\declaretheorem[
name=Theorem,
Refname={Theorem,Theorems},
numberwithin=section]{thm}
\declaretheorem[
name=Proposition,
Refname={Proposition,Propositions},
sibling=thm]{proposition}
\declaretheorem[
name=Definition,
Refname={Definition,Definitions},
sibling=thm]{dfn}
\declaretheorem[
name=Corollary,
Refname={Corollary,Corollaries},
sibling=thm]{cor}
\declaretheorem[
name=Lemma,
Refname={Lemma,Lemmas},
sibling=thm]{lem}
\declaretheorem[
name=Conjecture,
Refname={Conjecture,Conjectures},
sibling=thm]{conjecture}
\renewcommand{\thethmx}{\Alph{thmx}}
\newtheorem*{main}{Main Theorem}
\newcommand{\crefnames}[3]{
	\@for\next:=#1\do{
		\expandafter\crefname\expandafter{\next}{#2}{#3}
	}
}
\theoremstyle{plain}
\newlist{thmlist}{enumerate}{1}
\setlist[thmlist]{leftmargin=0.6cm, label= {\rm (\roman{thmlisti})}, ref=\thethm.(\roman{thmlisti}),noitemsep}
\newlist{thmenum}{enumerate}{1} 
\setlist[thmenum]{label=(\roman{thmenumi}),ref=\thethmx.(\roman{thmenumi}),noitemsep}
\newlist{mainenum}{enumerate}{1} 
\setlist[mainenum]{labelindent=0pt,labelwidth=1.25em,leftmargin=!, label={\rm (\roman{mainenumi})}, ref=\themain (\roman{mainenumi}) }
 \newtheorem{rem}[thm]{Remark}
 \def\oc{\mathscr{O}}
 \def\jr{\mathscr{J}}
 \def\mr{\mathscr{M}}
 \def\af{\mathbf{a}}
 \def\pr{{\rm pr}}
 \def\nd{\nabla_{\!\! D}}
 \def\onu{ {\nabla}_{\!\! \mathfrak{U}}}
 \def\cb{\mathbb{C}}
 \def\ib{\mathbb{I}}
 \def\ab{\mathbb{A}}
 \def\lbb{\mathbb{L}}
 \def\Im{\operatorname{Im}}
 \def\tl{\widetilde}  
 \def\cb{\mathbb{C}} \def\nb{\mathbb{N}}
 \def\gb{\mathbb{G}} 
 \def\pb{\mathbb{P}}  \def\rb{\mathbb{R}}
 \def\ys{\mathscr{Y}}
 \def\ls{\mathscr{L}}
 \def\ds{\mathscr{D}}
 \def\hs{\mathscr{H}}
 \def\uk{\mathfrak{U}}
 \def\wk{\mathfrak{w}}
  \def\d{\partial}
 \def\pt{{\rm P}}
\crefname{lem}{Lemma}{Lemmas}
\crefname{thm}{Theorem}{Theorems}
\crefname{proposition}{Proposition}{Propositions}
\crefname{dfn}{Definition}{Definitions}
\crefname{rem}{Remark}{Remarks}
\crefname{cor}{Corollary}{Corollaries}
\crefname{corx}{Corollary}{Corollaries}
\crefname{problem}{Problem}{Problems}
\crefname{thmx}{Theorem}{Theorems}
\crefname{conjecture}{Conjecture}{Conjectures}
\crefname{main}{Main Theorem}{Main Theorems}
\newcommand*{\rom}[1]{\expandafter\@slowromancap\romannumeral #1@}
\newsavebox{\@brx}
\newcommand{\llangle}[1][]{\savebox{\@brx}{\(\m@th{#1\langle}\)}
  \mathopen{\copy\@brx\kern-0.5\wd\@brx\usebox{\@brx}}}
\newcommand{\rrangle}[1][]{\savebox{\@brx}{\(\m@th{#1\rangle}\)}
  \mathclose{\copy\@brx\kern-0.5\wd\@brx\usebox{\@brx}}}
\numberwithin{equation}{section}
\definecolor{plum}{rgb}{0.8,0.2,0.8}
\let\oldsection\section
\renewcommand{\section}{
	\renewcommand{\theequation}{\thesection.\arabic{equation}}
	\oldsection}
\let\oldsubsection\subsection
\renewcommand{\subsection}{
	\renewcommand{\theequation}{\thesubsection.\arabic{equation}}
	\oldsubsection}
\title[On the logarithmic-orbifold  Kobayashi Conjecture]{Kobayashi Hyperbolicity of the complements  
\\	of general hypersurfaces of  high degree}
\author{Damian Brotbek}
\address{Universit\'e de Strasbourg, Institut de Recherche Math\'ematique Avanc\'ee, 	7 Rue Ren\'e-Descartes,
	67084 Strasbourg, France} 
\curraddr{Centre de math\'ematiques Laurent Schwartz, \'Ecole polytechnique,	91128 Palaiseau CEDEX, France
}
\email{\quad damian.brotbek@polytechnique.edu}
\urladdr{\quad https://sites.google.com/site/damianbrotbek/home}
\author{Ya Deng} 
\address{Universit\'e de Strasbourg, Institut de Recherche Math\'ematique Avanc\'ee, 	7 Rue Ren\'e-Descartes,
	67084 Strasbourg, France} 
\curraddr{
	Department of Mathematical Sciences, Chalmers University of Technology and the University of Gothenburg, Sweden}
\email{\quad yade@chalmers.se\quad dengya.math@gmail.com}
\urladdr{\quad https://sites.google.com/site/dengyamath}
\begin{document}
	\begin{abstract}
		In this paper,  we prove that in any projective manifold, the complements of general hypersurfaces of sufficiently large degree are Kobayashi hyperbolic. We also provide an effective lower bound on the degree. This confirms a conjecture by S. Kobayashi in 1970. Our proof, based on the theory of jet differentials, is obtained by reducing the problem to the construction of a particular example with strong hyperbolicity properties. This approach relies  the construction of higher order logarithmic  connections  allowing us to construct logarithmic Wronskians. These logarithmic Wronskians are the building blocks of the more general logarithmic jet differentials we are able to construct.
		
		As a byproduct of our proof, we prove a more general result on the orbifold hyperbolicity for generic geometric orbifolds  in the sense of Campana, with only one component and large multiplicities. We   also establish a Second Main Theorem type result for holomorphic entire curves  intersecting general hypersurfaces,  and  we   prove the Kobayashi hyperbolicity of the cyclic cover of a general hypersurface, again with an explicit  lower bound on the degree of all these hypersurfaces. 
	\end{abstract}
\subjclass[2010]{32Q45, 30D35, 14E99}
\keywords{Kobayashi hyperbolicity, orbifold hyperbolicity, logarithmic-orbifold Kobayashi conjecture, Second Main Theorem, jet differentials, logarithmic Demailly tower, higher order log connections, logarithmic Wronskians}
	\date{\today}
\maketitle

\section{Introduction}\label{intro}
 A  complex space $X$  is said to be \emph{Kobayashi hyperbolic}  if  the (intrinsically defined) Kobayashi pseudo distance $d_X$ is a distance, meaning that $d_X(p,q)>0$ for $p\neq q$ in $X$.  One can easily see that  a Kobayashi hyperbolic complex space $X$  does not contain any non-constant entire  holomorphic curve
 $f:\cb\to X$  (this last property is called   \emph{Brody hyperbolicity}). When $X$ is compact, by a well-known theorem of  Brody \cite{Bro78}, these two definitions of hyperbolicity  are equivalent. However, in general, we have many examples of complex manifolds  which  are Brody hyperbolic but not hyperbolic in the sense of Kobayashi, see for instance \cite{Kob98}.

In 1970, Kobayashi  made the  following conjecture \cite{Kob70}, which is often called the  \emph{logarithmic Kobayashi conjecture} in the literatures.
\begin{conjecture}[Kobayashi]\label{conj:Kobayashi}
The complement $\pb^n\setminus D$ of a general hypersurface $D\subset \pb^n$ 
of   sufficiently large degree   $d\geqslant d_n$  is Kobayashi  hyperbolic.
\end{conjecture}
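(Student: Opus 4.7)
The plan is to reduce \cref{conj:Kobayashi} to the construction of a single, particularly well-chosen hypersurface $D_0 \subset \pb^n$ in each degree $d \geqslant d_n$. A standard limit/Hurwitz argument applied to sequences of entire curves shows that, in the parameter space $\pb(H^0(\pb^n,\mathcal{O}(d)))$, the locus of $D$ for which $\pb^n\setminus D$ is Kobayashi hyperbolic (and hyperbolically embedded in $\pb^n$) is Zariski open. Since the general statement is also invariant under deformation, it suffices to exhibit \emph{one} example $D_0$ of degree $d$ with the desired hyperbolic behaviour, together with enough rigidity to guarantee that nearby smooth hypersurfaces inherit it.

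The main tool is Demailly's theory of logarithmic jet differentials, combined with the logarithmic version of the fundamental vanishing theorem (à la Siu--Yeung/Noguchi): any entire curve $f:\cb \to \pb^n\setminus D$ lifts canonically to the logarithmic Demailly--Semple tower $X_k$ of the pair $(\pb^n,D)$, and this lift must annihilate every global section of $\mathcal{O}_{X_k}(m)\otimes \pi^\star A^{-1}$, where $A$ is an ample line bundle on $\pb^n$. Consequently it is enough to produce, for some $D_0$, enough such global sections so that their common zero locus, once pushed down, is disjoint from $\pb^n\setminus D_0$. Hyperbolicity then follows from a standard negativity argument on the tautological line bundle, and hyperbolic embeddedness (needed to upgrade Brody to Kobayashi) is automatic once this base-locus statement holds uniformly in the family.

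The construction of these differentials is the heart of the matter. I would introduce higher-order logarithmic connections on the jet bundles associated to $(\pb^n,D)$, which allow one to differentiate sections of $\mathcal{O}_{\pb^n}(d)$ along jets while producing only logarithmic poles along $D$. From $k+1$ such sections $\sigma_0,\dots,\sigma_k$ one then builds a logarithmic Wronskian $W(\sigma_0,\dots,\sigma_k)$, an invariant logarithmic jet differential of order $k$ on $\pb^n$. Taking $D_0$ of a Fermat--Brotbek type $\sum_\alpha a_\alpha (x^\alpha)^{c} = 0$, with sufficiently many monomials and generic coefficients, one can analyse the vanishing of such Wronskians combinatorially at every jet of order $n$; by combining enough Wronskians, obtained from different monomial tuples, one expects to cut the common base locus down to the vertical part of the tower.

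The principal obstacle is exactly this base-locus analysis on the logarithmic Demailly tower: one must show that the common zero set of the constructed Wronskians meets $\pb^n \setminus D_0$ only at jets arising from degenerate lifts (constant curves), and this is delicate because the logarithmic jet bundles are highly non-split and the Wronskians depend in a very nontrivial way on the coefficients of $D_0$. A secondary difficulty is extracting an effective, polynomial-size bound $d_n$: this requires careful bookkeeping of the weighted degree of the Wronskians, of the negative twist one can afford, and of how many independent monomials are needed to realise the Fermat-type deformation. I expect the combinatorial/geometric base-locus computation, and the associated choice of deformation parameters, to be by far the most demanding step.
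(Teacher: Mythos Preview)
Your overall strategy---Fermat-type examples, higher-order logarithmic connections, logarithmic Wronskians, and the fundamental vanishing theorem---is exactly the approach of the paper. However, there are two genuine gaps in the reduction step and in the base-locus control.

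First, the claim that a ``standard limit/Hurwitz argument'' shows the hyperbolicity locus is \emph{Zariski} open is not correct: such arguments give only openness in the analytic topology, which does not suffice to pass from one example to a general member. The paper does not argue openness of hyperbolicity itself. Instead it identifies a purely \emph{algebraic} property---ampleness of a specific line bundle on a blow-up $\widetilde{H}_{\af,k}$ of the logarithmic Demailly tower---which is Zariski open in families by \cite[Theorem~1.2.17]{Laz04I} and which implies the desired base-locus statement (\cref{cor:Intermediaire}). This is where the blow-up of the logarithmic Wronskian ideal sheaf $\wk_{X_k(D)}$ enters: one must show that this ideal is functorial under restriction to fibres (\cref{lem:fiberwise blow-up}), so that the blow-up varies well in the universal family. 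Without this, you cannot propagate your one example to a Zariski-open set of hypersurfaces.

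Second, the base-locus analysis you flag as ``delicate'' is handled in the paper not by direct combinatorics on the Fermat example but by mapping the (blown-up) log jet tower to a universal complete intersection $\ys\subset {\rm Gr}_k\times\pb^k$ via \eqref{eq:map to grass} and \eqref{eq:map to univ}, and then invoking an effective Nakamaye-type result (\cref{nakmaye type}) to control the augmented base locus of $\ls$ on $\ys$. The indeterminacies of this rational map are exactly what the Wronskian blow-up resolves (\cref{partial}); your plan to ``combine enough Wronskians'' does not yet explain how to get information at singular jets or along $D$, which is precisely where the rational map fails to be defined. Finally, the upgrade from Brody to Kobayashi hyperbolicity is not automatic from uniformity in the family: the paper uses Green's theorem, which requires $D$ itself to be Brody hyperbolic, and this is imported from the earlier compact results \cite{Bro17,Den17}.
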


As is well known, \cref{conj:Kobayashi}  is simpler  to approach  when  $D$ is replaced by a simple normal crossing divisor  with several components. When  $D=\sum_{i=1}^{2n+1}H_i$ with  $\{H_i\}_{i=1,\ldots,2n+1}$  hyperplanes of  $\pb^n$  in general position, it was proved by Fujimoto \cite{Fuj72} and Green \cite{Gre77} that $\pb^n\setminus D$ is Kobayashi hyperbolic.  More generally, Noguchi-Winkelmann-Yamanoi  \cite{NWY07,NWY08,NWY13}  and Lu-Winkelmann \cite{L-W12}  even proved a stronger result towards the logarithmic Green-Griffiths conjecture: if \((Y,D)\) is a pair of log general type with logarithmic irregularity \(h^0(Y,\Omega_Y(\log D))\geqslant \dim Y\), then  \((Y,D)\) is \emph{weakly hyperbolic}.  Here we say a log pair $(Y,D)$ is {weakly hyperbolic} if all entire curves in $Y\setminus D$ lie in a proper subvariety $Z\subsetneq Y$.  When the logarithmic irregularity is strictly smaller than the dimension
of the manifold, or  equivalently the number
of irreducible components of $D$  are less or equal than the  dimension
of the manifold,  much less is known for the general logarithmic Green-Griffiths conjecture.  In \cite{Rou03,Rou09}   Rousseau dealt with the Kobayashi hyperbolicity of  $\pb^2\setminus D$ where $D$ consists of two irreducible curves of certain degrees. More recently, in \cite{BD17} we proved a more general result  concerning the hyperbolicity of the complement of  a sufficiently ample divisor with several components.
\begin{thm}[\!\!\cite{BD17}]\label{log Debarre}
	Let  $Y$ be a smooth projective variety of dimension $n$ and let \(c\geqslant n\).  Let \(L\) be a very ample line bundle on \(Y\). For any \(m\geqslant (4n)^{n+2}\) and  for general  hypersurfaces \(H_1,\dots,H_c\in |L^m|\), writing \(D=\sum_{i=1}^cH_i\), the  logarithmic cotangent bundle $\Omega_Y(\log D)$ is \emph{almost ample}. In particular, $Y\setminus D$ is Kobayashi hyperbolic and hyperbolically embedded into $Y$.
\end{thm}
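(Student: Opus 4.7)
The plan is to reduce the theorem to the construction of enough global logarithmic symmetric differentials on $Y$ with poles along $D$. Specifically, I aim to produce, for some integer $k$ and some ample line bundle, global sections of $\mathrm{Sym}^k \Omega_Y(\log D) \otimes A^{-1}$ (for $A$ ample) whose common base locus inside $\pb(\Omega_Y(\log D))$ lies over a proper Zariski closed subset of $Y$: this is the almost ampleness assertion. The Kobayashi hyperbolicity and the hyperbolic embedding of $Y \setminus D$ into $Y$ then follow from standard jet-differential arguments, since such sections force any entire curve $f : \cb \to Y \setminus D$ to satisfy algebraic differential equations cutting out a proper subvariety, and a Brody-type reparametrization combined with the logarithmic Ahlfors--Schwarz lemma upgrades this to full Kobayashi hyperbolicity and hyperbolic embedding.

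For the construction I would use logarithmic Wronskians. Given sections $s_1, \dots, s_c \in H^0(Y, L^m)$ cutting out the hypersurfaces $H_i$, the logarithmic one-forms $d s_i / s_i \in H^0(Y, \Omega_Y(\log H_i))$ are the basic building blocks. Using the hypothesis $c \geqslant n$, at a generic point of $Y$ one can extract $n$ linearly independent such forms; taking Wronskians built from the $s_i$ and their higher derivatives then gives candidate sections of $\mathrm{Sym}^k \Omega_Y(\log D) \otimes L^{-a}$ for appropriate twists. Multiplying by polynomial combinations of the $s_i$ one adjusts the twist to absorb a suitable negative power of $L$, obtaining a large combinatorially parametrized family of log symmetric differentials.

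The main obstacle is to show that, for a generic choice of $H_1, \dots, H_c$, the common zero locus of all these differentials in $\pb(\Omega_Y(\log D))$ projects to a proper subvariety of $Y$. The strategy is a Bertini-type moving-coefficients argument on an incidence variety parametrizing tuples of hypersurfaces together with a common base direction $[\xi] \in \pb(\Omega_Y(\log D))$. One must verify that the projection to $|L^m|^c$ is not surjective, which reduces to a fiber-dimension estimate: at a fixed $(y, [\xi])$, the tuples $(H_1, \dots, H_c)$ for which every constructed Wronskian vanishes at $(y, [\xi])$ should form a subvariety of codimension strictly greater than $\dim Y + \dim \pb(\Omega_Y(\log D))_y$. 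The explicit threshold $m \geqslant (4n)^{n+2}$ appears naturally from the combinatorics of this count, namely the number of independent constraints one can extract from jets of order up to $n$ of the $s_i$, and the requirement that these jets surject onto enough fibers of $\mathrm{Sym}^k \Omega_Y(\log D) \otimes L^{-a}$. This combinatorial bookkeeping, together with arranging that the chosen Wronskians span a sufficiently rich subspace at each point of a Zariski-open subset of $Y$, is the technical heart of the argument and where the explicit bound is genuinely needed.
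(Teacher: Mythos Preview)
This theorem is not proved in the present paper; it is quoted from the authors' earlier work \cite{BD17} and only stated in the introduction as background. So there is no proof here to compare against directly, but I can compare your sketch to the method that \cite{BD17} actually uses (which is also the template for the present paper).

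Your overall plan---build enough twisted sections of $\mathrm{Sym}^k\Omega_Y(\log D)$ and control their base locus---is the right target, and the use of the logarithmic forms $ds_i/s_i$ as basic building blocks is correct in spirit. But there are two genuine gaps.

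First, you conflate two different constructions. Wronskians, as used in this paper, take \emph{higher-order} derivatives of sections of a single line bundle and produce \emph{invariant jet differentials} in $E_{k,k'}\Omega_Y(\log D)$; they are the tool for the single-component case precisely because one component forces one to go to high jet order. In the $c\geqslant n$ situation of \cite{BD17}, the whole point is that one can stay at \emph{first order}: one uses the $c\geqslant n$ logarithmic connections $\nabla_{D_i}:L^m\to\Omega_Y(\log D)\otimes L^m$ (equation \eqref{def:log connection} in this paper) to produce, for each component, genuine logarithmic $1$-forms, and then takes symmetric products. There are no higher derivatives and no Wronskians. Your sentence ``taking Wronskians built from the $s_i$ and their higher derivatives'' mixes the two regimes.

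Second, and more seriously, your ``Bertini-type moving-coefficients argument on an incidence variety'' is not how the codimension estimate is obtained, and it is not clear that a direct fiber-dimension count of the kind you describe can be made to work for \emph{generic} hypersurfaces in $|L^m|$. The actual argument in \cite{BD17} follows the pattern laid out in the present paper (see \S\ref{sec:main construction}--\S\ref{sec:proof of main}): one constructs a specific family of Fermat-type divisors $D_{\af}=\sum H_{i,\af}$, shows that for these special examples the tautological relations $\nabla_{D_i}\sigma_i=0$ produce a regular map to a universal complete intersection variety $\ys$, pulls back positivity from $\ys$ via an effective Nakamaye-type result, and only then uses that almost ampleness is a Zariski-open condition in families to pass to the general member. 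The explicit bound $(4n)^{n+2}$ comes from the effective Nakamaye estimate on $\ys$ and the combinatorics of the Fermat construction, not from a direct jet-surjectivity count as you suggest. Without the Fermat-type reduction, your fiber-dimension sketch does not give a mechanism for producing the required codimension.
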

This result can be seen as a logarithmic analogue of a conjecture of Debarre, which was established by the first author and Darondeau in \cite{BD15} and independently by Xie \cite{Xie15}.\\

Let us now focus on the case of one component as in \cref{conj:Kobayashi}. In the case $n=2$, the first proof to \cref{conj:Kobayashi}  was provided by Siu-Yeung \cite{SY96}, with a very high degree bound, which  was later improved  to $d\geqslant 15$ by El Goul  \cite{EG03} and $d\geqslant 14$ by Rousseau  \cite{Rou09}.   Building on ideas of  Voisin \cite{Voi96,Voi98},  Siu \cite{Siu04},  Diverio-Merker-Rousseau \cite{DMR10}, the first step towards the general case  in \cref{conj:Kobayashi} was made by  Darondeau  in \cite{Dar16c}, in which he proved the \emph{weak hyperbolicity} of $\pb^n\setminus D$ for general hypersurfaces $D$ of degree $d\geqslant (5n)^2n^n$.   Very recently, based on  his strategy outlined in \cite{Siu04},  in \cite{Siu15} Siu made an important progress towards \cref{conj:Kobayashi}, in which he showed that $\pb^n\setminus D$ is \emph{Brody hyperbolic} for $D$ a general hypersurface of degree $d\geqslant d_n^*$, where $d_n^*$ is some (non-explicit)  function  depending on $n$.

The goal of the present  paper  is to prove \cref{conj:Kobayashi} with an effective estimate on the lower degree bound $d_n$. We also prove    a  \emph{Second Main Theorem type result}, \emph{orbifold hyperbolicity} of a general orbifold with \emph{one component}  and high multiplicity, and Kobayashi hyperbolicity of the cyclic cover of a general hypersurface of large degree.
\begin{main}[=\cref{effective estimate,effective estimate2,orbifold Kobayashi}]\label{main}
	Let $Y$ be a smooth projective variety of dimension $n\geqslant 2$.  Fix any very ample line bundle $A$ on $Y$. Then for a \emph{general} smooth hypersurface \(D\in |A^d| \) with   	\[d\geqslant (n+2)^{n+3}(n+1)^{n+3}\sim_{n\to \infty}e^3n^{2n+6},\]
	\begin{mainenum} 
		\item\label{main log Kobayashi}   	The complement   $Y\setminus D$ is  hyperbolically embedded into $Y$. In particular, $Y\setminus D$ is Kobayashi hyperbolic.
		\item \label{main SMT} For   any holomorphic entire curve  (possibly algebraically degenerate) \(f:\cb\rightarrow Y\) which is not contained in \(D\), one has 
		\begin{eqnarray*}
			T_f(r,A)\leqslant N^{(1)}_f(r,D)+C\big(\log T_f(r,A)+\log r\big) \quad \lVert.
		\end{eqnarray*}	
	Here $T_f(r,A)$ is the Nevanlinna order function, $N^{(1)}_f(r,D)$ is the truncated counting function, and the symbol \(\lVert\) means that	the  inequality holds outside a Borel subset of \((1,+\infty )\) of finite Lebesgue measure.
	\item\label{main orbifold} The (Campana) orbifold $\big(Y,(1-\frac{1}{d})D\big)$ is orbifold hyperbolic, \emph{i.e.} there exists no entire curve  $f:\cb \to Y$ so that $$ f(\cb)\not\subset D \quad {\rm with} \quad {\rm mult}_{t}(f^*D)\geqslant d \quad \forall t\in f^{-1}(D). $$  		\item   \label{main cyclic} Let $\pi:X\to Y$ be the cyclic cover of $Y$ obtained by taking the $d$-th root along $D$. Then $X$ is Kobayashi hyperbolic.
	\end{mainenum} 
\end{main} 
To the best of our knowledge, \cref{main orbifold} is the first general result on the \emph{orbifold Kobayashi conjecture}   \cite[Conjecture 5.5]{Rou10} dealing with general orbifolds with only one component.  We note that \cref{main log Kobayashi} immediately follows  from \cref{main orbifold}  in view of the definition of orbifold hyperbolicity and our previous results \cite{Bro17,Den17}.   We also observe  that \cref{main orbifold} implies  \cref{main cyclic} since $\pi:(X,\varnothing)\to \big(Y,(1-\frac{1}{d})D\big) $ is a $d$-folded \emph{unramified  cover}  in the category of orbifold.  The only result of the type of  \cref{main cyclic} we are aware of is due to Roulleau-Rousseau \cite{RR13}, who proved that for a   very general hypersurface $D$ in $\pb^n$ of  degree $d\geqslant 2n+2$,    the cyclic cover $X$ of $Y$ obtained by taking the $d$-th root along $D$ is \emph{algebraically hyperbolic}.   \\

Let us mention that in a recent preprint \cite{RY18}, which appeared after the first version of the present paper was made publicly available,   Riedl-Yang provide a short proof of \cref{conj:Kobayashi} with an effective bound on \(d_n\) (which is slightly worse than the bound we give here). However, their proof relies heavily on a series of work by Darondeau \cite{Dar16b,Dar16c,Dar16a} whereas our proof is essentially self-contained.  \\

Our approach is inspired by our previous works \cite{Bro17,Den17,BD17}. Those works were motivated by the compact counterpart of  \emph{Kobayashi conjecture}, also conjectured  in \cite{Kob70} by Kobayashi: \emph{a  general hypersurface $X\subset \pb^n$  of   sufficiently high degree   $d\geqslant d'_n$  is Kobayashi  hyperbolic}. There are now several proofs of this result, \cite{Siu15,Bro17} and more recently \cite{Dem18}. Here we will provide a logarithmic counterpart to the approach of \cite{Bro17} as well as the work \cite{Den17}.\\

Let us now outline the main points of the proof of our main result. First we observe that the first statement of our main result will follow from the Brody hyperbolicity of \(Y\setminus D\) in view of a theorem of Green \cite{Gre77} and the results established in \cite{Bro17,Den17}. In order to control the entire curves in \(Y\setminus D\) we rely on the theory of logarithmic jet differentials. Logarithmic jet differentials on the pair \((Y,D)\) are higher order generalizations of symmetric differential forms with logarithmic poles along \(D\) and provide obstructions to the existence of entire curves. Roughly speaking, in order to prove that \(Y\setminus D\) is Brody hyperbolic it suffices to construct many logarithmic jet differential forms on  \((Y,D)\) vanishing along some ample divisor and control their geometry. Let us also observe that in general it is critical to use higher order jet differentials and not merely  logarithmic symmetric differential forms. In general one has to go at least to order \(k=\dim Y\)  (see \emph{e.g.} \cite[Theorem 8]{Div09}).\\

 Let us now explain the approach we use to construct logarithmic jet differential forms. For simplicity, we suppose until the end of this section that \(Y=\pb^n\) and that \(A=\oc_{\pb^n}(1)\). The first step is to introduce higher order logarithmic connections. More precisely for any integer \(d\geqslant 1 \), any smooth \(D\in |\oc_{\pb^n}(d)|\) and any \(k\geqslant 0\) we define the $k$-th order logarithmic connection associated to the pair $(\pb^n,D)$
   \begin{align*}
     \nabla^k_{\!\! D}:\oc_{\pb^n}(d)\to E_{k,k}^{\rm GG}\Omega_{\pb^n}(\log D)\otimes \oc_{\pb^n}(d)  
   \end{align*}
     by  setting
   \begin{align}\label{intro:higher order}
   \nabla^k_{\!\! D}s=\sigma d^k\! \! \left(\frac{s}{\sigma}\right)
   \end{align}
    where $D=(\sigma=0)$ and $s\in \Gamma(U,\oc_{\pb^n}(d))$ for some open set $U\subset \pb^n$.  Here \(E_{k,k}^{\rm GG}\Omega_{\pb^n}(\log D)\) denotes the vector bundle of logarithmic jet differentials of order \(k\) and weighted degree \(k\). See  \cref{sec:LogJetSpace} and \cref{sec:connection} for more details.  The crucial point is the following \emph{tautological equality} for any \(k\geqslant 1\) 
    \begin{equation}\label{eq:tauto}
    \nabla^k_{\!\! D}\sigma=0.
    \end{equation}

    \medskip
    
    Next, we follow the general strategy in \cite{BD15,Bro17,BD17} which consists in reducing the general case to the construction of  a particular example \((\pb^n,D)\) satisfying a certain ampleness property, which implies  Brody hyperbolicity and which is a Zariski open property.      
    Such examples are   indeed  suitable deformations of Fermat type hypersurfaces.     For some suitably chosen parameters \(\delta,r,\varepsilon,k\in \mathbb{N}^*\), consider  the hypersurface \(D_\af  \subset \pb^n\) defined by a polynomial  of degree $d:=\varepsilon+(r+k)\delta$ in the form 
   \begin{align}\label{intro:Fermat}
   F(\af)= \sum_{\substack{I=(i_0,\ldots,i_n)\\ i_0+\cdots+i_n=\delta}}a_{ I}z^{(r+k)I},
   \end{align}
    where  we use  the multi-index notation \(z^{(r+1)I}=(z_0^{i_0}\cdots z_n^{i_n})^{r+k}\) for \(I=(i_0,\dots,i _n)\) and homogeneous coordinates \([z_0,\dots, z_n]\) on \(\pb^n\),  and the \(a_{I}\)'s are   homogeneous polynomials of degree \(\varepsilon\geqslant 1\) in \(\cb[z_0,\dots,z_n]\).  Write \(D_{\af}=(F(\af)=0)\subset \pb^n\). By considering the tautological relation \eqref{eq:tauto} for any \(j=1,\dots, k\) we obtain 
   the following equalities
    \begin{equation}\label{intro:equations}
   \left \{
    \begin{array}{cccccc} 
     &0  &= \nabla^1_{\!\!D_\af}(F_\af)  &=\sum_{|I|=\delta }\tilde{\alpha}_{I,1}z^{(r+k-1)I}  &=\sum_{|I|=\delta}\alpha_{I,1}z^{rI} &=\sum_{|I|=\delta}\alpha_{I,1}T^{I}\\
    &0  &= \nabla^2_{\!\!D_\af}(F_\af)  &=\sum_{|I|=\delta }\tilde{\alpha}_{I,2}z^{(r+k-2)I}  &=\sum_{|I|=\delta}\alpha_{I,2}z^{rI} &=\sum_{|I|=\delta}\alpha_{I,2}T^{I}\\
 &\vdots    &\vdots     &\vdots  &\vdots &\vdots  \\
   & 0  &= \nabla^k_{\!\!D_\af}(F_\af)  &=\sum_{|I|=\delta }\tilde{\alpha}_{I,k}z^{(r+k-k)I}  &=\sum_{|I|=\delta}\alpha_{I,k}z^{rI} &=\sum_{|I|=\delta}\alpha_{I,k}T^{I} 
    \end{array} \right.
    \end{equation}
    Here \((T_0,\dots, T_n):=(z_0^r,\dots,z_n^r)\), and for each \(I\) and $i$, one has
    $$\alpha_{I,i}\in H^0\big(\pb^n,  E_{i,i}^{\rm GG}\Omega_{\pb^n}(\log D_\af)\otimes \oc_{\pb^n}(\varepsilon+k\delta) \big).$$
    One should think of these elements as some holomorphic functions on some suitable logarithmic jet space $\pb_{\!\!k}^n(D_\af)$ (the logarithmic version of the Demailly-Semple jet tower constructed in \cite{DL01}).
Once suitably interpreted,  \eqref{intro:equations} allows us to construct a rational map 
    \begin{align}\label{intro:rational}
    \Phi_\af:\pb_{\!\!k}^n(D_\af)&\dashrightarrow \ys\\\nonumber
    w&\stackrel{{\rm loc}}{\mapsto }\Big( {\rm Span}\big(\alpha_{\bullet,1}(w),\alpha_{\bullet,2}(w),\ldots,\alpha_{\bullet,k}(w)\big) ; [z_0^r,z_1^r,\ldots,z_n^r]\Big)
    \end{align}
    where  $\alpha_{\bullet,i}(w):=\big(\alpha_{I,i}(w)\big)_{|I|=\delta}\in H^0\big(\pb^n,\oc_{\pb^n}(\delta)\big)$, and   \(\ys\) is the universal complete intersections of codimension \(k\) and multidegree \((\delta,\dots, \delta)\):
    \[\ys:=\{(\Delta,[T])\in {\rm Gr}_{k}\Big(H^0\big(\pb^n,\oc_{\pb^n}(\delta)\big)\Big) \times \pb^n\ | \ \forall P\in \Delta, P([T])=0 \}.\]
    If one denotes by \(\ls \) the Pl\"ucker line bundle on the Grassmannian ${\rm Gr}_{k}\Big(H^0\big(\pb^n,\oc_{\pb^n}(\delta)\big)\Big)$, by \eqref{intro:rational},  for any \(m\in \nb^*\) and for \(r\)  large enough, the pull-back of  every section  in 
    \[H^0\big(\ys,\ls^m\boxtimes \oc_{\pb^n}(-1)|_{\ys}\big)\]
    induces a logarithmic jet differential equation on the pair \((\pb^n,D_\af)\) vanishing along some ample divisor. Observe that when $k\geqslant n$, the projection map \(\ys\to {\rm Gr}_{k}\Big(H^0\big(\pb^n,\oc_{\pb^n}(\delta)\big)\Big)\) is generically finite, and   thus the   pull back of \(\ls\) to \(\ys\) is a big and nef line bundle. Therefore,  when   $m$ is large enough, there are many  global sections of \(\ls^m\boxtimes \oc_{\pb^n}(-1)|_{\ys}\).   Moreover, in view of a result of Nakamaye \cite{Nak00} the base locus    \({\rm Bs}\big(\ls^m\boxtimes \oc_{\pb^n}(-1)|_{\ys}\big)\) can be understood geometrically.
    Altogether this will allow us to control the geometry of the logarithmic jet differential forms we construct this way and eventually prove that for a general \(\af\) and suitable restrictions on the different parameters, the pair \((\pb^n,D_{\af})\) satisfies a property which is Zariski open and implies, among other things, Brody hyperbolicity.

    \medskip
Let us however emphasize that there are many technical difficulties along the way. First of all, we are not able to work directly with the pair \((\pb^n,D_{\af})\), but     with a pair \((H_{\af},D_{\af})\) which is biholomorphic to \((\pb^n,D_{\af})\) such that the family of all such pairs is easier to study. Secondly, the above rational map \(\Phi_{\af}\) is not a regular morphism in general. Therefore the above strategy doesn't provide any information on what happens along the indeterminacy locus of this map but in order to obtain the strong hyperbolicity property we seek, we need some information on the entire logarithmic jet tower and not merely an open subset of it. Therefore as in \cite{Bro17}, we introduce a suitable modification of the logarithmic jet tower obtained by blowing up a suitable ideal sheaf induced by the logarithmic Wronskian construction we introduce here.
The main difficulty lies in the description of elements \(\alpha_{I,i}\) constructed above as holomorphic functions on  the logarithmic Demailly  jet tower. This forces us to introduce another version, more technically involved but more precise, of the logarithmic connections and the logarithmic Wronskians mentioned previously.

    \medskip
    
    The paper is organized as follows.   In \cref{sec:jet space}, we recall the  technical tools in studying the hyperbolicity of algebraic varieties, especially the logarithmic Demailly jet tower and the invariant logarithmic jet differentials. 
    \cref{sec:connection and log Wronskian} is the main technical part of our paper. In this section,  we develop our main tools in this paper: the  higher order logarithmic connections   and   logarithmic Wronskians associated to families of global sections of a line bundle. We show that logarithmic Wronskians can be seen as a morphism from the jet bundle of a line bundle to the logarithmic invariant jet bundle.  Based on this interpretation, we prove that for the   ideal sheaf induced by the base ideal of logarithmic Wronskians, its cosupport lies on the set of singular jets in the log Demailly tower, and its blow-up is functorial under restrictions and families. This gives rise to a good compactification of the set of regular jets in the Demailly-Semple jet tower for the interior of the log pair. Using this construction we build a Zariski open property  for the Brody hyperbolicity of the family of log pairs, and reduce our proof of the main theorem to find some particular examples. \cref{sec:main construction} is devoted to the construction of the family of these particular hypersurfaces in \eqref{intro:Fermat}.  In \cref{sec:proof of main}, we provide detailed proofs of the main theorem. We first prove \eqref{intro:equations} and \eqref{intro:rational}, and show the existence of $D_\af\subset \pb^n$ satisfying the above Zariski open property when we adjust the parameters. To prove \cref{main SMT,main orbifold,main cyclic}, we   reduce the problems to the existence of  \emph{sufficiently many} logarithmic jet differentials with a \emph{sufficiently negative} twist.

\section{Jet spaces, jet differentials and jets of sections} \label{sec:jet space}

\subsection{Jet spaces and jet differentials}
\subsubsection{Jet spaces}
Let \(X\) be a complex manifold of dimension \(n\). For any \(k\in \mathbb{N}^*\), one defines $J_kX\rightarrow X$ to be the bundle of $k$-jets of germs of parametrized curves in $X$, that is, the set of equivalence
classes of holomorphic maps $f:(\cb, 0)\rightarrow X$, with the equivalence relation $f\sim_k g$ if and only if all
derivatives $f^{(j)}(0)= g^{(j)}(0)$ coincide for $0\leqslant j\leqslant k$, when computed in some (equivalently, any) local coordinate system
of $X$ near $x$. Given any \(f:(\cb,0)\to X\), we denote by \(j_kf\in J_kX\) the class of \(f\) in \(J_kX\).
There is a projection map  $p_k:J_kX\rightarrow X$ defined by $p_k(j_kf)= f(0)$.  
Under this map, \(J_kX\) is a \(\cb^{nk}\)-fiber bundle over \(X\). This can be seen as follows.

Let \(U\subset X\) be an open subset. For any holomorphic 1-form $\omega\in \Gamma(U,\Omega_U)$ and $f:(\cb,0)\rightarrow U$, we set  $f^*\omega:=A(t)dt$ and define the following functional:
\begin{align}\label{derivative}
	d^{k-1}\omega: p_k^{-1}(U)&\rightarrow  \cb\\\nonumber
	{j_kf}&\mapsto  A^{(k-1)}(0)
\end{align}
One immediately checks that this is well defined.  In the particular case \(\omega=d\varphi\) for some \(\varphi\in \mathscr{O}(U)\), one writes $d^{k}\varphi:=d^{k-1}\omega$. 
This construction allows us to see \(J_kX\) as a \(\cb^{nk}\)-fiber bundle over \(X\). Indeed, given $\omega_1,\ldots,\omega_n\in \Gamma(U,\Omega_U)$  generating $\Omega_X$ at any point \(x\in U\), then $\{d^{\ell}\omega_i\}_{0\leqslant \ell\leqslant k-1, 1\leqslant i\leqslant n}$ gives rise to the local trivialization of $p_k^{-1}(U)$:
\begin{align}\label{trivialization}
	p_k^{-1}(U)&\rightarrow  U \times \cb^{nk}\\\nonumber
	{j_kf}&\mapsto  \big(f(0); d^{\ell}\omega_i(j_{\ell}f)\big)_{0\leqslant \ell\leqslant k-1, 1\leqslant i\leqslant n}
\end{align}
In this case the projection to the second factor $\cb^{nk}$ is called the \emph{jet projection}, and
the natural coordinates of $\cb^{nk}$ are called \emph{jet coordinates}.

In particular, if  $(z_1,\ldots,z_n)$ are local holomorphic coordinates on $U$ centered at a point \(x\in U\), then $dz_1,\ldots,dz_k$ generates $\Omega_U$ at each point of \(U\).  Any germ of curve $f:(\cb,0)\rightarrow (X,x)$    can be written as
$$
f=(f_1,\ldots,f_n):(\cb, 0)\rightarrow  (\cb^n,0).
$$
It follows from the trivialization \eqref{trivialization} given by $\{d^{\ell}z_j\}_{1\leqslant \ell\leqslant k, 1\leqslant j\leqslant n}$ that the fiber $p_k^{-1}(x)$ can  be identified with the set of $k$-tuples of vectors
$$(\xi_1,\ldots,\xi_k) = \big(f'(0),f''(0), \ldots,f^{(k)}(0)\big)\in \cb^{nk}.$$
Observe also that there is a natural $\cb^*$-action on fibers of $J_kX$ defined by
$$
\lambda\cdot j_kf:=j_k(t\mapsto f(\lambda t)), \ \ \forall \ \lambda\in \cb^*, \ j_kf\in J_kX. 
$$ 
With respect to the above trivialization, this action  is described in jet coordinates by
$$
\lambda\cdot \big(f'(0),f''(0),\ldots,f^{(k)}(0)\big)=\big(\lambda f'(0),\lambda^2f''(0),\ldots,\lambda^kf^{(k)}(0)\big).
$$
\subsubsection{Jet differentials} Let us now recall the fundamental concept of jet differentials. For \(X\) as above, any open subset \(U\subset X\) and any integer \(k\geqslant 1\), a \emph{jet differential of order \(k\) on \(U\)} is an element \(P\in \mathscr{O}(p_k^{-1}(U))\). The (non-coherent) sheaf of jet differentials is defined to be \(\mathscr{E}^{\rm GG}_{k,\bullet}\Omega_X:=(p_{k})_*\mathscr{O}_{J_kX}\).

The $\cb^*$-action can be used to define the notion of 
of weight for jet differentials: a $k$-jet differential $P\in \mathscr{E}_{k,\bullet}^{\rm GG}\Omega_X(U)=\mathscr{O}(p_k^{-1}(U))$  is said to be of \emph{weight} $m$ if for any $j_kf\in p_k^{-1}(U)$, one has
 $$P(\lambda\cdot j_kf)=\lambda^mP(j_kf).$$  
 We thus define the Green-Griffiths sheaf $\mathscr{E}_{k,m}^{\rm GG}\Omega_X$ of jet differentials of order $k$ and weighted degree $m$ to be the subsheaf of $\mathscr{E}_{k,\bullet}^{\rm GG}\Omega_X$, of jet differentials of weight  $m$ with respect to the $\cb^*$-action.   With the above local coordinates, any element $P\in \mathscr{E}_{k,m}^{\rm GG}\Omega_X(U)$ can be written as
\begin{equation}\label{eq:ExpressionP}
 P(z,dz,\dots, d^kz)=\sum_{|\alpha|=m} c_\alpha(z) (d^{1}z)^{\alpha_1}(d^{2}z)^{\alpha_2}\cdots (d^{k}z)^{\alpha_k},
 \end{equation}
 where $c_\alpha(z)\in \oc(U)$ for any $\alpha:=(\alpha_1,\ldots,\alpha_k)\in (\mathbb{N}^n)^{k}$ and where we used the usual multi-index notation with the weighted degree  $|\alpha|:=|\alpha_1|+2|\alpha_2|+\cdots+k|\alpha_k|$. From this it follows at once that \(\mathscr{E}_{k,m}^{\rm GG}\Omega_X\) is locally free, and we shall denote the associated vector bundle by \(E_{k,m}^{\rm GG}\Omega_X\). One also defines
 $
 E_{k,\bullet}^{\rm GG}\Omega_X=\bigoplus_{m\geqslant 0} E_{k,m}^{\rm GG}\Omega_X,
 $ 
 which is in a natural way a bundle of graded algebras (the product is obtained simply by taking
 the product of polynomials).

Besides the multiplication, one can define for every \(k,m\geqslant 0\), a \(\cb\)-linear operator  \(d:\mathscr{E}_{k,m}^{\rm GG}\Omega_X\to \mathscr{E}^{\rm GG}_{k+1,m+1}\Omega_X\)  by
\[(dP)(j_{k+1}f):=\frac{d}{dt}\big(P (j_{k}f(t))\big)(0).\]
The fact that \(dP\) is well defined and holomorphic follows from a  local computation. This operator is coherent with the definition of \(d^k\) above in the sense that for any holomorphic one form \(\omega\in \Gamma(U,\Omega_U)\) on some open subset \(U\subset X\), and any \(k\in \mathbb{N}^*\) one has \(d^k\omega=d(d^{k-1}\omega)\). For instance, this implies that \(d^{k}\omega\in \mathscr{E}_{k+1,k+1}^{\rm GG}\Omega_X(U)\). In coordinates the operator \(d\) can be computed as follows. Take an open subset \(U\subset X\) with a coordinate chart \((z_1,\dots, z_n)\), and let \(P\in  \mathscr{E}_{k,m}^{\rm GG}\Omega_X(U)\) represented in coordinates by the expression \eqref{eq:ExpressionP}, then \(dP\) is given by
\begin{align*}
\sum_{|\alpha|=m} \left(\sum_{i=1}^n\frac{\partial c_\alpha(z)}{\partial z_i} d^1z_i(d^{1}z)^{\alpha_1}\cdots (d^{k}z)^{\alpha_k} +
\sum_{j=1}^k\sum_{i=1}^nc_\alpha(z) \alpha_j^id^{j+1}z_i(d^{1}z)^{\alpha_1}\cdots (d^jz)^{\alpha'_{j,i}}\cdots (d^{k}z)^{\alpha_k}\right),
\end{align*}
where \(\alpha'_{j,i}=(\alpha_j^1,\dots, \alpha_j^{i-1},\alpha_j^i-1,\alpha_j^{i+1},\dots, \alpha_j^n)\).

\subsection{Logarithmic jet spaces and bundles}\label{sec:LogJetSpace}
Let $X$ be a complex manifold (not necessarily compact), and let $D=\sum_{i=1}^{c}D_i$ be a simple normal crossing divisor on $X$,
that is, all the components $D_i$ are smooth irreducible divisors that meet
transversally. Such a pair $(X,D)$ is called a (smooth) \emph{log manifold}.  One denotes by $T_X(-\log D)$
the logarithmic
tangent bundle of $X$ along $D$. By definition, it is the subsheaf of the holomorphic tangent bundle \(T_X\) consisting of vector fields tangent to \(D\). One can then show that under our assumptions on \(D\), \(T_X(-\log D)\) is a locally free sheaf. Let  $U\subset X$ be an open subset of with local coordinates $(z_1,\ldots,z_n)$ such that for some \(0\leqslant c'\leqslant c\), \(D\cap U=(z_1\cdots z_{c'}=0)\) and  (up to reordering the components) one has $D_i\cap U=(z_i=0)$ for all \(i=1\dots c'\). Then $T_X(-\log D)$ is generated by
$$
z_1\frac{\d}{\d z_1},\ldots,z_{c'}\frac{\d}{\d z_{c'}}, \frac{\d}{\d z_{c'+1}}, \ldots, \frac{\d}{\d z_n}.
$$ Consider the dual of $T_X(-\log D)$, which is the locally free sheaf generated by
$$
\frac{d z_1}{z_1},\ldots,\frac{d z_{c'}}{z_{c'}},  d z_{c'+1}, \ldots,  d z_n,
$$
and denoted by $\Omega_X(\log D)$. The vector bundle $\Omega_X(\log D)$ is called the \emph{logarithmic cotangent bundle} of $(X,D)$.  We denote by $\jr_k(X)$ the set of local holomorphic sections
$\alpha:U\rightarrow J_kX$ of the $k$-jet bundle $J_kX\rightarrow X$, and   $\jr_k(X,\log D)$   the sheaf of
germs of local holomorphic sections $\alpha$ of $J_kX$ such that for any $\omega\in \Omega_X(\log D)_x$,
$(d^{j-1}\omega)(\alpha)$ are all holomorphic for any $j=1,\ldots,k$. $\jr_k(X,\log D)$  is called the logarithmic
$k$-jet sheaf and $\alpha$ is called a logarithmic $k$-jet field. Here we observe that for any meromorphic 1-form $\omega\in \mr(U,\Omega_X)$, one can also define $d^{i} \omega $ for any $i=1,\ldots,k$ as \eqref{derivative}, which can be seen as meromorphic sections of the fiber bundle $J_kX\rightarrow X$. It follows from \cite{Nog86} (see also \cite[\S 4.6.3]{NW14}) that there exists also a natural holomorphic fiber bundle $J_k(X,\log D)$ such that 
\begin{thmlist}
	\item there is a fiber mapping $\lambda:J_k(X,\log D)\rightarrow J_kX$, locally defined by
\begin{align*}\label{local trivialization}
\lambda:J_k(X,\log D)_{\upharpoonright U}&\rightarrow  J_kX_{\upharpoonright U}\\\nonumber
\big(z;z^{(j)}_\ell,z^{(j)}_i\big)_{1\leqslant j\leqslant k, 1\leqslant \ell\leqslant c',c'<i\leqslant n} &\mapsto  (z;z_\ell\cdot z^{(j)}_\ell,z^{(j)}_i)_{1\leqslant j\leqslant k, 1\leqslant \ell\leqslant c',c'<i\leqslant n};
\end{align*}
	\item the induced mapping between sections of holomorphic fiber bundles 
	$$\lambda_*:\Gamma\big(U,J_k(X,\log D)\big)\rightarrow  \jr_k(X,\log D)(U)$$
	is an isomorphism.
\end{thmlist}
Let us denote by $w_1= \log z_1,\ldots,w_{c'}= \log z_{c'},w_{c'+1}= z_{c'+1},\ldots,w_n= z_n$. The notation \(\log z_i\) should be understood formally and is used to simplify the notation \(d w_i=d\log z_i=\frac{dz_i}{z_i}\). One then has another  trivialization of $J_k(X,\log D)_{\upharpoonright U}$ is given as follows:
\begin{align*}
(d^{j}w_i)_{1\leqslant j\leqslant k,1\leqslant i\leqslant n}: J_k(X,\log D)_{\upharpoonright U} \rightarrow U\times \cb^{nk}.
\end{align*}
A local meromorphic $k$-jet differential $\alpha$ on $U$ is called a logarithmic
$k$-jet differential, if $\alpha(\beta)$ is holomorphic for any logarithmic $k$-jet field $\beta\in \jr_k(X,\log D)(U)$. The sheaf of logarithmic
$k$-jet   differential is denoted by $\mathcal{E}_{k,\bullet}^{\rm GG}\Omega_X(\log D)$, which is  also a locally free sheaf. The associated vector bundle is denoted by $E_{k,\bullet}^{\rm GG}\Omega_X(\log D)$, and is called \emph{$k$-jet logarithmic Green-Griffiths bundle}.  One also the following natural splitting 
$$
E_{k,\bullet}^{\rm GG}\Omega_X(\log D)=\bigoplus_{m\geqslant 0} E_{k,m}^{\rm GG}\Omega_X(\log D),
$$
where $E_{k,m}^{\rm GG}\Omega_X(\log D)$ is the  logarithmic $k$-jet  differentials of weighted degree $m$. Any local section $P\in \mathscr{E}_{k,m}^{\rm GG}\Omega_X(\log D)(U)$ can be written as
\begin{eqnarray}\label{local expression}
\sum_{|\alpha|=m} c_\alpha(z) (d^{1}w)^{\alpha_1}(d^{2}w)^{\alpha_2}\cdots (d^{k}w)^{\alpha_k},
\end{eqnarray}
where $c_\alpha(z)\in \oc(U)$ for any $\alpha:=(\alpha_1,\ldots,\alpha_k)\in (\mathbb{N}^n)^{k}$.  We will use another trivialization of $E_{k,m}^{\rm GG}\Omega_X(\log D)$. First, let us begin with a lemma.
\begin{lem}\label{lem:new basis}
Assume that locally on an open subset of $U\subset X$ with local coordinates $(z_1,\ldots,z_n)$ such that $D\cap U=(z_1=0)$. Then for any $j\in \mathbb{N}$,  $\frac{d^{j}z_1}{z_1}$ is  a logarithmic jet differential and moreover,  any local section $P\in  \mathscr{E}_{k,m}^{\rm GG}\Omega_X^*(\log D)(U)$ can be written as
\begin{eqnarray}\label{another expression}
\sum_{|\alpha|=m} \frac{c_\alpha(z)}{z_1^{\alpha_{1}^1+\cdots+\alpha_{k}^1}} (d^{1}z)^{\alpha_1}(d^{2}z)^{\alpha_2}\cdots (d^{k}z)^{\alpha_k},
\end{eqnarray}
where $c_\alpha(z)\in \oc(U)$ for any $\alpha:=(\alpha_1,\ldots,\alpha_k)\in (\mathbb{N}^n)^{k}$ with $\alpha_j=(\alpha_{j}^1,\ldots,\alpha_{j}^n)\in \mathbb{N}^n$.
\end{lem}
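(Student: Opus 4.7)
The plan is to establish both assertions via a single explicit formula that converts between the logarithmic jet coordinates $d^j w_1$ and the rational combinations $\frac{d^j z_1}{z_1}$. Recall that $w_1=\log z_1$ is a formal notation encoding $dw_1=dz_1/z_1$. Applying the Faà di Bruno formula to $\log\circ g$ for a germ $g(t)=z_1\circ f(t)$, I obtain, for every $j\geqslant 1$, a universal identity of the shape
\[
d^j w_1 \;=\; \frac{d^j z_1}{z_1} \;+\; R_j\!\left(\tfrac{d z_1}{z_1},\tfrac{d^2 z_1}{z_1},\ldots,\tfrac{d^{j-1} z_1}{z_1}\right),
\]
where $R_j$ is a universal weighted polynomial without constant term (assigning weight $i$ to $\frac{d^i z_1}{z_1}$), and more precisely
\[
d^j w_1 \;=\; \sum_{\pi}\frac{(-1)^{|\pi|-1}(|\pi|-1)!}{z_1^{|\pi|}}\prod_{B\in\pi}d^{|B|}z_1,
\]
the sum running over set partitions $\pi$ of $\{1,\ldots,j\}$. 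I would derive this formula by a direct induction on $j$ using the differential operator $d:\mathscr{E}_{k,m}^{\rm GG}\Omega_X\to \mathscr{E}_{k+1,m+1}^{\rm GG}\Omega_X$ introduced above; the one-block partition contributes the distinguished leading term $\frac{d^j z_1}{z_1}$.

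A second induction on $j$ inverts this identity: since $R_j$ involves only $\frac{d^i z_1}{z_1}$ with $i<j$, one sees recursively that $\frac{d^j z_1}{z_1}$ can be written as a polynomial in $d^1 w_1,\ldots,d^j w_1$ with $\oc(U)$-coefficients. Because the $d^\ell w_i$'s give the standard local frame of $\mathscr{E}_{k,m}^{\rm GG}\Omega_X(\log D)$ as in \eqref{local expression}, this proves that $\frac{d^j z_1}{z_1}$ is a logarithmic jet differential, settling the first part of the lemma.

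For the second assertion, take any $P\in\mathscr{E}_{k,m}^{\rm GG}\Omega_X(\log D)(U)$ and start with its expression \eqref{local expression} as a polynomial in the $d^j w_i$ with $\oc(U)$-coefficients. For $i\geqslant 2$ one has simply $d^j w_i=d^j z_i$, so those factors need no conversion. For $i=1$, substitute the explicit Faà di Bruno expansion above. The crucial combinatorial observation is that in every summand $\frac{1}{z_1^{|\pi|}}\prod_{B\in\pi}d^{|B|}z_1$ the exponent of $z_1^{-1}$ equals the total number of factors $d^{|B|}z_1$ appearing. Taking products of such expansions preserves this equality: in every monomial of the final expansion, the power of $z_1^{-1}$ equals the total degree in $d^{\bullet}z_1$-factors, which, after collecting into the form $\prod_j(d^j z)^{\alpha_j}$, is exactly $\alpha_1^1+\cdots+\alpha_k^1$. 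Reindexing by the resulting multi-indices $\alpha$ and collecting coefficients yields the asserted expression \eqref{another expression}.

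I do not foresee a genuine obstacle: the proof is essentially a bookkeeping exercise built around Faà di Bruno. The only mildly delicate point is verifying that the exponent of $z_1^{-1}$ matches the total $d^{\bullet}z_1$-degree after all products and collections of like terms are performed, but this homogeneity is automatic from the homogeneous structure of the Faà di Bruno expansion (a derivation of weight $j$ contributes weight $j$ to every term).
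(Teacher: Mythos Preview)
Your proposal is correct and follows essentially the same approach as the paper: both arguments establish that the two families $\{d^j\log z_1\}_{j\leqslant k}$ and $\{\frac{d^j z_1}{z_1}\}_{j\leqslant k}$ are universal integer-coefficient weighted polynomials in one another, and then substitute into the standard local expression \eqref{local expression}. The paper carries out the two directions by separate bare inductions on $j$, whereas you invoke Fa\`a di Bruno for $\log\circ g$ to obtain the closed formula and then invert; you also spell out explicitly the homogeneity fact (power of $z_1^{-1}$ equals total $d^\bullet z_1$-degree) that the paper leaves implicit in its final sentence.
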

\begin{proof} Let us prove by induction that for any \(j\geqslant 1\)  and any  $\beta=(\beta_1,\ldots,\beta_j)\in \mathbb{N}^j$ there exists $b_{j\beta}\in \mathbb{Z}$ such that 
	\begin{eqnarray}\label{another log}
	\frac{d^{j}z_1}{z_1}=\sum_{\beta_1+2\beta_2+\cdots+j\beta_j=j} b_{j\beta}\cdot (d^{1}\log z_1)^{\beta_1}(d^{2}\log z_1)^{\beta_2}\cdots (d^{j}\log z_1)^{\beta_j}.
	\end{eqnarray}
 By definition, it holds for \(j=1\). Assume now that \eqref{another log} holds for $j$. Then 
\begin{align*} 
d^{j+1}z_1=&d^{1}z_1\cdot\sum_{\beta_1+2\beta_2+\cdots+j\beta_j=j} b_{j\beta}\cdot (d^{1}\log z_1)^{\beta_1}(d^{2}\log z_1)^{\beta_2}\cdots (d^{j}\log z_1)^{\beta_j}+\\
&z_1\cdot \sum_{\beta_1+2\beta_2+\cdots+j\beta_j=j}\sum_{i=1}^{j} \beta_ib_{j\beta}\cdot (d^{1}\log z_1)^{\beta_1}\cdots(d^{i}\log z_1)^{\beta_{i}-1}(d^{i+1}\log z_1)^{\beta_{i+1}+1}\cdots (d^{j}\log z_1)^{\beta_j}\\
=&z_1\cdot\sum_{\beta_1+2\beta_2+\cdots+j\beta_j=j} b_{j\beta}\cdot (d^{1}\log z_1)^{\beta_1+1}(d^{2}\log z_1)^{\beta_2}\cdots (d^{j}\log z_1)^{\beta_j}+\\
&z_1\cdot \sum_{\beta_1+2\beta_2+\cdots+j\beta_j=j}\sum_{i=1}^{j} \beta_ib_{j\beta}\cdot (d^{1}\log z_1)^{\beta_1}\cdots(d^{i}\log z_1)^{\beta_{i}-1}(d^{i+1}\log z_1)^{\beta_{i+1}+1}\cdots (d^{j}\log z_1)^{\beta_j},
\end{align*}
and thus \eqref{another log} holds also for $j+1$.

	 On the other hand, one will prove by induction on $j$ that
	\begin{eqnarray}\label{inverse trivialization}
	d^{j}\log z_1=\sum_{\beta_1+2\beta_2+\cdots+j\beta_j=j} b_{j\beta}\cdot \Big(\frac{d^{1}z_1}{z_1}\Big)^{\beta_1}\Big(\frac{d^{2}z_1}{z_1}\Big)^{\beta_2}\cdots \Big(\frac{d^{j}z_1}{z_1}\Big)^{\beta_j}
	\end{eqnarray}
		where $b_{j\beta}\in \mathbb{Z}$ and $\beta=(\beta_1,\ldots,\beta_j)\in \mathbb{N}^j$. Assume that \eqref{inverse trivialization} holds for $j$. Then
		\begin{eqnarray*}
		d^{j+1}\log z_1=\sum_{\beta_1+2\beta_2+\cdots+j\beta_j=j}\sum_{i=1}^{j} \beta_ib_{j\beta}\cdot \Big(\frac{d^{1}z_1}{z_1}\Big)^{\beta_1}\cdots\Big(\frac{d^{i}z_1}{z_1}\Big)^{\beta_i-1}\cdots \Big(\frac{d^{j}z_1}{z_1}\Big)^{\beta_j}\Big(\frac{d^{i+1}z_1}{z_1}-\frac{d^{i}z_1d^{1}z_1}{z_1^2}\Big).
		\end{eqnarray*}
Hence \eqref{inverse trivialization}  also holds for $j+1$.  It thus just remains to use \eqref{inverse trivialization} and \eqref{local expression} to show \eqref{another expression}.
\end{proof}

\subsection{Demailly-Semple tower}\label{subsec:DS}
In this section we recall the formalism of directed pairs as introduced by Demailly \cite{Dem95}. A  \emph{directed manifold} $(X,V)$ is  a complex manifold $X$   equipped with a subbundle $V\subset T_X$ of rank $r$. A morphism of directed manifolds \(f:(Y,V_Y)\to(X,V_X)\) is by definition a morphism \(f:Y\to X\) such that \(f_*V_Y\subset f^*V_X\subset f^*T_X\). In \cite{Dem95} Demailly introduced the \emph{$1$-jet functor} which to any directed manifold $(X,V)$ associates the directed manifold defined by \(\pt_1V=\pt(V)\) and $ V_1:= (\pi_{0,1})_*^{-1}\oc_{\pt(V)}(-1)\subset T_{X_1}$, where $\oc_{\pt_1V}(-1)$ denotes the tautological line bundle  $\oc_{\pt(V)}(-1)$. This induces a morphism between directed manifolds
$
 (\pt_1V,V_1)\xrightarrow{\pi_{0,1}} (X,V).
 $
By iterating this $1$-jet functor, Demailly then  constructed
the so-called \emph{Demailly-Semple $k$-jet tower}  
\[
(\pt_kV,V_{k})\xrightarrow{\pi_{k-1,k}}  (\pt_{k-1}V,V_{k-1})\xrightarrow{\pi_{k-2,k-1}}  \cdots\rightarrow  (\pt_1V,V_1)\xrightarrow{{\pi_{0,1}}}  (X,V)
\]  
such that $\pt_kV := \pt (V_{k-1})$ and
 $V_k := (\pi_{k-1,k})_*^{-1}
\oc_{\pt_kV}(-1)\subset T_{\pt_kV}.$  
Here we denote by $\oc_{\pt_kV}(-1)$  the tautological line bundle  $\oc_{\pt(V_{k-1})}(-1)$, $\pi_{k-1,k}: \pt_kV  \rightarrow \pt_{k-1}V$ the
natural projection and $(\pi_{k-1,k})_* = d\pi_{k-1,k} : T_{\pt_kV} \rightarrow \pi^*_{k-1,k}T_{\pt_{k-1}V}$
the differential. By composing the projections we get for all pairs of indices $0 \leqslant j \leqslant  k$ natural morphisms
$$\pi_{j,k} : \pt_kV \rightarrow \pt_jV,\quad (\pi_{j,k})_* = (d\pi_{j,k})_{\upharpoonright V_k}: V_k \rightarrow (\pi_{j,k})^*V_j.
$$
For every $k$-tuple $  (a_1, \ldots , a_k)\in\mathbb{Z}^k$ we write
 $\oc_{\pt_kV}(a_1, \ldots, a_k)= \bigotimes_{1\leqslant j\leqslant k}\pi^*_{j,k}\oc_{\pt_jV}(a_j).
$ 
One can   inductively
define  $k$-th lift $f_{[k]}:(\cb,0)\rightarrow \pt_kV$  for germs of non-constant holomorphic curves $f:(\cb,0)\to X$  by $f_{[k]}(t)=\big(f_{[k-1]}(t),[f_{[k-1]}'(t)]\big)$ (although this is not well defined when \(f_{k-1}'(t)=0\) one can easily extend this definition to every \(t\) in the domain of definition of \(f\)).  

On the other hand, let $\gb_k$ be the group of germs of $k$-jets of biholomorphisms of $(\cb, 0)$, that is, the group of germs of biholomorphic maps
$$\varphi:t \mapsto  a_1t + a_2t^2+ \cdots + a_kt^k,\quad a_1\in \cb^*, a_j \in \cb, j>2,$$
in which the composition law is taken modulo terms $t^j$
of degree $j > k$. Then $\gb_k$ is a $k$-dimensional
nilpotent complex Lie group, which admits a natural fiberwise right action on $J_kV$. The action consists of reparameterizing $k$-jets of maps $f : (\cb, 0) \rightarrow (X,V)$ by a biholomorphic change of parameter
$\varphi: (\cb, 0) \rightarrow(\cb, 0)$ defined by $(f,\varphi) \mapsto f\circ\varphi$.  Moreover, if one denotes by $$J_k^{\rm reg}V:=\{j_kf\in J_kV \mid f'(0)\neq 0\}$$
the space of \emph{regular $k$-jets} tangent to $V$,  there exists a natural  morphism 
\begin{align}\label{embedding}
	J_k^{\rm reg}V &\rightarrow   \pt_kV\\
	{j_kf} &\mapsto   f_{[k]}(0)\nonumber
\end{align}
whose image is an open set in $\pt_kV$ denoted by $\pt_kV^{\rm reg}$; in other words,  $\pt_kV^{\rm reg}\subset \pt_kV$ is the set of elements $f_{[k]}(0)$ in $\pt_kV$ which can be reached by  regular germs of
curves $f$. It was proved in \cite[Theorem 6.8]{Dem95} that $\gb_k$ acts transitively on $J_k^{\rm reg}V$, and thus $\pt_kV^{\rm reg}$ can be identified with the quotient $J_k^{\rm reg}/\mathbb{G}_k$.
 Moreover,  the \emph{singular $k$-jets}, denoted by $\pt_kV^{\rm sing}:=\pt_kV\setminus \pt_kV^{\rm reg}$,  is a divisor in $\pt_kV$. In summary,  $\pt_kV$ is a  smooth compactification of $J_k^{\rm reg}/\gb_k$. 
 As will become clear later, and as was observed in  \cite[\S 7]{Dem95}, when dealing with hyperbolicity questions, the locus \(\pt_kV^{\rm sing}\) is in some sense irrelevant.

Let us recall the following  theorem by Demailly which is a crucial tool in our paper.
\begin{thm}[\!\!\protect{\cite[Corollary 5.12, Theorem 6.8]{Dem95}}]
	Let $(X,V)$ be a directed variety. 
	\begin{thmlist}
		\item \label{para} For any $w_0\in \pt_kV$, there exists an open neighborhood $U_{w_0}$ of $w_0$ and a family of  germs of curves $(f_w)_{w\in U_{w_0}}$, tangent to $V$ depending holomorphically on $w$ such that
		$$
		(f_{w})_{[k]}(0)=w\quad \mbox{and}\quad (f_{w})'_{[k-1]}(0)\neq 0,\quad \forall w\in U_{w_0}.
		$$
		In particular, $(f_{w})'_{[k-1]}(0)$ gives a local trivialization of the tautological line bundle $\oc_{\pt_kV}(-1)$ on $U_{w_0}$. 
		\item For any \(k,m\geqslant 1\) one has
		\begin{eqnarray}\label{local isomorphism}
		(\pi_{0,k})_*\oc_{\pt_kV}(m)\cong \mathscr{E}_{k,m}V^*.
		\end{eqnarray}
		\end{thmlist}
\end{thm}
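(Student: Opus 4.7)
The plan is to establish (i) by induction on $k$ and then deduce (ii) by constructing the claimed isomorphism explicitly via the canonical family produced by (i). For the base case $k=1$, given $w_0=(x_0,[v_0])\in \pt_1V=\pt(V)$, I would pick a local holomorphic frame $e_1,\ldots,e_r$ of $V$ near $x_0$ with $e_1(x_0)=v_0$, and for $w=(x,[\sum_i\lambda_i e_i(x)])$ in a neighborhood of $w_0$ take $f_w$ to be the integral curve through $x$ of the holomorphic vector field $\sum_i\lambda_i e_i$. Holomorphic dependence of integral curves on initial data and parameters yields a holomorphic family tangent to $V$ with $(f_w)_{[1]}(0)=w$ and $f_w'(0)\neq 0$.

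For the inductive step, set $w_0'=\pi_{k-1,k}(w_0)$ and let $(g_{w'})_{w'\in U_{w_0'}}$ be the inductively constructed family, so that $(g_{w'})_{[k-1]}(0)=w'$ and $(g_{w'})'_{[k-2]}(0)\neq 0$. The point $w_0$ corresponds to a direction $[\xi_0]\in \pt(V_{k-1,w_0'})$. Choosing a local holomorphic frame of $V_{k-1}$ near $w_0'$ compatible with the splitting into the tautological line $\oc_{\pt_{k-1}V}(-1)$ and the vertical kernel of $(\pi_{k-2,k-1})_*$, I would modify $g_{\pi_{k-1,k}(w)}$ by combining (a) a reparametrization $t\mapsto \mu(w) t$ to adjust the horizontal component of the target direction and (b) a higher-order perturbation of the form $t^k v(w)$ along local sections $v$ of $V$ to adjust the vertical component. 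A Taylor expansion shows that the resulting $k$-jet depends on $(\mu,v)$ in a way that is invertible at $w_0$, so these parameters can be solved for as holomorphic functions of $w$ on a neighborhood of $w_0$. The trivialization of $\oc_{\pt_kV}(-1)$ by $w\mapsto (f_w)'_{[k-1]}(0)$ then follows immediately from the defining condition $(f_w)_{[k]}(0)=w$. The main obstacle is the singular case $[\xi_0]\in \ker(\pi_{k-2,k-1})_*$: no rescaling of $g_{w_0'}$ alone can produce the prescribed $k$-jet, so one must verify that the rank-$r$ family of perturbations $t^kv$ indeed spans the vertical directions while remaining tangent to $V$ and depending holomorphically on $w$.

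For part (ii), using the canonical family from (i), I would define $\Phi:\mathscr{E}_{k,m}V^*\to (\pi_{0,k})_*\oc_{\pt_kV}(m)$ locally by
\[
\Phi(P)(w):=P(j_k f_w)\cdot \big((f_w)'_{[k-1]}(0)\big)^{*\otimes m},
\]
viewed as an element of $\oc_{\pt_kV}(m)_w$ via the trivialization of $\oc_{\pt_kV}(-1)_w$ from (i). Since $P$ is $\gb_k$-invariant of weight $m$, replacing $f_w$ by $f_w\circ\varphi$ with $\varphi\in\gb_k$ scales $P(j_kf_w)$ by $\varphi'(0)^m$ and $(f_w)'_{[k-1]}(0)$ by $\varphi'(0)$, so the two factors compensate and $\Phi(P)(w)$ is independent of the choice of representative. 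Injectivity follows because the formula recovers $P$ on the dense regular locus, and surjectivity by reading the same formula in reverse, using (i) to give local trivializations of $\oc_{\pt_kV}(m)$, yielding the asserted isomorphism of locally free sheaves.
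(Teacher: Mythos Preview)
The paper does not prove this theorem; it is quoted from Demailly \cite[Corollary~5.12, Theorem~6.8]{Dem95} and used as a black box. The only content the paper adds is the explicit formula immediately following the statement,
\[
\sigma_P(w)=P(j_kf_w)\big((f_w)'_{[k-1]}(0)\big)^{-m},
\]
which describes the inverse of the isomorphism in (ii). This is exactly your map $\Phi$, so for part (ii) your argument agrees with what the paper records, and the invariance check you give is the standard one.

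For part (i) there is no proof in the paper to compare against, but your sketch has a real gap at the singular locus that your own ``main obstacle'' paragraph does not resolve. A reparametrization $t\mapsto\mu t$ satisfies $(g\circ\mu)_{[k-1]}(t)=g_{[k-1]}(\mu t)$, hence $(g\circ\mu)_{[k-1]}'(0)=\mu\,g_{[k-1]}'(0)$; it rescales the velocity but does not change its direction in $\pt(V_{k-1})$. Likewise, the perturbation $f=g_{w'}+t^kv$ leaves the $(k-1)$-jet of $g_{w'}$ untouched, so the horizontal component $(\pi_{k-2,k-1})_*f_{[k-1]}'(0)=g_{[k-2]}'(0)\neq 0$ is fixed independently of $v$ and $\mu$. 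Thus every curve you produce has $[f_{[k-1]}'(0)]\notin\Gamma_k$, and you can never reach a point $w_0$ with $[\xi_0]\in\ker(\pi_{k-2,k-1})_*$. Demailly's original argument avoids this by a different induction: one applies the $k=1$ step to the directed manifold $(\pt_{k-1}V,V_{k-1})$ to obtain a curve $h$ in $\pt_{k-1}V$ tangent to $V_{k-1}$ with $h(0)=w_0'$ and $[h'(0)]=w_0$, then sets $f=\pi_{0,k-1}\circ h$ and shows $f_{[k-1]}$ agrees with $h$ after reparametrization. The point is that $h'(0)$ is allowed to be vertical from the outset, whereas in your scheme the horizontal component is pinned by the inductive datum.
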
 

In fact, the isomorphism \eqref{local isomorphism} can be understood explicitly in view of \cref{para}. With the notation  therein, for any given local invariant jet differential $P\in  \mathscr{E}_{k,m}V^*(U)$, the inverse image under $(\pi_{0,k})_*$ is the section 
 $\sigma_P\in \Gamma\big(U_{w_0}, \oc_{\pt_kV}(m)_{\upharpoonright U_{w_0}}\big)$ 
defined by
\begin{eqnarray}\label{define section}
\sigma_P(w):=P(j_kf_w)\big((f_{w})'_{[k-1]}(0)\big)^{-m}.
\end{eqnarray}

\subsection{Logarithmic Demailly-Semple  bundle}\label{sec:LogDSTower}
In \cite{DL01}, Dethloff-Lu extended the Demailly-Semple tower  to the logarithmic setting. They used it in particular to reprove the Brody hyperbolicity of  complements of  ample divisors in the abelian varieties. Following \cite{DL01}, a \emph{logarithmic directed manifold} is a triple $(X,D,V)$ where $(X,D)$ is a log manifold, and $V$ is a subbundle of $T_X(-\log D)$. In this section, we will recall for the reader's convenience Dethloff-Lu's construction of the \emph{logarithmic Demailly(-Semple)  $k$-jet tower}  associated to any logarithmic directed manifold.

Given log-manifolds $(X',D')$ and $(X,D)$, a holomorphic map $f:X'\rightarrow X$ such that $f^{-1}(D)\subset D'$
will be called a log-morphism from $(X',D')$ to $(X,D)$. It induces  morphisms 
$$
f_*:T_{X'}(-\log {D'})\rightarrow f^*T_X(-\log D) \ \ \text{and} \ \ f^*:f^*E_{k,m}^{\rm GG}\Omega_{X}(\log {D})\rightarrow E_{k,m}^{\rm GG}\Omega_{X'}(\log {D'}).
$$
A log directed morphism between log directed
manifolds $(X',D',V')$ and $(X,D,V)$ is a log morphism $f:(X',D')\rightarrow (X,D)$ such that $f_*V'\subset V$. 

 For any fixed order $k$, as the Demailly-Semple bundle, 
the \emph{logarithmic Demailly $k$-jet tower} 
$$\big({X}_k(D),D_k,V_k\big)\xrightarrow{{\pi}_{k-1,k}} \big({X}_{k-1}(D),D_{k-1},V_{k-1}\big)\xrightarrow{{\pi}_{k-2,k-1}} \ldots\rightarrow \big({X}_1(D),D_1,V_1\big)\xrightarrow{{\pi}_{0,1}} (X,D,V) $$
is constructed inductively. Define ${X}_k(D):=\pt({V}_{k-1})$, and let $ {{\pi}_{k-1,k}}:{X}_k(D) \rightarrow  {X}_{k-1}(D) $ be the natural projection. Set $D_{k}:=({\pi}_{k-1,k})^{-1}(D_{k-1})$ which is a simple normal crossing divisor, and induces a  morphism
$$
({{\pi}}_{k-1,k})_*:T_{{X}_k(D)}(-\log D_k)\rightarrow ({{\pi}}_{k-1,k})^* T_{{X}_{k-1}(D)}(-\log D_{k-1}).
$$
Define
$$
{V}_k:=({{\pi}}_{k-1,k})_*^{-1} \oc_{X_k(D)}(-1) \subset T_{{X}_k(D)}(-\log D_k),
$$
where $\oc_{X_k(D)}(-1):=\oc_{\pt({V}_{k-1})}(-1)$ is the  tautological line bundle,  which by definition is also a subbundle of $({{\pi}}_{k-1,k})^*{V}_{k-1}$.   We say that $\big({X}_k(D),D_k,V_k\big)\xrightarrow{{\pi}_{k-1,k}} \big({X}_{k-1}(D),D_{k-1},V_{k-1}\big)$ is the \emph{1-jet functor} of the log direct manifold $({X}_{k-1}(D),D_{k-1},V_{k-1})$.

Note that ${\rm ker}({{\pi}}_{k-1,k})_*=T_{{X}_k(D)/{X}_{k-1}(D)}$ by definition. This gives the following short exact
sequence of vector bundles over ${X}_k(D)$ 
$$
0\rightarrow T_{{X}_k(D)/{X}_{k-1}(D)}\rightarrow V_k\xrightarrow{({{\pi}}_{k-1,k})_*} \oc_{X_k(D)}(-1)\rightarrow 0.
$$
  Furthermore, we have the Euler exact sequence for projectivized bundles
  $$
  0\rightarrow \oc_{X_k(D)}\rightarrow ({{\pi}}_{k-1,k})^*{V}_{k-1}\otimes \oc_{X_k(D)}(1)\rightarrow T_{{X}_k(D)/{X}_{k-1}(D)}\rightarrow 0.
  $$
  By definition, there is a canonical line bundle morphism
\begin{eqnarray}\label{eq:short embedding}
 \oc_{X_k(D)}(-1)\hookrightarrow ({{\pi}}_{k-1,k})^*{V}_{k-1}\xrightarrow{({\pi}_{k-1,k})^*({{\pi}}_{k-2,k-1})_*} ({{\pi}}_{k-1,k})^*\oc_{{X}_{k-1}(D)}(-1)
\end{eqnarray}
which admits precisely $\Gamma_k:=\pt\big(T_{{X}_k(D)/{X}_{k-1}(D)}\big)\subset \pt(V_k)={X}_k(D)$ as its zero divisor:
\begin{equation}\label{eq:Gamma_k}
\oc_{X_k(D)}(1)= ({{\pi}}_{k-1,k})^*\oc_{{X}_{k-1}(D)}(1)\otimes \oc_{X_k(D)}(\Gamma_k).
\end{equation}
  Let us denote by  ${{\pi}_{j,k}}:{X}_k(D)\rightarrow {X}_j(D)$ 
the composition of  the   projections ${\pi}_{k}\circ \cdots\circ {\pi}_{j+1}$. Define
 $
{X}_k(D)^{\rm sing}:=\bigcup_{2\leqslant j\leqslant k} {{\pi}_{j,k}}^*(\Gamma_{j})
$, 
and
 $
{X}_k(D)^{\rm reg}:={X}_k(D) \setminus{X}_k(D)^{\rm sing}
$. 

\begin{dfn} For any open subset \(U\subset X\), a logarithmic differential operator $P\in \Gamma\big(U,\mathcal{E}^{\rm GG}_{k,m}\Omega_X(\log D)\big)$ is said to be
\emph{invariant by reparametrization group} $\gb_k$ if for any $g\in \gb_k$ and any $j_kf\in J_kX^{\rm reg}_{\upharpoonright X\setminus D}$, one has 
$$
P\big(j_k(f\circ g)\big)=g'(0)^m\cdot P(j_kf).
$$
	Let us define $\mathcal{E}_{k,m}\Omega_X(\log D)$ to be the subsheaf of $\mathcal{E}^{\rm GG}_{k,m}\Omega_X(\log D)$ which consists of invariant logarithmic differential operator. The associated vector bundle is denoted by ${E}_{k,m}\Omega_X(\log D)$.
\end{dfn}
The log Demailly tower is of great importance in the study of the algebraic degeneracy
of entire curves on $X\setminus D$, granting the following direct image formula in \cite[Proposition 3.9]{DL01} 
\begin{eqnarray}\label{eq:DirectImageFormula}
\mathscr{E}_{k,m}\Omega(\log D)=({\pi}_{k})_*\oc_{{X}_k(D)}(m).
\end{eqnarray} 

The following fundamental result shows that  the logarithmic jet differentials vanishing along some ample divisor  provides obstructions to the existence of entire curves in the complement.
\begin{thm}[Dethloff-Lu, Siu-Yeung]\label{thm:fundamental}
	Let \(X\) be a smooth complex projective variety  with \(D\subset X\) a normal crossings divisor on \(X\), and \({X}_k(D)\) denotes to be the log Demailly  \(k\)-jet tower of \(\big(X,D,T_X(-\log D)\big)\). For any non-constant entire curve \(f:\cb\rightarrow X\setminus D\) avoiding \(D\), any ample line 	bundle \(A\) on \(X\), any \(a_1,\ldots,a_k\in \mathbb{N} \) and any \[\omega\in H^0\big({X}_k(D),\oc_{{X}_k(D)}(a_1,\ldots,a_k) \otimes ({\pi}_{0,k})^*A^{-1} \big), \]
	one has \(f_{[k]}(\cb)\subseteq (\omega=0)   \).
\end{thm}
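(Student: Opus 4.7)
My plan is to prove this vanishing theorem by the Nevanlinna-theoretic argument of Siu-Yeung and Dethloff-Lu, adapted to the logarithmic setting. The strategy is to lift $f$ to the logarithmic Demailly tower, evaluate $\omega$ along the lift to obtain a scalar holomorphic function on $\cb$, and then use Noguchi's logarithmic derivative lemma together with the negative twist by $A^{-1}$ to force vanishing.

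First, using the inductive construction recalled in \cref{sec:LogDSTower}, I would lift the entire curve $f:\cb\to X\setminus D$ to its $k$-th prolongation $f_{[k]}:\cb\to X_k(D)$; since $f$ avoids $D$, the lift lands in $X_k(D)\setminus D_k$ where $D_k=\pi_{0,k}^{-1}(D)$. Pulling back $\omega$ along $f_{[k]}$ produces a holomorphic section of a trivial line bundle on $\cb$, which a choice of trivialization turns into a scalar holomorphic function $u:\cb\to\cb$; the conclusion $f_{[k]}(\cb)\subseteq(\omega=0)$ is then equivalent to $u\equiv 0$.

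Next, I would equip $A$ with a smooth hermitian metric $h_A$ of positive curvature and each tautological bundle $\oc_{X_k(D)}(a_j)$ with a smooth hermitian metric; their tensor product endows the line bundle carrying $\omega$ with a metric $h$ for which $|\omega|_h$ is bounded on the compact manifold $X_k(D)$. Reinterpreting $\omega$ via the direct image formula \eqref{eq:DirectImageFormula} as an invariant logarithmic jet differential on $X$ with values in $A^{-1}$, and using the local coordinate expression \eqref{another expression} from \cref{lem:new basis}, this bound translates into a pointwise inequality of the form
\begin{equation*}
|u(t)|\;\leqslant\;C\cdot|\sigma_A(f(t))|_{h_A}^{-1}\cdot\mathcal{P}\!\left(\tfrac{f_\ell^{(j)}(t)}{f_\ell(t)},\;f_i^{(j)}(t)\right)
\end{equation*}
on any chart around a point of $X$ where $D=(z_1\cdots z_{c'}=0)$, with $\mathcal{P}$ a polynomial in the indicated logarithmic derivatives (for $\ell\leqslant c'$, $j\leqslant k$) and ordinary derivatives (for $i>c'$, $j\leqslant k$), and $\sigma_A$ a local frame of $A$. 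Taking $\log^+$, averaging over circles $|t|=r$, and applying the First Main Theorem of Nevanlinna theory (which converts the $|\sigma_A|_{h_A}^{-1}$ contribution into $T_f(r,A)+O(1)$) together with Noguchi's logarithmic derivative lemma (which bounds the proximity of each log-derivative factor by $O(\log r+\log^+ T_f(r,A))$ outside a set of finite measure), an Ahlfors-Schwarz type argument as in Dethloff-Lu then rules out any non-zero $u$: the $A^{-1}$ twist makes $|\omega|_h$ decay along $f_{[k]}$ at a rate that cannot be balanced by the sub-linear growth of logarithmic derivatives.

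The main obstacle is the bookkeeping along $D_k$ and the singular locus $X_k(D)^{\rm sing}$: the smooth metrics chosen on the tautological bundles do not encode the logarithmic poles of $\omega$, so one must align the weighted-degree data in \eqref{another expression} with the $(a_1,\ldots,a_k)$-grading of the Demailly tower, and deploy Noguchi's logarithmic — rather than classical — derivative lemma. This logarithmic refinement is precisely what yields sub-linear Nevanlinna bounds for the quotients $f_\ell^{(j)}/f_\ell$ adapted to $D$, which are holomorphic on $\cb$ because $f$ misses $D$; their sub-linear growth control is the essential analytic input that makes the $A^{-1}$ twist decisive.
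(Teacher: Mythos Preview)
The paper does not give its own proof of this statement; it is quoted from Dethloff--Lu and Siu--Yeung and used as a black box (for instance in the proof of \cref{effective estimate}). Your outline is the standard Nevanlinna-theoretic argument from those references and is correct in its architecture: lift $f$ to $X_k(D)$, reinterpret $\omega$ via \eqref{eq:DirectImageFormula} as an invariant logarithmic jet differential $P$ with values in $A^{-1}$, bound $\log^+|P(j_kf)|$ via the local expression \eqref{another expression} and Noguchi's logarithmic derivative lemma, and contrast this with the growth forced by the ample twist.

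Two imprecisions deserve correction. First, ``Ahlfors--Schwarz type argument'' is a misnomer: no curvature comparison of metrics is used. The contradiction is purely Nevanlinna-theoretic: if $P(j_kf)\not\equiv 0$, the Green--Jensen formula applied to the section $P(j_kf)$ of $f^*A^{-1}$ gives
\[
\int_{0}^{2\pi}\log\bigl|P(j_kf)(re^{i\theta})\bigr|_{h_A^{-1}}\,\frac{d\theta}{2\pi}\ \geqslant\ T_f(r,A)-O(1),
\]
while your upper bound yields $O(\log r+\log T_f(r,A))\ \lVert$; the two are incompatible once $T_f(r,A)\to\infty$. Second, the sentence ``the $A^{-1}$ twist makes $|\omega|_h$ decay along $f_{[k]}$'' inverts the geometry: $|\omega|_h$ is simply bounded on the compact space $X_k(D)$, and it is the \emph{positive} curvature of $A$ that, through Green--Jensen, forces the lower bound $T_f(r,A)$ to grow. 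Finally, the inequality above only yields $T_f(r,A)=O(\log r)$, i.e.\ $f$ rational; you should add a line disposing of this case (extend to $\pb^1$ and note that for rational $f$ the log-derivative proximities are $O(1)$, sharpening the upper bound to a constant and still contradicting $T_f(r,A)\geqslant \log r - O(1)$).
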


\subsection{Jet bundle of a line bundle} We recall here the basic definitions and properties of   jet bundles of a line bundle (we refer to \cite[\S16.7]{EGA4}  for a detailed presentation). Let \(X\) be a complex manifold, and let \(L\) be a line bundle on \(X\). For any integer \(k\geqslant 0\), on defines the \(k\)-th order jet bundle \(J^kL\) of \(L\) as follows. Consider the product \(X\times X\) with the canonical projections \(\pr_1,\pr_2\) on the first and second factors. Let \(\Delta_X\subset X\times X\) be the diagonal and   \(\mathscr{I}_{\Delta_X}\subset \oc_{X\times X}\) denotes its ideal sheaf. Then one defines 
\begin{align}\label{def:jet bundle}
J^kL:=\pr_{1*}\left(\oc_{X\times X}/\mathscr{I}^{k+1}_{\Delta_X}\otimes \pr_2^*L\right).
\end{align}
It can be shown that this is a locally free sheaf on \(X\) such that for an \(x\in X\), the fiber at \(x\) is  \(J^k_xL=L\otimes \oc_{X,x}/\mathfrak{m}_{X,x}^{k+1}\). This construction is also functorial in the following way: given a complex manifold \(Y\) and morphism \(\varphi:X\to Y\), one obtains a natural morphsim of \(\oc_X\)-modules
\[\varphi^*:\varphi^*J^kL\to J^k\varphi^*L,\]
induced by the commutativity of the diagram 
	\begin{displaymath}
	\xymatrix{
		X\times X \ar[d]_{\pr_1} \ar[r]^{\varphi\times \varphi}  & Y\times Y \ar[d]_{\pr_1} \\
		X  \ar[r]^{\varphi}     & Y }
	\end{displaymath}
and the fact that \((\varphi\times \varphi)^{-1}\mathscr{I}_{\Delta_Y}\subset \mathscr{I}_{\Delta_X}\). 

We shall need the following elementary proposition. 
\begin{proposition}
Let \(L\) and \(L'\) be line bundles on \(X\). Any morphism of \(\oc_X\)-modules \(h:L\to L'\) induces a morphism of  \(\oc_X\)-modules  
\[J^kL\stackrel{j^kh}{\to} J^kL'.\]
Moreover, \(j^kh\) is an isomorphism whenever \(h\) is an isomorphism.
\end{proposition}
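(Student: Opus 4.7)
The plan is to construct $j^kh$ by applying, step by step, the three functorial operations used in \eqref{def:jet bundle} to the given morphism $h$. First, I would pull $h$ back along $\pr_2\colon X\times X\to X$ to obtain a morphism $\pr_2^*h\colon \pr_2^*L\to \pr_2^*L'$ of $\oc_{X\times X}$-modules. Next, tensoring this morphism over $\oc_{X\times X}$ with the fixed quotient sheaf $\oc_{X\times X}/\mathscr{I}^{k+1}_{\Delta_X}$ produces a morphism
\[
\oc_{X\times X}/\mathscr{I}^{k+1}_{\Delta_X}\otimes \pr_2^*L \longrightarrow \oc_{X\times X}/\mathscr{I}^{k+1}_{\Delta_X}\otimes \pr_2^*L'.
\]
Finally, pushing forward along $\pr_1$ delivers the desired morphism $j^kh\colon J^kL\to J^kL'$ of $\oc_X$-modules.

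For the second statement, the key observation is that each of the three operations above is a functor, so the whole construction $h\mapsto j^kh$ is itself functorial: it sends $\mathrm{id}_L$ to $\mathrm{id}_{J^kL}$ and respects composition. Consequently, if $h$ is an isomorphism with inverse $h^{-1}$, the same recipe applied to $h^{-1}$ yields a two-sided inverse $j^k(h^{-1})$ of $j^kh$, whence $j^kh$ is an isomorphism.

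No serious obstacle is expected here: the statement is essentially a triple application of functoriality, and if desired one can verify things locally by trivializing $L$ and $L'$ and reducing to the case $L=L'=\oc_X$, where $j^kh$ becomes multiplication (on the $k$-th order neighborhood of $\Delta_X$) by the function representing $h$. I would present the sheaf-theoretic version above as the cleanest argument and only fall back on the local picture if later constructions (e.g.\ the logarithmic Wronskian map) require an explicit formula.
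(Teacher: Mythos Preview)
Your proposal is correct and follows essentially the same approach as the paper: the paper defines \(j^kh\) as the push-forward under \(\pr_1\) of the morphism \(\pr_2^*h\) (tensored with \(\oc_{X\times X}/\mathscr{I}^{k+1}_{\Delta_X}\)), and states that the isomorphism assertion ``follows at once''. Your treatment is slightly more explicit in spelling out the functoriality argument for the second claim, but the content is identical.
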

\begin{proof}
The morphism \(j^kh\) is just the push-forward under \(\pr_1\) of the morphism
\[\pr_2^*h:\oc_{X\times X}/\mathscr{I}_{\Delta_X}\otimes L\to \oc_{X\times X}/\mathscr{I}_{\Delta_X}\otimes L'\]
induced by \(h\). The second assertion follows at once.
\end{proof}
Observe that there exists a \(\cb\)-linear morphism \[j_L^k:L\to J^kL,\] which is not a morphism of \(\oc_X\)-modules, defined, at the level of presheaves, as the composition, for any open subset  \(U\subset X\),
\[L(U)\stackrel{\pr_2^*}{\to}\pr_2^*L(U\times U)\to \oc_{U\times U}/\mathscr{I}_\Delta^{k+1}\otimes \pr_2^*L(U\times U)=J^kL(U).\] 

More explicitly, for any \(s\in L(U)\), the section \(j^k_L(s)\in J^kL(U)\) is such that  for any \(x\in U\), the element \(j_L^k(s)(x)\in J^k_xL=L\otimes \oc_{X,x}/\mathfrak{m}_{X,x}^{k+1}\) is precisely the image of \(s\) under the map \(L(U)\to L\otimes \oc_{X,x}/\mathfrak{m}_{X,x}^{k+1}\).  

This map can also be understood more explicitly in coordinates.  Take an open subset \(U\subset X\), up to considering a trivialization of \(L_{\upharpoonright U}\), one is reduced to understand \(\oc_U\to J^k\oc_U\). Observe that coordinates \((x_1,\dots, x_n)\) induce coordinates \((x_1,\dots, x_n,z_1,\dots, z_n)\) on \(U\times U\), from which one obtains that  the monomials \(((z-x)^I)_{|I|\leqslant k}\) form a local  frame for \(J^k\oc_U\). Here, we use the multi-index notation, \((z-x)^\alpha=(z_1-x_1)^{\alpha_1}\cdots (z_n-x_n)^{\alpha_n}\) for \(\alpha=(\alpha_1,\dots, \alpha_n)\) such that \(|\alpha|=\alpha_1+\cdots + \alpha_n\leqslant k\). The map \(j_{\oc_U}^k:\oc_U\to J^k\oc_U\) is then just given by computing, in each \(x\in U\),  the Taylor expansion up to order \(k\), namely, for any \(f\in \oc(U)\),
\[j^k_{\oc_U}(f)=\sum_{|\alpha|\leqslant k}\frac{1}{\alpha!}\frac{\partial^{|\alpha|}f}{\partial z^\alpha}(x)(z-x)^\alpha\in J^k{\oc_U}(U)\]
The following definition will be used in the sequel.
\begin{dfn}
Let \(X\) be a complex manifold and let \(L\) be a line bundle on \(X\). We say that \(L\) \emph{separates \(k\)-jets at every point of \(X\)} if the natural morphism
\[j^k_L:H^0(X,L)\otimes \oc_X\to J^kL\]
is surjective. Observe that this condition is equivalent to the surjectivity, for every \(x\in X\) of the natural map
 \[H^0(X,L)\otimes \oc_X\to L\otimes \oc_{X,x}/\mathfrak{m}^{k+1}_{X,x}.\]
\end{dfn}
Observe that if \(L\) is a very ample line bundle on \(X\), then \(L^k\) separates \(k\)-jets at every point of \(X\). 

\section{Higher order logarithmic connections and logarithmic Wronskians}\label{sec:connection and log Wronskian}
\subsection{Wronskians} Let us recall here the Wronskian constructions initiated by the first named author in  \cite{Bro17} and later reinterpreted by the second named author in an alternative way in \cite{Den17}. Let \(X\) be a complex manifold, and let \(L\) be a line bundle on $X$. Let \(k\geqslant 1\) be an integer and   take global sections \(s_0,\dots, s_k\in H^0(X,L)\). Then for every open subset \(U\subset X\) on which \(L\) is trivialized, by considering the holomorphic functions \(s_{0,U},\dots,s_{k,U}\in \mathscr{O}(U) \) associated to \(s_0,\dots, s_k\) under our choice of trivialization, we consider the Wronskian 
\[W_U(s_0,\dots, s_k):=\left|\begin{array}{ccc}s_{0,U}& \dots & s_{k,U} \\
d^1s_{0,U}& \dots & d^1s_{k,U} \\
\vdots&\ddots & \vdots\\
d^ks_{0,U}& \dots & d^ks_{k,U}
\end{array}\right|\in \oc(p_k^{-1}(U)).\]
Denote by \(k'=1+2+\cdots+k\). It was established in \cite{Bro17} that  \(W_U(s_0,\dots, s_k)\in \mathscr{E}_{k,k'}\Omega_X(U)\), and that those locally defined elements glue together into a global section 
\begin{align}\label{def:wronskian}
W_L(s_0,\dots,s_k)\in H^0(X,E_{k,k'}\Omega_X\otimes L^{k+1})
\end{align}
which is called \emph{Wronskian} in \cite[\S 2.2]{Bro17}. Moreover, in  \cite{Den17}, it was proved that there exists a  morphism of \(\oc_X\)-modules
\[j^kW_{L}:\bigwedge^{k+1}J^kL\to E_{k,k'}\Omega_X\otimes L^{k+1}\]
such that for any global section \(s_0,\dots, s_k\in H^0(X,L)\), one has 
\[j^kW_{L}(j^k_Ls_0\wedge\dots\wedge j^k_Ls_k)=W_L(s_0,\dots, s_k).\]
In \cref{sec:log wronskian}, we will construct a logarithmic counterpart of  Wronskians.
\subsection{Higher order logarithmic connections}\label{sec:connection}

Let \(X\) be a complex manifold. Let \(L\) be a line bundle on \(X\) and suppose that there exists \(\sigma\in H^0(X,L)\) such that \(D=(\sigma=0)\) is a smooth hypersurface of \(X\).
Then \(L\) is endowed with a natural logarithmic connection \(\nabla_{\!\! D}:L\to \Omega_X(\log D)\otimes L\), with logarithmic poles along \(D\), defined by 
\begin{eqnarray}\label{def:log connection}
\nabla_{\!\! D} s:=\sigma d\left(\frac{s}{\sigma}\right)=_{\rm loc}ds-s\frac{d\sigma}{\sigma}.
\end{eqnarray}
The second equality has to be understood locally, \emph{i.e.} if the open subset \(U\) over which \(L\) is trivialized, and if we denote by \(s_U,\sigma_U\in \mathscr{O}(U)\) the holomorphic functions associated to \(s,\sigma\), then one sets \[\nabla_{\!\!D}s_U=ds_U-s_U\frac{d\sigma_U}{\sigma_U}.\]
 This object is well defined since \(\frac{s}{\sigma}\) is a meromorphic function on \(X\) and that the local description shows that it has logarithmic poles along \(D\). Let us mention that in our paper \cite{BD17} we apply this construction   to prove  \cref{log Debarre}.

More generally, for every \(k\geqslant 0\), one can define  a \(\cb\)-linear map \(\nd^k:L\to E^{\rm GG}_{k,k}\Omega_X(\log D)\otimes L\) by
\begin{eqnarray}\label{def:higherorder}
\nd^ks:=\sigma d^k\!\left(\frac{s}{\sigma}\right).
\end{eqnarray}
Observe that \(\nd^0s=s\), and that \(\nd^1s=\nd s\) for any \(s\). Moreover, for any \(k\geqslant 1\) one has the local inductive description
\[\nd^ks=_{\rm loc}d\nd^{k-1}s-\nd^{k-1}s\cdot \frac{d\sigma}{\sigma}.\]
 We will need the following elementary, yet crucial, observation: for any \(k\geqslant 1\), one has
\begin{equation}\label{eq:VanishingNabla}
 \nd^k\sigma=0.
\end{equation}
Lastly let us observe that locally (with the above notation) one can use the Leibniz rule to \(s_U=\sigma_U\frac{s_U}{\sigma_U}\) to obtain
\begin{equation}\label{Leibniz}
d^ks_U=\sum_{i=0}^k\binom{k}{i}(\nd^is_U)\frac{d^{k-i}\sigma_U}{\sigma_U}.\end{equation}
While \(\nd^k:L\to E^{\rm GG}_{k,k}\Omega_X(\log D)\otimes L\) is only \(\cb\)-linear, we have the following proposition.
\begin{proposition}\label{prop:DefJetLogConnexion} With the above notation. There exists a  morphism of \(\oc_X\)-module
\[j^k\nd^k:J^kL\to \mathscr{E}_{k,k}^{\rm GG}\Omega_X(\log D)\otimes L,\]
such that \((j^k\nd^k)\circ j^k_L=\nd^k\).
\end{proposition}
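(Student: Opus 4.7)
The plan is to recognize the $\cb$-linear map $\nd^k$ as a differential operator of order at most $k$ in the Grothendieck sense, and then invoke the universal property of the jet bundle $J^kL$ to obtain the desired $\oc_X$-linear factorization $j^k\nd^k$. Recall that a $\cb$-linear map $D$ between $\oc_X$-modules is said to have order at most $k$ if the commutator $[f,D]$ has order at most $k-1$ for every local $f \in \oc_X$ (with the convention that order $-1$ means zero), and that $J^kL$ together with $j^k_L$ represents exactly this functor: any such $D$ factors uniquely through $j^k_L$ as an $\oc_X$-linear map $J^kL\to F$.

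I would establish that $\nd^k$ has order $\leqslant k$ by induction on $k$. The case $k=0$ is trivial since $\nd^0 = \mathrm{id}_L$ is $\oc_X$-linear. For the inductive step, starting from $\nd^k(fs) = \sigma d^k(fs/\sigma)$ and applying the Leibniz rule for $d^k$ to the product $f\cdot(s/\sigma)$, a direct computation yields
\[
[f,\nd^k](s) \;=\; f\nd^k s - \nd^k(fs) \;=\; -\sum_{i=1}^k \binom{k}{i}\, d^i(f) \cdot \nd^{k-i}(s),
\]
which exhibits $[f,\nd^k]$ as an $\oc_X$-linear combination of the operators $\nd^{k-i}$ for $i \geqslant 1$, twisted on the target side by the jet differentials $d^i(f)$. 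By the inductive hypothesis each $\nd^{k-i}$ has order $\leqslant k-i\leqslant k-1$, and the multiplication by $d^i(f)$ acts only on the target and does not increase the differential order in $s$. Hence $[f,\nd^k]$ has order $\leqslant k-1$, completing the induction. Applying the universal property to $D=\nd^k$ then produces $j^k\nd^k$ with the required property $\nd^k=(j^k\nd^k)\circ j^k_L$.

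The subtlety to handle is that the terms $d^i(f)\cdot \nd^{k-i}(s)$ lie a priori in $\mathscr{E}^{\rm GG}_{i,i}\Omega_X \cdot \mathscr{E}^{\rm GG}_{k-i,k-i}\Omega_X(\log D) \otimes L$, and one must use the graded-algebra structure of $\bigoplus_m \mathscr{E}^{\rm GG}_{k,m}\Omega_X(\log D)$ (extending the ordinary Green--Griffiths algebra) to place the product in $\mathscr{E}^{\rm GG}_{k,k}\Omega_X(\log D)\otimes L$. Should one prefer an explicit construction avoiding the representability formalism, one can instead work locally: iterating the recursion $\nd^k s_U = d\nd^{k-1}s_U - \nd^{k-1}s_U \cdot d\sigma_U/\sigma_U$ expresses $\nd^k s_U$ as a polynomial in $d^0 s_U,\ldots,d^k s_U$ with coefficients depending only on $\sigma$ (with logarithmic poles along $D$). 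Since each $d^i s_U(x)$ depends solely on the $i$-jet of $s_U$ at $x$, one defines $j^k\nd^k$ directly on the local frame $((z-x)^I\otimes e)_{|I|\leqslant k}$ of $J^kL$ by substituting the universal Taylor expansion into this polynomial, and then one checks coordinate-independence and $\oc_X$-linearity by a straightforward comparison on overlaps. Either route delivers the same morphism.
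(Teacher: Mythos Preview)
Your argument is correct. The commutator computation
\[
[f,\nd^k](s)=-\sum_{i=1}^k\binom{k}{i}d^i(f)\,\nd^{k-i}(s)
\]
follows directly from $\nd^k s=\sigma\,d^k(s/\sigma)$ and the Leibniz rule for $d^k$, and since multiplication by $d^i(f)$ is $\oc_X$-linear on the target, each summand is a differential operator in $s$ of order $\leqslant k-i\leqslant k-1$ by the inductive hypothesis. The universal property of $J^kL$ (EGA IV, \S16.8) then yields the factorization exactly as you say.

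This is a genuinely different route from the paper's own proof. The paper works on $X\times X$: it defines a ``partial'' operator $\nabla_2^k$ on $\pr_2^*L$ (differentiating only in the second factor), checks by a local computation that ${\rm res}_\Delta\circ\nabla_2^k$ annihilates $\pr_2^*L\otimes\mathscr{I}_\Delta^{k+1}$, and then pushes forward along $\pr_1$ to produce $j^k\nd^k$ directly from the definition \eqref{def:jet bundle} of $J^kL$; the $\oc_X$-linearity is verified by hand using $d_2(\pr_1^*f)=0$. In effect the paper re-derives, for this specific operator, the mechanism underlying the universal property, whereas you invoke that property as a black box after the clean inductive verification that $\nd^k$ has order $\leqslant k$. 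Your approach is shorter and more conceptual; the paper's is more explicit and self-contained, which aligns with its later need (\cref{sec:intrinsic}) for concrete local descriptions on the logarithmic Demailly tower. Either way the resulting morphism is the same.
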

\begin{proof}
The \(k\)-th order jet space of \(X\times X\) naturally splits as 
\[J_k(X\times X)\cong J_kX\times J_kX,\]
under the map \(j_k(f_1,f_2)\mapsto (j_kf_1,j_kf_2)\). Let us define, for any \(k,m\geqslant 0\) an operator \(d_2:\mathscr{E}^{\rm GG}_{k,m}\Omega_{X\times X}\to \mathscr{E}^{\rm GG}_{k+1,m+1}\Omega_{X\times X}\) by setting for any open subset \(U\subset X\times X\) and any \(P\in \Gamma(U,\mathscr{E}^{\rm GG}_{k,m}\Omega_{X\times X})\)
\[d_2P(j_k(f_1,f_2))=\frac{d}{dt}(P((f_1(0),j_{k-1}f_2(t)))(0).\]
We define for every \(k\in \mathbb{N}^*\) an \(\cb\)-linear morphism \(\nabla_2^k:\pi_2^*L\to \mathscr{E}^{\rm GG}_{k,k}\Omega_{X\times X}(\log \pi_2^{-1}(D))\otimes \pi_2^*L\) inductively by setting
\begin{align*}
\nabla^0_2s&=s\\
\nabla^{k}_2s&=\pi_2^*d_2\!\!\left(\frac{s}{\pi_2^*\sigma}\right)=_{\rm loc}d_2\nabla^{k-1}_2s-\nabla^{k-1}_2s\frac{d\pi_2^*\sigma}{\pi_2^*\sigma},\ \ \text{for any}\ \ k\geqslant 0.
\end{align*}
As before, the last equality has to be understood locally and   one verifies that this is well defined.  Observe that for any open subsets \(V\subset X\times X\) and \(U\subset X\) such that \(\pi_1(V)\subset U\), and for any \(f\in \mathscr{O}(U)\) one has \(d_2\pi_1^*f=0\). Therefore for any  \(s\in \pi_2^*L(U)\), one has 
\[\nabla_2^k(\pi_1^*f\cdot s)=\pi_1^*f\cdot \nabla_2^k s.\]
We can consider the composition \({\rm res}_\Delta\circ \nabla^k_2\):
\[\pi_2^*L\stackrel{\nabla_2^k}{\to} \mathscr{E}^{\rm GG}_{k,k}\Omega_{X\times X}(\log \pi_2^{-1}(D))\otimes \pi_2^*L\stackrel{{\rm res}_{\Delta}}{\to} \mathscr{E}^{\rm GG}_{k,k}\Omega_{\Delta}(\log \pi_2^{-1}(D)_{\upharpoonright \Delta})\otimes \pi_2^*L_{\upharpoonright \Delta}.\]
A local computation now proves  that for any open subset \(U\subset X\times X\) and every element \(s\in \Gamma(U,\pi_2^*L\otimes \mathscr{I}_{\Delta}^{k+1})\), one has 
\[{\rm res}_{\Delta}\circ \nabla^k_2(s)=0.\] 
Therefore we obtain a \(\cb\)-linear map 
\[\nabla_{2\Delta}^k:\pi_2^*L\times \mathscr{O}_{X\times X}/\mathscr{I}^{k+1}_{\Delta}\to \mathscr{E}^{\rm GG}_{k,k}\Omega_{\Delta}(\log \pi_2^{-1}(D)_{\upharpoonright \Delta})\otimes \pi_2^*L_{\upharpoonright \Delta},\]
and by applying \(\pi_{1*}\), we obtain a \(\cb\)-linear morphism
\[j^k\nd^k:J^kL=\pi_{1*}\left(\pi_2^*L\times \mathscr{O}_{X\times X}/\mathscr{I}^{k+1}_{\Delta}\right)\to  \pi_{1*}\left(\mathscr{E}^{\rm GG}_{k,k}\Omega_{\Delta}(\log \pi_2^{-1}(D)_{\upharpoonright \Delta})\otimes \pi_2^*L_{\upharpoonright \Delta}\right)\stackrel{\iota}{\cong}  \mathscr{E}^{\rm GG}_{k,k}\Omega_{X}(\log D)\otimes L.\]
Let us now prove that  \(j^k\nd^k\) is \(\oc_X\)-linear. Take an open subset \(U\subset X\) and elements \(f\in \mathscr{O}(U), \tau\in J^kL(U)\). By definition, one can consider (up to shrinking \(U\) if necessary) \(\tau\) as an element \(\tau\in\pi_2^*L\otimes \mathscr{O}_{X\times X}/\mathscr{I}_{\Delta}^{k+1}(\pi_1^{-1}(U))\). Up to shrinking \(U\) if necessary, take \(\tilde{\tau}\in \pi_2^*L(V)\) representing \(\tau\) for some neighborhood  \(V\subset X\times X\) of \(\Delta\cap \pi_1^{-1}(U)\). By definition, \[j^k\nd^k(f\tau)=\iota \nabla^k_{2\Delta}(\pi_1^*f\tilde{\tau})=\iota\circ {\rm res}_\Delta\circ \nabla^k_2(\pi_1^*f\cdot \tilde{\tau})=\iota\circ {\rm res}_\Delta\circ \left(\pi_1^*f\cdot \nabla^k_2(\tilde{\tau})\right)=\pi_1^*f\cdot \iota\circ {\rm res}_\Delta\nabla^k_2(\tilde{\tau})=f\cdot j^k\nd^k(\tau).\]

 To see   that \(\nd^k=j^k\nd^k\circ j^k_L\), it suffices, by definition of \(j^k_L\), to prove that for any open subset \(U\subset X\) and every element \(s\in L(U)\), one has 
 \[\iota\circ {\rm res}_{\Delta}\circ \nabla^k_2(\pi_2^*s)=\nd^k(s).\]
 But observe that \(\nabla^k_2(\pi_2^*s)=\pi_{2,k}^*\nabla^ks\), where \[\pi^*_{2,k}:\pi^*\left(\mathscr{E}^{\rm GG}_{k,k}\Omega_X(\log D)\otimes L\right)\to \mathscr{E}^{\rm GG}_{k,k}\Omega_{X\times X}(\log (\pi_2^{-1}D))\otimes \pi_2^*L \]
 is the map induced by \(\pi_2\). Moreover, if one denotes \(\sigma_1:X\to \Delta\) the canonical lift, one has by definition that \(\iota=\sigma_{1,k}^*\) is just the isomorphism induced by \(\sigma_1\) and that therefore \(\iota\circ {\rm res}_{\Delta}\circ \pi_{2,k}^*=(\pi_2\circ \sigma_1)_k^*=({\rm id}_X)_k^*\) is just the identity on \(\mathscr{E}^{\rm GG}_{k,k}\Omega_X(U)\).
\end{proof}
\subsection{Logarithmic Wronskians}\label{sec:log wronskian} Let \(X\) be  an \(n\)-dimensional complex manifold endowed with a line bundle \(L\). Suppose that there exists a smooth hypersurface \(D\in |L|\) defined by a section \(\sigma_D\in H^0(X,L)\). Fix a positive integer \(k\geqslant 1\). Given \(s_1,\dots,s_k\in H^0(X,L)\) we define the logarithmic Wronskian to be 

\begin{equation}\label{eq:defLogWronskian}
        W_{\!D}(s_1,\ldots,s_k):=\begin{vmatrix}
	\nd^{1}(s_1) & \cdots & \nd^{1}(s_k)  \\
	\vdots & \ddots & \vdots \\
	\nd^{k}(s_1) & \cdots & \nd^{k}(s_k) 
	\end{vmatrix} \in H^0\big(X, E_{k,k'}^{\rm GG} \Omega_X(\log D)\otimes  L^{k}\big).
\end{equation}
We shall shortly see that in fact these elements define in fact global sections of \(E_{k,k'}\Omega_X(\log D)\otimes L^k\).
 We also define a morphism of \(\oc_X\)-modules 
\begin{eqnarray}\label{definition:Wronskian}
j^kW_{\!D}:\bigwedge^kJ^kL\to E_{k,k'}^{\rm GG}\Omega_X(\log D)\otimes L^k,
\end{eqnarray} 
by setting 
\[j^kW_D(g_1\wedge\dots\wedge g_n)=\begin{vmatrix}
	j^1\nd^{1}(g_1) & \cdots & j^1\nd^{1}(g_k)  \\
	\vdots & \ddots & \vdots \\
	j^k\nd^{k}(g_1) & \cdots & j^k\nd^{k}(g_k) 
	\end{vmatrix}\] 
	for $g_1,\ldots,g_k\in J^kL(U)$.
Here  we use for any \(1\leqslant j\leqslant k\) the inclusion \(E_{j,m}^{\rm GG}\Omega_X(\log D)\subset E_{k,m}^{\rm GG}\Omega_X(\log D)\) and the truncating morphism $J^kL\to J^jL$.
This construction is related to the (non-logarithmic) Wronskian  in the following way.
\begin{lem}\label{lem: non-log} Same notation as above. For any open subset \(U\subset X\) and any \(g_1, \dots, g_k\in  L(U)\) one has
\begin{equation*}
j^kW_L(j^k_L\sigma_D\wedge j^k_Lg_1\wedge\dots\wedge j^k_Lg_k)=\sigma_D\cdot j^kW_D(j^k_Lg_1\wedge \cdots\wedge j^k_Lg_k)
\end{equation*}

\end{lem}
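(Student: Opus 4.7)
The proof is local and essentially a determinant computation, so I would work over an open subset $U\subset X$ on which $L$ is trivialized, allowing me to treat $\sigma_D$ and the $g_i$ as holomorphic functions. By the definition of $j^kW_L$ and $j^kW_D$ (more precisely by the compatibility $j^kW_L\circ \bigwedge^{k+1} j^k_L = W_L$ applied to the sections, and the same for $j^kW_D$), it suffices to establish the equality
\[
W_L(\sigma_D,g_1,\ldots,g_k)=\sigma_D\cdot W_{\!D}(g_1,\ldots,g_k)
\]
of sections of $E^{\rm GG}_{k,k'}\Omega_X(\log D)\otimes L^{k+1}$ (the left-hand side a priori lies in the subsheaf $E^{\rm GG}_{k,k'}\Omega_X\otimes L^{k+1}$, and the identity will show the right-hand side is holomorphic).

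The whole argument rests on the Leibniz formula \eqref{Leibniz}, which locally reads
\[
d^j g_i = \sum_{\ell=0}^{j}\binom{j}{\ell}(\nd^\ell g_i)\,\frac{d^{j-\ell}\sigma_D}{\sigma_D},
\]
and on the tautological vanishing \eqref{eq:VanishingNabla}, namely $\nd^\ell\sigma_D=0$ for every $\ell\geqslant 1$ (with $\nd^0\sigma_D=\sigma_D$). Reading this formula as an identity of row vectors indexed by $\ell=0,\ldots,k$, I encode it as a matrix factorization $M = A\cdot N$, where $M$ is the $(k+1)\times(k+1)$ matrix whose rows are $(d^j\sigma_D,d^j g_1,\ldots,d^j g_k)$ for $j=0,\ldots,k$, where $A$ is the lower-triangular matrix with $A_{j\ell}=\binom{j}{\ell}\frac{d^{j-\ell}\sigma_D}{\sigma_D}$ for $\ell\leqslant j$ (and in particular $A_{jj}=1$), and where $N$ is the matrix whose $(\ell,i)$-entry is $\nd^\ell g_i$ (with $g_0:=\sigma_D$).

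Taking determinants, $\det M=\det A\cdot\det N=\det N$ because $A$ is unipotent lower-triangular. By the tautological vanishing, the first column of $N$ is $(\sigma_D,0,\ldots,0)^{T}$, so expanding along the first column gives $\det N=\sigma_D\cdot\det N'$, where $N'$ is the $k\times k$ matrix with $(j,i)$-entry $\nd^j g_i$ for $j=1,\ldots,k$. By the very definition \eqref{eq:defLogWronskian} of the logarithmic Wronskian, $\det N' = W_{\!D}(g_1,\ldots,g_k)$, and the desired identity follows.

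The computation itself is entirely elementary; the only conceptual point is to recognize that \eqref{Leibniz} together with \eqref{eq:VanishingNabla} is exactly a matrix factorization by a unipotent change of rows which, from the column whose entries are the usual derivatives of $\sigma_D$, extracts a factor of $\sigma_D$ and leaves the log Wronskian matrix. Consequently, I do not anticipate any real obstacle; the only care needed is to ensure that the formal manipulations, which involve the meromorphic symbols $d^{j-\ell}\sigma_D/\sigma_D$, take place consistently within the sheaf of logarithmic jet differentials, which they do by \cref{lem:new basis}.
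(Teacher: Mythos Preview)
Your proof is correct and follows essentially the same route as the paper: both use the Leibniz relation \eqref{Leibniz} to convert the determinant in the $d^j$-derivatives into one in the $\nd^j$-connections (the paper phrases this as ``elementary operations on the lines'', you make it explicit as a unipotent matrix factorization), then invoke the vanishing $\nd^\ell\sigma_D=0$ for $\ell\geqslant 1$ and expand along the first column.
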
 
\begin{proof}On proves by using elementary operations on the lines and  Leibniz relation \eqref{Leibniz}, that for any  \(g_0, \dots, g_k\in L(U)\) one has
\begin{align*}
j^kW_L(j^k_Lg_0\wedge\dots\wedge j^k_Lg_k)
&=\begin{vmatrix}
	j^0\nd^{0}(j^0_Lg_0) & \cdots & j^0\nd^{0}(j^0_Lg_k)  \\
	\vdots & \ddots & \vdots \\
	j^k\nd^{k}(j^k_Lg_0) & \cdots & j^k\nd^{k}(j^k_Lg_k) 
	\end{vmatrix}.
\end{align*}
Then one applies this equality to \(g_0=\sigma_D\) and use relation \eqref{eq:VanishingNabla} and Proposition \ref{prop:DefJetLogConnexion} to prove that for any \(p\geqslant 1\), \(j^p\nd^p(j^p\sigma_D)=0\). The lemma follows by expanding the determinant with respect to the first column.
\end{proof}
In particular,  we see that the morphism \(j^kW_D\) factors through a morphism
\begin{eqnarray*}
\bigwedge^kJ^kL\to E_{k,k'}\Omega_X(\log D)\otimes L^k\hookrightarrow E_{k,k'}^{\rm GG}\Omega_X(\log D)\otimes L^k
\end{eqnarray*}
which we shall denote (slightly abusively) by \(j^kW_D\) in the rest of this paper. Therefore, by  Proposition  \ref{prop:DefJetLogConnexion} we obtain also that for any \(s_1,\dots, s_k\in H^0(X,L)\), \[W_{\!D}(s_1,\ldots,s_k)\in H^0\big(X,E_{k,k'}\Omega_X(\log D)\otimes L^k\big).\]
By   \eqref{eq:DirectImageFormula}, there exists a unique global section  
 \begin{align}\label{def:omega}
\omega_{\!D}(s_1,\ldots,s_k)\in H^0\big({X}_k(D),\oc_{{X}_k(D)}(k')\otimes {\pi}_{0,k}^*L^k\big)
\end{align}   
such that $({\pi}_{0,k})_*\omega_{\!D}(s_1,\ldots,s_k)=W_{\!D}(s_1,\ldots,s_k)$. These observations will be refined even further in the next section.

If $Y$ is a submanifold of $X$ which is transverse to $D$, then $(Y,Y\cap D)$ is a sub-log manifold of $(X,D)$. Write $D_Y:=Y\cap D$. One  has the following commutative diagram
\begin{align*}\label{dia:fonctorial}
\xymatrix{L\ar[r]^-{\nabla_{\!\!D}^k}\ar[d] & E^{\rm GG}_{k,k}\Omega_X(\log D)\otimes L \ar[d]\\
	L_{\upharpoonright Y} \ar[r]^-{\nabla_{\!\!D_Y}^k} & E^{\rm GG}_{k,k}\Omega_Y(\log D_Y)\otimes L_{\upharpoonright Y}.
}
\end{align*}
In particular, for any $s_1,\ldots,s_k\in H^0(X,L)$, one has
\begin{eqnarray}
W_{\!D}(s_1,\ldots,s_k)_{\upharpoonright Y}=W_{D_Y}(s_{1\upharpoonright Y},\ldots,s_{k\upharpoonright Y}).
\end{eqnarray}
Since the  log Demailly $k$-jet tower ${Y}_k(D_Y)$ of $(Y,D)$ can be seen as a smooth subvariety of ${X}_k(D)$, it follows that
\begin{eqnarray}\label{equ:fonctorial}
\omega_{\!D}(s_1,\ldots,s_k)_{\upharpoonright {Y}_k(D_Y)}=\omega_{D_Y}(s_{1 \upharpoonright Y},\ldots,s_{k \upharpoonright Y}). 
\end{eqnarray}
\subsection{Higher order log connections as local functions  on the log Demailly   tower}\label{sec:intrinsic} 
Take  \(X\), \(L\) and \(D\) as in the previous subsection. Fix a positive integer \(k\geqslant 1\). Consider the log Demailly $k$-jet tower $X_k(D)$ associated to \(\big(X,D,T_X(-\log D)\big)\).
Recall that 
 given  any \(s_1,\dots, s_k\in H^0(X,L)\),  one can associate  to \(W_D(s_1,\dots, s_k)\) a \emph{unique} element 
$\omega_{\!D}(s_1,\dots, s_k)\in H^0\big({X}_k(D),\oc_{{X}_k(D)}(k')\otimes \pi_{0,k}^*L^k\big)$
 by \eqref{def:omega}.  The drawback of using \eqref{eq:DirectImageFormula} is that the element \(\omega_{\!D}(s_1,\dots, s_k)\) is not fully explicit, since the isomorphism in \emph{loc. cit.}  is not completely explicit. To be more precise, on \(X\setminus D\) this isomorphism coincides with \eqref{local isomorphism}, and can therefore be understood in view of Theorem \ref{para} and   \eqref{define section}. However, \eqref{eq:DirectImageFormula} is only obtained indirectly in a neighborhood of a point of \(D\). On the other hand, during the proof of our main result, we will need an explicit description of \(\omega_{\!D}(s_1,\dots, s_k)\) at every point. For this reason, we provide here an alternative way, closer to Demailly's philosophy of directed pairs, to describe this element. To be more precise, we will construct an element 
\[\omega_{\!D}'(s_1,\dots, s_k)\in H^0\big({X}_k(D),\oc_{{X}_k(D)}(k,k-1,\dots,1)\otimes \pi_{0,k}^*L\big)\]
which is sent to \(\omega_{\!D}(s_1,\dots, s_k)\) under the canonical inclusion \(\oc_{{X}_k(D)}(k,k-1,\dots,1)\hookrightarrow \oc_{{X}_k(D)}(k')\) induced by multiplication by 
\[k{\pi}_{2,k}^*\Gamma_2+(k+k-1){\pi}_{3,k}^*\Gamma_3+\cdots+(k+\cdots+3){\pi}^*_{k-1,k}\Gamma_{k-1}+(k+\cdots+2)\Gamma_k,\]
where $\Gamma_j\in H^0\big(X_j(D), \oc_{{X}_j(D)}(1)\otimes \pi^*_{j-1,j}\oc_{{X}_{j-1}(D)}(-1)\big)$ is the effective divisor defined in \eqref{eq:Gamma_k}.

The starting point of our construction is the following. Let \((X,D,V)\) be a log directed manifold, and let \((\widetilde{X},\widetilde{D},\widetilde{V})\) be the derived log directed manifold via the 1-jet functor as defined in  \Cref{sec:LogDSTower}. Consider open subsets \(U\subset X\) and \(\widetilde{U}\subset \widetilde{X}\) such that \(\tilde{\pi}(\widetilde{U})\subset U\). Suppose  that we are given a trivialization of \(\oc_{\widetilde{X}}(-1)_{\upharpoonright \widetilde{U}}\) induced by a nowhere vanishing section \(\xi\in \Gamma\big(\widetilde{U},\oc_{\widetilde{X}}(-1)\big)\) and suppose moreover, that we are given a trivialization of \(L_{\upharpoonright U}\) under which the section \(\sigma_D\) corresponds to a holomorphic function \(\sigma_U\in \oc(U)\). Then, given any \(f\in \oc(U)\), we can define 
\[\nabla_Uf=df-f\frac{d\sigma_U}{\sigma_U}\in \Gamma(U,\Omega_X(D)).\]
Thus we obtain an element \(\tilde{\pi}^*\nabla_Uf_{\upharpoonright \widetilde{U}}\in \Gamma(\widetilde{U},\tilde{\pi}^*\Omega_X(\log D))\). Note that since we have inclusions \[\oc_{\widetilde{X}}(-1)\hookrightarrow \tilde{\pi}^*V\hookrightarrow \tilde{\pi}^*TX(-\log D),\]
we can see \(\xi\) as an element in \(\Gamma(\widetilde{U},\tilde{\pi}^*TX(-\log D))\). Therefore we can define 
\(\nabla^{\rm DS}_{U,\sigma_U,\widetilde{U},\xi}(f)\in \oc(\widetilde{U})\) by setting 
\[\nabla^{\rm DS}_{U,\sigma_U,\widetilde{U},\xi}(f)(a)=\tilde{\pi}^*\nabla_Uf_{\upharpoonright \widetilde{U}}\big(\xi(a)\big) \quad \forall a\in \tilde{U}.\]
Observe that this \(\tilde{f}\) depends strongly on the choice of trivializations.

This procedure can now be extended by induction on the higher order  log Demailly tower.  Consider a log-manifold \((X,D)\) and write \(L=\oc_X(D)\). Consider the log Demailly tower \(\big({X}_k(D),D_k,V_k\big)_{k\geqslant 0}\) associated to the log directed manifold \((X_0,D_0,V_0)=\big(X,D,T_X(-\log D)\big)\). A \emph{trivialization tower of oder \(k\)}, \(\mathfrak{U}=\big((U_0,\sigma_U),(U_j,\xi_j)_{1\leqslant j\leqslant k}\big)\), consists of the following data:
\begin{thmlist}
\item An open subset \(U\subset X\) and for each \(1\leqslant j\leqslant k\), an open subset \(U_j\subset {X}_j(D)\) such that \({\pi}_{0,1}(U_1)\subset U\) and \({\pi}_{j,j+1}(U_{j+1})\subset U_j\) whenever \(j< k\). 
\item A trivialization of \(L_{\upharpoonright U}\) under which the section \(\sigma_D\) corresponds to a holomorphic function \(\sigma_U\in \oc(U_0)\).
\item For every \(1\leqslant j\leqslant k\), a nowhere vanishing section \(\xi_j\in \Gamma\big(U_j,\oc_{{X}_j(D)}(-1)\big)\) which therefore induces a trivialization of \(\oc_{{X}_j(D)}(-1)_{\upharpoonright U_j}\).
\end{thmlist}
Let  \(\mathfrak{U}=\big((U,\sigma_U),(U_j,\xi_j)_{1\leqslant j\leqslant k}\big)\) be a trivialization tower of order \(k\) and let \(f\in \oc(U)\) be a holomorphic function on \(U\). Then on can define for any \(0\leqslant j \leqslant k\), a holomorphic function 
\begin{align*}
\onu^jf\in \oc(U_j)
\end{align*}
inductively by setting 
\begin{align}\nonumber
\onu^0f&=f\\\label{eq:trivial high}
\onu^{j+1}f&=\nabla^{\rm DS}_{U_j,{\pi}^*_{0,j}\sigma_U,U_{j+1},\xi_{j+1}}\big(\onu^{j}f\big)\ \ \ \forall\ \ 0\leqslant j<k,
\end{align}
where  we observe that $({\pi}^*_{0,j}\sigma_U=0)\cap U_j$ defines $D_j\cap U_j$. 
Here again these functions all depend in a critical way of the choice of trivialization tower \(\mathfrak{U}\).  

Consider now global sections \(s_1,\dots, s_k\in H^0(X,L)\). Let us fix a trivialization tower \(\mathfrak{U}\) and let \(s_{1,U},\dots, s_{k,U}\in \oc(U)\) be the local representatives of \(s_1,\dots, s_k\) under our choice of trivialization for \(L_{\upharpoonright U}\) (i.e. \(s_{i,U}=\sigma_U\frac{s_i}{\sigma_D}\)) and define 
\[\omega_{\mathfrak{U}}(s_1,\dots s_k)=
\left|\begin{array}{ccc}
\onu^1s_{1,U}&\cdots & \onu^1s_{k,U}\\
\vdots & & \vdots \\
\onu^ks_{1,U}&\cdots & \onu^ks_{k,U}
\end{array}\right|\in \oc(U_{k}).
\]
Here we abusively write \(\onu^js_{i,U}\) instead of \({\pi}_{j,k}^*\onu^js_{i,U}\in \oc(U_k)\) for any \(1\leqslant j<k\). The key point is that these locally defined objects can be glued together.
\begin{proposition}\label{prop:DSWronskian} For any \(s_1,\dots, s_k\in H^0(X,L)\), the family of holomorphic functions \(\big(\omega_{\mathfrak{U}}(s_1,\dots,s_k)\big)_{\mathfrak{U}}\) define a global section 
\[\omega_{\!D}'(s_1,\dots,s_k)\in H^0\big({X}_k(D),\oc_{{X}_k(D)}(k,k-1,\dots,1)\otimes \pi_{0,k}^*L^k\big).\]
More precisely, for any trivialization tower $\mathfrak{U}$ of order $k$, one has 
\begin{align}\label{eq:another wronskian}
\omega_{\!D}'(s_1,\dots,s_k)_{\upharpoonright U_k}=\omega_{\mathfrak{U}}(s_1,\dots s_k)\cdot({\pi}_{1,k})^*\xi_{1}^{-k}\cdot ({\pi}_{2,k})^*\xi_{2}^{-(k-1)}\cdots({\pi}_{k-1,k})^*\xi_{k-1}^{-2}\cdot \xi_{k}^{-1}.
\end{align}
Moreover, under the natural inclusion 
\begin{align*}
H^0\big({X}_k(D),\oc_{{X}_k(D)}(k,k-1,\dots,1)\otimes \pi_{0,k}^*L^k\big)\hookrightarrow  H^0\big({X}_k(D),\oc_{{X}_k(D)}(k')\otimes \pi_{0,k}^*L^k\big)\\
\stackrel{\eqref{eq:DirectImageFormula}}{\simeq} H^0\big(X,E_{k,k'}\Omega_X(\log D)\otimes L^k\big),
\end{align*}
the element \(\omega_{\!D}'(s_1,\dots,s_k)\) is sent to \(W_D(s_1,\dots,s_k)\).
\end{proposition}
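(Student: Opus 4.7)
The proof splits into two independent checks. First, I would show that the right-hand side of \eqref{eq:another wronskian} is independent of the choice of trivialization tower $\mathfrak{U}$; this simultaneously establishes the gluing into a global section $\omega_{\!D}'(s_1,\ldots,s_k)$ of $\oc_{X_k(D)}(k,k-1,\ldots,1)\otimes \pi_{0,k}^*L^k$ and the explicit local formula \eqref{eq:another wronskian}. Second, I would verify that the image of $\omega_{\!D}'(s_1,\ldots,s_k)$ in $H^0\big(X_k(D),\oc_{X_k(D)}(k')\otimes \pi_{0,k}^*L^k\big)$ corresponds to $W_{\!D}(s_1,\ldots,s_k)$ under the direct image isomorphism \eqref{eq:DirectImageFormula}.

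For the first part, there are three elementary changes of trivialization tower to analyze: shrinking the opens $U_j$ (trivial); modifying the trivialization of $L_{\upharpoonright U}$ by a nowhere vanishing $g\in\oc^*(U)$, so $\sigma_U\mapsto g\sigma_U$ and $s_{i,U}\mapsto gs_{i,U}$; and modifying a single $\xi_j\mapsto \lambda_j\xi_j$. For the $L$-trivialization change, a direct computation from $\nabla_U h=dh-h\tfrac{d\sigma_U}{\sigma_U}$ yields $\nabla_U'(gh)=g\nabla_U h$, which propagates inductively to $\onu^{i}(gh)=g\cdot\onu^{i}h$ for every $i$; the determinant is therefore multiplied by $g^k$, matching the transition law of $L^k$. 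The substantive case is the change $\xi_j\mapsto \lambda_j\xi_j$ at a single level $j$: since $\nabla^{\rm DS}$ is $\cb$-linear in its $\xi$-slot, row $j$ of the determinant scales by $\lambda_j$, while for each subsequent row $j+\ell$ the Leibniz-type relation for $\nabla^{\rm DS}$ applied to $\lambda_j\cdot\onu^{j+\ell-1}s_{i,U}$ produces additional terms involving the $\onu^{i}s_{\bullet,U}$ with $i<j+\ell$, which are proportional to rows already present in the determinant and hence vanish after elementary row operations. A careful inductive bookkeeping then shows that each row from $j$ to $k$ contributes exactly one factor $\lambda_j$, giving $\omega_{\mathfrak{U}'}=\lambda_j^{k-j+1}\omega_{\mathfrak{U}}$; this is precisely the transformation needed so that the product $\omega_{\mathfrak{U}}\cdot \pi_{1,k}^*\xi_1^{-k}\cdots \xi_k^{-1}$ defines a section of $\oc_{X_k(D)}(k,k-1,\ldots,1)\otimes \pi_{0,k}^*L^k$.

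For the second part, since by \eqref{eq:DirectImageFormula} the pushforward $(\pi_{0,k})_*$ is injective, it suffices to compare the two pushforwards after restriction to $\pi_{0,k}^{-1}(X\setminus D)\cap X_k(D)^{\rm reg}$. For any point $w$ in this locus I would use \cref{para} to pick a holomorphic family of germs of regular curves $(f_w)$ and take $\xi_j=(f_w)_{[j-1]}'(0)$ as the tautological trivialization of $\oc_{X_j(D)}(-1)$. A direct local-coordinate computation then identifies $\onu^{j}s_{i,U}(w)$ with the evaluation of $\nd^{j}s_i$ on the log $j$-jet $j_jf_w$, whence $\omega_{\mathfrak{U}}(s_1,\ldots,s_k)(w)=W_{\!D}(s_1,\ldots,s_k)(j_kf_w)$, and the identification recipe \eqref{define section} applied to the invariant log jet differential $W_{\!D}(s_1,\ldots,s_k)$ produces exactly the image of $\omega_{\!D}'(s_1,\ldots,s_k)$ under the natural inclusion. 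The main obstacle is the $\xi_j$-change analysis in the first part: although conceptually straightforward, organizing the row reductions so that the Leibniz error terms cancel systematically across all levels requires careful inductive bookkeeping, and this is the technical heart of the argument.
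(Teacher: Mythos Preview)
Your proposal is correct and follows essentially the same strategy as the paper, with one organizational difference worth noting. For Part~1 the paper does not decompose into elementary changes; instead it takes two arbitrary trivialization towers $\mathfrak{U}^1,\mathfrak{U}^2$ and proves by induction on $p$ a single transformation formula
\[
\nabla_{\mathfrak{U}^1}^p(s_{U^1})=\theta_p\cdots\theta_1\,g\,\nabla_{\mathfrak{U}^2}^p(s_{U^2})+\sum_{j=0}^{p-1}P_j^p\,\nabla_{\mathfrak{U}^2}^j(s_{U^2}),
\]
with the $P_j^p$ independent of $s$. The crucial technical input, isolated as Lemma~\ref{lem:Beta}, is that applying $\nabla^{\rm DS}$ at level $p{+}1$ to a pulled-back $\onu^j(f)$ with $j<p$ yields $\beta_{j,p}\cdot\onu^{j+1}(f)$ for some holomorphic $\beta_{j,p}$ independent of $f$. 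Your ``Leibniz-type relation'' and ``careful inductive bookkeeping'' are precisely this lemma: without it, the error terms you produce in row $j{+}\ell$ would not a priori be scalar multiples (with $s$-independent coefficients) of earlier rows. Your decomposition into single-$\xi_j$ changes is a legitimate simplification, but each such change still propagates through all higher levels and needs Lemma~\ref{lem:Beta} to control $\nabla^{\rm DS}_{p+1}(\pi^*_{j,p}\onu^j s)$.

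For Part~2 your argument matches the paper's: restrict to $X_k(D)^{\rm reg}\cap\pi_{0,k}^{-1}(X\setminus D)$, choose a trivialization tower adapted to a holomorphic family of regular germs, and compare via \eqref{define section}. The paper carries this out with an explicit coordinate family $\gamma_{(w,z)}$ and a concrete tower in which $\sigma_U\equiv 1$, so that $\onu^j$ reduces to $d^j$; your invocation of \cref{para} with $\xi_j=(f_w)'_{[j-1]}(0)$ is the same idea, though one must check that these $\xi_j$ descend to holomorphic sections on suitable $U_j\subset X_j(D)$, which is what the paper's explicit construction guarantees.
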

The proof of this result relies on the following technical lemma.
\begin{lem}\label{lem:Beta} For any  trivializing tower \(\mathfrak{U}=\big((U,\sigma_U),(U_j,\xi_j)_{1\leqslant j\leqslant k}\big)\) and any integers  \(1\leqslant j<p\leqslant k\), there exists a holomorphic function \(\beta_{j,p}\in \mathscr{O}({U}_{p+1})\) such that for any \(f\in \mathscr{O}(U)\) one has
\[\nabla^{\rm DS}_{(U_p,{\pi}_p^*\sigma_U,U_{p+1},\xi_{p+1})}\big({\pi}_{j,p}^*\onu^j(f)\big)=\beta_{j,p}\cdot {\pi}_{j+1,p+1}^*\onu^{j+1}(f)_{\upharpoonright U_{p+1}}.\]
\end{lem}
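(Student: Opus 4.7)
The plan is to track what $\nabla^{\rm DS}$ does when applied to a function that has been pulled back from a lower level of the log Demailly tower. The identity should ultimately come down to the inductive structure of the tautological line bundles $\oc_{X_k(D)}(-1)\hookrightarrow \pi_{k-1,k}^*V_{k-1}$.

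First, I would prove the elementary compatibility formula $\nabla_{U_p}\big(\pi_{j,p}^*\onu^j f\big)=\pi_{j,p}^*\nabla_{U_j}(\onu^j f)$. This follows by writing $\pi_{0,p}^*\sigma_U=\pi_{j,p}^*\pi_{0,j}^*\sigma_U$, so that
\[\nabla_{U_p}(\pi_{j,p}^*\onu^jf)=d(\pi_{j,p}^*\onu^jf)-\pi_{j,p}^*\onu^j f\cdot \pi_{j,p}^*\frac{d\pi_{0,j}^*\sigma_U}{\pi_{0,j}^*\sigma_U}=\pi_{j,p}^*\nabla_{U_j}(\onu^j f).\]
Once this is available, I would unfold the definition of $\nabla^{\rm DS}$ at level $p+1$: the pullback $\pi_{p,p+1}^*\pi_{j,p}^*\nabla_{U_j}(\onu^jf)=\pi_{j,p+1}^*\nabla_{U_j}(\onu^jf)$ is a logarithmic $1$-form on $X_j(D)$ pulled up to $U_{p+1}$, and the evaluation against $\xi_{p+1}(a)\in \pi_{p,p+1}^*V_p$ reduces to evaluating the original form $\nabla_{U_j}(\onu^jf)$ against the pushed-forward tangent vector $(\pi_{j,p})_*\xi_{p+1}(a)\in V_j$.

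Next, I would use the structure of the log Demailly tower to rewrite $(\pi_{j,p})_*\xi_{p+1}(a)$. The short exact sequences defining $V_k$ produce at each level a map $V_k\stackrel{(\pi_{k-1,k})_*}{\longrightarrow}\pi_{k-1,k}^*\oc_{X_{k-1}(D)}(-1)$. Applying this iteratively to $\xi_{p+1}$ and trivializing by the sections $\xi_p,\xi_{p-1},\dots,\xi_{j+1}$ one obtains at each step a holomorphic factor $\gamma_q$ with $(\pi_{q-1,q})_*\xi_q=\gamma_q\cdot\xi_{q-1}\circ \pi_{q-1,q}$, and their product
\[\beta_{j,p}:=\gamma_p\cdot \pi_{p,p+1}^*\gamma_{p-1}\cdots \pi_{j+2,p+1}^*\gamma_{j+2}\in \oc(U_{p+1})\]
satisfies $(\pi_{j,p})_*\xi_{p+1}(a)=\beta_{j,p}(a)\cdot \xi_{j+1}\big(\pi_{j+1,p+1}(a)\big)$. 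Plugging this back into the evaluation from the previous step and recognizing $\nabla_{U_j}(\onu^jf)(\xi_{j+1}(\cdot))$ as $\onu^{j+1}f$ yields the claimed identity.

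The only subtle step is the third one, where one must carefully chase the pushforward of $\xi_{p+1}$ through the intermediate tautological line bundles and keep track of the proportionality factors between the successive trivializing sections; steps one and two are essentially formal.
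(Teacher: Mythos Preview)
Your proof is correct and follows essentially the same route as the paper. The paper defines \(\beta_{j,p}\) in a single stroke by observing that the differential of \(\pi_{j,p}\) induces a morphism \(d\pi_{j,p}:\oc_{X_{p+1}(D)}(-1)\to \pi_{j+1,p+1}^*\oc_{X_{j+1}(D)}(-1)\) and writing \(d\pi_{j,p}(\xi_{p+1})=\beta_{j,p}\,\pi_{j+1,p+1}^*\xi_{j+1}\); the computation then proceeds exactly as your steps one and two. Your third step simply factors this single map through the intermediate levels \(p,p-1,\dots,j+1\), which is fine but unnecessary. One small indexing slip to fix: the surjection in the short exact sequence is \((\pi_{k-1,k})_*:V_k\to \oc_{X_k(D)}(-1)\), not to \(\pi_{k-1,k}^*\oc_{X_{k-1}(D)}(-1)\); accordingly, the iterated pushforward applied to \(\xi_q\in \oc_{X_q(D)}(-1)\subset \pi_{q-1,q}^*V_{q-1}\) is \((\pi_{q-2,q-1})_*\) rather than \((\pi_{q-1,q})_*\), shifting your \(\gamma_q\)'s by one. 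This does not affect the argument.
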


\begin{proof}
By definition of the log Demailly jet tower, the differential of the map  \({\pi}_{j,p}\) induces a morphism
\[d{\pi}_{j,p}:\mathscr{O}_{X_{p+1}(D)}(-1)\to \pi_{j+1,p+1}^*\oc_{{X}_{j+1}(D)}(-1).\]
Over the open subset \(U_{p+1}\), since \(\xi_{j+1}\) is nowhere vanishing on \(U_{j+1}\), there exists \(\beta_{j,p}\in \oc(U_{p+1})\) such that
\[d{\pi}_{j,p}(\xi_{p+1})=\beta_{j,p}{\pi}^*_{j+1,p+1}(\xi_{j+1}).\] 
Let us write \(g=\onu^j(f)\in\oc(U_j)\) for simplicity, so that by definition \eqref{eq:trivial high}
\[\onu^{j+1}(f)=\nabla^{\rm DS}_{U_j,{\pi}^*_{0,j}\sigma_U,U_{j+1},\xi_{j+1}}(g)={\pi}^*_{j,j+1}\left(dg-g\frac{d{\pi}^*_{0,j}\sigma_U}{{\pi}^*_{0,j}\sigma_U}\right)(\xi_{j+1}).\]
The proof of the lemma is then reduced to the following computation:
\begin{align*}
\nabla^{\rm DS}_{(U_p,{\pi}_p^*\sigma_U,U_{p+1},\xi_{p+1})}\big({\pi}_{j,p}^*g\big)&={\pi}^*_{p,p+1}\left(d{\pi}_{j,p}^*g-{\pi}_{j,p}^*g\frac{d{\pi}^*_{0,p}\sigma_U}{{\pi}^*_{0,p}\sigma_U}\right)(\xi_{p+1})\\
&={\pi}^*_{p,p+1}\left({\pi}_{j,p}^*dg-{\pi}_{j,p}^*g\cdot{\pi}_{j,p}^*\frac{d{\pi}^*_{0,j}\sigma_U}{{\pi}^*_{0,j}\sigma_U}\right)(\xi_{p+1})\\
&=\left[{\pi}^*_{j+1,p+1}{\pi}_{j,j+1}^*\left(dg-g\frac{d{\pi}^*_{0,j}\sigma_U}{{\pi}^*_{0,j}\sigma_U}\right)\right](\xi_{p+1})\\
&={\pi}^*_{j+1,p+1}{\pi}_{j,j+1}^*\left[\left(dg-g\frac{d{\pi}^*_{0,j}\sigma_U}{{\pi}^*_{0,j}\sigma_U}\right)(d{\pi}_{j,p}(\xi_{p+1}))\right]\\
&={\pi}^*_{j+1,p+1}\left[{\pi}_{j,j+1}^*\left(dg-g\frac{d{\pi}^*_{0,j}\sigma_U}{{\pi}^*_{0,j}\sigma_U}\right)\right](\beta_{j,p}\xi_{j+1})\\ 
&=\beta_{j,p}{\pi}_{j+1,p+1}^*\onu^{j+1}(f). \qedhere
\end{align*}
\end{proof}

\begin{proof}[Proof of Proposition \ref{prop:DSWronskian}]
Consider two trivialization towers \[\mathfrak{U}^1=\big((U^1,\sigma_{U^1}),(U^1_j,\xi^1_j)_{1\leqslant j\leqslant k}\big)\ \ \text{and}\ \ \mathfrak{U}^2=\big((U^2,\sigma_{U^2}),(U^2_j,\xi^2_j)_{1\leqslant j\leqslant k}\big).\]
 Writing \(U^{12}=U^1\cap U^2\) Let \(g\in \oc(U^{12})\) be the transition map   from \(U^2\) to \(U^1\) induced by our choice of trivializations for \(L\), so that for any global section \(s\in H^0(X,L)\), \begin{equation}\label{relationp0}s_{U^1 \upharpoonright U^{12}}=g\cdot s_{U^2 \upharpoonright U^{12}}.\end{equation}  
For any \(1\leqslant j\leqslant k\), let us also write \(U_j^{12}=U_j^1\cap U_j^2\) and consider the function \(\theta_j\in \oc(U_j^{12})\) such that \[\xi_j^1=\theta_j\xi_j^2.\] Therefore \(\theta_j\) is the transition map from \(U_j^2\) to \(U_j^1\) for the trivializations \(\oc_{{X}_j(D)}(1)_{\upharpoonright U^2_j}\cong U^2_j\times \mathbb{C}\) and \(\oc_{{X}_j(D)}(1)_{\upharpoonright U^1_j}\cong U^1_j\times \mathbb{C}\) induced by \(\xi_j^2\) and \(\xi_j^1\) respectively.
We are now going to establish that for any \(0\leqslant  p\leqslant k\), and for any \(0\leqslant j<p\), there exists \(P_j^p\in \oc(U^p)\) a holomorphic function such that for any \(s\in H^0(X,L)\) one has 
\begin{equation}\label{eq:PFormula}{\nabla}_{\mathfrak{U}^1}^p(s_{U^1})=\theta_p\theta_{p-1}\cdots \theta_1g{\nabla}_{\mathfrak{U}^2}^p(s_{U^2})+\sum_{j=0}^{p-1}P_j^p{\nabla}_{\mathfrak{U}^2}^j(s_{U^2}).\end{equation}
The key point in this formula is that \(P^p_j\) does not depend on \(s\). From this, and from elementary operations on the lines in the determinant defining \(\omega_{\mathfrak{U}^i}(s_1,\dots, s_k)\), it will follow that 
\[\omega_{\mathfrak{U}^1}(s_1,\dots, s_k)_{\upharpoonright U_k^{12}}=\theta_k\theta_{k-1}^2\cdots \theta_2^{k-1}\theta_1^kg\omega_{\mathfrak{U}^2}(s_1,\dots, s_k)_{\upharpoonright U_k^{12}}\]
which concludes the proof of the first statement of the proposition.

We will establish \eqref{eq:PFormula} by induction on \(p\). For \(p=0\) this is just \eqref{relationp0}. Take \(0\leqslant p < k\) and suppose that formula \eqref{eq:PFormula} holds for \(p\).  Take \(s\in H^0(X,L)\). Recall that 
\[{\nabla}_{\mathfrak{U}^1}^{p+1}(s_{U^1})=\nabla^{\rm DS}_{U^{12}_{p},{\pi}_{0,p}^*\sigma_{U^1},U^{12}_{p+1},\xi_{p+1}^1}\big({\nabla}_{\mathfrak{U}^1}^p(s_{U^1})\big).\] 
On the other hand, one has
\begin{align*}
\nabla^{\rm DS}_{U^{12}_{p},{\pi}_{0,p}^*\sigma_{U^1},U^{12}_{p+1},\xi_{p+1}^1}\big({\nabla}_{\mathfrak{U}^1}^p(s_{U^1})\big)&=\nabla^{\rm DS}_{U^{12}_{p},{\pi}_{0,p}^*(g\sigma_{U^2}),U^{12}_{p+1},\theta_{p+1}\xi_{p+1}^2}\big({\nabla}_{\mathfrak{U}^1}^p(s_{U^1})\big)\\
&=\theta_{p+1}\nabla^{\rm DS}_{U^{12}_{p},{\pi}_{0,p}^*(g\sigma_{U^2}),U^{12}_{p+1},\xi_{p+1}^2}\big({\nabla}_{\mathfrak{U}^1}^p(s_{U^1})\big)\\
&=\theta_{p+1}\nabla^{\rm DS}_{U^{12}_{p},{\pi}_{0,p}^*\sigma_{U^2},U^{12}_{p+1},\xi_{p+1}^2}\big({\nabla}_{\mathfrak{U}^1}^p(s_{U^1})\big)-\theta_{p+1}{\nabla}_{\mathfrak{U}^1}^p(s_{U^1}){\pi}^*_{p,p+1}\frac{d{\pi}^*_{0,p}g}{{\pi}^*_{0,p}g}(\xi^2_{p+1}).
\end{align*}
Observe that, using our induction hypothesis, the term \(-\theta_{p+1}{\nabla}_{\mathfrak{U}^1}^p(s_{U^1}){\pi}^*_{p,p+1}\frac{d{\pi}^*_{0,p}g}{{\pi}^*_{0,p}g}(\xi^2_{p+1})\) is  of the form allowed in formula \eqref{eq:PFormula}  to be considered as an error term for the rank \(p+1\). Therefore it only remains to prove that \(\theta_{p+1}\nabla^{\rm DS}_{U^{12}_{p},{\pi}_{0,p}^*\sigma_{U^2},U^{12}_{p+1},\xi_{p+1}^2}\big({\nabla}_{\mathfrak{U}^1}^p(s_{U^1})\big)\) is of the form announced in \eqref{eq:PFormula}. To lighten the  notation we will now write \(\nabla^{\rm DS}=\nabla^{\rm DS}_{U^{12}_{p},{\pi}_{0,p}^*\sigma_{U^2},U^{12}_{p+1},\xi_{p+1}^2}\). By induction one has 
\begin{align*}
\theta_{p+1}\nabla^{\rm DS}\big({\nabla}_{\mathfrak{U}^1}^p(s_{U^1})\big)&=\theta_{p+1}\nabla^{\rm DS}\Big(\theta_p\cdots \theta_1g{\nabla}_{\mathfrak{U}^2}^p(s_{U^2})+\sum_{j=1}^pP_j^p\pi_{j,p}^*{\nabla}_{\mathfrak{U}^2}^j(s_{U^2})\Big)\\
&=\theta_{p+1}\nabla^{\rm DS}\big(\theta_p\cdots \theta_1g{\nabla}_{\mathfrak{U}^2}^p(s_{U^2})\big) +\sum_{j=1}^p\theta_{p+1}\nabla^{\rm DS}\big(P_j^p{\nabla}_{\mathfrak{U}^2}^j(s_{U^2})\big).
\end{align*}
Before continuing, observe that for any \(f_1,f_2\in \oc(U_p^{12})\), one has (by an immediate computation) 
\[\nabla^{\rm DS}(f_1f_2)=f_1\nabla^{\rm DS}(f_2)+f_2 df_1(\xi_{p+1}^2).\]
Applying this to \(f_1=\theta_p\cdots \theta_1g\) and \(f_2={\nabla}_{\mathfrak{U}^2}^p(s_{U^2})\) we obtain 
\begin{align*}\theta_{p+1}\nabla^{\rm DS}\big(\theta_p\cdots \theta_1g{\nabla}_{\mathfrak{U}^2}^p(s_{U^2})\big)&=\theta_{p+1}\theta_p\cdots \theta_1g\nabla^{\rm DS}\big({\nabla}_{\mathfrak{U}^2}^p(s_{U^2})\big)+{\nabla}_{\mathfrak{U}^2}^p(s_{U^2})d(\theta_p\cdots \theta_1g)(\xi_{p+1}^2)\\
&=
\theta_{p+1}\theta_p\cdots \theta_1g{\nabla}_{\mathfrak{U}^2}^{p+1}(s_{U^2})+{\nabla}_{\mathfrak{U}^2}^p(s_{U^2})d(\theta_p\cdots \theta_1g)(\xi_{p+1}^2).\end{align*}
Observe that the term \({\nabla}_{\mathfrak{U}^2}^p(s_{U^2})d(\theta_p\cdots \theta_1g)(\xi_{p+1}^2)\) is of the form allowed in the last term of formula \eqref{eq:PFormula} at rank \(p+1\). Therefore, the proof of formula \eqref{eq:PFormula} will be completed if one proves that for each \(j<p\) the term \(\theta_{p+1}\nabla^{\rm DS}\big(P_j^p{\nabla}_{\mathfrak{U}^2}^j(s_{U^2})\big)\) is also of the form of an error term if  \eqref{eq:PFormula} at rank \(p+1\). To see this, observe that for each \(j<p\) one has
\begin{align*}
\theta_{p+1}\nabla^{\rm DS}\big(P_j^p{\nabla}_{\mathfrak{U}^2}^j(s_{U^2})\big)&= P_j^p\theta_{p+1}\nabla^{\rm DS}\big({\nabla}_{\mathfrak{U}^2}^j(s_{U^2})\big)+{\nabla}_{\mathfrak{U}^2}^j(s_{U^2})dP_j^p(\xi_{p+1}^2)\\
&=  P_j^p\theta_{p+1}\beta_{j,p}{\nabla}_{\mathfrak{U}^2}^{j+1}(s_{U^2})+{\nabla}_{\mathfrak{U}^2}^j(s_{U^2})dP_j^p(\xi_{p+1}^2),
\end{align*}
where \(\beta_{j,p}\) is the function appearing in Lemma \ref{lem:Beta} applied to the trivialization tower \(\mathfrak{U}^2\). This concludes the proof of \eqref{eq:PFormula}.

To conclude the proof of the proposition, it remains to prove that \(\omega_{\!D}'(s_1,\dots, s_k)\) is sent  to \(W_D(s_1,\dots,s_k)\) under the above natural map. By continuity, it suffices to prove this over the open subset \(X_k(D)^{\rm reg}\cap {\pi}_k^{-1}(X\setminus D)\).  Take \(w_k\in X_k(D)^{\rm reg}\cap {\pi}_k^{-1}(X\setminus D)\). From the previous part of the proposition, we are allowed to choose any trivialization tower in order to make the computation of \(\omega'_{\!D}(s_1,\dots,s_k)\) in a neighborhood of \(w_k\). On the other hand, to compute the element \(\omega_{\!D}(s_1,\dots, s_k)\in H^0({X}_k(D),\oc_{{X}_k(D)}(k')\otimes \pi_{0,k}^*L)\) associated to \(W_D(s_1,\dots, s_k)\) under the isomorphism \eqref{eq:DirectImageFormula}, we are allowed to use the explicit description isomorphism \eqref{local isomorphism}. Indeed, outside \(D\) the logarithmic and absolute jet towers coincide.  Let us therefore apply Theorem \ref{para}.

Let $U\subset X\setminus D$ be an open set with local coordinates $(z_1,\ldots,z_n)$. Take a trivialization of $L_{\upharpoonright U}$ such that $\sigma_D$ is identically equal to $1$. It follows from \cite[Proof of Theorem 6.8]{Dem95} that ${X}_k(D)^{\rm reg}\bigcap {\pi}^{-1}_{0,k}(U)$ can be covered by   open sets $U\times \cb^{(n-1)k}$. Indeed, consider the family of holomorphic curves 
\begin{align*}
\gamma:U\times\cb^{(n-1)k}\times \cb&\rightarrow  U
\\
(z,w,t)&\mapsto \gamma_{(w,z)}(t)
\end{align*}
defined by
\[\gamma_{(w,z)}^i(t)= \begin{cases}
z_i+\frac{w^{(1)}_i}{1!}t+\frac{w^{(2)}_i}{2!}t^2+\cdots+\frac{w^{(k)}_i}{k!}t^k & \mbox{ if  }\  1\leqslant i\leqslant n-1\\
z_n+t & \mbox{ if  }\  i=n,
\end{cases}\]
where  \(w:=(w_i^{(j)})_{1\leqslant i\leqslant n-1}^{1\leqslant j\leqslant k}\) and  \(\gamma_{(w,z)}:=(\gamma_{(w,z)}^1,\dots, \gamma_{(w,z)}^n)\). To be precise, the map \(\gamma\) is only defined on a open neighborhood of \(U\times\cb^{(n-1)k}\times \{0\}\) in \(U\times\cb^{(n-1)k}\times  \cb\), but this subtlety will be irrelevant as we will only consider the  \(k\)-jets of each \(\gamma_{(w,z)}\) at the point \(0\).

In this setting,  we will prove that its $k$-th lift \((\gamma_{(w,z)})_{[k]}(0)\) gives a holomorphic embedding
 \begin{eqnarray}\label{parameter lift}
\tau_k:U\times\cb^{(n-1)k}\rightarrow  {X}_k(D)^{\rm reg}
\end{eqnarray} whose image is an open subset.

Let us take a special trivialization tower of order $k$, denoted by \(\mathfrak{U}=\big((U_0,\sigma_U),(U_j,\xi_j)_{1\leqslant j\leqslant k}\big)\) in the following way:
\begin{thmlist}
	\item $(U_0,\sigma_U)=(U,1)$.
	\item Set $U_1=\Big\{\Big(\big[z_1^{(1)}\frac{\d}{\d z_1}+\cdots+z_{n-1}^{(1)}\frac{\d}{\d z_{n-1}}+\frac{\d}{\d z_{n}}\big];z\Big)\in \pt(T_U)\Big\}\simeq \cb^{n-1}\times U$ with the coordinate $\Big(z;z_1^{(1)},\ldots,z_{n-1}^{(1)}\Big)$. Define \(\xi_1=z_1^{(1)}\frac{\d}{\d z_1}+\cdots+z_{n-1}^{(1)}\frac{\d}{\d z_{n-1}}+\frac{\d}{\d z_{n}}\in \Gamma\big(U_1,\oc_{ {X}_1(D)}(-1)\big)\), and take the basis for $V_{1\upharpoonright U_1}$ as
	\[
	e^{(1)}_1=\frac{\d}{\d z^{(1)}_1},  \ldots, \ \ \  e^{(1)}_{n-1}=\frac{\d}{\d z^{(1)}_{n-1}},\ \  \ e^{(1)}_n=\frac{\d}{\d z_{n}}+z_1^{(1)}\frac{\d}{\d z_1}+\cdots+z_{n-1}^{(1)}\frac{\d}{\d z_{n-1}}.
	\]
	Then one has $({\pi}_{0,1})_*\big(e^{(1)}_n\big)=\xi_1$, and ${\pi}_{0,1}\big(z;z_1^{(1)},\ldots,z_{n-1}^{(1)}\big)=z$.
	\item Set $U_2=\{([z_1^{(2)}e^{(1)}_1+\cdots+z_{n-1}^{(2)}e^{(1)}_{n-1}+ e^{(1)}_n])\in \pt({V}_1)_{\upharpoonright U_1}\}\simeq U\times \cb^{2(n-1)}$ with the coordinate $\big(z;z^{(1)},z_1^{(2)},\ldots,z_{n-1}^{(2)}\big)$. Here we write $z^{(1)}=(z_1^{(1)},\ldots,z_{n-1}^{(1)})$ for short. Define \(\xi_2=z_1^{(2)}e^{(1)}_1+\cdots+z_{n-1}^{(2)}e^{(1)}_{n-1}+ e^{(1)}_n\in \Gamma\big(U_2,\oc_{X_2}(-1)\big)\), and take the basis for $V_{2\upharpoonright U_2}$ as
	\[
	e^{(2)}_1=\frac{\d}{\d z^{(2)}_1}, \ldots,\ \ \ e^{(2)}_{n-1}=\frac{\d}{\d z^{(2)}_{n-1}},\ \ \ e^{(2)}_n=\frac{\d}{\d z_{n}}+z_1^{(1)}\frac{\d}{\d z_1}+\cdots+z_{n-1}^{(1)}\frac{\d}{\d z_{n-1}}+z_1^{(2)}\frac{\d}{\d z_1^{(1)}}+\cdots+z_{n-1}^{(2)}\frac{\d}{\d z_{n-1}^{(1)}}.
	\]
	Then one has $({\pi}_{1,2})_*\big(e^{(2)}_n\big)=\xi_2$, and ${\pi}_{1,2}\big(z;z^{(1)},z_1^{(2)},\ldots,z_{n-1}^{(2)}\big)=\big(z;z^{(1)}\big)$. 
	\item Inductively, one can define $U_p$ with coordinates \(\big(z;z^{(1)},\ldots,z^{(p)})\in U\times \cb^{(n-1)p} \) such that for any $j<p$, 
	\({\pi}_{j,p}\big(z;z^{(1)},\ldots,z^{(p)}\big)=\big(z;z^{(1)},\ldots,z^{(j)}\big)\) and \[\xi_p=\frac{\d}{\d z_{n}}+z_1^{(1)}\frac{\d}{\d z_1}+\cdots+z_{n-1}^{(1)}\frac{\d}{\d z_{n-1}}+z_1^{(2)}\frac{\d}{\d z^{(1)}_1}+\cdots+z_{n-1}^{(2)}\frac{\d}{\d z^{(1)}_{n-1}}+\cdots+ z_1^{(p)}\frac{\d}{\d z^{(p-1)}_1}+\cdots+z_{n-1}^{(p)}\frac{\d}{\d z^{(p-1)}_{n-1}} \]
	Then $({\pi}_{j,p})_*(\xi_p)=\xi_j$. In particular, by \eqref{eq:short embedding} and \eqref{eq:Gamma_k}, for any $1\leqslant j<p$, there is an isomorphism
\begin{eqnarray*}\label{dia:same trivialization}
	\xymatrix{
	\oc_{X_p(D)}(-1)_{\upharpoonright U_p}\ar[rr]^-{({\pi}_{j,p})^*({\pi}_{j-1,p-1})_*}_-{\simeq}&&({\pi}_{j,p})^*\oc_{{X}_j(D)}(-1)_{\upharpoonright U_p}\\
	&	\oc_{U_p}\ar[ul]^-{\cdot \xi_p}_-{\simeq}\ar[ur]_-{\cdot ({\pi}_{j,p})^*\xi_j}^-{\simeq}&
}
\end{eqnarray*}
and
\begin{eqnarray}\label{eq:relation}
\xi_j^{-1}\cdot ({\pi}_j)^*\xi_{j-1}=\xi_j^{-1}\cdot ({\pi}_j)^*({\pi}_{j-1})_*(\xi_j)=\Gamma_{j \upharpoonright U_j}.
\end{eqnarray}
\end{thmlist}
In this setting, one can prove that, within the coordinates for $U_k$, the $k$-th lift \((\gamma_{(w,z)})_{[k]}(0)=(z;w) \). Hence $\tau_k:\cb^{(n-1)k}\times U\rightarrow X_k(D)^{\rm reg}$ whose image is the open subset $U_k$, and under the trivialization tower of order $k$, $\tau_k$ is an identity map. Moreover,  \((\gamma_{(w,z)})_{[k-1]}'(0)=\xi_{k \upharpoonright (\gamma_{(w,z)})_{[k]}(0)} \). Hence a straightforward computation  shows that
	if we identify the parameter space $\cb^{(n-1)k}\times U$ of $\gamma_{(w,z)}(t)$ with $U_k$  by \begin{align*}
	\tau_k:\cb^{(n-1)k}\times U&\rightarrow  U_k\\
	\big(w^{(1)},\ldots,w^{(k)};z\big)&\mapsto  (\gamma_{(w,z)})_{[k]}(0), 
	\end{align*}
	then for any $f\in \Gamma(U,\oc_U)$ and for any \(j=1,\dots, k\) one  has
	\[
	d^{j}(f)(j_k\gamma_{(w,z)})={\nabla}_{\mathfrak{U}}^j(f)(w,z)\in  \oc({U_j}).
	\]

For any $s_1,\ldots,s_k\in\Gamma(U,L_{\upharpoonright U})$. Write \(s_{1,U},\dots, s_{k,U}\in  \oc(U) \) for the local representatives of \(s_1,\dots, s_k\) under our choice of trivialization for \(L_{\upharpoonright U}\) (i.e. \(s_{i,U}=\frac{s_i}{\sigma_D}\)). Then by \eqref{define section}, \(\omega_{\!D}(s_{1,U},\dots,s_{k,U})\in  \Gamma\big(U_k,\oc_{{X}_k(D)}(k')\big) \) is defined by
\begin{align*}
\omega_{\!D}(s_{1},\dots,s_{k})(w,z)&=W_D(s_{1,U},\dots,s_{k,U})\big(j_k\gamma_{(w,z)}\big)\cdot \big((\gamma_{(w,z)})_{[k-1]}'(0)\big)^{-k'}\\
&=\left|\begin{array}{ccc}
\nd^1(s_{1,U})&\cdots & \nd^1(s_{k,U})\\
\vdots &\ddots & \vdots \\
\nd^k(s_{1,U})&\cdots & \nd^k(s_{k,U})
\end{array}\right|\big(j_k\gamma_{(w,z)}\big)\cdot (\xi_k)^{-k'}\\
&=\left|\begin{array}{ccc}
d^{1}(s_{1,U})&\cdots & d^{1}(s_{k,U})\\
\vdots &\ddots & \vdots \\
d^{k}(s_{1,U})&\cdots & d^{k}(s_{k,U})
\end{array}\right|\big(j_k\gamma_{(w,z)}\big)\cdot (\xi_k)^{-k'}
\end{align*}
where the last equality is due to $\sigma_U=1$. Note that $d^{j}:\Gamma(U,\oc_U)\rightarrow E_{j,j}^{\rm GG}\Omega_U$.  
Write
\begin{align}\label{eq:exceptional}
\mathbf{\Gamma}_k=k{\pi}_{2,k}^*\Gamma_2+(k+k-1){\pi}_{3,k}^*\Gamma_3+\cdots+(k+\cdots+3){\pi}^*_{k-1,k}\Gamma_{k-1}+(k+\cdots+2)\Gamma_k
\end{align} for short. Hence
\begin{align*}
	\omega_{\!D}(s_{1},\dots,s_{k})_{\upharpoonright U_k}&= \left|\begin{array}{ccc}
		{\nabla}^{1}_{\mathfrak{U}}(s_{1,U})&\cdots & {\nabla}^{1}_{\mathfrak{U}}(s_{k,U})\\
		\vdots &\ddots & \vdots \\
		{\nabla}^{k}_{\mathfrak{U}}(s_{1,U})&\cdots & {\nabla}^{k}_{\mathfrak{U}}(s_{k,U})
	\end{array}\right|\cdot (\xi_k)^{-k'}\\
&=\omega_{\mathfrak{U}}(s_1,\dots s_k)\cdot  (\xi_k)^{-k'}\\
&\stackrel{\eqref{eq:relation}}{=}\omega_{\mathfrak{U}}(s_1,\dots s_k)\cdot({\pi}_{1,k})^*\xi_{1}^{-k}\cdot ({\pi}_{2,k})^*\xi_{2}^{-(k-1)}\cdots({\pi}_{k-1,k})^*\xi_{k-1}^{-2}\cdot \xi_{k}^{-1}\cdot  \mathbf{\Gamma}_k\\
&\stackrel{\eqref{eq:another wronskian}}{=}\omega_{\!D}'(s_1,\dots,s_k)_{\upharpoonright U_k} \cdot  \mathbf{\Gamma}_k.
\end{align*}
 Since \(U_k\) is dense in \(\pi_{0,k}^{-1}(U)\) and since \(\omega_{\!D}(s_{1},\dots,s_{k})\) and \(\omega_{\!D}'(s_1,\dots,s_k) \cdot \mathbf{\Gamma}_k \) coincide on \(U_k\), it follows by continuity that they also coincide on \(\pi_{0,k}^{-1}(U)\). Therefore it follows that these two sections coincide on \(\pi_{0,k}^{-1}(X\setminus D)\) and therefore they also coincide on the whole space \(X_k(D)\):
\begin{align}\label{eq:relation of two wronskian}
\omega_{\!D}(s_{1},\dots,s_{k})=\omega_{\!D}'(s_1,\dots,s_k) \cdot \mathbf{\Gamma}_k.
\end{align}
The proposition is thus proved.
\end{proof}

\subsection{Logarithmic Wronskian ideal sheaf}\label{sec:wronskian ideal}
Recall that in \eqref{definition:Wronskian}, we defined the \emph{log Wronskian morphism}
\[
j^kW_{\!D}:\bigwedge^kJ^kL\to E_{k,k'}\Omega_X(\log D)\otimes L^k,
\]
which is a morphism of  $\oc_X$-module. We denote by $\mathscr{W}_{k,L}:=j^kW_D(\bigwedge^kJ^kL)$, which is a subsheaf of  $E_{k,k'}\Omega_X(\log D)\otimes L^k$. 
By \eqref{eq:DirectImageFormula} for any $m\in \mathbb{N}$ there exists a natural morphism 
\begin{align*}
 {\pi}_{0,k}^{-1}\big(E_{k,k'}\Omega_X(\log D)\otimes L^m\big)\simeq {\pi}_{0,k}^{-1}({\pi}_{0,k})_*\big(\oc_{{X}_k(D)}(k')\otimes \pi_{0,k}^*L^m\big)\to \oc_{{X}_k(D)}(k')\otimes \pi_{0,k}^*L^m.
\end{align*}
We denote by $ {\wk}_{{X}_k(D)}$ the image of the composition
\[
{\pi}_{0,k}^{-1}\mathscr{W}_{k,L}\otimes \oc_{{X}_k(D)}(-k')\otimes \pi_{0,k}^*L^{-k}\hookrightarrow{\pi}_{0,k}^{-1}E_{k,k'}\Omega_X(\log D)\otimes \oc_{{X}_k(D)}(-k')\to \oc_{{X}_k(D)}, 
\]
which is a coherent ideal sheaf on ${{X}_k(D)}$.  $ {\wk}_{{X}_k(D)}$ will be called the \emph{$k$-th logarithmic Wronskian ideal sheaf} associated to the log manifold $(X,D)$. Let us denote by 
\[
\mathbb{W}_{{X}_k(D)}:={\rm Span}\{\omega_{\!D}(s_1,\ldots,s_k)\mid s_1,\ldots,s_k\in H^0(X,L)\}
\]
the sub-linear system of $|\oc_{{X}_k(D)}(k')\otimes \pi_{0,k}^*L^k|$. 
One thus has the following result
\begin{proposition}\label{prop:coincide}
	When $L$ generates $k$-jets everywhere on $X$, $ {\wk}_{{X}_k(D)}$ is the base ideal of \(\mathbb{W}_{{X}_k(D)}\). It satisfies moreover
	$$
	{\rm Supp}(\oc_{{X}_k(D)}/{\wk}_{{X}_k(D)})\subset X_k(D)^{\rm sing}\bigcup \pi_{0,k}^{-1}(D).
	$$ 
\end{proposition}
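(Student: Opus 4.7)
The plan is to split the argument into a formal step (yielding the first assertion) and a pointwise local computation at regular jets above $X\setminus D$ (yielding the support statement). For the first assertion, the hypothesis that $L$ separates $k$-jets everywhere gives surjectivity of the evaluation $H^0(X,L)\otimes \oc_X\to J^kL$, and hence of the induced map $\bigwedge^k\big(H^0(X,L)\otimes \oc_X\big)\to \bigwedge^k J^kL$. Composing with $j^kW_{\!D}$, one identifies $\mathscr{W}_{k,L}\subset E_{k,k'}\Omega_X(\log D)\otimes L^k$ with the $\oc_X$-subsheaf locally generated by the log Wronskians $W_{\!D}(s_1,\dots,s_k)$ with $s_i\in H^0(X,L)$. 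Via the isomorphism \eqref{eq:DirectImageFormula} these correspond to the global sections $\omega_{\!D}(s_1,\dots,s_k)$, which by definition span the linear system $\mathbb{W}_{{X}_k(D)}$. Unwinding the construction of $\wk_{{X}_k(D)}$ in \cref{sec:wronskian ideal} then identifies it with the base ideal of $\mathbb{W}_{{X}_k(D)}$.

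For the support statement, it suffices to exhibit, at every $w\in X_k(D)^{\rm reg}\cap \pi_{0,k}^{-1}(X\setminus D)$, sections $s_1,\dots,s_k\in H^0(X,L)$ with $\omega_{\!D}(s_1,\dots,s_k)(w)\neq 0$. I would first apply \cref{para} to write $w=f_{[k]}(0)$ for some germ $f\colon(\cb,0)\to X$ with $f'(0)\neq 0$ and $f(0)\notin D$, so that $(f_{[k-1]}'(0))^{k'}$ trivializes $\oc_{X_k(D)}(k')$ at $w$ and the question reduces to the non-vanishing of $W_{\!D}(s_1,\dots,s_k)(j_kf)$. Then I would invoke \cref{lem: non-log}: since $\sigma_D$ is invertible near $f(0)$, this is equivalent to the non-vanishing of the ordinary (non-logarithmic) Wronskian $W_L(\sigma_D,s_1,\dots,s_k)$ at $j_kf$.

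To produce the sections, I would choose local coordinates $(z_1,\dots,z_n)$ centered at $f(0)$ such that $f$ is given by $t\mapsto (t,0,\dots,0)$ (using regularity of $f$), and trivialize $L$ locally so that $\sigma_D$ corresponds to a unit $\sigma_U$. By $k$-jet separation I can then pick $s_1,\dots,s_k\in H^0(X,L)$ whose local representations are $z_1^j$ modulo $\mathfrak{m}^{k+1}_{f(0)}$; then $s_{j,U}\circ f(t)=t^j$, and the Wronskian matrix of $(\sigma_U,s_{1,U},\dots,s_{k,U})\circ f(t)$ at $t=0$ becomes lower triangular with diagonal $\sigma_U(0),1!,\dots,k!$, hence with nonzero determinant. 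The main obstacle here is the logarithmic bookkeeping: once \cref{lem: non-log} reduces the problem to the non-logarithmic Wronskian, the remaining verification is a standard Vandermonde-type computation that parallels the absolute case of \cite{Bro17}.
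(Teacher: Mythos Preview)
Your proof is correct and follows essentially the same approach as the paper. For the first assertion the argument is identical: $k$-jet separation makes the $W_D(s_1,\dots,s_k)$ generate $\mathscr{W}_{k,L}$, and unwinding the definition of $\wk_{X_k(D)}$ gives the base ideal identification. For the second assertion the paper simply cites \cite[Lemma~2.4]{Bro17}, whereas you spell out the argument explicitly---reducing via \cref{lem: non-log} to the non-logarithmic Wronskian and then performing the Vandermonde computation; this is precisely the content of the cited lemma, so the approaches coincide (one minor point: you should write $s_{j,U}\circ f(t)\equiv t^j\pmod{t^{k+1}}$ rather than an equality, though this does not affect the lower-triangular computation at $t=0$).
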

\begin{proof}
	For any $s_1,\ldots,s_k\in H^0(X,L)$, let us define the natural linear map
\begin{eqnarray*}
	\bigwedge^kH^0(X,L)&\rightarrow& H^0(X,\bigwedge^kJ^kL)\\
	s_1\wedge\cdots\wedge s_k &\mapsto& j_L^ks_1\wedge \cdots \wedge j_L^ks_k.
\end{eqnarray*}
It follows from the definition that
\[
W_D(s_1,\ldots,s_k)=j^kW_D\big(j_L^ks_1\wedge \cdots \wedge j_L^ks_k \big)\in H^0(X,\mathscr{W}_{k,L}).
\]
Hence by the definition of $ {\wk}_{{X}_k(D)}$ and the fact $({\pi}_{0,k})_* \omega_{\!D}(s_1,\ldots,s_k) =W_D(s_1,\ldots,s_k)$, one concludes that
\[
\omega_{\!D}(s_1,\ldots,s_k)\in H^0\big({X}_k(D),\oc_{{X}_k(D)}(k')\otimes {\pi}_{0,k}^*L^k\otimes   {\wk}_{{X}_k(D)}  \big).
\]
In other words, the base ideal of $\mathscr{W}_{k,L}$ belongs to $ {\wk}_{{X}_k(D)}$.

\medskip

On the other hand, since $L$ separates $k$-jets everywhere on $X$, the set of global sections  \[{\rm Span}\{j_L^ks_1\wedge \cdots \wedge j_L^ks_k\in H^0(X,\bigwedge^kJ^kL)\mid s_1,\ldots,s_k\in H^0(X,L) \}\] thus generates the locally free sheaf $\bigwedge^kJ^kL$ everywhere on $X$. Recall that $J^kW_D:\bigwedge^kJ^kL\rightarrow \mathscr{W}_{k,L}$ is a surjective morphism between sheaves of $\oc_X$-modules. Therefore, the set of sections 
\[
{\rm Span}\{j^kW_D\big(j_L^ks_1\wedge \cdots \wedge j_L^ks_k\big)\in H^0(X,\mathscr{W}_{k,L})\mid s_1,\ldots,s_k\in H^0(X,L) \}
\]
generates  the sheaf of $\oc_X$-module $\mathscr{W}_{k,L}$, which implies that $ {\wk}_{{X}_k(D)}$ belongs to the base ideal of $\mathbb{W}_{{X}_k(D)}$ by the definition of ${\wk}_{{X}_k(D)}$.  In conclusion,   the base ideal of \(\mathbb{W}_{{X}_k(D)}\) is $ {\wk}_{{X}_k(D)}$. The second assertion follows from \cite[Lemma 2.4]{Bro17}.
\end{proof}

Let us now give a more detailed local description of these objects.
Let $\mathbb{D}^{\!n}$ be the (unit) polydisc, and denote by  $E:=\{(z_1,\ldots,z_n)\in \mathbb{D}^{\!n}\mid z_1=0 \}$.  
As in \eqref{definition:Wronskian}, we define a  morphism of $\oc_{\mathbb{D}^{\!n}}$-module associated to the log pair $(\mathbb{D}^{\!n},E)$
\begin{eqnarray*}
J^kW_E:\bigwedge^{k}J^k\oc_{\mathbb{D}^{\!n}} \rightarrow E_{k,k'}\Omega_{\mathbb{D}^{\!n}}(\log E).
\end{eqnarray*}
 Set
\[
J^kW: \bigwedge^{k}J^k\oc_{\mathbb{D}^{\!n}} \rightarrow E_{k,k'}\Omega_{\mathbb{D}^{\!n}}
\]
to be the morphism of $\oc_{\mathbb{D}^{\!n}}$-module induced by the following map
\begin{eqnarray*}
\bigwedge^{k}H^0(\mathbb{D}^{\!n},\oc_{\mathbb{D}^{\!n}})&\rightarrow&  H^0(\mathbb{D}^{\!n},E_{k,k'}\Omega_{\mathbb{D}^{\!n}})\\
f_1\wedge\cdots\wedge f_k&\mapsto&
\begin{vmatrix}
d^{1} f_1  & \cdots & d^{1} f_k   \\
\vdots & \ddots & \vdots \\
d^{k} f_1  & \cdots & d^{k} f_k  
\end{vmatrix}.
\end{eqnarray*}
Fix an open covering $\mathfrak{U}$ of $X$ such that for any open set $U\in \mathfrak{U}$, $L_{\upharpoonright U}$ can be trivialized and such that  one  has the following dichotomy:
\begin{enumerate}[leftmargin=0.7cm]
	\item \label{intersect}$(U,D\cap U)$ is biholomorphic to $(\mathbb{D}^{\!n},E)$, and under the trivialization of $L_{\upharpoonright U}$, $\sigma_D=z_1$.
	\item \label{not intersect}$U\cap D=\emptyset$, and  under the trivialization of $L_{\upharpoonright U}$, $\sigma_D=1$.
\end{enumerate}  
It follows from the very definition that one has the following local trivialization of the morphism $J^kW_D$ in Case \eqref{intersect},
\[\xymatrix{
\bigwedge^{k}J^k L_{\upharpoonright U}\ar[d]_{ {\rotatebox{90}{$\simeq$}}}  \ar[r]^-{J^kW_D}  & 	E_{k,k'}\Omega_X(\log D)\otimes L_{\upharpoonright U}^{k+1}\ar[d]^-{\rotatebox{270}{$\simeq$}}\\
\bigwedge^{k}J^k\oc_{\mathbb{D}^{\!n}} \ar[r]^-{J^kW_E} & E_{k,k'}\Omega_{\mathbb{D}^{\!n}}(\log E); 
}\]
and in Case \eqref{not intersect}
\[\xymatrix{
	\bigwedge^{k}J^k L_{\upharpoonright U}\ar[d]_{\rotatebox{90}{$\simeq$}}  \ar[r]^-{J^kW_D}  & 	E_{k,k'}\Omega_X(\log D)\otimes L_{\upharpoonright U}^{k+1}\ar[d]^-{\rotatebox{270}{$\simeq$}}\\
	\bigwedge^{k}J^k\oc_{\mathbb{D}^{\!n}} \ar[r]^-{J^kW} & E_{k,k'}\Omega_{\mathbb{D}^{\!n}}.
}\]
 Therefore, we conclude that the local models of the logarithmic Wronskians are universal.

Let us denote by $\mathbb{D}^{\!n}_{k}$ (resp. ${\mathbb{D}^{\!n}_k}(E)$) the   (resp. logarithmic) Demailly-Semple $k$-jet tower of $(\mathbb{D}^{\!n},T_{\mathbb{D}^{\!n}})$ (resp. $\big(\mathbb{D}^{\!n},E,T_{\mathbb{D}^{\!n}}(-\log E) \big)$). By \eqref{eq:DirectImageFormula},
there are natural morphisms of $\oc_{{\mathbb{D}^{\!n}_k}(E)}$-module and $\oc_{ {\mathbb{D}^{\!n}_k}}$-module \[{\pi}_{0,k}^{-1}E_{k,k'}\Omega_{\mathbb{D}^{\!n}}(\log E)\to \oc_{{\mathbb{D}^{\!n}_k}(E)}(k'), \ \ \  {\pi}_{0,k}^{-1}E_{k,k'}\Omega_{{\mathbb{D}^{\!n}}}\to \oc_{ {\mathbb{D}^{\!n}_k}}(k').\]
Set  $\mathscr{W}_{k,E}$ and $\mathscr{W}_{k}$
to be the images of   $J^kW_E$ and $J^kW$.   Therefore,  
\begin{eqnarray*}
 {\pi}_{0,k}^{-1}\mathscr{W}_{k,E}\otimes \oc_{{\mathbb{D}^{\!n}_k}(E)}(-k')&\hookrightarrow&{\pi}_{0,k}^{-1}E_{k,k'}\Omega_{\mathbb{D}^{\!n}}(\log E)\otimes \oc_{{\mathbb{D}^{\!n}_k}(E)}(-k')\to \oc_{{\mathbb{D}^{\!n}_k}(E)},\\
  {\pi}_{0,k}^{-1}\mathscr{W}_{k}\otimes \oc_{\mathbb{D}^{\!n}_k}(-k')&\hookrightarrow& {\pi}_{0,k}^{-1}E_{k,k'}\Omega_{\mathbb{D}^{\!n}}\otimes \oc_{{\mathbb{D}_k^n}}(-k')\to \oc_{\mathbb{D}_k^n},
\end{eqnarray*}
whose images are coherent ideal sheaves, which we denote by ${\wk}_{\mathbb{D}^{\!n}_{k}(E)}\subset \oc_{{\mathbb{D}^{\!n}_k}(E)}$ and ${\wk}_{\mathbb{D}^{\!n}_{k}}\subset \oc_{\mathbb{D}^{\!n}_k}$. Then for any $U\in \mathfrak{U}$, under the trivialization  ${X}_k(D)_{\upharpoonright U}\simeq {\mathbb{D}^{\!n}_k}(E)$ in Case \eqref{intersect} and ${X}_k(D)_{\upharpoonright U}\simeq {\mathbb{D}^{\!n}_k}$ in Case \eqref{not intersect}, one has the isomorphisms ${\mathfrak{w}}_{{X}_k(D)\upharpoonright U}\simeq {\wk}_{\mathbb{D}^{\!n}_{k}(E)}$ and ${\mathfrak{w}}_{{X}_k(D)\upharpoonright U}\simeq {\wk}_{\mathbb{D}^{\!n}_{k}}$  respectively. This local description will be  used to establish a certain universal property in \Cref{sec:Universal family}.

\subsection{Universal property of logarithmic Wronskian ideal sheaves}\label{sec:universal}
Let us begin with the following setting. Let $A$  be a very ample line bundle over a smooth projective manifold $Y$, and let $\lbb$ be  the total space of the line bundle $A^m$ for some $m\in \mathbb{N}^*$. Denote by $p:\lbb\rightarrow Y$ the natural projection map with $L:=p^*A^m$, and $T\in H^0(\lbb,L)$ the tautological section such that $T(x)=x$ for any $x\in \lbb$. Note that $Y$ can be seen as the  smooth hypersurface of $\lbb$ defined by $ \{x\in \lbb\mid T(x)=0\}$. Then according to \Cref{sec:connection}, there exists for any \(k\in \nb\) a natural   higher order logarithmic connection $\nabla^k:\oc_{\lbb}(L)\rightarrow \oc_{\lbb}(L)\otimes E_{k,k}^{\rm GG}\Omega_{\lbb}(\log Y)$ associated to the log manifold $({\lbb},Y)$. For any sections $s_1,\ldots,s_k\in H^0(Y,A^m)$, it follows from  \eqref{eq:defLogWronskian}  that one has the associated logarithmic Wronskian
$$W_{{\lbb},Y} (p^*s_1,\ldots,p^*s_k)\in H^0\big({\lbb},E_{k,k'}\Omega_{\lbb}(\log Y)\otimes L^k).$$  By \eqref{eq:DirectImageFormula}, there exists a unique section in $H^0\big(\lbb_{k} ,\oc_{\lbb_{k} }(k')\otimes {\pi}_{0,k}^*L^{k}\big)$, denoted by $\omega_{\log} (s_1,\ldots,s_k)$, such that
\[({\pi}_{0,k})_*\omega_{\log} (s_1,\ldots,s_k)=W_{{\lbb},Y} (p^*s_1,\ldots,p^*s_k),\] 
where $\lbb_{k} $ denotes to be the log Demailly  $k$-jet tower of $\big({\lbb},Y,T_{\lbb}(-\log Y)\big)$.

Let us denote by \(\omega'_{\log}(s_1,\ldots,s_k)\) the logarithmic Wronskian defined in \Cref{prop:DSWronskian}. Then 
\begin{eqnarray}\label{eq:  two wronskian}
\omega_{\log}(s_1,\ldots,s_k)=\omega'_{\log}(s_1,\ldots,s_k)\cdot \bm{\Gamma}_k,
\end{eqnarray}
where \(\bm{\Gamma}_k\) is an effective divisor of \(\lbb_k\) defined in \eqref{eq:exceptional}.  Consider the linear systems
\begin{eqnarray*}
\mathbb{W}_{k,{\lbb},Y}:={\rm Span}\{\omega_{\log} (s_1,\ldots,s_k)\mid s_1,\ldots,s_k\in H^0(Y,A^m) \}\\
\mathbb{W}'_{k,{\lbb},Y}:={\rm Span}\{\omega'_{\log} (s_1,\ldots,s_k)\mid s_1,\ldots,s_k\in H^0(Y,A^m) \}
\end{eqnarray*}
and define  ${\wk}_{k,{\lbb},Y}$ and ${\wk}'_{k,{\lbb},Y}$ to be their base ideal. By \eqref{eq:  two wronskian}, one has
\begin{eqnarray}\label{eq:ideal relation}
{\wk}_{k,{\lbb},Y}={\wk}'_{k,{\lbb},Y}\cdot\oc_{\lbb_k}(-\bm{\Gamma}_k).
\end{eqnarray}
It follows from the definition of $W_{{\lbb},Y}$ that, there exists a morphism of $\oc_{\lbb}$-module
\begin{align}\label{eq:image}
j^kW_{{\lbb},Y}:\bigwedge^kp^*(J^kA^m)\rightarrow E_{k.k'}\Omega_{\lbb}(\log Y)\otimes L^k
\end{align}
such that $W_{{\lbb},Y}$ factors through this morphism. Set $\mathscr{W}_{k,{\lbb},Y}$ to be the image of $j^kW_{{\lbb},Y}$. We will study the properties of $j^kW_{{\lbb},Y}$ locally.

Take an open set $U$ with coordinates $(z_1,\ldots,z_n)$ such that $A_{\upharpoonright U}$ can be trivialized. Then there are local coordinates $(t,z_1,\ldots,z_n)$ for $p^{-1}(U)\simeq U\times \cb$, such that $L_{\upharpoonright p^{-1}(U)}$ is trivialized with $T_{\upharpoonright p^{-1}(U)}=t$. Hence  the divisor $Y\cap p^{-1}(U)$  is  defined by the local equation $(t=0)$.  One thus can regard $U$  as a smooth divisor in $p^{-1}(U)$ defined by $(t=0)$.  For any \(k\in \nb\),  write $\nabla_U^{k}:\oc_{U\times \cb}\rightarrow E^{\rm GG}_{k,k}\Omega_{U\times \cb}(\log U)$ for the higher order logarithmic connection defined in \eqref{def:higherorder}. 
In view of \eqref{eq:defLogWronskian} we define 
\begin{eqnarray}\label{eq: local wronskian}
W_{p^{-1}(U),U}:	\bigwedge^k \oc(U)&\rightarrow& \Gamma\big(U\times \cb,E_{k,k'}\Omega_{U\times \cb}  (\log U)\big)\\
	f_1\wedge\cdots\wedge f_k&\mapsto & \begin{vmatrix}
		\nabla^{1}_{U}(p_U^*f_1) & \cdots & \nabla^{1}_U(p_U^*f_k)  \\\nonumber
		\vdots & \ddots & \vdots \\
		\nabla^{k}_U(p_U^*f_1) & \cdots & \nabla^{k}_U(p_U^*f_k) 
	\end{vmatrix}
\end{eqnarray}
where $p_U:U\times \cb\rightarrow U$ is the natural projection map. By \cref{lem: non-log}, one has
\begin{eqnarray}\label{eq: constant}
W_{p^{-1}(U),U}(	f_1\wedge\cdots\wedge f_k)=  \begin{vmatrix}
		1& d^{0}(p_U^*f_1) & \cdots & d^{0}(p_U^*f_k)  \\
		\frac{1}{t}d^{1}t& d^{1}(p_U^*f_1) & \cdots & d^{1}(p_U^*f_k)  \\
		\vdots& \vdots & \ddots & \vdots \\
		\frac{1}{t}d^{k}t&d^{k}(p_U^*f_1) & \cdots & d^{k}(p_U^*f_k) 
	\end{vmatrix}.
\end{eqnarray}
Observe that $d^{i}(p_U^*f_j)$ does not contain any $d^{\ell}\log t$, therefore \eqref{eq: local wronskian}  induces a morphism between locally free sheaves of $\oc_{p^{-1}(U)}$-modules $$j^kW_{p^{-1}(U),U}:\bigwedge^kp^*(J^k\oc_U)\rightarrow E_{k,k'}\Omega_{U\times \cb}(\log U)$$
so that $W_{p^{-1}(U),U}$ factors through this morphism. If we use the  basis for the local trivialization of $E_{k,k'}^{GG}\Omega_{U\times \cb}(\log U)$ in \Cref{lem:new basis} and the standard basis for the trivialization of $J^k\oc_U$ induced by the coordinates system $(z_1,\ldots,z_n)$, then by \eqref{eq: constant}, $j^kW_{p^{-1}(U),U}$ is represented by a \emph{constant matrix} with respect to these trivializations. In particular, the image of $j^kW_U$, denoted by $\mathscr{W}_{k, U}$, is a  locally free sheaf. In this setting, $j^kW_{p^{-1}(U),U}$ trivializes $j^kW_{{\lbb},Y}$.
Hence $\mathscr{W}_{k,{\lbb},Y}\subset E_{k,k'}\Omega_{\lbb}(\log Y)\otimes L^k$ is a locally free sheaf of $\oc_{\lbb}$-module on ${\lbb}$. As in \Cref{prop:coincide}, one has the following 
\begin{proposition}\label{prop:ideal sheaf}
For $L:=p^*A^m$, when $m\geqslant k$,	the ideal sheaf ${\wk}_{k,{\lbb},Y}$ coincides with the image
\begin{align*}
\varphi_k:	{\pi}_{0,k}^{-1}\mathscr{W}_{k,{\lbb},Y} \otimes  \oc_{ {\lbb}_{k}}(-k')\otimes \pi_{0,k}^*L^{-k}\hookrightarrow {\pi}_{0,k}^{-1}\ E_{k,k'}\Omega_{\lbb}(\log Y) \otimes \oc_{ {\lbb}_{k}}(-k')\rightarrow \oc_{ {\lbb}_{k}}, 
\end{align*}
	where $\oc_{ {\lbb}_{k}}(1)$ denotes to be the tautological line bundle defined in \cref{sec:LogDSTower}.
\end{proposition}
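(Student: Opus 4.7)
My plan is to parallel the proof of Proposition \ref{prop:coincide}, establishing the two inclusions separately, and exploiting the explicit local description of $j^kW_{p^{-1}(U),U}$ obtained in the preceding discussion.

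For the inclusion ${\wk}_{k,{\lbb},Y}\subset \varphi_k$, I would start from the defining factorization
\[
W_{\lbb,Y}(p^*s_1,\ldots,p^*s_k)=j^kW_{\lbb,Y}\bigl(j^k_L(p^*s_1)\wedge\cdots\wedge j^k_L(p^*s_k)\bigr)\in H^0(\lbb,\mathscr{W}_{k,{\lbb},Y}),
\]
valid for any $s_1,\ldots,s_k\in H^0(Y,A^m)$. Translating this relation through the adjunction $E_{k,k'}\Omega_{\lbb}(\log Y)\otimes L^k\simeq (\pi_{0,k})_*\bigl(\oc_{\lbb_k}(k')\otimes \pi_{0,k}^*L^k\bigr)$, the corresponding section $\omega_{\log}(s_1,\ldots,s_k)$ becomes a global section of $\oc_{\lbb_k}(k')\otimes \pi_{0,k}^*L^k\otimes \varphi_k$. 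Since ${\wk}_{k,{\lbb},Y}$ is by definition the base ideal generated locally by such sections, this forces ${\wk}_{k,{\lbb},Y}\subset \varphi_k$.

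For the reverse inclusion $\varphi_k\subset {\wk}_{k,{\lbb},Y}$, the main input is that $A^m$ separates $k$-jets everywhere on $Y$ when $m\geqslant k$, a standard consequence of the very-ampleness of $A$. The resulting surjection $H^0(Y,A^m)\otimes \oc_Y\twoheadrightarrow J^kA^m$ pulls back to a surjection $H^0(Y,A^m)\otimes \oc_{\lbb}\twoheadrightarrow p^*J^kA^m$, and taking $k$-th exterior powers, the elements $p^*(j^k_{A^m}s_1)\wedge\cdots\wedge p^*(j^k_{A^m}s_k)$ generate $\bigwedge^k p^*(J^kA^m)$ as an $\oc_{\lbb}$-module. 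Applying $j^kW_{\lbb,Y}$, the subsheaf $\mathscr{W}_{k,{\lbb},Y}$ is then generated as an $\oc_{\lbb}$-module by the Wronskians $W_{\lbb,Y}(p^*s_1,\ldots,p^*s_k)$. Passing through the adjunction again, I conclude that $\varphi_k$ is generated as an $\oc_{\lbb_k}$-ideal by the local expressions of the $\omega_{\log}(s_1,\ldots,s_k)$, which are exactly the generators of ${\wk}_{k,{\lbb},Y}$.

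The one point needing care is the bookkeeping between module generation on $\lbb$ and ideal generation on the higher tower $\lbb_k$: since the natural evaluation map to $\oc_{\lbb_k}$ involves twisting by negative powers of the tautological and pulled-back bundles, the transfer from ``generators as $\oc_{\lbb}$-submodules of $\mathscr{W}_{k,{\lbb},Y}$'' to ``generators of the image ideal $\varphi_k$'' must be checked stalkwise. Fortunately, the explicit local trivialization of $j^kW_{p^{-1}(U),U}$ by a constant matrix in suitable bases, established just before the statement, shows that $\mathscr{W}_{k,{\lbb},Y}$ is locally free and makes this transfer routine, reducing the whole argument to the pointwise jet-separation property of $A^m$.
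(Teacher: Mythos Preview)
Your proposal is correct and follows essentially the same two-inclusion argument as the paper, using jet-separation of $A^m$ to show that the Wronskians $W_{\lbb,Y}(p^*s_1,\ldots,p^*s_k)$ generate $\mathscr{W}_{k,\lbb,Y}$ as an $\oc_{\lbb}$-module. One minor notational slip: in your first displayed formula the inputs to $j^kW_{\lbb,Y}$ should be $p^*(j^k_{A^m}s_i)\in p^*(J^kA^m)$ rather than $j^k_L(p^*s_i)\in J^kL$, since the domain of $j^kW_{\lbb,Y}$ in \eqref{eq:image} is $\bigwedge^k p^*(J^kA^m)$ --- you correctly switch to this in your second paragraph.
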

\begin{proof}
		For any $s_1,\ldots,s_k\in H^0(Y,A^m)$, one has the following natural linear map from the global sections to their $k$-jets
	\begin{eqnarray*}
		\bigwedge^kH^0(Y,A^m)&\rightarrow& H^0(Y,\bigwedge^kJ^kA^m)\\
		s_1\wedge\cdots\wedge s_k &\mapsto& j^ks_1\wedge \cdots \wedge j^ks_k.
	\end{eqnarray*}
Here we write $j^k$ instead of  $j^k_{A^m}$ to lighten the notation.
Recall that 
	\[
	W_{{\lbb},Y}(s_1,\ldots,s_k)=j^kW_{{\lbb},Y}\big(p^*j^ks_1\wedge \cdots \wedge p^*j^ks_k\big)\in H^0({\lbb},\mathscr{W}_{k,{\lbb},Y}),
	\]
	and thus by the definition of ${\wk}_{k,{\lbb},Y}$ and the fact $({\pi}_{0,k})_* \omega_{\log}(s_1,\ldots,s_k) =W_{{\lbb},Y}( s_1,\ldots, s_k)$, one has
	\[
{\wk}_{k,{\lbb},Y}\subseteq	\Im(\varphi_k).
	\]
	On the other hand, since $A$ is very ample and $m\geqslant k$, then $A^m$ generates $k$-jets, and the set \[{\rm Span}\{p^*j^ks_1\wedge \cdots \wedge p^*j^ks_k)\in H^0({\lbb},\bigwedge^k p^*J^kA^m)\mid s_1,\ldots,s_k\in H^0(Y,A^m) \}\] generates the  locally free  sheaf of $\oc_{{\lbb}}$-module $\bigwedge^kp^*J_kA^m$ everywhere on ${\lbb}$. It follows from the definition that $J^kW_{{\lbb},Y}: \bigwedge^kp^*J^kA^m\rightarrow \mathscr{W}_{k,{\lbb},Y}$ is a surjective morphism between sheaves of $\oc_{\lbb}$-modules. Therefore, the set of  sections 
	\[
{\rm Span}\{j^kW_{{\lbb},Y}\big(p^*j^ks_1\wedge \cdots \wedge p^*j^ks_k\big)\in H^0({\lbb},\mathscr{W}_{k,{\lbb},Y})\mid s_1,\ldots,s_k\in H^0(Y,A^m) \}
	\]
	generates  the sheaf of $\oc_{\lbb}$-module $\mathscr{W}_{k,{\lbb},Y}$, and one  thus has ${\wk}_{k,{\lbb},Y}\supseteq	\Im(\varphi_k)$. This implies the result.
\end{proof}

\subsection{Universal family of  log Demailly towers of general log pairs and its blow-up}\label{sec:Universal family}

As we did in \cite{BD17}, the construction of the log pair $(\lbb,Y)$ enables us to ``linearize" the family of log manifolds $(Y,D)$ with $D$ varying in the linear system $|A^m|$.   Indeed, for any $\sigma\in H^0(Y,A^m)$, consider the hypersurface $H_\sigma\subset {\lbb}$ defined to be the zero locus of the section
$$
T-p^*\sigma\in H^0({\lbb},p^*A^m).
$$
When the zero locus $D_\sigma$ of $\sigma$ is a smooth hypersurface on $Y$, $H_\sigma$ will also be smooth.  
A crucial observation is  that \[p_\sigma=p_{\upharpoonright H_\sigma}:(H_\sigma,D_\sigma)\rightarrow (Y,D_\sigma)\] is a biholomorphism between log manifolds, and the hyperbolicity of $Y\setminus D_\sigma$ is therefore equivalent to that of  $H_\sigma\setminus D_\sigma$. Moreover, we have  the functoriality of the logarithmic Wronskians  ideal sheaves.

\begin{lem}\label{isomorphism ideal}
We denote by $ {H}_{\sigma,k}$   the log Demailly $k$-jet  tower of   $\big(H_\sigma,D_\sigma,T_{H_{\sigma}}(-\log D_\sigma)\big)$, and let $ {\mathfrak{w}}_{ {H}_{\sigma,k}}$ be the $k$-th logarithmic Wronskian ideal sheaf of ${H}_{\sigma,k}$ defined in \Cref{sec:wronskian ideal}. When $m\geqslant k$,  we have
$$ {\wk}_{k,{\lbb},Y \upharpoonright  {H}_{\sigma,k}}= {\mathfrak{w}}_{ {H}_{\sigma,k}}.$$
\end{lem}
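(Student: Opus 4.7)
The plan is to identify both sides of the claimed equality as base ideals of linear systems of logarithmic Wronskians and then to match these systems via the functoriality of the log Demailly tower. Since $D_\sigma$ is smooth, $H_\sigma$ meets $Y$ transversally along $D_\sigma$, so $p_\sigma\colon (H_\sigma,D_\sigma)\to (Y,D_\sigma)$ is a biholomorphism of log pairs and $H_{\sigma,k}$ embeds naturally as a smooth sub-log-manifold of $\lbb_k$. Under this embedding, $\oc_{\lbb_k}(1)_{\upharpoonright H_{\sigma,k}}=\oc_{H_{\sigma,k}}(1)$ and $\pi_{0,k}^*L$ restricts to $\pi_{0,k}^*(L_{\upharpoonright H_\sigma})$, so restrictions of the relevant sections live in the correct line bundles on $H_{\sigma,k}$.

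By \Cref{prop:ideal sheaf}, which applies because $m\geqslant k$, the ideal $\wk_{k,\lbb,Y}$ is the base ideal of the linear system $\mathbb{W}_{k,\lbb,Y}$. Restricting a generator $\omega_{\log}(s_1,\ldots,s_k)$ to $H_{\sigma,k}$ and applying the functoriality formula \eqref{equ:fonctorial} to the inclusion $(H_\sigma,D_\sigma)\hookrightarrow(\lbb,Y)$ yields
\[
\omega_{\log}(s_1,\ldots,s_k)_{\upharpoonright H_{\sigma,k}}=\omega_{D_\sigma}(p_\sigma^*s_1,\ldots,p_\sigma^*s_k),
\]
where we identify $L_{\upharpoonright H_\sigma}$ with $\oc_{H_\sigma}(D_\sigma)$ via the restriction of the tautological section $T$, which cuts out $D_\sigma$ on $H_\sigma$. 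On the other hand, $L_{\upharpoonright H_\sigma}=p_\sigma^*A^m$ separates $k$-jets on $H_\sigma$ (since $A$ is very ample, $m\geqslant k$, and $p_\sigma$ is a biholomorphism), so \Cref{prop:coincide} applies to the log pair $(H_\sigma,D_\sigma)$: the ideal $\mathfrak{w}_{H_{\sigma,k}}$ is the base ideal of the linear system spanned by all $\omega_{D_\sigma}(\tau_1,\ldots,\tau_k)$ with $\tau_i\in H^0\big(H_\sigma,\oc_{H_\sigma}(D_\sigma)\big)$.

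Since $p_\sigma$ is a biholomorphism, pullback induces an isomorphism $H^0(Y,A^m)\xrightarrow{\sim}H^0\big(H_\sigma,L_{\upharpoonright H_\sigma}\big)$, so the restriction of $\mathbb{W}_{k,\lbb,Y}$ to $H_{\sigma,k}$ spans exactly the system whose base ideal is $\mathfrak{w}_{H_{\sigma,k}}$. The conclusion then follows from the elementary fact that the restriction of the base ideal of a linear system to a closed subvariety equals the base ideal of the restricted sections. The main delicate point is to verify that the functoriality formula \eqref{equ:fonctorial} genuinely applies at the level of the log Demailly tower; this in turn rests on the compatibility of the higher order logarithmic connections $\nd^j$ under pullback to a transversal sub-log-manifold, which was built into the construction in \Cref{sec:log wronskian}.
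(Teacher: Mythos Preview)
Your proof is correct and follows essentially the same route as the paper: use the functoriality formula \eqref{equ:fonctorial} to identify the restrictions of the generators of $\mathbb{W}_{k,\lbb,Y}$ with the generators of $\mathbb{W}_{H_{\sigma,k}}$, invoke the isomorphism $H^0(Y,A^m)\cong H^0(H_\sigma,L_\sigma)$ induced by the biholomorphism $p_\sigma$, and conclude via \Cref{prop:coincide}. One small remark: you cite \Cref{prop:ideal sheaf} to justify that $\wk_{k,\lbb,Y}$ is the base ideal of $\mathbb{W}_{k,\lbb,Y}$, but in the paper this is the \emph{definition} of $\wk_{k,\lbb,Y}$; Proposition~\ref{prop:ideal sheaf} establishes the alternative description as the image of $\varphi_k$, which you do not actually need here.
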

\begin{proof}
Recall that \[p_\sigma=p_{\upharpoonright H_\sigma}:(H_\sigma,D_\sigma)\rightarrow (Y,D_\sigma)\] is a biholomorphism between log manifolds. Write $L_\sigma:=p_\sigma^*A^m=L_{\upharpoonright H_\sigma}$. Then $p_\sigma$ induces an isomorphism of linear spaces of global sections
\begin{align*}
H^0(Y,A^m)&\xrightarrow{\simeq} H^0(H_\sigma, L_\sigma)\\
s & \mapsto p_\sigma^*s.
\end{align*}
By the functoriality of the logarithmic Wronskians in \eqref{equ:fonctorial}, for any  $s_1,\ldots,s_k\in H^0(Y,A^m)$, we have
$$
\omega_{\log}(s_1,\ldots,s_k)_{\upharpoonright {H}_{\sigma,k}}=\omega_{\!D_\sigma}(p_\sigma^*s_1,\ldots,p_\sigma^*s_{k })\in H^0\big({H}_{\sigma,k}, \oc_{{H}_{\sigma,k}}(k')\otimes \pi_{0,k}^*L_\sigma^k\big).
$$ 
Hence $ {\wk}_{k,{\lbb},Y \upharpoonright  {H}_{\sigma,k}}$ is the base ideal sheaf of the linear system 
\[
\mathbb{W}_{{H}_{\sigma,k};L_\sigma}:={\rm Span}\{\omega_{\!D_\sigma}(s_1',\ldots,s_k')\mid s'_1,\ldots,s'_k\in H^0(H_\sigma, L_\sigma)\}.
\]
We note that when $m\geqslant k$, $A^m$ generates $k$-jets everywhere on $Y$, therefore so does  $L_{\sigma}$. The lemma then follows immediately from \cref{prop:coincide}.
\end{proof}

Now let us consider the \emph{universal family} of hypersurfaces ${\hs}^{\rm u}\subset {\lbb}\times H^0(Y,A^m) $ defined by 
\[
{\hs}^{\rm u}:=\{(x,\sigma)\in {\lbb}\times H^0(Y,A^m)\mid  T(x)-p^*\sigma(x)=0\},
\]
and define the family of hypersurfaces in $Y$ by
\[
 {\ds}^{\rm u}:=\{(y,\sigma)\in  Y\times H^0(Y,A^m)\mid  \sigma(y)=0\}.
\]
One can take a non-empty Zariski open set $\ab^{\rm s}$ of the  parameter space $ {\ab}^{\rm u}:=H^0(Y,A^m)$ such that, the shrinking log family over $\ab^{\rm s}$, denote by of $({\hs}^{\rm s}, {\ds}^{\rm s})\to \ab^{\rm s}$,  is \emph{smooth}. 
Set ${\hs}_k^{\rm s}$ to be the  log Demailly $k$-jet  tower of  $\big(\hs^{\rm s},\ds^{\rm s},T_{\hs^{\rm s}/\ab^{\rm s}}(-\log  \ds^{\rm s})\big)$, and denote by $q_k:{\hs}_k^{\rm s}\rightarrow  \ab^{\rm s}$ the natural projection. By the choice of $\ab^{\rm s}$, one notes that for any $\sigma\in \ab^{\rm s}$ , the fiber  $q_k^{-1}(\sigma)$ is $ {H}_{\sigma,k}$. Observe that we have an embedding 
${\hs}_k^{\rm s}\hookrightarrow  {\lbb}_{k}\times \ab^{\rm s}$. Let us denote by
$$
 {\wk}_{{\hs}_k^{\rm s}}:={\rm pr}_1^*( {\wk}_{k,{\lbb},Y})_{\upharpoonright{\hs}_k^{\rm s}},
$$
where ${\rm pr}_1: {\lbb}_{k}\times \ab^{\rm u}\rightarrow  {\lbb}_{k}$ is the natural projection map. By \cref{isomorphism ideal}, we have
$$
{\wk}_{{\hs}_k^{\rm s}\upharpoonright H_{\sigma, k}}={\mathfrak{w}}_{ {H}_{\sigma,k}}.
$$
In some sense, the ideal ${\wk}_{k,{\lbb},Y}$ is the obstruction to the positivity of $\oc_{ {\lbb}_{k}}(1)$. Therefore,   let us define  $\nu_k:\widetilde{{\lbb}}_k \rightarrow  {\lbb}_{k}$ to be the blow-up of the ideal sheaf $ {\wk}_{k,{\lbb},Y}$. It follows from \eqref{eq:ideal relation}  that $\nu_k$ is also the blow-up for \( {\wk}'_{k,{\lbb},Y}\) (see \cite[Chapter \rom{2}, Exercise 7.11]{Har77}). We denote by $F$ and $F'$  the effective divisors in \(\widetilde{\lbb}_k\) such that
\begin{eqnarray}\label{eq:two exceptional}
\nu_k^* {\wk}_{k,{\lbb},Y}=\oc_{\widetilde{{\lbb}}_k}(-{F}) \quad\mbox{and}\quad\nu_k^* {\wk}'_{k,{\lbb},Y}=\oc_{\widetilde{{\lbb}}_k}(-{F'}).
\end{eqnarray}
We  define
$\mu_k:\widetilde{\hs}_k^{\rm s} \rightarrow {\hs}_k^{\rm s}$  to be the blow-up of the ideal sheaf ${\wk}_{{\hs}_k^{\rm s}}$ with  $\oc_{\widetilde{\hs}_k^{\rm s}}(-\tilde{F}):=\mu_k^* {\wk}_{{\hs}_k^{\rm s}}$. 
By the universal property of the blow-up, one has the commutative diagram
\[
\xymatrix{\widetilde{\hs}_k^{\rm s} \ar[d]_{\mu_k}\ar@{^{(}->}[r]&  \widetilde{{\lbb}}_k\times \ab^{\rm s}\ar[d]^{\nu_k\times \mathds{1}} \\
{\hs}_k^{\rm s}	\ar@{^{(}->}[r]&  {\lbb}_{k}\times \ab^{\rm s}.
}
\]
The following lemma enables us to  reduce the desired ``general Kobayashi hyperbolicity" to  a construction of a particular example satisfying a strong Zariski open property.
\begin{lem}\label{lem:fiberwise blow-up}
When $\mu_k$ is restricted on each fiber $\widetilde{H}_{\sigma,k}$ of $\widetilde{\hs}_k^{\rm s}\rightarrow \ab^{\rm s}$, $\mu_{\sigma,k}=\mu_{k \upharpoonright \widetilde{H}_{\sigma,k}}:\widetilde{H}_{\sigma,k}\rightarrow  {H}_{\sigma,k}$ is nothing but the blow-up of the ideal sheaf $ {\mathfrak{w}}_{ {H}_{\sigma,k}}$.
\end{lem}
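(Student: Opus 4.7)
The plan is to reduce the assertion to a local statement on \({\hs}_k^{\rm s}\) and exploit the universality of the Wronskian ideal local model from \Cref{sec:wronskian ideal}. By \cref{isomorphism ideal} we already know \({\wk}_{{\hs}_k^{\rm s}\upharpoonright {H}_{\sigma,k}} = \mathfrak{w}_{{H}_{\sigma,k}}\), so the question becomes one of compatibility: whether the blow-up \(\mu_k\) commutes with the base change along the fiber inclusion \({H}_{\sigma,k} \hookrightarrow {\hs}_k^{\rm s}\). The strategy is to exhibit, locally on \({\hs}_k^{\rm s}\), an analytic product decomposition of the pair \(({\hs}_k^{\rm s}, {\wk}_{{\hs}_k^{\rm s}})\) in the \(\ab^{\rm s}\)-direction, after which the result will follow from the elementary commutativity of blow-up with such local products.

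To construct such a local decomposition, I would work near a chosen point of \({\hs}^{\rm s}\) lying over \(\sigma\). Since \(\hs^{\rm s} \to \ab^{\rm s}\) is smooth with fibers biholomorphic to \(Y\) via \(p_{\sigma'}\), one can pick coordinates \((y_1,\dots,y_n)\) on a polydisc \(U \subset Y\) trivializing \(A^m\) and use the map \((y, \sigma') \mapsto (y, \sigma'(y), \sigma')\) to identify \(\hs^{\rm s}\) locally with \(U \times W\) for a neighborhood \(W \subset \ab^{\rm s}\) of \(\sigma\). Near a point of \(\ds^{\rm s}\), after reordering so that \(\partial \sigma/\partial y_1 \neq 0\) and changing coordinates to \(\tilde{y}_1 := \sigma(y_1,\dots,y_n)\) via the implicit function theorem, the divisor \(\ds^{\rm s}\) locally takes the product form \(\{\tilde{y}_1 = 0\} \times W\). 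This identifies the log directed pair \((\hs^{\rm s}, \ds^{\rm s}, T_{\hs^{\rm s}/\ab^{\rm s}}(-\log \ds^{\rm s}))\) locally with \((\mathbb{D}^n, E, T_{\mathbb{D}^n}(-\log E)) \times W\), and by functoriality lifts to an identification \({\hs}_k^{\rm s} \cong \mathbb{D}^n_k(E) \times W\).

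By the naturality of the logarithmic Wronskian construction from \Cref{sec:connection and log Wronskian} together with the universality of its local model from \Cref{sec:wronskian ideal}, under this identification \({\wk}_{{\hs}_k^{\rm s}}\) corresponds to the pullback \({\rm pr}_1^*{\wk}_{\mathbb{D}^n_k(E)}\). The crucial ingredient is the \emph{constant matrix} presentation around \eqref{eq: constant}: it shows that the local generators of \({\wk}_{k,\lbb,Y}\) involve only the \(Y\)-direction together with the tautological line-bundle direction \(t\), so that after the trivialization above they become generators independent of the parameter \(\sigma\). Hence \({\wk}_{{\hs}_k^{\rm s}}\) has a genuine product structure in the \(W\)-factor.

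Given this product structure, the blow-up \(\mu_k\) locally takes the form \(\mathrm{Bl}_{{\wk}_{\mathbb{D}^n_k(E)}}(\mathbb{D}^n_k(E)) \times W \to \mathbb{D}^n_k(E) \times W\), because the Rees algebra \(\bigoplus_n {\wk}_{{\hs}_k^{\rm s}}^n\) splits as a pullback from the first factor and \(\mathrm{Proj}\) commutes with such flat products. Restricting to the fiber over \(\sigma\) then recovers \(\mathrm{Bl}_{{\wk}_{\mathbb{D}^n_k(E)}}(\mathbb{D}^n_k(E))\), which under the identification is exactly the blow-up of \({H}_{\sigma,k}\) along \(\mathfrak{w}_{{H}_{\sigma,k}}\) on the corresponding chart; patching such local identifications yields the global statement. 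The main technical difficulty lies in verifying the claimed local product structure for the ideal, which rests on the universal local computations of \Cref{sec:wronskian ideal}; once this is in place, the commutation with base change is immediate.
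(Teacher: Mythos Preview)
Your approach is essentially the paper's own: use analytic local triviality of the smooth family of log pairs to reduce to the universal local model of the Wronskian ideal from \Cref{sec:wronskian ideal}, whence the ideal is a pullback from the fiber factor and the blow-up splits as a product. One small slip: the coordinate change straightening \(\ds^{\rm s}\) must depend on the running parameter \(\sigma'\in W\) (e.g.\ \(\tilde y_1:=\sigma'(y)\)), not on the fixed \(\sigma\), since otherwise only the central fiber \(D_\sigma\) is flattened and \(\ds^{\rm s}\) does not take the product form \(\{\tilde y_1=0\}\times W\); with this correction your argument goes through and coincides with the paper's.
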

\begin{proof}
	Let us first  observe that as a consequence of the local inverse theorem in several complex variables we obtain that in the analytic category, families of smooth pairs $({\hs}^{\rm s}, {\ds}^{\rm s})\stackrel{\rho}{\to}\ab^{\rm s}$ are \emph{locally trivial} in the following sense:  for any \(x\in {\hs}^{\rm s}\) there exists a neighborhood \(\Omega\subset {\hs}^{\rm s}\),  a neighborhood \(V\subset \ab^{\rm s}\) of \(\sigma:=\rho(x)\) and an open subset \(U\subset \cb^n\) with coordinates  \((z_1,\dots,z_n)\) such that there exists an isomorphism 
		\[\Phi:U \times V\stackrel{\simeq}{\to} \Omega \]
		satisfying \(\rho\circ \Phi=\pr_2\) (where \(\pr_2:U\times V\to V\) is the projection on the first factor) and 
		such that  
		\[\Phi^*\ds^{\rm s}=(z_1 =0)  \quad {\rm or} \quad \ds^{\rm s}\cap \Omega=\varnothing.   \]  
By the local description of the logarithmic Wronskian ideal sheaves established in \cref{sec:wronskian ideal}, via the isomorphism $\Phi$ 
 one has  
\[
{\wk}_{{\hs}_k^{\rm s} \upharpoonright   \pi_{0,k}^{-1}(\Omega)}\simeq {\rm pr}_1^* {\wk}_{\mathbb{D}^{\!n}_{k}(E)}  \quad {\rm or} \quad {\wk}_{{\hs}_k^{\rm s} \upharpoonright   \pi_{0,k}^{-1}(\Omega)}\simeq {\rm pr}_1^* {\wk}_{\mathbb{D}^{\!n}_{k}}.
\]
This implies the result of the lemma.
\end{proof}
On the other hand, for any sections $s_1,\ldots,s_k\in H^0(Y,A^m)$, by   \eqref{eq:ideal relation} and \eqref{eq:two exceptional}  there exists  a (unique)  section
\begin{align*} 
\tilde{\omega}_{\log}(s_1,\ldots,s_k)&\in   H^0\Big(\widetilde{{\lbb}}_k,\nu_k^*\big(\oc_{ {\lbb}_{k}}(k')\otimes {\pi}_{0,k}^*L^k\big)\otimes \oc_{\widetilde{{\lbb}}_k}(-{F})\Big), \\\nonumber
&= H^0\Big(\widetilde{{\lbb}}_k,\nu_k^*\big(\oc_{ {\lbb}_{k}}(k,k-1,\ldots,1)\otimes {\pi}_{0,k}^*L^k\big)\otimes \oc_{\widetilde{{\lbb}}_k}(-{F'})\Big) 
\end{align*}such that
\begin{align}\label{eq:pull back relation1}
\nu_k^*\omega_{\log}(s_1,\ldots,s_k)&=\tilde{\omega}_{\log}(s_1,\ldots,s_k)\cdot F,\\\label{eq:pull back relation2}
\nu_k^*\omega'_{\log}(s_1,\ldots,s_k)&=\tilde{\omega}_{\log}(s_1,\ldots,s_k)\cdot F'.
\end{align}
We will also need the following crucial lemma.
\begin{lem}\label{k-jets}
	For any sections $s_1,\ldots,s_k,s_1',\ldots,s_k'\in H^0(Y,A^m)$ and any point $y\in Y$, if the $k$-jets $j^ks_i(y)=j^ks_i'(y)\in (J^kA^m)_y$ for each $i=1,\ldots,k$, then on the fiber $\widetilde{{\lbb}}_{k,y}:=( p\circ {\pi}_{0,k}\circ\nu_k)^{-1}(y)$ of $p\circ {\pi}_{0,k}\circ\nu_k:\widetilde{{\lbb}}_k\rightarrow Y$, one has
$$ \widetilde{\omega}_{\log}(s_1,\ldots,s_k)_{\upharpoonright \widetilde{{\lbb}}_{k,y}}= \widetilde{\omega}_{\log}(s'_1,\ldots,s'_k)_{\upharpoonright \widetilde{{\lbb}}_{k,y}}.$$
\end{lem}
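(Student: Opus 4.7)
The plan is to exploit the \(\oc_{\lbb}\)-linearity of the Wronskian morphism, transfer the resulting fiberwise equality to the logarithmic Demailly tower via \eqref{eq:DirectImageFormula}, and then pass through the blow-up \(\nu_k\).

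First, I would recall from the proof of \Cref{prop:ideal sheaf} that
\[W_{\lbb,Y}(p^*s_1,\ldots,p^*s_k) = j^kW_{\lbb,Y}\bigl(p^*j^ks_1\wedge\cdots\wedge p^*j^ks_k\bigr),\]
where \(j^kW_{\lbb,Y}:\bigwedge^k p^*(J^kA^m)\to E_{k,k'}\Omega_{\lbb}(\log Y)\otimes L^k\) is a morphism of \(\oc_{\lbb}\)-modules. Consequently, the value of this Wronskian at a point \(x\in\lbb\) depends only on the fiber values \((p^*j^ks_i)(x) = j^ks_i(p(x))\in (J^kA^m)_{p(x)}\). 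Under the assumption \(j^ks_i(y)=j^ks_i'(y)\) for all \(i\), the two Wronskians \(W_{\lbb,Y}(p^*s_1,\ldots,p^*s_k)\) and \(W_{\lbb,Y}(p^*s_1',\ldots,p^*s_k')\) therefore coincide as sections of \(E_{k,k'}\Omega_{\lbb}(\log Y)\otimes L^k\) on the fiber \(p^{-1}(y)\).

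I would then transfer this equality to the log Demailly tower \(\lbb_k\). Via the direct image isomorphism \eqref{eq:DirectImageFormula}, the fiber of \(\mathscr{E}_{k,k'}\Omega_{\lbb}(\log Y)\) at a point \(x\in\lbb\) is naturally identified with \(H^0\bigl(\pi_{0,k}^{-1}(x),\oc_{\lbb_k}(k')|_{\pi_{0,k}^{-1}(x)}\bigr)\), so that the value at \(x\) of a section \(W\) corresponds exactly to the restriction to \(\pi_{0,k}^{-1}(x)\) of its associated \(\omega\in H^0\bigl(\lbb_k,\oc_{\lbb_k}(k')\bigr)\). Applying this fiber-by-fiber over every \(x\in p^{-1}(y)\) yields the intermediate equality
\[\omega_{\log}(s_1,\ldots,s_k)_{\upharpoonright \lbb_{k,y}} = \omega_{\log}(s_1',\ldots,s_k')_{\upharpoonright \lbb_{k,y}},\]
where \(\lbb_{k,y}:=(p\circ\pi_{0,k})^{-1}(y)\).

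Finally, I would lift this equality through \(\nu_k\). By \eqref{eq:pull back relation1}, \(\nu_k^*\omega_{\log}(s_1,\ldots,s_k) = \widetilde{\omega}_{\log}(s_1,\ldots,s_k)\cdot s_F\), where \(s_F\in H^0\bigl(\widetilde{\lbb}_k,\oc_{\widetilde{\lbb}_k}(F)\bigr)\) denotes the canonical section, and analogously for the primed tuple. Pulling back the previous step therefore gives
\[\bigl(\widetilde{\omega}_{\log}(s_1,\ldots,s_k) - \widetilde{\omega}_{\log}(s_1',\ldots,s_k')\bigr)\cdot s_F \equiv 0 \quad \text{on } \widetilde{\lbb}_{k,y}.\]
The main obstacle, and the one genuinely new verification required, is to confirm that \(s_F\) does not vanish on a Zariski dense open subset of \(\widetilde{\lbb}_{k,y}\), after which one may divide by \(s_F\) and conclude by continuity. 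This reduces to the natural analogue of \Cref{prop:coincide} for the pair \((\lbb,Y)\): the cosupport of \({\wk}_{k,\lbb,Y}\) is contained in \(\lbb_k^{\rm sing}\cup \pi_{0,k}^{-1}(Y)\). Since \(\lbb_{k,y}\) is an iterated projective bundle over \(p^{-1}(y)\cong \cb\) and hence irreducible, its intersection with this locus is a proper closed subset, and the strict transform of \(\lbb_{k,y}\) under \(\nu_k\) provides a dense open subset of \(\widetilde{\lbb}_{k,y}\) on which \(s_F\) is nowhere vanishing.
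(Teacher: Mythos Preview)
Your argument follows the same route as the paper's proof: both establish the intermediate equality \(\omega_{\log}(s_1,\ldots,s_k)_{\upharpoonright \lbb_{k,y}}=\omega_{\log}(s_1',\ldots,s_k')_{\upharpoonright \lbb_{k,y}}\) via the \(\oc_\lbb\)-linearity of \(j^kW_{\lbb,Y}\) and the direct image formula, and both reduce the passage through \(\nu_k\) to showing that the cosupport of \(\wk_{k,\lbb,Y}\) does not contain the whole fiber \(\lbb_{k,y}\).

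The gap is in your justification of this last point. The claimed analogue of \Cref{prop:coincide}, namely that the cosupport of \(\wk_{k,\lbb,Y}\) is contained in \(\lbb_k^{\rm sing}\cup\pi_{0,k}^{-1}(Y)\), is false. The hypothesis of \Cref{prop:coincide} is that the line bundle separate \(k\)-jets, but \(\wk_{k,\lbb,Y}\) is generated by Wronskians of sections \(p^*s\) with \(s\in H^0(Y,A^m)\), and such sections are constant along every fiber of \(p\). Concretely, take \(x\in p^{-1}(y)\setminus Y\) and a germ \(f\) at \(x\) with \(f'(0)\neq 0\) pointing purely in the fiber direction of \(p\); then \(d^i(p^*s)(j_kf)=0\) for all \(i\geqslant 1\) and all \(s\), and formula \eqref{eq: constant} shows that every \(\omega_{\log}(s_1,\ldots,s_k)\) vanishes at \([j_kf]\) as soon as \(k\geqslant 2\). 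This point lies in \(\lbb_k^{\rm reg}\setminus\pi_{0,k}^{-1}(Y)\) and even in \(\lbb_{k,y}\), so your structural containment fails exactly on the fiber you care about. The paper sidesteps this by a direct check: in local coordinates \((z_1,\ldots,z_n)\) centered at \(y\), the single Wronskian \(\omega_{\log}(z_1,z_1^2,\ldots,z_1^k)\) is not identically zero on \(\lbb_{k,x}\), which is all that is needed.
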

\begin{proof}

Define $ {{\lbb}}_{k,x}$ to be the fiber $( {\pi}_{0,k} )^{-1}(x)$ of $  {\pi}_{0,k} : {\lbb}_{k}\rightarrow {\lbb}$. Note that the natural morphism 
$$E_{k,k'}\Omega_{\lbb}(\log Y)\otimes k(x)\xrightarrow{\simeq} ({\pi}_{0,k})_*\oc_{ {\lbb}_{k}}(k')\otimes k(x)\rightarrow H^0\big( {{\lbb}}_{k,x},\oc_{ {\lbb}_{k}}(k')_{\upharpoonright  {{\lbb}}_{k,x}}\big)$$ is an isomorphism, where $k(x)$ is the residue field of ${\lbb}$ at $x$. By the assumption that $j^ks_i(y)=j^ks_i'(y)$ for each $i=1,\ldots,k$, one has 
$$
j^ks_1\wedge\cdots\wedge j^ks_k(y)=j^ks'_1\wedge\cdots\wedge j^ks'_k(y)\in \bigwedge^k J^kA^m\otimes k(y).
$$
Hence for any $x\in p^{-1}(y)$, one has
$$
j^kW_{k,{\lbb},Y}\big(p^* j^ks_1\wedge\cdots\wedge p^*j^ks_k \big)(x)=j^kW_{k,{\lbb},Y}\big(p^* j^ks'_1\wedge\cdots\wedge p^*j^ks'_k \big)(x)\in E_{k,k'}\Omega_{\lbb}(\log Y)\otimes L^k\otimes k(x),
$$
and 
we conclude that $\omega_{\log}(s_1,\ldots,s_k)_{\upharpoonright  {{\lbb}}_{k,y}}=\omega_{\log}(s'_1,\ldots,s'_k)_{\upharpoonright  {{\lbb}}_{k,y}}$.

It now suffices to observe that the co-support of the ideal sheaf \( {\mathfrak{w}}_{k,\lbb,Y}\) does not contain the fiber \( {\lbb}_{k,x}\). Indeed, the announced statement will follow at once by continuity. To see this, it suffices to take coordinates \((z_1,\dots, z_n)\) centered at \(y=p(x)\) and consider the functions \(z_1,\dots, z_1^k\) in  a neighborhood of \(y\). A direct computation then shows that \(\omega_{\log}(z_1,\dots, z_1^k)_{\upharpoonright  {\lbb}_{k,x}}\) is not identically zero, which implies the announced result.
\end{proof}

\section{Main constructions}\label{sec:main construction}
\subsection{Fermat type hypersurfaces and associated pairs} \label{construction}
 To begin with, we construct a family of hypersurfaces in $Y$ parametrized by certain Fermat type   as in \cite{Bro17}. 
 Let \(A\) be a very ample line bundle on \(Y\).
 For an integer $N\geqslant n={\rm dim}(Y)$, we fix $N+1$ sections in general position $\tau_0,\ldots,\tau_N\in H^0(Y,A)$. By ``general position" we mean that the divisors defined by $(\tau_j=0)_{j=0,\ldots,N} $ are all smooth and meet transversally. For any two   positive integers $\varepsilon,\delta$,  set \[\ib:=\{I=(i_0,\ldots,i_N)\mid|I|=\delta \}\] and 
 \[\af:=\big(a_{I}\in H^0(Y,A^{\varepsilon })\big)_{|I|=\delta}\in \ab:=\bigoplus_{I\in\ib}H^0(Y,A^{\varepsilon}).\] For two positive integers $r$ and $k$ fixed later according to our needs, consider the family ${\mathscr{D}}\rightarrow \ab$ of hypersurfaces in $\big\lvert\big(\varepsilon+(r+k)\delta\big)A\big\rvert$ defined by the zero locus of the bihomogenous sections
 \begin{eqnarray}\nonumber
 \sigma(\af)(y):y\mapsto\sum_{|I|=\delta}a_{I}(y)\tau(y)^{(r+k)I},
 \end{eqnarray}
 where $(a_{I})_{|I|=\delta}$ varies in the parameter space $\ab$, and $\tau:=(\tau_0,\ldots,\tau_N)$. For any \(\af\in \ab\), let us write $D_{\af}$ for the fiber of the family ${\mathscr{D}}\rightarrow \ab$.    
 
 Write $m:=\varepsilon+(r+k)\delta$. Consider the total space \(p:\lbb\to Y\) of \(A^m\) defined in \cref{sec:universal}, and write $L:=p^*A^m$. With the same notation in \emph{loc. cit.},  consider the family of hypersurfaces $ {\hs}\rightarrow \ab$ in $\lbb$ defined by the vanishing of  the section
 $$
 T-p^*\sigma(\af)\in  H^0(\lbb,L).
 $$
For any \(\af\in \ab\),  write \(H_{\af}:=( T-p^*\sigma(\af)=0)\subset \lbb\). By \cite{BD15} there exists a non-empty Zariski open subset \({\ab}_{\rm sm}\subset \ab\) such that \(D_{\af}\) is a smooth hypersurface for any \(\af\in {\ab}_{\rm sm}\), and so is \(H_{\af}\).   Let us now shrink the family \(\hs\) (resp. $\ds$) to \({\ab}_{\rm sm}\), and let us denote abusively \(\hs\to {\ab}_{\rm sm}\) (resp. \(\ds\to {\ab}_{\rm sm}\)) this restricted family.  Since we can see \(\ds\) as a hypersurface in \(\hs\) defined by the equation \((T=0)\). Then, by the choice of  $\ab_{\rm sm}$,   \(\ds\) is a smooth hypersurface of \(\hs\) and moreover, 
$(\hs,\ds)\to \ab_{\rm sm}$ is a smooth family of log pairs. 

 Let us define ${\hs}_{k}$ to be the   log  Demailly  $k$-jet tower of $(\hs,\ds,T_{\hs/{\ab}_{\rm sm}}(-\log \ds))$.  Under the natural inclusive morphism
 \begin{align*}
{\ab}_{\rm sm}&\hookrightarrow  {\ab}^{\rm s}\\
\af&\mapsto  \sigma(\af)
\end{align*} 
to the universal family defined in \Cref{sec:Universal family}, and by the flat base change theorem, one simply obtains \({\hs}_{k}:= {\hs}_k^{\rm s}\times_{\ab^{\rm s}}\ab_{\rm sm} \). Let us define 
  \begin{align*}
 \widetilde{\hs}_{k}:= \widetilde{\hs}^{\rm s}_k\times_{\ab^{\rm s}}\ab_{\rm sm}.
 \end{align*}
For any \(\af\in \ab_{\rm sm}\), the fibers of \({\hs}_{k}\rightarrow \ab_{\rm sm}\) and \(\widetilde{\hs}_{k}\rightarrow \ab_{\rm sm}\) are denoted by \({H}_{\af,k}\) and \(\widetilde{H}_{\af,k}\) respectively. Observe that in view of \cref{lem:fiberwise blow-up}, this notation is consistent in the sense that \(\widetilde{H}_{\af,k}\) is indeed the blow-up of \(H_{\af,k}\) along the logarithmic Wronskian ideal sheaf $\wk_{H_{\af,k}}$.

\subsection{Mapping to the Grassmannians} 
Consider the log pair $(\lbb,Y)$ defined in \cref{construction} equipped with the line bundle $L:=p^*A^m$. By \eqref{def:higherorder}, one can define the higher order logarithmic connection $\nabla^j:L\to E^{\rm GG}_{j,j}\Omega_{\lbb}(\log Y)\otimes L$. As in \cref{sec:intrinsic}, let us take a  trivialization tower  \(\mathfrak{U}=\big((U,T_U),(U_j,\xi_j)_{1\leqslant j\leqslant k}\big)\) of order \(k\). A straightforward induction implies the following 
\begin{lem}\label{dI}
	For any $I\in \ib$ and for any $1\leqslant j\leqslant k$, there exist  $\cb$-linear maps 
\begin{align*}
\nabla^{j}_{\!\!I}&:H^0(Y,A^{\varepsilon} )\rightarrow  H^0\big(\lbb,E_{j,j}^{\rm GG}\Omega_{\lbb}(\log Y)\otimes p^*A^{\varepsilon+k\delta}\big),\\
{\nabla}_{\!\!\uk,I}^{j}&:H^0(Y,A^{\varepsilon} )\rightarrow \oc({U_k}), 
\end{align*}
	such that for any $a_I\in H^0(Y,A^{\varepsilon})$, one has
	\begin{align*}
	\nabla^{j}(p^*a_I\cdot{(p^*\tau)}^{(r+k)I})&= (p^*\tau)^{rI}\cdot  \nabla^{j}_{\!\!I }(a_I),\\
	\onu^j(p^*a_I\cdot{(p^*\tau)}^{(r+k)I})&= (\tau_{U_k})^{rI} \cdot {\nabla}_{\!\!\uk,I}^{j}(a_I).
	\end{align*}
	Here we denote by $\tau_{U_k}$ the pull-back of trivialization of $p^*\tau$ under $L_{\upharpoonright U}$ to $U_k$.
\end{lem}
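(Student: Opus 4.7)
The plan is to prove both identities simultaneously by induction on $j$. The base case $j=0$ is immediate: $\nabla^0 s = s = (p^*\tau)^{rI}\cdot p^*a_I\cdot (p^*\tau)^{kI}$ and analogously $\onu^0 s = s$, so one sets $\nabla^0_{\!\!I}(a_I):=p^*a_I\cdot (p^*\tau)^{kI}$, a holomorphic section of $p^*A^{\varepsilon+k\delta}$, and likewise locally. The $\cb$-linearity in $a_I$ is then automatic, as $\nabla^j$ (resp.~$\onu^j$) is $\cb$-linear and the only other operation will be division by a fixed nonzero section (resp.~function).

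For the inductive step, the conceptually cleanest route is a vanishing-order argument applied directly to the definition \eqref{def:higherorder}, which gives $\nabla^j s = T\cdot d^j(s/T)$. The $\cb$-linear operator $d^j$ decreases the order of vanishing along any smooth divisor by at most $j$, since in local coordinates $d^j f$ is a polynomial expression in the partial derivatives $\partial^\alpha f$ with $|\alpha|\leqslant j$. Now the meromorphic section $s/T$, with $s:=p^*a_I\cdot (p^*\tau)^{(r+k)I}$, has its only pole along $Y=(T=0)$ and vanishes to order $(r+k)i_\ell$ along each divisor $p^*(\tau_\ell=0)$, the general position hypothesis on $\tau_0,\ldots,\tau_N$ guaranteeing that these divisors meet $Y$ transversally. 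It follows that $d^j(s/T)$ vanishes to order at least $(r+k)i_\ell-j\geqslant r\,i_\ell$ along $p^*(\tau_\ell=0)$ --- the estimate $k\,i_\ell\geqslant j$ holds whenever $i_\ell\geqslant 1$ and $j\leqslant k$, while the case $i_\ell=0$ is trivial --- and multiplication by $T$ leaves this vanishing unchanged. Hence $\nabla^j s$ is divisible in $E^{\rm GG}_{j,j}\Omega_{\lbb}(\log Y)\otimes L$ by $(p^*\tau)^{rI}=\prod_\ell (p^*\tau_\ell)^{r\,i_\ell}$, and the resulting quotient defines the required global section $\nabla^{j}_{\!\!I}(a_I)\in H^0\big(\lbb, E^{\rm GG}_{j,j}\Omega_{\lbb}(\log Y)\otimes p^*A^{\varepsilon+k\delta}\big)$.

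The local statement for $\onu^j$ is then essentially automatic: by \eqref{eq:trivial high} and the construction of \Cref{sec:intrinsic}, the function $\onu^j f\in\oc(U_k)$ is precisely the local realization of $\nabla^j f$ computed against the chosen trivialization tower $\mathfrak{U}$, so the global divisibility of $\nabla^j s$ by $(p^*\tau)^{rI}$ transfers to a divisibility of $\onu^j(p^*a_I\cdot (p^*\tau)^{(r+k)I})$ by $(\tau_{U_k})^{rI}$ inside $\oc(U_k)$, and one sets ${\nabla}^{j}_{\!\!\uk,I}(a_I)\in\oc(U_k)$ to be the quotient.

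The only mildly delicate point --- and thus the main (modest) obstacle --- is the vanishing-order estimate at points of $Y\cap p^*(\tau_\ell=0)$, where both $T$ and $p^*\tau_\ell$ vanish. There one works in local coordinates in which $T$ and $p^*\tau_\ell$ are part of a coordinate system (using transversality) and checks by a direct expansion that the factor $T$ in $\nabla^j s = T\cdot d^j(s/T)$ exactly absorbs the worst pole of order $j+1$ that $d^j$ introduces on $1/T$, leaving a holomorphic logarithmic jet differential whose order of vanishing along $p^*(\tau_\ell=0)$ is still at least the exponent of $p^*\tau_\ell$ in $s$ minus $j$, i.e.~$(r+k)i_\ell - j\geqslant r\,i_\ell$, as needed.
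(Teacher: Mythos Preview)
Your argument for the first identity is essentially the paper's intended ``straightforward induction'' phrased in terms of vanishing orders: each application of $d$ (hence each step from $\nabla^{j-1}$ to $\nabla^j$ via the local recursion $\nabla^j s = d\nabla^{j-1}s - \nabla^{j-1}s\cdot\frac{dT}{T}$) lowers the order of vanishing along each $p^*(\tau_\ell=0)$ by at most one, so after $j\leqslant k$ steps one still has at least $(r+k-j)i_\ell\geqslant r\,i_\ell$. Using the recursive formula directly, rather than the closed form $T\cdot d^j(s/T)$, avoids the delicate point you flag at $Y\cap p^*(\tau_\ell=0)$ entirely: one stays inside the locally free sheaf $E^{\rm GG}_{j,j}\Omega_{\lbb}(\log Y)\otimes L$ throughout, and divisibility by $(p^*\tau_\ell)^{r i_\ell}$ is checked on coefficients.

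There is, however, a genuine gap in your treatment of the second identity. The claim that $\onu^j f\in\oc(U_k)$ is ``precisely the local realization of $\nabla^j f$ computed against the chosen trivialization tower'' is not correct: the operators $\nabla^j$ and $\onu^j$ live in different worlds (Green--Griffiths jet differentials versus functions on the log Demailly tower) and are \emph{not} related pointwise in general --- the comparison in \Cref{prop:DSWronskian} is only at the level of the Wronskian determinants, and even that requires the whole proof of that proposition. So the global divisibility of $\nabla^j s$ does not automatically transfer to $\onu^j s$. The fix is immediate once identified: run the \emph{same} induction directly on $\onu^j$, using its own recursion \eqref{eq:trivial high}, namely $\onu^{j+1}g = \big(dg - g\,\tfrac{d\pi_{0,j}^*\sigma_U}{\pi_{0,j}^*\sigma_U}\big)(\xi_{j+1})$. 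Since $d$ applied to $(\tau_{U_k})^{mI}\cdot h$ again drops the $\tau_\ell$-order by at most one, divisibility by $(\tau_{U_k})^{rI}$ follows exactly as before.
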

Therefore, for any $I_1,\ldots,I_k\in \ib$ and any $a_{I_1},\ldots,a_{I_k}\in H^0(Y,L^{\varepsilon})$ one can define
\begin{align*} 
	W_{\log,I_1,\ldots,I_k}( a_{I_1},\ldots, a_{I_k}):=\begin{vmatrix}
		\nabla^{1}_{\!\!I_1 }(a_{I_1}) & \cdots & 	\nabla^{1}_{\!\!I_k }(a_{I_k}) \\
		\vdots & \ddots & \vdots \\
		\nabla^{k}_{\!\!I_1 }(a_{I_1}) & \cdots & 	\nabla^{k}_{\!\!I_k }(a_{I_k}) 
	\end{vmatrix} \in H^0({\lbb}, E_{k,k'}  \Omega_{\lbb}(\log Y)\otimes p^*A^{k(\varepsilon+k\delta ) }).
\end{align*}
It then follows from \Cref{dI} that 
\begin{align}\label{eq:modified Wronskian}
W_{{\lbb},Y}(p^*a_{I_1}\cdot{(p^*\tau)}^{(r+k)I_1},\ldots,p^*a_{I_k}\cdot{(p^*\tau)}^{(r+k){I_k}})=(p^*\tau)^{r(I_1+\cdots+I_k)}\cdot W_{\log,I_1,\ldots,I_k}( a_{I_1},\ldots, a_{I_k}).
\end{align}
Set $$\omega_{\log,I_1,\ldots,I_k}( a_{I_1},\ldots, a_{I_k})\in H^0\big({{\lbb}}_k,\oc_{{{\lbb}}_k}(k')\otimes (p\circ{\pi}_{0,k})^*A^{k(\varepsilon+k\delta)}\big),$$ 
to be the inverse image of $({\pi}_{0,k})_*$ under the isomorphism \eqref{eq:DirectImageFormula}, then
\begin{align}\label{eq:two wronskian}
\omega_{\log}(a_{I_1}\tau^{(r+k)I_1},\dots,a_{I_k}\tau^{(r+k)I_k})=\omega_{\log,I_1,\ldots,I_k}( a_{I_1},\ldots, a_{I_k})\cdot (p\circ{\pi}_{0,k})^*\tau^{r(I_1+\cdots+I_k)}.
\end{align}
Moreover from  \Cref{prop:ideal sheaf} one can deduce at once
\begin{lem}\label{lem:ideal contain}
The section \(\omega_{\log,I_1,\ldots,I_k}( a_{I_1},\ldots, a_{I_k})\) vanishes along \({\wk}_{k,{\lbb},Y}\). In other words, \[\omega_{\log,I_1,\ldots,I_k}( a_{I_1},\ldots, a_{I_k})\in H^0\big({{\lbb}}_k,\oc_{{{\lbb}}_k}(k')\otimes (p\circ{\pi}_{0,k})^*A^{k(\varepsilon+k\delta)}\otimes {\wk}_{k,{\lbb},Y}\big),\] where ${\wk}_{k,{\lbb},Y}$ is the ideal sheaf defined in \Cref{sec:universal}.
\end{lem}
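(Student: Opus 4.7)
My plan is to combine \eqref{eq:two wronskian} and Proposition~\ref{prop:ideal sheaf} with a saturation argument which allows one to ``divide out'' the factor $(p\circ\pi_{0,k})^*\tau^{r(I_1+\cdots+I_k)}$ appearing in the factorization.

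First, since $m=\varepsilon+(r+k)\delta\geqslant k$, Proposition~\ref{prop:ideal sheaf} applies to the global sections $s_j:=a_{I_j}\tau^{(r+k)I_j}\in H^0(Y,A^m)$ and yields
\[
\omega_{\log}(a_{I_1}\tau^{(r+k)I_1},\dots,a_{I_k}\tau^{(r+k)I_k})\in H^0\bigl(\lbb_k,\oc_{\lbb_k}(k')\otimes\pi_{0,k}^*L^k\otimes\wk_{k,\lbb,Y}\bigr).
\]
Equivalently, under the direct image isomorphism \eqref{eq:DirectImageFormula}, the Wronskian $W_{\lbb,Y}(p^*s_1,\ldots,p^*s_k)$ belongs to $H^0(\lbb,\mathscr{W}_{k,\lbb,Y})$, and by \eqref{eq:modified Wronskian} this Wronskian factors as $W_{\log,I_1,\ldots,I_k}(a_{I_1},\ldots,a_{I_k})\cdot(p^*\tau)^{r(I_1+\cdots+I_k)}$.

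The main technical point of the plan is the assertion that $\mathscr{W}_{k,\lbb,Y}$ is actually a \emph{subbundle} of $E_{k,k'}\Omega_\lbb(\log Y)\otimes L^k$, not merely a locally free subsheaf. This should be extracted from the explicit local description given in \Cref{sec:universal}: in suitable local trivializations, $j^kW_{p^{-1}(U),U}$ is represented by a \emph{constant} matrix, whose image in a free module is always a direct summand. Consequently the quotient $E_{k,k'}\Omega_\lbb(\log Y)\otimes L^k/\mathscr{W}_{k,\lbb,Y}$ is locally free, in particular torsion free. Multiplication by any nonzero global section of a line bundle is then injective on any twist of this quotient, so if $\mathscr{N}$ is a line bundle on $\lbb$ and $\sigma\in H^0(\lbb,\mathscr{N})$ is nonzero, the preimage of $\mathscr{W}_{k,\lbb,Y}$ under the multiplication map
\[
\cdot\sigma:E_{k,k'}\Omega_\lbb(\log Y)\otimes L^k\otimes\mathscr{N}^{-1}\;\longrightarrow\; E_{k,k'}\Omega_\lbb(\log Y)\otimes L^k
\]
is exactly $\mathscr{W}_{k,\lbb,Y}\otimes\mathscr{N}^{-1}$.

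Applying this with $\mathscr{N}=p^*A^{rk\delta}$ and $\sigma=(p^*\tau)^{r(I_1+\cdots+I_k)}$, the factorization above shows that $W_{\log,I_1,\ldots,I_k}(a_{I_1},\ldots,a_{I_k})\in H^0(\lbb,\mathscr{W}_{k,\lbb,Y}\otimes p^*A^{-rk\delta})$. Transferring back to $\lbb_k$ using that $\pi_{0,k}$ is a flat tower of projective bundles with $\pi_{0,k*}\oc_{\lbb_k}=\oc_\lbb$ (so that $\pi_{0,k*}$ is exact and satisfies the projection formula, and carries the twisted ideal $\oc_{\lbb_k}(k')\otimes\pi_{0,k}^*L^k\otimes\wk_{k,\lbb,Y}$ to $\mathscr{W}_{k,\lbb,Y}$ by the very definition of $\wk_{k,\lbb,Y}$), we conclude
\[
\omega_{\log,I_1,\ldots,I_k}(a_{I_1},\ldots,a_{I_k})\in H^0\bigl(\lbb_k,\oc_{\lbb_k}(k')\otimes(p\circ\pi_{0,k})^*A^{k(\varepsilon+k\delta)}\otimes\wk_{k,\lbb,Y}\bigr).
\]
The hardest part of the plan is precisely verifying the subbundle claim at every point---including over the zero section $Y\subset\lbb$---from the local models of \Cref{sec:universal}; once this is granted, the division and the descent to $\lbb_k$ are purely formal.
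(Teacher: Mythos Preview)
Your proof is correct and follows essentially the same route as the paper: from the factorization \eqref{eq:modified Wronskian} one deduces $W_{\log,I_1,\ldots,I_k}(a_{I_1},\ldots,a_{I_k})\in H^0(\lbb,\mathscr{W}_{k,\lbb,Y}\otimes p^*A^{-rk\delta})$, and then the very definition of $\wk_{k,\lbb,Y}$ as the image of $\pi_{0,k}^{-1}\mathscr{W}_{k,\lbb,Y}\otimes\oc_{\lbb_k}(-k')\otimes\pi_{0,k}^*L^{-k}\to\oc_{\lbb_k}$ (Proposition~\ref{prop:ideal sheaf}) transfers this to the required statement on $\lbb_k$. You are in fact more careful than the paper on one point: the paper simply asserts the first membership ``by \eqref{eq:modified Wronskian}'', whereas you correctly isolate that dividing out $(p^*\tau)^{r(I_1+\cdots+I_k)}$ requires knowing that $\mathscr{W}_{k,\lbb,Y}$ sits as a subbundle (equivalently, that the quotient is torsion-free), which indeed follows from the constant-matrix description of $j^kW_{p^{-1}(U),U}$ in \Cref{sec:universal}.
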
 
\begin{proof}
By \eqref{eq:modified Wronskian}, one has 
$$
	W_{\log,I_1,\ldots,I_k}( a_{I_1},\ldots, a_{I_k})\in H^0(\lbb, \mathscr{W}_{k,{\lbb},Y}\otimes p^*A^{-rk\delta}),
$$
where we recall that $\mathscr{W}_{k,{\lbb},Y}$ is the image of the morphism $j^kW_{{\lbb},Y}$ defined in \eqref{eq:image}. Then the base ideal of $\omega_{\log,I_1,\ldots,I_k}( a_{I_1},\ldots, a_{I_k})$ belongs to the ideal sheaf of $\oc_{ {\lbb}_{k}}$ defined by the image of the morphism
	$$
{\pi}_{0,k}^{-1}(\mathscr{W}_{k,{\lbb},Y}\otimes p^*A^{-rk\delta})\otimes (p\circ{\pi}_{0,k})^* A^{-k(\varepsilon+k\delta)} \otimes  \oc_{ {\lbb}_{k}}(-k')\otimes \hookrightarrow {\pi}_{0,k}^*\ E_{k,k'}\Omega_{\lbb}(\log Y) \otimes \oc_{ {\lbb}_{k}}(-k')\rightarrow \oc_{ {\lbb}_{k}}.
$$
Note that the image of the above morphism coincides with that of the following one 
	$$
{\pi}_{0,k}^*\mathscr{W}_{k,{\lbb},Y}\otimes {\pi}_{0,k}^* L^{-k} \otimes  \oc_{ {\lbb}_{k}}(-k')\otimes \hookrightarrow {\pi}_{0,k}^*\ E_{k,k'}\Omega_{\lbb}(\log Y) \otimes \oc_{ {\lbb}_{k}}(-k')\rightarrow \oc_{ {\lbb}_{k}}.
$$
The lemma  follows immediately from  \Cref{prop:ideal sheaf}.
\end{proof}
 Recall that we  define  $\nu_k:\widetilde{{\lbb}}_k \rightarrow  {\lbb}_{k}$ to be the blow-up of the ideal sheaf $ {\wk}_{k,{\lbb},Y}$. By \cref{lem:ideal contain} and \eqref{eq:pull back relation1} there  exists a unique
\begin{align*}
	\widetilde{\omega}_{\log,I_1,\ldots,I_k}( a_{I_1},\ldots, a_{I_k})\in H^0\Big(\widetilde{{\lbb}}_k,\nu_k^*\big(\oc_{{{\lbb}}_k}(k')\otimes (p\circ{\pi}_{0,k})^*A^{k(\varepsilon+k\delta)}\big)\otimes \oc_{\widetilde{{\lbb}}_k}(-F)\Big)
\end{align*} 
such that
\begin{align*}
	\nu_k^*\omega_{\log,I_1,\ldots,I_k}( a_{I_1},\ldots, a_{I_k})=F\cdot\widetilde{\omega}_{\log,I_1,\ldots,I_k}( a_{I_1},\ldots, a_{I_k}).
\end{align*}
By definition $\omega_{\log,I_1,\ldots,I_k}( a_{I_1},\ldots, a_{I_k})$ is \emph{alternating} with respect to $(I_1,\ldots,I_k)$. We then can  define a rational map
\begin{align*}
	 \Phi:  \ab\times {{\lbb}}_k    &\dashrightarrow  \pt\big(\Lambda^{k}(\cb^{\mathbb{I}})\big)\\
	(\af,w) &\mapsto  \big[\big(\omega_{\log,I_1,\ldots,I_k}(a_{I_1},\ldots,a_{I_k})(w)\big)_{I_1,\ldots,I_k\in \ib}\big].
\end{align*}
The map $\Phi$ can also  be interpreted explicitly using our intrinsic construction in \Cref{sec:intrinsic}. 
Let us fix a tower trivialization \(\mathfrak{U}\) of order \(k\).  If we  denote by
\[
{\nabla}^i_{\!\!\uk,\bullet}(\af,w):=\big(	{\nabla}_{\!\!\uk,I}^{j}(a_I)(w)\big)_{I\in \ib}\in \bigoplus_{I\in\ib}\oc(\ab\times U_k),
\] 
for any  $1\leqslant i\leqslant k$, then we can define another rational map locally by
\begin{align*}
	\Phi_{\!\uk}: \ab  \times U_k &\dashrightarrow   {\rm Gr}_k(\cb^{\mathbb{I}}) \\
	(\af,w)&\mapsto  {\rm Span}\big({\nabla}^1_{\!\!\uk,\bullet}(\af,w),\cdots,{\nabla}^k_{\!\!\uk,\bullet}(\af,w)\big)_{I_1,\ldots,I_k\in \ib}
\end{align*}
and this is indeed the localization of $\Phi$.
\begin{lem}\label{lem:two rational}
	One has $\Phi_{\upharpoonright \ab\times U_k}={\rm Pluc}\circ \Phi_{\!\uk}$, where  ${\rm Pluc}:{\rm Gr}_{k}(\cb^{\ib})\hookrightarrow \textnormal{P}\big(\Lambda^{k}(\cb^{\mathbb{I}})\big)$ denotes the Pl\"ucker embedding.
	\end{lem}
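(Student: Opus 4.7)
The plan is to compute both sides in local coordinates on $U_k$ using the trivialization tower and verify agreement up to a common projective scaling. Fix a trivialization tower $\uk=\big((U,T_U),(U_j,\xi_j)_{1\leqslant j\leqslant k}\big)$ of order $k$. This induces a trivialization of the line bundle $\oc_{{\lbb}_k}(k')\otimes (p\circ{\pi}_{0,k})^*A^{k(\varepsilon+k\delta)}$ on $U_k$, under which the section $\omega_{\log,I_1,\ldots,I_k}(a_{I_1},\ldots,a_{I_k})$ is represented by an honest function on $U_k\times \ab$. The goal is to identify these functions, up to a single factor common to all $(I_1,\ldots,I_k)$, with the $k\times k$ Plücker minors of the matrix whose rows are the vectors $\nabla^j_{\!\!\uk,\bullet}(\af,w)$.

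The central computation applies \Cref{dI}: for every $i,j$ one has
\[\onu^j\big(p^*a_{I_i}\cdot (p^*\tau)^{(r+k)I_i}\big)=\tau_{U_k}^{rI_i}\cdot \nabla^{j}_{\!\!\uk,I_i}(a_{I_i}).\]
The crucial observation is that the prefactor $\tau_{U_k}^{rI_i}$ depends on the column index $i$ but not on the row index $j$; factoring it out of each column of the determinant defining $\omega_{\uk}$ yields
\[\omega_{\uk}\big(a_{I_1}\tau^{(r+k)I_1},\ldots,a_{I_k}\tau^{(r+k)I_k}\big)= \tau_{U_k}^{r(I_1+\cdots+I_k)}\cdot \det\big(\nabla^{j}_{\!\!\uk,I_i}(a_{I_i})\big)_{j,i}.\]
Combining this identity with the local expression \eqref{eq:another wronskian} of \Cref{prop:DSWronskian}, with the relation $\omega_{\log}=\omega'_{\log}\cdot \bm{\Gamma}_k$ from \eqref{eq:  two wronskian}, and with the defining relation \eqref{eq:two wronskian} for $\omega_{\log,I_1,\ldots,I_k}$, one obtains the local expression
\[\omega_{\log,I_1,\ldots,I_k}(a_{I_1},\ldots,a_{I_k})_{\upharpoonright U_k}=C\cdot \det\big(\nabla^{j}_{\!\!\uk,I_i}(a_{I_i})\big)_{j,i},\]
where $C$ is a factor built from $\pi^*_{1,k}\xi_1^{-k}\cdots \xi_k^{-1}$ and the defining section of $\bm{\Gamma}_k$, and is therefore independent of the choice of $(I_1,\ldots,I_k)$.

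Finally, by the very definition of the Plücker embedding, the $(I_1,\ldots,I_k)$-th Plücker coordinate of the subspace ${\rm Span}\big(\nabla^1_{\!\!\uk,\bullet}(\af,w),\ldots,\nabla^k_{\!\!\uk,\bullet}(\af,w)\big)\subset \cb^{\ib}$ is precisely $\det(\nabla^{j}_{\!\!\uk,I_i}(a_{I_i}))_{j,i}$. Passing to projective classes cancels the common factor $C$, which gives the desired equality $\Phi_{\upharpoonright \ab\times U_k}={\rm Pluc}\circ \Phi_{\!\uk}$ as rational maps. The only subtlety is that $C$ may vanish along $\bm{\Gamma}_k$; since both sides are merely rational maps, coincidence on the Zariski open complement is enough. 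I expect the main obstacle to be the bookkeeping between the two versions $\omega_{\log}$ and $\omega'_{\log}$ of the logarithmic Wronskian and the corresponding shift of line bundle by $\bm{\Gamma}_k$, rather than any substantive mathematical difficulty.
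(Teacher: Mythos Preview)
Your proposal is correct and follows essentially the same route as the paper: both arguments use \Cref{dI} to factor $\tau_{U_k}^{rI_i}$ out of each column, then invoke \eqref{eq:another wronskian} from \Cref{prop:DSWronskian} together with the relation $\omega_{\log}=\omega'_{\log}\cdot\bm{\Gamma}_k$ and \eqref{eq:two wronskian} to identify the local expression of $\omega_{\log,I_1,\ldots,I_k}$ with the Pl\"ucker minor up to a common factor. Your explicit remark that the common factor $C$ may vanish along $\bm{\Gamma}_k$ but that this is harmless for an equality of rational maps is a welcome clarification not spelled out in the paper.
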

\begin{proof}
Let us define
	\[\omega_{\mathfrak{U},I_1,\ldots,I_k}(a_{I_1},\ldots, a_{I_k}):=
	\left|\begin{array}{ccc}
	{\nabla}_{\!\!\uk,I_1}^{1}(a_{I_1})&\cdots & {\nabla}_{\!\!\uk,I_1}^{1}(a_{I_k})\\
	\vdots & \ddots& \vdots \\
	{\nabla}_{\!\!\uk,I_1}^{k}(a_{I_1})&\cdots & {\nabla}_{\!\!\uk,I_k}^{k}(a_{I_k})
	\end{array}\right|\in \oc(\ab\times U_{k}),
	\]
	which corresponds to  the Pl\"ucker coordinate of ${\rm Pluc}\circ \Phi_{\!\uk}$. 
		By \Cref{dI}, one has \[\omega_{\mathfrak{U},I_1,\ldots,I_k}(a_{I_1},\ldots, a_{I_k})\cdot \tau_{U_k}^{r(I_1+\cdots+I_k)}=	\left|\begin{array}{ccc}
		\onu^1(p^*a_{I_1}\cdot{(p^*\tau)}^{(r+k)I_1})&\cdots & \onu^1(p^*a_{I_k}\cdot{(p^*\tau)}^{(r+k)I_k})\\
		\vdots &\ddots& \vdots \\
		\onu^k(p^*a_{I_1}\cdot{(p^*\tau)}^{(r+k)I_1})&\cdots & \onu^k(p^*a_{I_k}\cdot{(p^*\tau)}^{(r+k)I_k})
		\end{array}\right|.		
		\]
		It follows from \Cref{prop:DSWronskian} that under the trivialization of $\uk$, one has
		\[ \omega_{\log}(a_{I_1}\tau^{(r+k)I_1},\ldots,a_{I_k}\tau^{(r+k)I_k})_{\upharpoonright U_k}=\omega_{\mathfrak{U},I_1,\ldots,I_k}(a_{I_1},\ldots, a_{I_k})\cdot \tau_{U_k}^{r(I_1+\cdots+I_k)}\cdot \bm{\Gamma}_{k,\uk}, \]
		here \(\bm{\Gamma}_{k,\uk}\in\oc(U_k)\) is the holomorphic function defining \(\bm{\Gamma}_{k}\) via the trivialization of $\uk$.
 By \eqref{eq:two wronskian} we conclude the proof of the lemma.
\end{proof}
\begin{rem}
By the proof of \cref{lem:two rational}, one can glue \(\omega_{\mathfrak{U},I_1,\ldots,I_k}(a_{I_1},\ldots, a_{I_k})\)  together to obtain a global section 
	\begin{align}\label{eq:new wronskian}
	\omega'_{\log,I_1,\ldots,I_k}( a_{I_1},\ldots, a_{I_k})\in H^0\big({{\lbb}}_k,\oc_{{{\lbb}}_k}(k,k-1,\dots,1)\otimes (p\circ{\pi}_{0,k})^*A^{k(\varepsilon+k\delta)}\big)
	\end{align}
	such that 
	\[\omega_{\log,I_1,\ldots,I_k}( a_{I_1},\ldots, a_{I_k})=\omega'_{\log,I_1,\ldots,I_k}( a_{I_1},\ldots, a_{I_k})\cdot\bm{\Gamma}_k. \]
	It follows from \eqref{eq:two exceptional} that 
\begin{align}\label{eq:pull-back}
\nu_k^*\omega'_{\log,I_1,\ldots,I_k}( a_{I_1},\ldots, a_{I_k})=F'\cdot \widetilde{\omega}_{\log,I_1,\ldots,I_k}( a_{I_1},\ldots, a_{I_k}). 
\end{align}
	\end{rem}
Consider the following rational map
\begin{align}\label{eq:map to grass}
 \widetilde{\Phi}: \ab\times \widetilde{\lbb}_k   &\dashrightarrow   \textnormal{P}\big(\Lambda^{k}(\cb^{\mathbb{I}})\big)\\\nonumber
 (\af,\tilde{w})&\mapsto  \big[\big(\widetilde{\omega}_{\log,I_1,\ldots,I_k}(a_{I_1},\ldots,a_{I_k})(\tilde{w})\big)_{I_1,\ldots,I_k\in \ib}\big].
\end{align}
Since $\widetilde{\lbb}_k\setminus{\rm Supp}(F)\xrightarrow{\nu_k}{{\lbb}}_k\setminus {\rm Supp}(\oc_{{{\lbb}}_k}/{\wk}_{k,\lbb,Y})$ is a isomorphism,  one has
\(\widetilde{\Phi}=\Phi\circ \nu_k  \) outside \({\rm Supp}(F)\), and by the fact that \(\widetilde{\lbb}_k\) is irreducible, this implies
that $\widetilde{\Phi}$ also factors through the Pl\"ucker embedding, which is also denoted by
 \(\widetilde{\Phi}\).   One thus has the following commutative diagram
	\begin{displaymath}
	\xymatrix{
 \ab\times	\widetilde{\lbb}_k \ar[d]_{\mathds{1}\times \nu_k } \ar@{-->}[dr]^{\widetilde{\Phi}}  &  \\
	 \ab\times	{\lbb}_k          \ar@{-->}[r]^-{\Phi}     & {\rm Gr}_{k}(\cb^{\ib})  }
	\end{displaymath} 

\subsection{Partially resolving the  indeterminacy}\label{partial resolving}

In this subsection, we will find a local and linear description for \(\widetilde{\Phi}\), and use this to prove that \(\nu_k\) \emph{partially} resolves the indeterminacies of rational map \(\widetilde{\Phi}\) in the same spirit as \cite[Lemmata 3.6 \& 3.7]{Bro17}.
 
\begin{lem}\label{partial}
	Fix any $\varepsilon\geqslant k$ and any $N>n$. For any $\tilde{w}_0\in \widetilde{{\lbb}}_k$,  there exists an open neighborhood $\widetilde{U}_{\tilde{w}_0}$ of $\tilde{w}_0$ such that we can define $\cb$-linear maps
	$$
	\ell_{I}^p: H^0(Y,A^{\varepsilon})\rightarrow \oc(\widetilde{U}_{\tilde{w}_0})
	$$
 for any $I\in  \ib$ and $p=1,\ldots,k$ satisfying the following conditions. 
	\begin{thmlist}
		\item \label{linear-claim 1} Write $\ell_{\bullet}^p(\af,\tilde{w}):=\big(\ell_{I}^p(a_I)(\tilde{w})\big)_{I\in \ib}\in\cb^{\ib}$.  The Pl\"ucker coordinates of $\widetilde{\Phi}(\af,\tilde{w})$ in $ {\rm Gr}_{k}(  \cb^{\ib})$   all vanishes if and only
		$$
		{\rm dim\ Span}\big(\ell_{\bullet}^1(\af,\tilde{w}),\ldots, \ell_{\bullet}^k(\af,\tilde{w}) \big)<k.
		$$
		\item \label{linear-claim 2} When $
{\rm dim\ Span}\big(\ell_{\bullet}^1(\af,\tilde{w}),\ldots, \ell_{\bullet}^k(\af,\tilde{w}) \big)=k
		$, one has
		$$
		\widetilde{\Phi}(\af,\tilde{w})=\big(\ell_{\bullet}^1(\af,\tilde{w}),\ldots, \ell_{\bullet}^k(\af,\tilde{w}) \big).
		$$
		\item \label{linear-claim 3}
		Set $y:=p\circ{\pi}_{0,k}\circ\nu_k(\tilde{w}_0)\in Y$ and define $\rho_{y}:(\cb^{{\ib}})^{k}\rightarrow (\cb^{\ib_{y}})^{k}$ to be the natural projection map, where 
		$$\ib_{y}:=\{I\in \ib\mid \tau^I(y)\neq0 \}.$$
		 Define a linear map 
		\begin{align}\label{linear map:non-truncated}
		\widetilde{\varphi}_{\tilde{w}_0}:\ab&\rightarrow  (\cb^{{{\ib}}})^{k}\\\nonumber
		\af&\mapsto \big(\ell_{\bullet}^1(\af,\tilde{w}_0),\ldots, \ell_{\bullet}^k(\af,\tilde{w}_0) \big).
		\end{align}
		 Then one has
\begin{align}\label{eq:rank}
	{\rm rank\ } \rho_{y}\circ\widetilde{\varphi}_{\tilde{w}_0}=k\cdot \#\ib_{y},
\end{align}
		where $\#\ib_{y}$ denotes to be the cardinality of $\ib_{y}$.
	\end{thmlist}
\end{lem}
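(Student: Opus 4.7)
The plan is to construct the $\ell_I^p$ locally at $\tilde{w}_0$ by a row-reduction procedure on the blow-up $\widetilde{\lbb}_k$, starting from the intrinsic local presentation of the logarithmic Wronskians in a trivialization tower, and then to deduce (iii) by reducing it to the fact that $A^\varepsilon$ separates $k$-jets. Globally this is the logarithmic counterpart of \cite[Lemmas~3.6 \& 3.7]{Bro17}.

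First I would set $w_0 := \nu_k(\tilde{w}_0)$, $x_0 := \pi_{0,k}(w_0)$, $y_0 := p(x_0)$, and choose a trivialization tower $\mathfrak{U} = \big((U, T_U), (U_j, \xi_j)_{1 \leqslant j \leqslant k}\big)$ of order $k$ with $w_0 \in U_k$, together with a trivialization of $A$ on $U$. By \cref{dI}, for each $I \in \ib$ and each $p \in \{1, \dots, k\}$ we have a $\cb$-linear functional ${\nabla}_{\!\!\uk, I}^p : H^0(Y, A^\varepsilon) \to \oc(U_k)$. Consider the $k \times \#\ib$ matrix $M(\af, w) := \big({\nabla}_{\!\!\uk, I}^p(a_I)(w)\big)_{p, I}$ of holomorphic functions on $U_k$, linear in $\af \in \ab$. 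By \cref{lem:two rational} and the proof of \cref{prop:DSWronskian}, the $k \times k$ maximal minors of $M(\af, w)$ equal $\omega_{\mathfrak{U}, I_1, \dots, I_k}(a_{I_1}, \dots, a_{I_k})$ and agree with the Plücker coordinates $\omega'_{\log, I_1, \dots, I_k}(a_{I_1}, \dots, a_{I_k})$ up to a nowhere-vanishing factor on $U_k$ coming from the trivializing sections $\xi_j$.

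Next, in a small neighborhood $\widetilde{U}_{\tilde{w}_0}$ of $\tilde{w}_0$ in $\nu_k^{-1}(U_k)$, let $g \in \oc(\widetilde{U}_{\tilde{w}_0})$ be a local equation of the exceptional divisor $F'$. By \eqref{eq:pull-back} combined with \cref{prop:ideal sheaf}, every maximal minor of $\nu_k^* M(\af, \cdot)$ is divisible by $g$ and the ideal they generate is exactly $(g)$. Following the linear-algebraic procedure of \cite[Lemma~3.6]{Bro17}, I would then perform holomorphic row operations on $\nu_k^* M$ (independent of $\af$) and divide exactly one of the resulting rows by $g$, producing a new $k \times \#\ib$ matrix whose entries I denote $\ell_I^p(a_I)(\tilde{w})$. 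By construction the $\ell_I^p$ are $\cb$-linear in $a_I$ (the row operations are independent of $\af$), depend only on $a_I$ (as ${\nabla}_{\!\!\uk, I}^p(a_I)$ does), and their maximal minors equal $\widetilde{\omega}_{\log, I_1, \dots, I_k}(a_{I_1}, \dots, a_{I_k})$ up to a nowhere-vanishing factor; assertions (i) and (ii) follow immediately, since the minors of $(\ell_I^p)$ all vanish iff the rows fail to span a $k$-plane, and otherwise $\widetilde{\Phi}(\af, \tilde{w})$ is the row span. I expect this row-reduction step to be the main obstacle: carrying it out so that the entries $\ell_I^p$ are genuinely holomorphic and $\cb$-linear in $a_I$ and their maximal minors coincide with $\widetilde{\omega}_{\log, I_1, \dots, I_k}$ on the nose requires explicit local coordinates on $\widetilde{\lbb}_k$ and careful tracking of how $F'$ enters the rows of $\nu_k^* M$, including at points lying over $D$.

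For (iii), since $\ell_I^p(a_I)$ depends only on $a_I$, the map $\widetilde{\varphi}_{\tilde{w}_0}$ is a direct sum over $I \in \ib$ of individual maps $T_I : H^0(Y, A^\varepsilon) \to \cb^k$, $a_I \mapsto (\ell_I^p(a_I)(\tilde{w}_0))_p$, so $\rho_{y_0} \circ \widetilde{\varphi}_{\tilde{w}_0}$ is the direct sum of the $T_I$ for $I \in \ib_{y_0}$ and it suffices to prove each such $T_I$ is surjective. Unwinding the row reductions, $T_I$ is equivalent, via an invertible change of basis of $\cb^k$, to the map $a_I \mapsto ({\nabla}_{\!\!\uk, I}^p(a_I)(w_0))_p$; since $\tau^I(y_0) \neq 0$ for $I \in \ib_{y_0}$, \cref{dI} rewrites this as $a_I \mapsto \big(\onu^p(p^* a_I \cdot (p^* \tau)^{(r+k) I})(w_0)\big)_p$. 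Each of these $k$ functionals depends only on the $k$-jet of $a_I$ at $y_0$, and $A^\varepsilon$ separates $k$-jets there since $A$ is very ample and $\varepsilon \geqslant k$. Finally, the inductive definition \eqref{eq:trivial high} shows that $\onu^p(\cdot)(w_0)$ has exact weight $p$ in the jet filtration of $J^k_{y_0} A^\varepsilon$, so expressing these $k$ functionals in a basis ordered by jet order yields an upper-triangular matrix with nonzero pivots, giving the desired surjectivity.
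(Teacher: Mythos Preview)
Your overall strategy for (i)--(ii) is in the right spirit, but the paper avoids the row-reduction obstacle you flag by a different, cleaner device. Instead of manipulating $\nu_k^*M$ directly, the paper first fixes auxiliary sections $b_1,\dots,b_k\in H^0(Y,A^m)$ with $\tilde{\omega}_{\log}(b_1,\dots,b_k)(\tilde{w}_0)\neq 0$ (these exist by \cref{prop:ideal sheaf} since $m\geqslant k$), and then defines
\[
\ell_I^p(a_I):=\frac{\tilde{\omega}_{\log,p,I}(b_1,\dots,b_{p-1},a_I,b_{p+1},\dots,b_k)}{\tilde{\omega}_{\log}(b_1,\dots,b_k)},
\]
the numerator being the ``single-column-replaced'' Wronskian divided by $F'$. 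This is manifestly holomorphic and $\cb$-linear in $a_I$, and a Cramer-type identity gives $G\cdot(\ell_I^p)_p=(\nabla^p_{\uk,I}(a_I))_p$ with $G=(\onu^p(b_j))$, so the $k\times k$ minors of $(\ell_I^p)$ equal $\tilde{\omega}_{\log,I_1,\dots,I_k}/\tilde{\omega}_{\log}(b_1,\dots,b_k)$. In effect this \emph{is} a specific row reduction (multiply by the adjugate of $G$, then divide by a unit), but the choice of $b_i$'s makes it explicit and bypasses the need for local coordinates on $\widetilde{\lbb}_k$.

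There is a genuine gap in your argument for (iii). You assert that $T_I$ is equivalent, via an invertible change of basis of $\cb^k$, to $a_I\mapsto(\nabla^p_{\uk,I}(a_I)(w_0))_p$. But your row reduction involves dividing a row by the local equation $g$ of $F'$, so when $\tilde{w}_0$ lies on the exceptional divisor (precisely the case of interest, covering $X_k(D)^{\rm sing}\cup\pi_{0,k}^{-1}(D)$), this change of basis is \emph{not} invertible, and the map $a_I\mapsto(\nabla^p_{\uk,I}(a_I)(w_0))_p$ can genuinely fail to be surjective there even though $T_I$ is. Your subsequent triangularity claim for $\onu^p(\cdot)(w_0)$ is also not justified at singular jets. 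The paper circumvents this by working directly on the blow-up via \cref{k-jets}: for $I\in\ib_y$ one has $\tau^I(y)\neq 0$, so (using that $A^\varepsilon$ separates $k$-jets) one can pick $c_1,\dots,c_k\in H^0(Y,A^\varepsilon)$ with $j^kc_i(y)=j^k(b_i/\tau^{(r+k)I})(y)$; then \cref{k-jets} gives $\tilde{\omega}_{\log}(c_1\tau^{(r+k)I},\dots,c_k\tau^{(r+k)I})(\tilde{w}_0)=\tilde{\omega}_{\log}(b_1,\dots,b_k)(\tilde{w}_0)$, and the determinant $\det(\ell_I^p(c_j))(\tilde{w}_0)$ computes to $\tau(y)^{-krI}\neq 0$, proving surjectivity of $T_I$ without ever passing back down to $\lbb_k$.
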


\begin{proof}
	Set   $w_0:=\nu_k(\tilde{w}_0)$, $x_0={\pi}_{0,k}(w_0)$ and thus $y=p(x_0)$.  Since $m:=\varepsilon+(r+k)\delta\geqslant k$, by \cref{prop:ideal sheaf}   there exist $b_1,\ldots,b_k\in H^0(Y,A^m)$ such that $\tilde{\omega}_{\log}(b_1,\ldots,b_k)(\tilde{w})\neq 0$ on some neighborhood $\widetilde{U}_{{w}_0}$ of $\tilde{w}_0$ in $\widetilde{\lbb}_{k}$.  Pick a trivialization tower \(\uk\) of order \(k\) such that \(w_0\in U_k\). We shrink $\widetilde{U}_{{w}_0}$ such that \(\widetilde{U}_{{w}_0}\subset\nu_k^{-1}(U_k) \). For any $\sigma\in H^0(Y,A^{m})$, one has $\onu^i(\sigma)\in \oc({U_k})$, thus by abuse of notation, we also write \(\onu^i(\sigma)\)   as a holomorphic function on $\widetilde{U}_{\tilde{w}_0}$ under the pull-back $\nu_k:\widetilde{U}_{\tilde{w}_0}\rightarrow U_k$. 
	
It follows from \Cref{dI} that, for any $p=1,\ldots,k$, one can define
\[\omega'_{\log,p,I}(b_1,\ldots,b_{p-1},a_I,b_{p+1},\ldots,b_k) \in H^0\big({{\lbb}}_k,\oc_{{{\lbb}}_k}(k,k-1,\dots,1)\otimes (p\circ{\pi}_{0,k})^*A^{km-r\delta}\big)\]
such that 
\begin{align*}
\omega'_{\log} ( b_1,\ldots,b_{p-1},a_I\cdot \tau^{(r+k)I},b_{p+1},\ldots,b_k)=(p\circ{\pi}_{0,k})^*\tau^{rI}   \cdot \omega'_{\log,p,I}(b_1,\ldots,b_{p-1},a_I,b_{p+1},\ldots,b_k).
\end{align*}
Indeed, locally \(\omega'_{\log,p,I}(b_1,\ldots,b_{p-1},a_I,b_{p+1},\ldots,b_k)\) is defined by
\[\left|\begin{array}{ccccccc}
	{\nabla}^{1}_{\mathfrak{U}}(b_1)&\cdots & {\nabla}^{1}_{\mathfrak{U}}(b_{p-1}) &{\nabla}^{1}_{\mathfrak{U},I}(a_I)&	{\nabla}^{1}_{\mathfrak{U}}(b_{p+1})&\cdots & {\nabla}^{1}_{\mathfrak{U}}(b_{k})\\
	\vdots &\ddots & \vdots &\vdots& \vdots &\ddots&\vdots \\
	{\nabla}^{k}_{\mathfrak{U}}(b_1)&\cdots & {\nabla}^{k}_{\mathfrak{U}}(b_{p-1}) &{\nabla}^{k}_{\mathfrak{U},I}(a_I)&	{\nabla}^{k}_{\mathfrak{U}}(b_{p+1})&\cdots & {\nabla}^{k}_{\mathfrak{U}}(b_{k})
\end{array}\right|\]
By the relation between \(\omega_{\log}(\bullet)\) and \(\omega'_{\log}(\bullet)\) in \Cref{sec:intrinsic} and  similar arguments as \Cref{lem:ideal contain}, one deduces that
\[\omega'_{\log,p,I}(b_1,\ldots,b_{p-1},a_I,b_{p+1},\ldots,b_k) \in H^0\big({{\lbb}}_k,\oc_{{{\lbb}}_k}(k,k-1,\dots,1)\otimes (p\circ{\pi}_{0,k})^*A^{km-r\delta}\otimes  {\wk}'_{k,\lbb,Y}\big),\]
where ${\wk}'_{k,\lbb,Y}$ is the ideal sheaf of $\lbb_{k}$ defined in \Cref{sec:universal}. By \eqref{eq:pull back relation2}, there exists a unique holomorphic section
$$
\tilde{\omega}_{\log,p,I}(b_1,\ldots,b_{p-1},a_I,b_{p+1},\ldots,b_k)\in H^0\Big({\widetilde{\lbb}}_k,\nu_k^*\big(\oc_{{{\lbb}}_k}(k,k-1,\ldots,1)\otimes (p \circ {\pi}_{0,k})^*A^{km-r\delta}\big)\otimes  \oc_{ \widetilde{\lbb}_{k}}(-F')\Big)
$$
such that
$$
\nu_k^*\omega'_{\log,p,I}(b_1,\ldots,b_{p-1},a_I,b_{p+1},\ldots,b_k) =F'\cdot \tilde{\omega}_{\log,p,I}(b_1,\ldots,b_{p-1},a_I,b_{p+1},\ldots,b_k).
$$
On $\widetilde{U}_{\tilde{w}_0}$, within the trivialization of $\uk$, we now define
	\begin{align*}
		\ell_{I}^p(a_I):&=\frac{\nu_k^*\omega'_{\log,p ,I}(b_1,\ldots,b_{p-1},a_I,b_{p+1},\ldots,b_k)}{\nu_k^*\omega'_{\log}(b_1,\ldots,b_k)}\\
		&=\frac{F'\cdot \tilde{\omega}_{\log,p,I}(b_1,\ldots,b_{p-1},a_I,b_{p+1},\ldots,b_k) }{ F'\cdot \widetilde{\omega}_{\log}(b_1,\ldots,b_k) }\\
		&=\frac{\tilde{ \omega}_{\log,p ,I}(b_1,\ldots,b_{p-1},a_I,b_{p+1},\ldots,b_k)}{ \widetilde{\omega}_{\log}(b_1,\ldots,b_k) }
	\end{align*}
where the second equality is  due to  \eqref{eq:pull back relation2}.
	Hence \(\ell_{I}^p(a_I)\) are all holomorphic  functions over $\widetilde{U}_{\tilde{w}_0}$.
Consider the matrix of functions  $G(\tilde{w}) $ over \(\widetilde{U}_{\tilde{w}_0}\) defined by 
	$$
	G(\tilde{w}):=	\begin{pmatrix}
\onu^{1}(b_1)&\ldots,&\onu^{1}(b_k)\\
	\vdots &\ddots & \vdots\\
\onu^{k}(b_1)&\ldots&\onu^{k}(b_k)	
	\end{pmatrix},
	$$
	then	by definition, one has
	$$
	G\cdot	\begin{pmatrix}
	\ell_{I}^1(a_I)\\
	\vdots\\
	\ell_{I}^k(a_I)\\
	\end{pmatrix}
	=\frac{\det G}{\nu_k^*\omega'_{\log}(b_1,\ldots,b_k)}\cdot \begin{pmatrix}
		\nabla^{1}_{\!\!\uk,I}(a_I)\\
	\vdots\\
	\nabla^{k}_{\!\!\uk,I}(a_I)
	\end{pmatrix}=\begin{pmatrix}
	\nabla^{1}_{\!\!\uk,I}(a_I)\\
	\vdots\\
	\nabla^{k}_{\!\!\uk,I}(a_I)
	\end{pmatrix}.
	$$ 
	For any $I_1,\ldots,I_k\in {\ib}$, on $\widetilde{U}_{\tilde{w}_0}$ one has
\begin{align}\label{matrix relation}
	\begin{vmatrix}
\ell_{I_1}^1(a_{I_1}) &\ldots &	\ell_{I_k}^1(a_{I_k})\\
\vdots &\ddots &\vdots\\
\ell_{I_1}^k(a_{I_1})&\ldots&		\ell_{{I_k}}^k(a_{I_k})
\end{vmatrix}=\frac{ \nu_k^*{\omega}'_{\log,I_1,\ldots,I_k}( a_{I_1},\ldots, a_{I_k})}{\nu_k^*\omega'_{\log}(b_1,\ldots,b_k)}=\frac{ \widetilde{\omega}_{\log,I_1,\ldots,I_k}( a_{I_1},\ldots, a_{I_k})}{ \tilde{ \omega}_{\log}(b_1,\ldots,b_k)},
\end{align}
	where the last equality is due to \eqref{eq:pull back relation2} and \eqref{eq:pull-back}. This implies  \cref{linear-claim 1,linear-claim 2}.
	\medskip
	
	In order to prove   \cref{linear-claim 3}, we first observe that the linear map $\widetilde{\varphi}_{\tilde{w}_0}$ is   block with respect to $I\in \ib$. Thus set
	\begin{align*}
	\tilde{\varphi}_I:H^0(Y,A^{\varepsilon })&\rightarrow  \cb^{k}\\
	a_I&\mapsto \big(	\ell_{I}^1(a_I)(\tilde{w}_0),\ldots, 	\ell_{I}^k(a_I)(\tilde{w}_0) \big).
	\end{align*}
Note that $A^{\varepsilon }$ generates $k$-jets everywhere on $Y$ by the assumption that $\varepsilon \geqslant k$.  For  any $I\in \ib_{y}$, by the definition of \(\ib_{y}\) one has $\tau^I(y)\neq 0$, and  one can therefore take $c_1,\ldots, c_k\in H^0(Y,A^{\varepsilon }) $ such that the $k$-jets 
$$j^kc_i(y)=j^k(\frac{b_i }{\tau^{(r+k)I}})(y)$$ 
for any $i=1,\ldots,k$.
	Then 
	$$
	j^k b_i (y)= j^k( c_i\cdot {\tau^{(r+k)I}})(y)
	$$
for any $i=1,\ldots,k$.  It follows from Lemma \ref{k-jets} that
$$
\tilde{\omega}_{\log }(c_1\cdot{\tau}^{(r+k)I},\ldots, c_k\cdot{\tau}^{(r+k)I})(\tilde{w}_0)=\tilde{\omega}_{\log }( {b}_1,\ldots, {b}_k)(\tilde{w}_0).
$$
Hence 
	\begin{align*}
	\begin{vmatrix}
		\ell_{I}^1(c_1) &\ldots &	\ell_{I}^1(c_k)\\
 \vdots &\ddots&\vdots\\
		\ell_{I}^k(c_1)&\ldots&		\ell^k_{{I}}(c_k)
\end{vmatrix}(\tilde{w}_0)
&\stackrel{\eqref{matrix relation}}{=} \frac{ \widetilde{\omega}_{\log,I,\ldots,I}( c_1,\ldots, c_k)(\tilde{w}_0)}{\widetilde{\omega}_{\log}( {b}_1,\ldots, {b}_k)(\tilde{w}_0)}\\
&\stackrel{\eqref{eq:two wronskian}}{=}\frac{\widetilde{\omega}_{\log }(c_1\cdot{\tau}^{(r+k)I},\ldots, c_k\cdot{\tau}^{(r+k)I})(\tilde{w}_0)}{{\tau(y)^{krI}}\cdot \widetilde{\omega}_{\log }( {b}_1,\ldots, {b}_k)(\tilde{w}_0)} \\
	&=\frac{1}{ {\tau(y)^{krI}}}\neq 0.
	\end{align*}
This implies that ${\rm rank}\,  \widetilde{\varphi}_I =k$.
\cref{linear-claim 3} immediately  follows from  that $\widetilde{\varphi}_{\tilde{w}_0}:=\bigoplus_{I\in \ib}\widetilde{\varphi}_I$. We finish the proof of the whole lemma.
\end{proof}
Let us apply \cref{partial} to show that $\tilde{\Phi}(\af,\bullet):\widetilde{\lbb}_k\dashrightarrow {\rm Gr}_{k}( \cb^{\ib})$ is a regular morphism for general $\af\in \ab$   when we choose the parameters $N,\delta,k$ properly.

\begin{lem}\label{regular1}
	Assume that $N>n$, $\delta\geqslant (k+1)n+k$. Then there exists a Zariski dense open set $\ab_{\rm def}\subset \ab_{\rm sm}$ such that $\widetilde{\Phi} : \ab_{\rm def}\times \widetilde{\lbb}_k\rightarrow {\rm Gr}_{k}( \cb^{\ib})$ is a regular morphism.
\end{lem}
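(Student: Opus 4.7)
The strategy is to cut out a closed subvariety \(B\subset \ab\times \widetilde{\lbb}_k\) corresponding to the indeterminacy of \(\widetilde{\Phi}\) and show that its first projection \(\pr_1:B\to \ab\) is not dominant; one can then set \(\ab_{\rm def}:=\ab_{\rm sm}\setminus \overline{\pr_1(B)}\), which will be Zariski open and dense. By \cref{linear-claim 1} together with \cref{linear-claim 2}, near any point \(\tilde{w}_0\in \widetilde{\lbb}_k\) the indeterminacy locus of \(\widetilde{\Phi}\) is precisely the rank-deficient locus of the \(k\times \#\ib\) matrix \(M(\af,\tilde{w}):=\bigl(\ell_I^p(a_I)(\tilde{w})\bigr)_{1\leqslant p\leqslant k,\, I\in\ib}\), so \(B\) is locally cut out by the vanishing of all \(k\times k\) minors of \(M\). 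In particular \(B\) is Zariski closed, and our task reduces to showing \(\dim B<\dim \ab\).

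To bound \(\dim B\) I would estimate the fibers of the second projection \(\pr_2:B\to \widetilde{\lbb}_k\). Fix \(\tilde{w}_0\in \widetilde{\lbb}_k\), set \(y:=p\circ \pi_{0,k}\circ \nu_k(\tilde{w}_0)\), and note that the fiber \(B_{\tilde{w}_0}\) is contained in \((\rho_y\circ \widetilde{\varphi}_{\tilde{w}_0})^{-1}(\Sigma_y)\), where \(\Sigma_y\subset (\cb^{\ib_y})^k\) denotes the subvariety of \(k\)-tuples of linearly dependent vectors. This determinantal subvariety has pure codimension \(\#\ib_y-k+1\) in \((\cb^{\ib_y})^k\). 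By \cref{linear-claim 3}, the linear map \(\rho_y\circ \widetilde{\varphi}_{\tilde{w}_0}:\ab\to (\cb^{\ib_y})^k\) is of maximal rank \(k\cdot \#\ib_y\), hence surjective, so its preimage of \(\Sigma_y\) has the same codimension:
\[\mathrm{codim}_{\ab} B_{\tilde{w}_0}\;\geqslant\;\#\ib_y-k+1.\]

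Next I would bound \(\#\ib_y\) from below. Since \(\tau_0,\ldots,\tau_N\) are in general position and \(N>n\), at most \(n\) of them vanish simultaneously at \(y\), so at least \(N+1-n\geqslant 2\) of them are nonzero there; any multi-index of weight \(\delta\) supported on those indices belongs to \(\ib_y\), giving \(\#\ib_y\geqslant \binom{\delta+N-n}{N-n}\geqslant \delta+1\). A direct dimension count yields \(\dim \widetilde{\lbb}_k=\dim \lbb_k=(n+1)+kn\), so the hypothesis \(\delta\geqslant (k+1)n+k\) translates exactly to
\[\#\ib_y-k+1\;\geqslant\;\delta-k+2\;\geqslant\;(k+1)n+2\;>\;\dim \widetilde{\lbb}_k.\]
Combining this with the upper semicontinuity of fiber dimension along \(\pr_2\) gives \(\dim B\leqslant \dim \widetilde{\lbb}_k+\dim \ab-(\#\ib_y-k+1)<\dim \ab\), so \(\pr_1\) cannot be dominant and the desired open subset \(\ab_{\rm def}\) exists.

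The key technical input is \cref{linear-claim 3}: its surjectivity statement is what converts the abstract codimension of the determinantal locus in the target into an honest codimension in the parameter space \(\ab\); without it, the linear map could conceivably land entirely inside a special stratum of \(\Sigma_y\). The arithmetic hypothesis \(\delta\geqslant (k+1)n+k\) is sharp for this dimension count, being equivalent to \(\#\ib_y\geqslant \delta+1\geqslant (n+1)(k+1)=\dim \widetilde{\lbb}_k+k\); this is precisely the threshold that forces the indeterminacy locus to project non-dominantly onto \(\ab\).
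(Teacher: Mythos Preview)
Your proposal is correct and follows essentially the same argument as the paper: both reduce to showing the indeterminacy locus \(Z\subset \ab\times\widetilde{\lbb}_k\) has dimension strictly less than \(\dim\ab\), bound each fiber \(Z_{\tilde{w}_0}\) via the surjective linear map \(\rho_y\circ\widetilde{\varphi}_{\tilde{w}_0}\) from \cref{linear-claim 3}, and compute the codimension \(\#\ib_y-k+1\) of the rank-deficient locus in \((\cb^{\ib_y})^k\). The paper presents the same count in terms of \(\dim\Delta_y=(k-1)(\#\ib_y+1)\) rather than the codimension \(\#\ib_y-k+1\), but these are equivalent, and both arrive at the inequality \((k+1)n+1+(k-1)<\delta+1\).
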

\begin{proof}
	By \eqref{eq:map to grass} the indeterminacy locus of $\widetilde{\Phi} $ is contained in the subvariety 
	$$
	Z:=\{(\af ,\tilde{w})\in \ab \times \widetilde{\lbb}_k\mid  \tilde{\omega}_{\log ,I_1,\ldots,I_k}( a_{I_1},\ldots, a_{I_k})(\tilde{w})=0 \ \forall I_1,\ldots,I_k\in \ib \}.
	$$
Denote ${\rm pr}_1:\ab \times \widetilde{\lbb}_k\rightarrow \ab$ and ${\rm pr}_2:\ab \times \widetilde{\lbb}_k\rightarrow \widetilde{\lbb}_k$ to be the projection maps. It then suffices to show that
	 $
	{\rm pr}_1(Z)\subsetneq \ab.
	$ 
	Fix any $\tilde{w}_0\in \widetilde{\lbb}_k$.  	Set $w_0:=\nu_k(\tilde{w}_0)$,  $x={\pi}_{0,k}(w_0)$ and $y=p(x_0)$. Define $Z_{\tilde{w}_0}:=Z\cap {\rm pr}_2^{-1}(\tilde{w}_0)$. For the linear map defined in  \cref{linear-claim 3},  it follows from \cref{linear-claim 1} that 
	$$
	{\rm pr}_1(Z_{\tilde{w}_0})=(\tilde{\varphi}_{\tilde{w}_0})^{-1}(\Delta),
	$$
	where
	$$
	\Delta:=\{ (v_{\bullet}^1,\ldots,v_{\bullet}^k)\in (\cb^{\ib})^{k} \mid {\rm dim\ Span}(v_{\bullet}^1,\ldots,v_{\bullet}^k)<k \}.
	$$
Define a linear subspace of $(\cb^{\ib_{y}})^{k}$ by
	$$
	\Delta_{y}:=\{ (v_{\bullet}^1,\ldots,v_{\bullet}^k)\in (\cb^{\ib_{y}})^{k} \mid {\rm dim\ Span}(v_{\bullet}^1,\ldots,v_{\bullet}^k)<k \},
	$$
	and one has 
 $
	\Delta\subseteq \rho_{y}^{-1}(\Delta_y)
	$,
	where 
	$\rho_{y}:(\cb^{{\ib}})^{k}\rightarrow (\cb^{\ib_{y}})^{k}$ is the natural projection map. 
Hence
	$$
	{\rm pr}_1(Z_{\tilde{w}_0})\subset (\rho_{y}\circ \tilde{\varphi}_{\tilde{w}_0})^{-1}\Delta_{y},
	$$
and
	\begin{align*}
	{\rm dim}\, Z_{\tilde{w}_0} &\leqslant  {\rm dim}\,  {\rm ker}(\rho_{y}\circ \tilde{\varphi}_{\tilde{w}_0})+{\rm dim}\, \Delta_{y}\\
	&= {\rm dim}\, \ab -{\rm rank\ }(\rho_{y}\circ \tilde{\varphi}_{\tilde{w}_0})+ {\rm dim}\, \Delta_{y}\\
	&\stackrel{\eqref{eq:rank}}{=}   {\rm dim}\, \ab -k\#\ib_{y}+(k-1)(\#\ib_{y}+1)\\
	&= {\rm dim}\, \ab +(k-1)-\#\ib_{y}.
	\end{align*}
Therefore,
	\begin{align*}
	{\rm dim}\,  Z &\leqslant  {\rm dim}\,  \widetilde{\lbb}_k +\max_{\tilde{w}_0\in \widetilde{\lbb}_k} {\rm dim}\,  Z_{\tilde{w}_0}\\
	&\leqslant  {\rm dim}\,  \widetilde{\lbb}_k +{\rm dim}\, \ab +(k-1) -\min_{y\in Y }\#\ib_{y}\\
	&\leqslant  (k+1)n+1+(k-1)+{\rm dim}\, \ab -(\delta+1)\\
	&< {\rm dim}\,  \ab .
	\end{align*}
Here we observe that $\#\ib_{y}\geqslant \binom{N-n+\delta}{\delta}\geqslant \delta+1$ for any $y\in Y$ when $N>n$.
Let us define $\ab_{\rm def}:=\big(\ab\setminus \overline{{\rm pr}_1(Z) }\big)\cap \ab_{\rm sm}$, which is a Zariski dense open set of $\ab$. By the definition of $Z$, we conclude that $\tilde{\Phi}(\af,\bullet):\widetilde{\lbb}_k\rightarrow {\rm Gr}_{k}( \cb^{\ib})$ is a regular morphism for any $\af\in \ab_{\rm def}$.
\end{proof}

\section{Proof of the main results}\label{sec:proof of main}
\subsection{Associated universal complete intersection variety} We are now in position to introduce the main geometric framework used during the proof of our main result.  As in \cite{BD15,Bro17,Den17,BD17} we rely on the universal complete intersection variety associated to our problem defined by
\[\ys:=\left\{(\Delta,[z])\in {\rm Gr}_{k}\Big(H^0\big(\pb^k,\oc_{\pb^k}(\delta)\big)\Big)\times \pb^k\ | \ \forall \ P\in \Delta,\ P([z])=0 \right\},\]
where we fix the parameter $N=k=n+1$ now.  Let us write \({\rm Gr}_k:={\rm Gr}_{k}\Big(H^0\big(\pb^k,\oc_{\pb^k}(\delta)\big)\Big) \)  for simplicity.
For technical reasons, we will also need to adapt this construction to the stratification on \(Y\) induced by the vanishings of the \(\tau_j\)'s. To do this, let us define for any $J\subset \{0,\ldots,k\}$, 
\begin{align*}
 \mathbb{P}_{J}&:=\{[z]\in \mathbb{P}^{k}\ |\  z_j=0 \mbox{ if } j\in J\},\\
	 Y_J&:=\{y\in Y\ |\  \tau_j(y)=0\Leftrightarrow j\in J   \},\\
	 \ib_J &:=\{I\in \ib \ |\ {\rm Supp}(I)\subset  \{0,\ldots,k\}\setminus J  \},
\end{align*}
and let us also consider the restricted \emph{universal  complete intersection varieties}
 \begin{align*}
 \ys_{J}:&=\{(\Delta,[z])\in {\rm Gr}_k\times \pb_{J}\ |\ \forall P\in \Delta, P([z])=0 \}\\
 &=\ys\cap({\rm Gr}_k\times \pb_J)\subset \ys.
 \end{align*}
 Let us denote by  \(\pr:\ys\to {\rm Gr}_k\) the canonical projection, and for any \(J\subset \{0,\dots, k\}\) we set \(\pr_J:=\pr_{\upharpoonright \ys_J}:\ys_J\to {\rm Gr}_k\). Observe that the \(\pr_J\) is \emph{generically finite}. This observation is crucial in the rest of the argument which highly rests on the understanding of the geometry of the \emph{non-finite locus} of \(\pr_J\):
 \[E_J:=\left\{y\in \ys_J\ | \ \dim_y\pr_J^{-1}(\pr_J(y))>0\right\},\]
 and its image in \({\rm Gr}_k\):
 \[G_J^{\infty}:=\pr_J(E_J)=\left\{\Delta\in {\rm Gr}_k\ | \ \dim\pr_J^{-1}(\Delta)>0\right\}.\]
\subsection{Factorization through the universal complete intersection variety} Let us now relate this universal complete intersection to our special families of Fermat type pairs constructed in the previous sections by considering the morphism
\begin{align}\label{eq:map to univ}
	\tilde{\Psi}:  \ab_{\rm def}\times \widetilde{\lbb}_k &\to {\rm Gr}_{k}\times \pb^k\\\nonumber
	(\af,\tilde{w})&\mapsto \Big(\tilde{\Phi}(\af,\tilde{w}), \big[\tau_0^r(p\circ {\pi}_{0,k}\circ \nu_k)(\tilde{w}),\ldots,\tau_k^r(p\circ {\pi}_{0,k}\circ \nu_k)(\tilde{w})\big]\Big).
\end{align}
For any \(J\subset\{0,\dots, k\}\) let us write,
\[\lbb_{k,J}:=(p\circ {\pi}_{0,k})^{-1}(Y_J)   \ \ \text{and}\ \ \widetilde{\lbb}_{k,J}:=\nu_k^{-1}(\lbb_{k,J}).\]
Recall the definition of the (restricted) families \(\hs_{k}\rightarrow \ab_{\rm def}\) and \(\widetilde{\hs}_{k}\rightarrow \ab_{\rm def}\) and denote, for any \(J\subset \{0,\dots, k\}\),
 \begin{eqnarray*}
\hs_{k,J}=\hs_{k}\cap (\ab_{\rm def} \times \widetilde{\lbb}_{k,J})\quad {\rm and} \quad \widetilde{\hs}_{k,J}=\widetilde{\hs}_{k}\cap (\ab_{\rm def} \times \widetilde{\lbb}_{k,J}). 
 \end{eqnarray*}
 For any \(\af\in \ab_{\rm def}\), let us denote by \(H_{\af,k,J}\) and \(\widetilde{H}_{\af,k,J}\) the fiber above \(\af\) of \(\hs_{k,J}\) and \(\widetilde{\hs}_{k,J}\) respectively. One then has the crucial factorizing property of $\widetilde{\Psi}$.
\begin{lem}\label{factorization}
For any  $J\subset \{0,\ldots,k\}$, when restricted to $\widetilde{\hs}_{k,J}$, the morphism $\widetilde{\Psi}$ factors through 
	${\ys}_{J}\subset {\rm Gr}_{k}\times \pb^k$. 
\end{lem}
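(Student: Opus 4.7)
The goal naturally splits into showing that $\widetilde\Psi(\af,\tilde w)$ has its $\pb^k$-component in $\pb_J$ and its Grassmannian component contained in the annihilator of that point. Writing $y:=p\circ{\pi}_{0,k}\circ \nu_k(\tilde w)$, the first statement is immediate: by definition of $\widetilde\hs_{k,J}$ the point $y$ lies in $Y_J$, so $\tau_j(y)=0$ exactly when $j\in J$ and hence $[\tau_0^r(y),\ldots,\tau_k^r(y)]\in\pb_J$.

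For the non-trivial half, the plan is to derive the tautological system of $k$ linear equations between the local coordinates of $\widetilde\Phi(\af,\tilde w)$ that forces this $k$-plane to annihilate $[\tau^r(y)]$. The main input is \eqref{eq:VanishingNabla}: through the biholomorphism $p_\af:H_\af\to Y$, the section $T|_{H_\af}$ defines $D_\af$ inside $H_\af$, so $\nabla_{\!\!D_\af}^j(T|_{H_\af})=0$ for every $j\geqslant 1$, and the resulting column vanishing in the determinantal definition of $\omega_{D_\af}$ forces $\omega_{D_\af}(b_1|_{H_\af},\ldots,T|_{H_\af},\ldots,b_k|_{H_\af})=0$ for any auxiliary $b_i\in H^0(Y,A^m)$ inserted in the remaining slots. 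On the other hand, since $H_\af=(T-p^*\sigma(\af)=0)$, one has on $H_\af$ the identity $T|_{H_\af}=\sum_{I\in\ib}(p^*a_I\cdot (p^*\tau)^{(r+k)I})|_{H_\af}$; substituting into the $p$-th slot, invoking multilinearity of the log Wronskian, functoriality \eqref{equ:fonctorial}, and the one-slot analog of the factorization \eqref{eq:two wronskian} yields for each $p=1,\dots,k$ the global identity
\[
\sum_{I\in\ib}(p\circ{\pi}_{0,k})^*\tau^{rI}\cdot \omega_{\log,p,I}(b_1,\dots,a_I,\dots,b_k)_{\upharpoonright H_{\af,k}}=0
\]
on $H_{\af,k}$, which is what plays the role of the tautological relations \eqref{intro:equations} in the present logarithmic setting.

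Choosing the $b_i$ as in the proof of \cref{partial} so that $\widetilde\omega_{\log}(b_1,\dots,b_k)$ is nowhere vanishing on some neighborhood $\widetilde U_{\tilde w_0}$ of a prescribed point of $\widetilde\lbb_k$, pulling the identity back under $\nu_k$, cancelling the common exceptional factor $F$ (using \eqref{eq:pull back relation1} and its analog for $\omega_{\log,p,I}$), and dividing by $\widetilde\omega_{\log}(b_1,\dots,b_k)$, the formula for $\ell^p_I$ established in the proof of \cref{partial} gives the pointwise vanishings $\sum_{I\in\ib}\ell^p_I(\af,\tilde w)\,\tau^{rI}(y)=0$ for $p=1,\dots,k$ on $\widetilde U_{\tilde w_0}\cap\widetilde\hs_{k,J}$. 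By \cref{linear-claim 2}, the $k$-plane $\widetilde\Phi(\af,\tilde w)$ is spanned by $\ell^1_\bullet,\dots,\ell^k_\bullet$, and under the monomial identification $\cb^{\ib}\cong H^0(\pb^k,\oc_{\pb^k}(\delta))$ these relations say exactly that every polynomial in $\widetilde\Phi(\af,\tilde w)$ vanishes at $[\tau_0^r(y),\dots,\tau_k^r(y)]$; combined with the first paragraph this gives $\widetilde\Psi(\af,\tilde w)\in\ys_J$ on $\widetilde U_{\tilde w_0}\cap\widetilde\hs_{k,J}$. Since $\ys_J$ is closed in ${\rm Gr}_k\times\pb^k$ and the $\widetilde U_{\tilde w_0}$ cover $\widetilde\hs_{k,J}$, the factorization holds globally. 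The principal technical care will lie in cleanly separating the globally defined step (the tautological identity on $H_{\af,k}$) from the locally defined step (the division by the nowhere-vanishing $\widetilde\omega_{\log}(b_1,\dots,b_k)$ on $\widetilde U_{\tilde w_0}$), and in verifying that the one-slot mixed Wronskian $\omega_{\log,p,I}$ indeed takes values in $\mathfrak{w}_{k,\lbb,Y}$ so as to lift cleanly to $\widetilde\lbb_k$ with the same exceptional factor $F$.
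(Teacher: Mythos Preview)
Your argument is correct, but it takes a substantially longer route than the paper's. The paper does not touch the $\ell^p_I$ machinery of \cref{partial} at all here. Instead it observes that since $\widetilde{\Psi}=(\Psi)\circ(\mathds{1}\times\nu_k)$ on the dense open complement of the exceptional locus, it is enough to show that the \emph{rational} map $\Psi$ restricted to $\hs_k$ lands in $\ys$ wherever it is defined; closedness of $\ys$ then forces $\widetilde{\Psi}(\widetilde{\hs}_k)\subset\ys$. For that, the paper picks a trivialization tower $\mathfrak{U}$ and uses \cref{lem:two rational} to identify $\Phi$ locally with $\Phi_{\mathfrak U}$, i.e.\ with the span of the vectors $\big(\nabla^j_{\!\!\mathfrak{U},I}(a_I)\big)_{I\in\ib}$. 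The single computation
\[
0=\onu^j\big(T-p^*\sigma(\af)\big)\stackrel{\eqref{eq:VanishingNabla}}{=}-\onu^j\,p^*\sigma(\af)=-\sum_{|I|=\delta}(\tau_{U_k})^{rI}\,\nabla^{j}_{\!\!\mathfrak{U},I}(a_I)
\]
on $H_{\af,k}\cap U_k$ then directly says that each spanning vector annihilates $[\tau^r]$, and the lemma follows.

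Your approach derives the same tautological relations but packages them through the auxiliary sections $b_1,\dots,b_k$, the mixed one-slot Wronskians $\omega_{\log,p,I}$, the lift to $\widetilde{\lbb}_k$ with cancellation of $F$ (or $F'$), and finally the identification $\widetilde{\Phi}(\af,\tilde w)={\rm Span}(\ell^1_\bullet,\dots,\ell^k_\bullet)$ via \cref{linear-claim 2}. That is a valid derivation, and it has the small advantage of working directly upstairs without a density argument; but it imports the full apparatus of \cref{partial} (which the paper only develops for the harder \cref{regular1,exceptional locus}) into a statement that can be checked with one line in a trivialization tower. Note also that in the paper the objects $\omega'_{\log,p,I}$ and the divisor $F'$ are what actually appear in the definition of $\ell^p_I$, so your reference to $F$ and $\omega_{\log,p,I}$ would need to be adjusted to match.
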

\begin{proof}
	 It suffices to prove that \(\widetilde{\Psi}\) restricted to \(\widetilde{\lbb}_{k,J}\times \ab_{\rm def}\)
	factors through
	\({\rm Gr}_k\times  \pb_J\) and that
	\(\widetilde{\Psi}\) restricted to 
	\(\widetilde{\hs}_{k}\)
	factors through \(\ys\). The first claim is
	straightforward to prove. For the second one, since \(\widetilde{\Phi}=\Phi\circ\nu_k\), it suffices to prove that the rational map
\begin{align*}
{\Psi}:  \ab_{\rm def}\times \lbb_k &\dashrightarrow {\rm Gr}_k\times \pb^k\\
(\af,w)&\mapsto \bigg( {\Phi}( \af,w),\Big[\tau_0^r\big((p\circ {\pi}_{0,k} )( {w})\big),\dots,\tau_k^r\big((p\circ {\pi}_{0,k} )( {w})\big)\Big]\bigg)
\end{align*}
factors through \(\ys\) when restricted to \(\hs_k\). 

Let us take a  trivialization tower \(\mathfrak{U}\) of order \(k\) as in \cref{sec:intrinsic}.  Pick any \(\af\in \ab_{\rm def}\).   Recall that $H_\af$ is defined by   the vanishing of  the section
$$
T-p^*\sigma(\af)=T-	 p^*(\sum_{|I|=\delta }a_I\cdot \tau ^{(r+k)I})\in  H^0(\lbb,p^*L).
$$
Then over $H_{\af,k}\cap U_k$,    for any $i=1,\ldots,k$ one has
$$
0=\onu^i\big(T-p^*\sigma(\af)\big)\stackrel{\eqref{eq:VanishingNabla}}{=}-\onu^i\ p^*\sigma(\af)= - \sum_{|I|=\delta }(\tau_{U_k})^{rI} \cdot {\nabla}_{\!\!\uk,I}^{j}(a_I),
$$
where the last equality is due to  \cref{dI},  and we denote by $\tau_{U_k}$ the pull-back of trivialization of $\tau$ under $L_{\upharpoonright U}$ to $U_k$.  By the alternative definition of $\Phi$ in  \cref{lem:two rational}, we conclude the first claim. 
The second claim of the lemma follows directly from \Cref{regular1}.
\end{proof}

\subsection{An effective Nakamaye type result}
Let us denote by   \(\pr_1,\pr_2\) the projection on the first and second factor of \({\rm Gr}_k\times \pb^k\),  and let us consider the Pl\"ucker line bundle \(\ls\) on \({\rm Gr}_k\). By definition, one has \(\ls:={\rm Pluc}^*\oc_{\text{P}(\Lambda^k(\cb^{\ib}))}(1)\) where \({\rm Pluc}:{\rm Gr}_k\hookrightarrow \text{P}\big(\Lambda^k(\cb^{\ib})\big)\) denotes the Pl\"ucker embedding. The use of the universal complete intersection in our situation is justified by the following formula: for any \(d\in \mathbb{N}\), one has
\begin{align}\label{pull-back}
			\widetilde{\Psi}^*\big(\ls^d\boxtimes  \oc_{\pb^{k}}(-1)\big)=\nu_k^*\big(\oc_{\lbb_k}(dk')\otimes (p\circ{\pi}_{0,k})^*A^{dk(\varepsilon +k\delta)-r}\big)\otimes \oc_{\widetilde{\lbb}_k}(-dF).
	\end{align}
where we write \( \ls^d\boxtimes  \oc_{\pb^{k}}(-1):={\rm pr}_1^*\ls^d\otimes {\rm pr}_2^*\oc_{\pb^{k}}(-1)\). In particular, if \(r>dk(\varepsilon +k\delta)\), every global section of \( \ls^d\boxtimes  \oc_{\pb^{k}}(-1) \) gives rise to a logarithmic jet differentials vanishing along an ample divisor of  \(Y\). Of course, there may not exist  such global sections due to the presence of the negative twist \(\pr_2^*\oc_{\pb^k}(-1)\). However, observe that  the line bundle \(\ls\) is ample on \({\rm Gr}_k\) and the projection \(\pr_J:\ys_J\to {\rm Gr}_k\) is generically finite, therefore, \(\pr_J^*\ls=\pr^*\ls_{\upharpoonright \ys_J}\) is big and nef for any \(J\subset \{0,\dots, k\}\) and therefore, for \(d\) large enough, there are many global sections of the line bundle \(\ls^d\boxtimes  \oc_{\pb^{k}}(-1)_{\upharpoonright \ys_J}\) . In view of the factorization property established in the previous section, we obtain that for any \(J\subset \{0,\dots, k\}\),  any integer \(m\) and any \(\af\in \ab_{\rm def}\),
\begin{eqnarray*}
	{\rm Bs}\Big(\nu_k^*\big(\oc_{H_{\af,k,J}}(dk')\otimes (p\circ{\pi}_{0,k})^*A^{dk(\varepsilon +k\delta)-r}\big)\otimes \oc_{\widetilde{H}_{\af,k,J}}(-dF)\Big)\subseteq		\widetilde{\Psi}^{-1}\Big({\rm Bs}\big(\ls^d\boxtimes  \oc_{\pb^{k}}(-1)_{\upharpoonright \ys_J}\big)\Big).
	\end{eqnarray*}
These considerations lead us to study the right hand side in this formula. Since \(\pr_J^*\ls\) is big and nef for any \(J\subset \{0,\dots, k\}\), Nakamaye's theorem \cite{Nak00}  on the augmented base locus guaranties that 
\[E_J=\mathbf{B}_+(\pr_J^*\ls)={\rm Bs}\big(\ls^d\boxtimes  \oc_{\pb^{k}}(-1)_{\upharpoonright \ys_J}\big)\]
for \(d\) large enough. To determine an explicit bound for the values of \(d\) satisfying this formula is critical in order to obtain an effective bound on the degree in our main theorem. While we don't know a bound for this exact problem, the second named author was able in \cite{Den17}  to obtain the following bound for a slightly weaker inclusion sufficient for our purposes. 
\begin{thm}[\!\!{\cite{Den17}}]\label{nakmaye type}
	For any  $d\geqslant  \delta^{k-1}$, and any $J\subset\{0,\ldots,k\}$, the  base locus of the line bundle $ \ls^d\boxtimes  \oc_{\pb^{k}}(-1)_{\upharpoonright \ys_J}$ satisfies
	\begin{align}\label{nakmaye}
	{\rm Bs}\big(\ls^d\boxtimes  \oc_{\pb^{k}}(-1)_{\upharpoonright \ys_J}\big)\subset \pr_J^{-1}(G_{J}^{\infty}).
	\end{align}
Recall that $G^\infty_J$ is the set of points in ${\rm Gr}_k$ such that the fiber of  $\pr_J:\ys_J\rightarrow {\rm Gr}_k$ is not a finite set.
\end{thm}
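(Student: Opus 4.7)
The plan is to prove the inclusion $\mathrm{Bs}(\ls^d\boxtimes\oc_{\pb^k}(-1)|_{\ys_J})\subset\pr_J^{-1}(G_J^\infty)$ via an effective Nakamaye-type argument. The starting point is the structural observation that $\pr_J:\ys_J\to {\rm Gr}_k$ is generically finite, so $\pr_J^*\ls$ is big and nef, and its null locus $\mathrm{Null}(\pr_J^*\ls)$ --- the union of subvarieties on which the restriction has top self-intersection zero --- coincides with the set $\pr_J^{-1}(G_J^\infty)$ of positive-dimensional fibers. Nakamaye's theorem \cite{Nak00} would then give a (non-effective) inclusion $\mathrm{B}_+(\pr_J^*\ls)=\pr_J^{-1}(G_J^\infty)$, yielding the conclusion for some unspecified $d\gg 0$. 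The real content is to make this effective.

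To extract the explicit bound $d\geq\delta^{k-1}$, I would fix a point $(\Delta_0,[z_0])\in\ys_J$ with $\Delta_0\notin G_J^\infty$, so that the scheme-theoretic fiber $Z:=\pr_J^{-1}(\Delta_0)\subset \pb_J$ is a $0$-dimensional complete intersection containing $[z_0]$. Bezout's theorem bounds $\deg Z\leq\delta^{\dim \pb_J}\leq\delta^{k}$. The goal is then to construct a section $s\in H^0(\ys_J,\ls^d\boxtimes\oc_{\pb^k}(-1))$ with $s(\Delta_0,[z_0])\neq 0$.

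The explicit construction would proceed by realizing such sections as multilinear expressions in the Plücker coordinates of $\Delta$ combined with low-degree polynomial data in $[z]$. Concretely, after choosing a basis $Q_1,\dots,Q_k$ of $\Delta_0$, one looks for a linear form $L\in H^0(\pb^k,\oc(1))$ vanishing on $Z\setminus\{[z_0]\}$ but not at $[z_0]$, and then one extends $L\cdot(Q_1\wedge\dots\wedge Q_k)^{d}$ (interpreted in the appropriate equivariant fashion via Plücker) to a global section of $\ls^d\boxtimes\oc(-1)$ using the Koszul / residue formalism for the universal complete intersection $\ys\subset {\rm Gr}_k\times \pb^k$. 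Castelnuovo--Mumford regularity of a complete intersection of $k$ hypersurfaces of degree $\delta$ (bounded by $k(\delta-1)+1$) together with the Bezout estimate controls the degree needed in the $\Delta$-variables for the interpolation problem to have a solution.

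The main obstacle is precisely this last step: globalizing the local/interpolation construction into an honest section of $\ls^d\boxtimes\oc_{\pb^k}(-1)|_{\ys_J}$ of the correct bidegree, and showing that the exponent that emerges is exactly $k-1$ rather than $k$. The improvement from $\delta^k$ to $\delta^{k-1}$ should reflect that after fixing one point $[z_0]$, one only needs to separate the remaining $\deg Z -1$ points, whose effective ``spreading degree'' in the Plücker coordinates is one less than the naive count; carrying this through rigorously in terms of the Koszul resolution of $\oc_{\ys}$ (and keeping track of twists against $\ls$ and $\oc_{\pb^k}(1)$) is the technical heart of the proof.
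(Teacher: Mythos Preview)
This theorem is not proved in the present paper; it is quoted from \cite{Den17} without argument, so there is no ``paper's own proof'' to compare against here. What you have written is therefore being assessed on its own terms.

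As a plan, the first paragraph is fine and standard: $\pr_J$ is generically finite, $\pr_J^*\ls$ is big and nef, its null locus is $\pr_J^{-1}(G_J^\infty)$, and Nakamaye gives the statement for some unspecified $d$. The issue is entirely in your attempted effectivization, which contains at least two concrete problems.

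First, you propose to choose a linear form $L\in H^0(\pb^k,\oc(1))$ vanishing on $Z\setminus\{[z_0]\}$ but not at $[z_0]$. This is generally impossible: the fiber $Z$ can have up to $\delta^{k}$ points, and a hyperplane in $\pb^k$ cannot be made to pass through an arbitrary finite set of more than $k$ points while avoiding a prescribed one. Second, even granting such an $L$, the object $L\cdot(Q_1\wedge\dots\wedge Q_k)^d$ lives naturally in $\ls^d\boxtimes\oc_{\pb^k}(+1)$, not $\ls^d\boxtimes\oc_{\pb^k}(-1)$; you have the twist going the wrong way. Producing sections of a line bundle with a \emph{negative} twist on the $\pb^k$ factor is exactly the nontrivial point, and nothing in your interpolation/regularity sketch addresses how the minus sign is absorbed. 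Your final paragraph essentially concedes that the ``technical heart'' --- globalizing via Koszul and extracting $\delta^{k-1}$ rather than $\delta^k$ --- is not carried out.

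In \cite{Den17} the effective bound comes from an explicit construction of sections, not from a regularity estimate of the type you sketch; if you want to reconstruct the argument you should look there rather than trying to push the Castelnuovo--Mumford route, which does not obviously deliver the correct exponent or the correct sign of the twist.
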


\subsection{Avoiding the exceptional locus}
In this section we explain how one can control \(\widetilde{\Psi}^{-1}\big(\pr_J^{-1}(G_J^{\infty})\big)\).

\begin{lem}\label{exceptional locus}
	For any $J\subset \{0,\ldots,k\}$, when  $\delta\geqslant n(k+1)+1$,  there exists a non-empty Zariski open subset $\ab_{J}\subset \ab_{\rm def}$ such that
	$$
\widetilde{\Phi}^{-1}(G^\infty_{J})\cap (\ab_{J}\times \widetilde{\lbb}_{k,J})=\varnothing.
	$$
\end{lem}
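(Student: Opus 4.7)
The plan is an incidence-variety dimension count in the spirit of \cref{regular1}. Set
\[Z_J := \bigl\{(\af,\tilde{w}) \in \ab_{\rm def}\times \widetilde{\lbb}_{k,J} : \widetilde{\Phi}(\af,\tilde{w}) \in G^\infty_J\bigr\},\]
with projections $\pr_1,\pr_2$ to its two factors. Taking $\ab_J := \ab_{\rm def}\setminus \overline{\pr_1(Z_J)}$, it suffices to prove $\dim Z_J < \dim \ab$, which I will derive from the fiberwise bound $\dim Z_J \leq \dim \widetilde{\lbb}_{k,J} + \sup_{\tilde{w}_0}\dim\bigl(\pr_2^{-1}(\tilde{w}_0)\cap Z_J\bigr)$.

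Fix $\tilde{w}_0\in \widetilde{\lbb}_{k,J}$ and put $y := p\circ\pi_{0,k}\circ\nu_k(\tilde{w}_0)\in Y_J$, so that $\ib_y=\ib_J$. Since the evaluation of any $P\in H^0(\pb^k,\oc(\delta))$ at a point of $\pb_J$ involves only its coefficients indexed by $\ib_J$, membership of $\widetilde{\Phi}(\af,\tilde{w}_0)\in{\rm Gr}_k$ in $G^\infty_J$ depends only on $\rho_y\circ\widetilde{\varphi}_{\tilde{w}_0}(\af)\in (\cb^{\ib_J})^k$. By \cref{linear-claim 3} this linear map is surjective, whence
\[\dim\bigl(\pr_2^{-1}(\tilde{w}_0)\cap Z_J\bigr) \leq \dim\ab - \operatorname{codim}_{(\cb^{\ib_J})^k}(T_J),\]
where $T_J\subset (\cb^{\ib_J})^k$ is the locus of $k$-tuples $(v^1,\ldots,v^k)$ whose span (viewed in $H^0(\pb_J,\oc(\delta))$) has positive-dimensional common vanishing locus in $\pb_J$.

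The heart of the argument is a lower bound on $\operatorname{codim}(T_J)$. Any element of $T_J$ produces a subspace $\widetilde\Delta\subset H^0(\pb_J,\oc(\delta))$ whose zero locus in $\pb_J$ contains an irreducible curve $C$, and one stratifies $T_J$ by the geometric type of $C$. For a fixed line $L\subset\pb_J$, requiring every $v^i$ to vanish on $L$ forces each $v^i$ into the codimension-$(\delta+1)$ kernel of the restriction $H^0(\pb_J,\oc(\delta))\to H^0(L,\oc(\delta))$, contributing codimension $k(\delta+1)$; letting $L$ range over the Grassmannian of lines in $\pb_J$, of dimension $2(k-|J|-1)$, one obtains codimension at least $k(\delta+1)-2(k-|J|-1)$ (in the boundary case $|J|=n$, where $\pb_J=\pb^1$, the only positive-dimensional subscheme is $\pb^1$ itself and the formula correctly specialises to $T_J=\{0\}$ with codimension $k(\delta+1)$). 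A comparison using $h^0(\oc_C(\delta))\gtrsim d\delta$ shows that strata corresponding to curves of higher degree $d$ contribute strictly larger codimension as soon as $\delta$ exceeds a small multiple of $k$, so the line stratum is extremal. With $k=n+1$ and the hypothesis $\delta\geq n(k+1)+1$, a direct calculation yields
\[k(\delta+1)-2(k-|J|-1) > n(k+1)+1-|J| = \dim\widetilde{\lbb}_{k,J}\]
for every $0\leq |J|\leq n$, completing the dimension count.

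The main technical difficulty lies in confirming that the line stratum really dominates: a positive-dimensional vanishing locus in $\pb_J$ need not contain any line (for instance, a smooth conic in $\pb^2$), so one must uniformly bound the dimensions of Hilbert schemes of curves of arbitrary degree and weigh them against the growth of $h^0(\oc_C(\delta))$.
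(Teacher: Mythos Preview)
Your overall architecture is exactly the paper's: an incidence-variety dimension count over $\widetilde{\lbb}_{k,J}$, the reduction via \cref{linear-claim 3} to the surjective linear map $\rho_y\circ\widetilde{\varphi}_{\tilde{w}_0}:\ab\to(\cb^{\ib_J})^k$, and the identification of the fiber with the preimage of your locus $T_J$ (the paper calls it $V_{2,J}^\infty$). The divergence, and the gap, is in the codimension bound for $T_J$.

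You aim for the sharp estimate $\operatorname{codim}T_J\geq k(\delta+1)-2(k-|J|-1)$ coming from the line stratum, and you correctly identify that this is incomplete: you have not controlled the strata coming from curves of higher degree, and the heuristic $h^0(\oc_C(\delta))\gtrsim d\delta$ glosses over genuine difficulties (singular curves, non-reduced components, bounding Hilbert scheme dimensions uniformly). This is precisely the content of Benoist's theorem \cite{Ben11}, and the paper simply invokes it (via \cite[Corollary~3.2]{BD15}) to obtain the much weaker but entirely sufficient bound
\[\operatorname{codim}_{(\cb^{\ib_J})^k}\bigl(V_{2,J}^\infty\bigr)\;\geq\;\delta+1.\]
With this in hand, the dimension count finishes immediately: $\dim Z_J\leq\dim\ab-(\delta+1)+\dim\widetilde{\lbb}_{k,J}$ and $\dim\widetilde{\lbb}_{k,J}\leq(k+1)n+1<\delta+1$ by hypothesis. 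No stratification of $|J|$, no stronger bound, no tracking of $\dim\widetilde{\lbb}_{k,J}=(k+1)n+1-|J|$ is needed.

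In short: your reduction is right, but the step you flagged as ``the main technical difficulty'' is a real theorem, not a routine verification. Replace your attempted stratification argument by the citation to Benoist and the proof is complete and coincides with the paper's.
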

\begin{proof}
	Take any $\tilde{w}_0\in \widetilde{\lbb}_{k,J}$.  Set $w_0:=\nu_k(\tilde{w}_0)$,  $x={\pi}_{0,k}(w_0)$ and $y=p(x_0)$.  Then we have $\ib _J=\ib _{y}$, and  we define the following analogues of $\ys$ parametrized by affine spaces
	\begin{align*} 
	\tl{\ys}_J:=\{\big(P_1,\ldots,P_k,[z]\big)\in  (\cb^{\ib } )^{k}\times \pb_{ J} \  \big\lvert \ P_1([z])=\cdots=  P_k([z])=0  \},\\ 
	\tl{\ys}_{y}:=\{\big(P_1,\ldots,P_k,[z]\big)\in (\cb^{\ib_J } )^{k} \times \pb_{ J}  \  \big\lvert \ P_1([z])=\cdots=  P_k([z])=0 \}.
	\end{align*}
	Here we use the identification $\cb^{\ib }\cong H^0\big(\pb^k,\oc_{\pb^k}(\delta)\big)$ and  $\cb^{\ib_J }\cong H^0\big(\pb_{J},\oc_{\pb_{J}}(\delta )\big)$. By analogy with $G_J^\infty$, we denote by $V_{1,J}^\infty$ (resp. $V_{2,J}^\infty$) the set of points in $(\cb^{\ib } )^{k}$ (resp. $(\cb^{\ib_J } )^{k}$) at which the fiber in $\tl{\ys}_J$ (resp. $\tl{\ys}_{y}$) is positive dimensional. 
	
	We take the linear map $\tilde{\varphi}_{\tilde{w}_0} :\ab\rightarrow (\cb^{\ib } )^{k}$ 
	\begin{align}\nonumber
	\tilde{\varphi}_{\tilde{w}_0}:\ab&\rightarrow  (\cb^{\ib})^{k}\\\nonumber
	\af&\mapsto  \big(\ell_{\bullet}^1(\af,\tilde{w}_0),\ldots, \ell_{\bullet}^k(\af,\tilde{w}_0) \big)
	\end{align}
	defined  in Lemma \ref{partial} so that,   for any $\af\in \ab_{\rm def}$, we have
	$$
	\tilde{\Phi}(\af,\tilde{w}_0)={\rm Span}\big(\ell_{\bullet}^1(\af,\tilde{w}_0),\ldots, \ell_{\bullet}^k(\af,\tilde{w}_0) \big).
	$$
Then we have
	$$
	\tilde{\Phi}^{-1}(G^\infty_J)\cap (\ab_{\rm def} \times \{\tilde{w}_0\})= \tilde{\varphi}_{\tilde{w}_0}^{-1}(V_{1,J}^\infty)\cap \ab_{\rm def}=(\rho_{y}\circ \tilde{\varphi}_{\tilde{w}_0})^{-1}(V_{2,J}^\infty)\cap \ab_{\rm def}.
	$$
 Recall that we have $\ib _J=\ib _{y}$. 
	Therefore
	\begin{align*}
		{\rm dim}\, \big(\tilde{\Phi}^{-1}(G_J^\infty)\cap (\ab_{\rm def}\times \{\tilde{w}_0\})\big)&\leqslant  {\rm dim}\,  \big((\rho_{y}\circ \tilde{\varphi}_{\tilde{w}_0})^{-1}(V_{2,J}^\infty)\big)\\
		&\leqslant  {\rm dim}\,   V_{2,J}^\infty +{\rm dim}\ker\, (\rho_{y}\circ\tilde{\varphi}_{\tilde{w_0}})\\
		&\leqslant {\rm dim}\,   V_{2,J}^\infty +{\rm dim}\,  \ab-{\rm rank}\, (\rho_{y}\circ\tilde{\varphi}_{\tilde{w_0}})\\
		&= {\rm dim}\,   V_{2,J}^\infty +{\rm dim}\,  \ab-k\#\ib_{y}.
	\end{align*}
	Since
	\begin{align*}
		{\rm dim}\,  (V_{2,J}^\infty)&= {\rm dim}\,   (\cb^{\ib_{J} } )^{k} -{\rm codim}\,  \big(V_{2,J}^\infty,(\cb^{\ib_{y} } )^{k}\big)\\
		&= k\#\ib_J-{\rm codim}\,  \big(V_{2,J}^\infty,(\cb^{\ib_{y} } )^{k}\big)\\
		&= k\#\ib_{y}-{\rm codim}\,  \big(V_{2,J}^\infty,(\cb^{\ib_{y} } )^{k}\big),
	\end{align*}
by putting the above inequalities together, one obtains
	$$
	{\rm dim}\,  \big(\tilde{\Phi}^{-1}(G_J^\infty)\cap \ab_{\rm def}\times \{\tilde{w}_0\}\big)\leqslant {\rm dim}\,   \ab -{\rm codim}\,  (V_{2,J}^\infty,(\cb^{\ib_{y} } )^{k}),
	$$
	which yields
	$$
	{\rm dim}\,  \big(\tilde{\Phi}^{-1}(G_J^\infty)\cap \ab_{\rm def}\times \widetilde{\lbb}_{k,J}\big)\leqslant {\rm dim}\,   \ab -{\rm codim}\,  (V_{2,J}^\infty,(\cb^{\ib_{y} } )^{k})+{\rm dim}\,  \widetilde{\lbb}_{k,J}.
	$$
	By a result due to   Benoist (see \cite{Ben11} or \cite[Corollary 3.2]{BD15}), we have
	$$
	{\rm codim}\,  \big(V_{2,J}^\infty,(\cb^{\ib_{y} } )^{k}\big)\geqslant   \delta+1.
	$$
	Therefore, if 
	\begin{equation}\label{avoid} 
	{\rm dim}\,  \widetilde{\lbb}_{k,J}\leqslant (k+1)n+1 < \delta +1,
	\end{equation}
	$\tilde{\Phi}^{-1}(G_J^\infty)$ doesn't dominate $\ab_{\rm def}$ via the projection $\ab_{\rm def}\times\widetilde{\lbb}_{k,J}\rightarrow \ab_{\rm def}$, and hence there exists a non-empty Zariski open subset $\ab_J\subset \ab_{\rm def}$ such that
 	\begin{align*}
\tilde{\Phi}^{-1}(G_J^\infty)\cap (\ab_J\times \widetilde{\lbb}_{k,J} )=\varnothing. 
\end{align*} 
\end{proof}

\subsection{Proof of the logarithmic Kobayashi conjecture} We are now in position to conclude the proof of our first main result. With the notation of \cref{exceptional locus}, set 
\[\ab_{\rm nef}:=\bigcap_{J\subset\{0,\dots,k\}}\ab_J,\]
which is a non-empty Zariski open subset $ \ab_{\rm def} $  by \cref{exceptional locus}. Fix $\delta= (k+1)n+k$ so that the conditions in \cref{regular1,exceptional locus} are fulfilled.
\begin{thm}\label{TheoremIntermediaire} Same notation as above. 
	For any $\af\in \ab_{\rm nef}$, the line bundle 
		$$
\nu_k^*\big(\oc_{{\lbb}_k}(\delta^{k-1}k')\otimes (p\circ{\pi}_{0,k})^*A^{\delta^{k-1}k(\varepsilon+k\delta)-r}\big)\otimes \oc_{\widetilde{\lbb}_k}(-\delta^{k-1}F)_{\upharpoonright \widetilde{H}_{\af,k}}
		$$
		is nef on $\widetilde{H}_{\af,k}$.
\end{thm}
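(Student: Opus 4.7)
The plan is to recognize the line bundle in the statement as a pullback under $\widetilde{\Psi}$ of a natural line bundle on the universal complete intersection $\ys$, and then prove nefness by a curve-by-curve check combining the three main inputs established previously: the factorization lemma, the avoidance lemma on the exceptional locus, and the Nakamaye-type base-locus estimate.

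First, I would apply the pullback formula \eqref{pull-back} with $d=\delta^{k-1}$, which identifies the line bundle in the statement with the restriction to $\widetilde{H}_{\af,k}$ of $\widetilde{\Psi}^*\mathcal{M}$, where
\[
\mathcal{M} \;:=\; \ls^{\delta^{k-1}}\boxtimes \oc_{\pb^k}(-1)
\]
on ${\rm Gr}_k\times \pb^k$. Thus it is enough to show that $\widetilde{\Psi}^*\mathcal{M}$ is nef on $\widetilde{H}_{\af,k}$, i.e.\ that $\widetilde{\Psi}^*\mathcal{M}\cdot C\geqslant 0$ for every irreducible curve $C\subset \widetilde{H}_{\af,k}$. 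If $\widetilde{\Psi}|_C$ is constant the intersection vanishes trivially, so we may assume $\widetilde{\Psi}(C)$ is an irreducible curve in ${\rm Gr}_k\times \pb^k$.

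Next, to localize the argument on the stratification of $Y$ by the $Y_J$'s, I associate to $C$ the subset
\[
J \;:=\; \{j\in\{0,\dots,k\}\ |\ \tau_j\equiv 0 \text{ on } (p\circ\pi_{0,k}\circ\nu_k)(C)\},
\]
so that $C':=C\cap\widetilde{H}_{\af,k,J}$ is Zariski open and dense in $C$. By \cref{factorization}, $\widetilde{\Psi}(C')\subset \ys_J$, and since $\ys_J=\ys\cap({\rm Gr}_k\times \pb_J)$ is closed in ${\rm Gr}_k\times \pb^k$, we also get $\widetilde{\Psi}(C)\subset \ys_J$. Now the two crucial inputs combine: on the one hand, since $\af\in \ab_{\rm nef}\subset \ab_J$, \cref{exceptional locus} yields $\widetilde{\Psi}(C')\cap \pr_J^{-1}(G_J^\infty)=\varnothing$, so $\widetilde{\Psi}(C)$ is not entirely contained in $\pr_J^{-1}(G_J^\infty)$; on the other hand, \cref{nakmaye type} applied with $d=\delta^{k-1}$ gives
\[
{\rm Bs}\big(\mathcal{M}|_{\ys_J}\big) \;\subset\; \pr_J^{-1}(G_J^\infty).
\]
Hence $\widetilde{\Psi}(C)\not\subset {\rm Bs}(\mathcal{M}|_{\ys_J})$, so $\mathcal{M}|_{\widetilde{\Psi}(C)}$ admits a global section not vanishing identically, which means it has non-negative degree on $\widetilde{\Psi}(C)$; pulling back through the (non-constant) morphism $C\to \widetilde{\Psi}(C)$ one obtains $\widetilde{\Psi}^*\mathcal{M}\cdot C\geqslant 0$, as required.

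The real content of the argument is not this concluding step, which is essentially formal, but rather the combination of \cref{exceptional locus} and \cref{nakmaye type}: the former requires the Zariski-open set $\ab_{\rm nef}$ to kill the preimage of $G_J^\infty$ simultaneously for all $J$, and the latter provides the quantitatively sharp bound $d\geqslant \delta^{k-1}$ which fixes the final shape of the line bundle. The main subtlety I would be careful about is that the index $J$ associated to $C$ is well-defined and that the image inclusion $\widetilde{\Psi}(C)\subset \ys_J$ genuinely extends from $C'$ to the whole curve $C$; this rests solely on the density of $C'$ in $C$ and the closedness of $\ys_J\subset \ys$, and poses no serious difficulty.
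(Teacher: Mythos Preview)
Your proposal is correct and follows essentially the same approach as the paper's proof: identify the line bundle via the pullback formula \eqref{pull-back}, then for each irreducible curve $C\subset\widetilde{H}_{\af,k}$ pick the unique stratum index $J$ so that $C$ has a dense open subset in $\widetilde{H}_{\af,k,J}$, use \cref{factorization} to land $\widetilde{\Psi}(C)$ in $\ys_J$, use \cref{exceptional locus} to ensure $\widetilde{\Psi}(C)\not\subset\pr_J^{-1}(G_J^\infty)$, and conclude via \cref{nakmaye type}. Your write-up is slightly more explicit about the constant-image case and about why non-containment in the base locus yields non-negative degree, but these are cosmetic additions to the same argument.
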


\begin{proof}    It suffices to  show that for any   irreducible curve \(C\subset \widetilde{H}_{\af,k}\), one has 
\[C\cdot \nu_k^*\big(\oc_{{\lbb}_k}(\delta^{k-1}k')\otimes (p\circ{\pi}_{0,k})^*A^{\delta^{k-1}k(\varepsilon+k\delta)-r}\big)\otimes \oc_{\widetilde{\lbb}_k}(-\delta^{k-1}F)\geqslant 0.\]
By  \eqref{pull-back}  this is equivalent to 
\begin{equation}\label{eq:IntersectionGoal}C\cdot\widetilde{\Psi}^*\big(\ls^{\delta^{k-1}}\boxtimes  \oc_{\pb^{k}}(-1) \big)\geqslant 0.\end{equation}
Let \(J\subset \{0,\dots,k\}\) be such that \(\widetilde{H}_{\af,k,J}\) contains a non-empty open subset \(C^{\circ}\) of \(C\) (there exists a unique such \(J\)).  By the factorization property  in \cref{factorization}, one has 
$\widetilde{\Psi}(C)\subset \ys_J$. 
Moreover, by \cref{exceptional locus}, we see that \[\widetilde{\Psi}(C^{\circ})\cap E_J\subset \widetilde{\Psi}(C^{\circ})\cap \pr_J^{-1}({G}_J^{\infty})=\varnothing.\]
In particular, \(\widetilde{\Psi}(C)\not\subset \pr_J^{-1}(G_J^{\infty})\) and therefore it  follows from 	\cref{nakmaye type} applied to \(m=\delta^{k-1}\) that 
\[\widetilde{\Psi}(C)\not\subset {\rm Bs}\big(\ls^{\delta^{k-1}}\boxtimes   \oc_{\pb^{k}}(-1)_{\upharpoonright \ys_J}\big),\]
from which \eqref{eq:IntersectionGoal} follows at once.
\end{proof}
Observe that \cref{TheoremIntermediaire}  implies the following result.
\begin{cor}\label{cor:Intermediaire} Same notation as above. There exists \(\beta,\tilde{\beta}\in \nb\) such that for any \(\alpha\geqslant 0\), and for a general hypersurface \(D\in |A^{\varepsilon+(r+k)\delta}|\), denoting by \(Y_k(D)\) the  log Demailly $k$-jet tower associated to   \(\big(Y,D,T_{Y}(-\log D)\big)\),  the stable base locus 
\[\mathbf{B}\big(\oc_{Y_k(D)}(\beta+\alpha\delta^{k-1}k')\otimes {\pi}_{0,k}^*A^{\tilde{\beta}+\alpha(\delta^{k-1}k(\varepsilon +k\delta)-r)}\big)\subseteq Y_k(D)^{\rm sing}\cup{\pi}_{0,k}^{-1}(D).\] 
\end{cor}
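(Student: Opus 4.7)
The plan is to deduce the corollary from \cref{TheoremIntermediaire} by combining the nefness it furnishes on $\widetilde{H}_{\af,k}$ with an auxiliary ample line bundle to obtain base-point-freeness of some power, pushing the resulting sections down to $H_{\af,k}\simeq Y_k(D_\af)$ to confine the base locus to the cosupport of the logarithmic Wronskian ideal, and finally spreading the property from the Fermat subfamily to a general $D$ by the Zariski openness of ampleness in families. The biholomorphism $p:(H_\af,D_\af)\to (Y,D_\af)$ identifies $H_{\af,k}$ with $Y_k(D_\af)$, and by \cref{lem:fiberwise blow-up,isomorphism ideal} the map $\mu_k:\widetilde{H}_{\af,k}\to H_{\af,k}$ is the blow-up of the Wronskian ideal $\mathfrak{w}_{H_{\af,k}}$, whose cosupport lies in $H_{\af,k}^{\rm sing}\cup \pi_{0,k}^{-1}(D_\af)$ by \cref{prop:coincide}.

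First, I would fix $\beta,\tilde{\beta}\in \nb$ large enough so that $\oc_{H_{\af,k}}(\beta)\otimes \pi_{0,k}^*A^{\tilde{\beta}}$ is ample on $H_{\af,k}$ (which holds for suitable weights by standard ampleness results on Demailly jet towers over an ample base) and so that
\[\mathcal{B}\,:=\,\mu_k^*\bigl(\oc_{H_{\af,k}}(\beta)\otimes \pi_{0,k}^*A^{\tilde{\beta}}\bigr)\otimes \oc_{\widetilde{H}_{\af,k}}(-\tilde F)\]
is ample on $\widetilde{H}_{\af,k}$, which is possible after further enlarging $\beta,\tilde{\beta}$ since $\oc(-\tilde F)$ is $\mu_k$-relatively ample. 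By \cref{TheoremIntermediaire}, for any $\af\in \ab_{\rm nef}$ and any $\alpha\geqslant 0$, the line bundle
\[\mathcal{L}_\alpha\,:=\,\mu_k^*\bigl(\oc_{H_{\af,k}}(\alpha\delta^{k-1}k')\otimes \pi_{0,k}^*A^{\alpha(\delta^{k-1}k(\varepsilon+k\delta)-r)}\bigr)\otimes \oc_{\widetilde{H}_{\af,k}}(-\alpha\delta^{k-1}\tilde F)\]
is nef on $\widetilde{H}_{\af,k}$, so $\mathcal{L}_\alpha\otimes \mathcal{B}$ is ample. For some sufficiently divisible $N$, the linear system $|(\mathcal{L}_\alpha\otimes \mathcal{B})^N|$ is base-point-free on $\widetilde{H}_{\af,k}$, and pushing these sections down via $\mu_k$ yields global sections of $\oc_{Y_k(D_\af)}(N(\beta+\alpha\delta^{k-1}k'))\otimes \pi_{0,k}^*A^{N(\tilde{\beta}+\alpha(\delta^{k-1}k(\varepsilon+k\delta)-r))}$ that generate this line bundle away from the cosupport of $\mathfrak{w}_{H_{\af,k}}$. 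Hence the stable base locus of $\oc_{Y_k(D_\af)}(\beta+\alpha\delta^{k-1}k')\otimes \pi_{0,k}^*A^{\tilde{\beta}+\alpha(\delta^{k-1}k(\varepsilon+k\delta)-r)}$ is contained in $Y_k(D_\af)^{\rm sing}\cup \pi_{0,k}^{-1}(D_\af)$ for every Fermat-type $D_\af$ with $\af\in \ab_{\rm nef}$.

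To extend the conclusion to a general $D\in |A^{\varepsilon+(r+k)\delta}|$, I would apply the openness of ampleness in flat families of projective schemes to the universal family $\widetilde{\hs}^{\rm s}\to \ab^{\rm s}$ from \cref{sec:Universal family}: for each fixed $\alpha$, the set of $D\in \ab^{\rm s}$ for which $\mathcal{L}_\alpha\otimes \mathcal{B}$ is ample on $\widetilde{H}_{D,k}$ is Zariski open, and it is non-empty by the previous step, hence Zariski dense. Running the pushdown argument over this dense open subset then proves the corollary. The main obstacle is this final spreading step: the openness of ampleness must be applied compatibly with the variation of the Wronskian blow-up in families, which is precisely what the functoriality of the Wronskian ideal sheaves established in \cref{sec:Universal family} is designed to ensure.
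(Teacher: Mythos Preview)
Your approach is essentially the same as the paper's: tensor the nef bundle from \cref{TheoremIntermediaire} with an auxiliary ample line bundle on $\widetilde{H}_{\af,k}$, push down to confine the stable base locus to the cosupport of $\mathfrak{w}_{H_{\af,k}}$, and then spread to general $D$ via openness of ampleness in the universal family of \cref{sec:Universal family}.

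There is one technical slip. You assert that $\oc_{H_{\af,k}}(\beta)\otimes\pi_{0,k}^*A^{\tilde{\beta}}$ is ample for large $\beta,\tilde{\beta}$, but for $k\geqslant 2$ the single-weight line bundle $\oc_{X_k(D)}(m)$ is \emph{not} ample after any twist from the base: it is only relatively ample over $X_{k-1}(D)$, and one needs positivity at every level of the tower. The paper fixes this by taking instead the multi-weight bundle $\oc_{H_{\af,k}}(a_1,\dots,a_k)\otimes\pi_{0,k}^*A^{\tilde{\beta}}$ with suitable $a_1,\dots,a_k,q$ so that
\[
\mu_{\af,k}^*\bigl(\oc_{H_{\af,k}}(a_1,\dots,a_k)\otimes\pi_{0,k}^*A^{\tilde{\beta}}\bigr)\otimes\oc_{\widetilde{H}_{\af,k}}(-qF)
\]
is ample; the rest of your argument (nef $+$ ample $=$ ample, pushdown, openness) then goes through verbatim with this bundle. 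Only at the very end does the paper set $\beta:=a_1+\cdots+a_k$ and invoke the relation \eqref{eq:Gamma_k} to pass from $\oc(a_1,\dots,a_k)$ to $\oc(\beta)$: the difference is an effective divisor supported in $X_k(D)^{\rm sing}$, so the base-locus containment in $Y_k(D)^{\rm sing}\cup\pi_{0,k}^{-1}(D)$ is preserved. With this correction your proof is complete and coincides with the paper's.
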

\begin{proof}
Fix  any  \(\af\in \ab_{\rm nef}\).  Observe now that there exists \(\tilde{\beta},a_1,\dots,a_k,q\in \nb\) such that the line bundle 
\[\mu_{\af,k}^*\big(\oc_{{H}_{\af,k}}(a_1,\dots,a_k)\otimes {\pi}_{0,k}^*A^{\tilde{\beta}}\big)\otimes \oc_{\widetilde{H}_{\af,k}}(-qF)\]
is ample, where $\mu_{\af,k}:\widetilde{H}_{\af,k}\to H_{\af,k}$ is the blow-up of the  logarithmic Wronskian ideal sheaf $\wk_{{H}_{\af,k}}$. By \cref{TheoremIntermediaire} as well as  the functorial properties for the restriction of Wronskians in  \eqref{equ:fonctorial} and the blow-up of logarithmic Wronskian ideal sheaves in \cref{lem:fiberwise blow-up}, for any \(\alpha\in \nb\) the line bundle
\begin{align*}
\Big(\nu_{k}^*\big(\oc_{\lbb_k}(a_1,\dots,a_k+\alpha\delta^{k-1}k')\otimes (p\circ{\pi}_{0,k})^*A^{\tilde{\beta}+\alpha(\delta^{k-1}k(\varepsilon+k\delta)-r)}\big)\otimes \oc_{\widetilde{\lbb}_{k}}\big(-(q+\delta^{k-1})F\big) \Big)_{\upharpoonright \widetilde{H}_{\af,k}}=\\
\mu_{\af,k}^*\big(\oc_{{H}_{\af,k}}(a_1,\dots,a_k+\alpha\delta^{k-1}k')\otimes  L_\af^{\tilde{\beta}+\alpha(\delta^{k-1}k(\varepsilon+k\delta)-r)}\big)\otimes \oc_{\widetilde{H}_{\af,k}}\big(-(q+\delta^{k-1})F\big) 
\end{align*}
is ample, where $L_\af:=(p\circ \pi_{0,k})^*A_{\upharpoonright H_{\af,k}}$.   Recall that $\widetilde{H}_{\af,k}$ is a fiber of the smooth family  $\widetilde{\hs}_k^{\rm s}\rightarrow \ab^{\rm s}$, where $\widetilde{\hs}_k^{\rm s}$ is the functorial blow-up of the  \emph{universal family} of log Demailly towers of general log pairs defined in \cref{sec:Universal family}.  Since ampleness is an open condition in families \cite[Theorem 1.2.17]{Laz04I}, then by \cref{lem:fiberwise blow-up} again we conclude that there exists an non-empty Zariski open subset $\ab_{\rm amp}\subset \ab^{\rm s}$ such that for any $\sigma\in \ab_{\rm amp}$, the line bundle
\begin{align*}
\Big(\nu_{k}^*\big(\oc_{\lbb_k}(a_1,\dots,a_k+\alpha\delta^{k-1}k')\otimes (p\circ{\pi}_{0,k})^*A^{\tilde{\beta}+\alpha(\delta^{k-1}k(\varepsilon+k\delta)-r)}\big)\otimes \oc_{\widetilde{\lbb}_{k}}\big(-(q+\delta^{k-1})F\big) \Big)_{\upharpoonright \widetilde{H}_{\sigma,k}}=\\
\mu_{\sigma,k}^*\big(\oc_{{H}_{\sigma,k}}(a_1,\dots,a_k+\alpha\delta^{k-1}k')\otimes  L_\sigma^{\tilde{\beta}+\alpha(\delta^{k-1}k(\varepsilon+k\delta)-r)}\big)\otimes \oc_{\widetilde{H}_{\sigma,k}}\big(-(q+\delta^{k-1})F\big) 
\end{align*}
is ample. Here  $ {H}_{\sigma,k}$  is  the log  Demailly  $k$-jet tower of   $\big(H_\sigma,D_\sigma,T_{H_\sigma}(-\log D_\sigma)\big)$,   $L_\sigma:=(p\circ \pi_{0,k})^*A_{\upharpoonright H_{\sigma,k}}$, and  $\mu_{\sigma,k}:\widetilde{H}_{\sigma,k}\to H_{\sigma,k}$ denotes to be the blow-up of the $k$-th log Wronskian ideal sheaf $\wk_{H_{\sigma,k}}$.  By our construction of  $H_\sigma$, the log pairs \((H_{\sigma},D_\sigma)\) and \((Y,D_{\sigma})\) are isomorphic. Hence the line bundle 
\[\nu_k^*\big(\oc_{{Y}_{k}(D_\sigma)}(a_1,\dots,a_k+\alpha\delta^{k-1}k')\otimes {\pi}_{0,k}^*A^{\tilde{\beta}+\alpha(\delta^{k-1}k(\varepsilon+k\delta)-r)}\big)\otimes \oc_{\widetilde{Y}_{k}(D_\sigma)}(-(q+\delta^{k-1})F)\]
is ample as well, where we denote  by   \(\widetilde{Y}_{k}(D_\sigma)\)  the blow-up along the Wronskian ideal sheaf $\wk_{{Y}_k(D_\sigma)}$. In particular, its stable base locus is empty, which implies  that the stable locus  
\[\mathbf{B}\Big(\oc_{{Y}_{k}(D_\sigma)}(a_1,\dots,a_k+\alpha\delta^{k-1}k')\otimes {\pi}_{0,k}^*A^{\tilde{\beta}+\alpha(\delta^{k-1}k(\varepsilon+k\delta)-r)}\Big) \]
is contained in the cosupport of the logarithmic Wronskian ideal sheaf $\wk_{{Y}_k(D_\sigma)}$, which is  contained in \(Y_k(D_\sigma)^{\rm sing}\cup{\pi}_{0,k}^{-1}(D_\sigma)\) by \cref{prop:coincide}. Now it suffices to take \(\beta=a_1+\dots+a_k\) and apply the relation \eqref{eq:Gamma_k} to conclude that the stable base locus of the line bundle
\[\oc_{{Y}_{k}(D_\sigma)}(\beta+\alpha\delta^{k-1}k')\otimes {\pi}_{0,k}^*A^{\tilde{\beta}+\alpha(\delta^{k-1}k(\varepsilon+k\delta)-r)}\]
is also contained in  \(Y_k(D_\sigma)^{\rm sing}\cup{\pi}_{0,k}^{-1}(D_\sigma)\).  Recall that $\ab^{\rm s}$ is the Zariski open subset of  $\ab^{\rm u}:=H^0(Y,A^m)$ parameterizing all smooth hypersurfaces, where we recall $m:=\varepsilon+(r+k)\delta$.  $\ab_{\rm amp}$ therefore parametrizes a \emph{general}  hypersurface in  $|A^m|$,  whence the result.
\end{proof}
From \cref{cor:Intermediaire} the first statement of our main theorem follows at once. 
\begin{cor}\label{effective estimate}
	Let $Y$ be a projective manifold  of dimension $n\geqslant 2$, and  $A$ a very ample line bundle over $Y$. Then for any 
	\[m\geqslant (n^2+3n+1)^{n+3}\sim_{n\to \infty}e^3n^{2n+6},\]
	if $D\in |A^m|$ is a general smooth hypersurface, then $Y\setminus D$ is hyperbolically embedded in \(Y\).
\end{cor}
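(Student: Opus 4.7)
The plan is to combine the intermediate result \cref{cor:Intermediaire} with the fundamental obstruction theorem \cref{thm:fundamental}, the Brody-hyperbolicity machinery developed in \cite{Bro17,Den17}, and Green's theorem \cite{Gre77} on hyperbolic embeddedness. The numerical bound will come from optimizing the auxiliary parameters $\varepsilon$ and $r$ underlying the construction of Section~\ref{sec:main construction}.

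First I would apply \cref{cor:Intermediaire} with the parameters already fixed in the construction, namely $k=n+1$ and $\delta=(k+1)n+k = n^2+3n+1$. To ensure that the line bundles produced by that corollary have arbitrarily negative twist along $A$, I impose on the auxiliary parameters the constraints $\varepsilon\geqslant k$ and $r>\delta^{k-1}k(\varepsilon+k\delta)$. Under these constraints, for a general $D\in |A^m|$ with $m=\varepsilon+(r+k)\delta$ and for every sufficiently large $\alpha$, the stable base locus of the line bundle
\[
\oc_{Y_k(D)}(\beta+\alpha\delta^{k-1}k')\otimes \pi_{0,k}^*A^{-N_\alpha}, \qquad N_\alpha\xrightarrow{\alpha\to\infty}+\infty,
\]
is contained in $Y_k(D)^{\rm sing}\cup\pi_{0,k}^{-1}(D)$. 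Applying \cref{thm:fundamental} to each such section, the $k$-th lift $f_{[k]}$ of any entire curve $f:\cb\to Y\setminus D$ must take values in this stable base locus; since $f(\cb)\cap D=\varnothing$, we conclude $f_{[k]}(\cb)\subseteq Y_k(D)^{\rm sing}$.

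The main obstacle is to promote this ``almost ample'' conclusion into genuine Brody hyperbolicity of $Y\setminus D$. This is precisely the content of the machinery developed in \cite{Bro17,Den17}: exploiting the explicit description of $Y_k(D)^{\rm sing}$ as a union of divisors $\pi_{j,k}^{-1}(\Gamma_j)$ together with the tautological inequality and an induction on the order of jets, any entire curve whose $k$-th lift is trapped in $Y_k(D)^{\rm sing}$ is forced to be constant. Once Brody hyperbolicity of $Y\setminus D$ is established, the classical theorem of Green \cite{Gre77} (in the form recorded in \cite[\S4.6.3]{NW14}) upgrades it to hyperbolic embeddedness of $Y\setminus D$ into $Y$.

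It remains to optimize the bound on $m$. Taking $\varepsilon=k=n+1$ and $r=\lfloor\delta^{k-1}k(\varepsilon+k\delta)\rfloor+1$, and using $k(\varepsilon+k\delta)=(n+1)^2(\delta+1)=(n+1)^3(n+2)$, one finds
\[
m \;=\; \varepsilon+(r+k)\delta \;\leqslant\; (n+1)^3(n+2)\,\delta^{n+1} \;+\; (n+2)\delta \;+\; (n+1).
\]
Since $\delta=(n+1)(n+2)-1$, a direct expansion gives $\delta^2-(n+1)^3(n+2)=(n+2)(n+1)(n-1)+1>0$ for $n\geqslant 2$, which is amply enough to absorb the lower-order terms and yields $m\leqslant \delta^{n+3}=(n^2+3n+1)^{n+3}$, proving the corollary.
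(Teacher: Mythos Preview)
Your overall strategy is the same as the paper's: apply \cref{cor:Intermediaire}, feed the resulting jet differentials into \cref{thm:fundamental} to get Brody hyperbolicity of $Y\setminus D$, then invoke Green's theorem for hyperbolic embeddedness. However, there are two genuine gaps.

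\medskip

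\textbf{The optimization does not cover all $m$.} In your last paragraph you fix $\varepsilon=k$ and take the smallest admissible $r$, producing a \emph{single} value of $m=\varepsilon+(r+k)\delta$ bounded by $\delta^{n+3}$. But the statement asks for \emph{every} $m$ above the threshold. With $\varepsilon$ pinned at $k$, increasing $r$ only produces $m\equiv k\pmod{\delta}$; the other residue classes are never reached. The paper handles this by letting $\varepsilon$ range over $k\leqslant\varepsilon\leqslant k+\delta-1$ and checking, via the elementary inequality $k(k+\delta-1+k\delta)<(\delta+1)^2$, that every $m\geqslant (r_0+k)\delta+2\delta$ with $r_0=\delta^{k-1}(\delta+1)^2$ can be written as $\varepsilon+(r+k)\delta$ with $r>\delta^{k-1}k(\varepsilon+k\delta)$. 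Note that once $\varepsilon$ is allowed up to $k+\delta-1$, the constraint on $r$ becomes $r>\delta^{k-1}k(k+\delta-1+k\delta)$, and the resulting bound on $m$ is governed by $\delta^{k-1}(\delta+1)^2\cdot\delta$ rather than $\delta^{k-1}\cdot(n+1)^3(n+2)\cdot\delta$; your comparison $\delta^2>(n+1)^3(n+2)$ no longer suffices, and one is led to the bound $(n+2)^{n+3}(n+1)^{n+3}$ that appears in the paper's computation.

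\medskip

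\textbf{The role of \cite{Bro17,Den17} and Green's theorem.} You invoke \cite{Bro17,Den17} for the implication ``$f_{[k]}(\cb)\subseteq Y_k(D)^{\rm sing}\Rightarrow f$ constant''. That implication is elementary and already in \cite{Dem95}: for any nonconstant $f$, the lift $f_{[k]}$ lands in $Y_k(D)^{\rm reg}$. Where \cite{Bro17,Den17} are actually needed is for the \emph{hypersurface} $D$ itself: Green's theorem \cite{Gre77} asserts that $Y\setminus D$ is hyperbolically embedded provided \emph{both} $D$ and $Y\setminus D$ are Brody hyperbolic. Your argument establishes only the latter; the Brody hyperbolicity of $D$ (for general $D\in|A^m|$ with $m$ in the stated range) is precisely the compact Kobayashi conjecture proved in \cite{Bro17,Den17}, and must be cited as a separate input. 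The paper makes this two-step structure explicit.
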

\begin{proof}
Recall first that a result of Green \cite{Gre77} guaranties that if \(D\) is a smooth hypersurface in \(Y\) such that \(D\) and \(Y\setminus D\) are both Brody hyperbolic, then \(Y\setminus D\) is hyperbolically embedded. Moreover, under the assumption of the corollary, it was established in \cite{Bro17,Den17}, that \(D\) is (Brody) hyperbolic, therefore it remains to prove that \(Y\setminus D\) is Brody hyperbolic. To see this we will just give an explicit bound on the degrees \(\varepsilon+(r+k)\delta\) covered by \cref{cor:Intermediaire}. Therefore recall that we have \(k=n+1\), and take \(\delta=(k+1)n+k=n^2+3n+1\) and set  $$r_0=\delta^{k-1}(\delta+1)^{2}=(n^2+3n+1)^{n}(n^2+3n+2)^{2}.$$
By the basic inequality
\begin{eqnarray}\label{basic}
k(k+\delta-1+k\delta)<(\delta+1)^2,
\end{eqnarray}
 one can show  that any \(m\geqslant (r_0+k)\delta+2\delta\) can be written in the form
\[m=\varepsilon+(r+k)\delta\]
with \(k\leqslant \varepsilon \leqslant k+\delta-1\), and \(r>\delta^{k-1}k(\varepsilon +k\delta)\).  In particular, applying \cref{cor:Intermediaire} for \(\alpha\) large enough and applying \cref{thm:fundamental},  we see that for   general hypersurface \(D\in |A^m|\),  $Y\setminus D$ are Brody hyperbolic. In order to obtain an explicit bound on \(m\)  it then suffices to give a bound on \((r_0+k)\delta+2\delta\): 
\begin{align*}
(r_0+k)\delta+2\delta&= \big(\delta^{k-1}(\delta+1)^{2}+k+2\big)\delta\\
&<  (n +2)^{n+3}(n+1)^{n+3}\sim_{n\to \infty} e^3n^{2n+6}. \qedhere
\end{align*} \end{proof} \subsection{Application to value distribution theory}
In this section, we show how   \cref{cor:Intermediaire} allows us to obtain a result in Nevanlinna theory. 
Let us recall the main definition used in Nevanlinna theory and refer the reader to the book \cite{NW14} for a detailed presentation. 
Let \(X\) be a   projective manifold and let \(A\) be an ample line bundle on \(X\) endowed with a smooth hermitian metric \(h\) whose   curvature tensor $\sqrt{-1}\Theta_{h,A}$ satisfies $\sqrt{-1}\Theta_{h,A}\geqslant \omega$ for some K\"ahler form $\omega$. For any entire curve $f:\cb\rightarrow X$, the \emph{Nevanlinna order function} is defined by
$$
T_f(r,A):=\int_{1}^{r}\frac{dt}{t}\int_{\Delta(t)}f^*(\sqrt{-1}\Theta_{h,A}),
$$
where $\Delta(t)$ is the disc of radius $t$ in $\cb$. For any simple normal crossing divisor $D$ such that $f$ is not contained in $D$, and for any \(k\in \nb^*\cup\{\infty\}\) one sets \[n^{(k)}_f(t,D):=\sum_{|z|<t} {\rm min}\big\{k,{\rm mult}_{z}(f^*D)\big\} \quad \text{for any \ }t>0,
\]
where \({\rm mult}_{z}(f^*D)\) denotes the multiplicity of \(f^*D\) at the point \(z\). For \(k=\infty\) one just writes \(n_f(t,D)\). One then defines the \emph{truncated counting function} at order $k$ by
\begin{eqnarray*}
	N^{(k)}_f(r,D):=\int_{1}^{r}n^{(k)}_f(t,D)dt\quad r>1
\end{eqnarray*}
In the case \(k=\infty\) we simply write \(N_f(r,D)\) and call it \emph{Nevanlinna's counting function}.
 One of the purpose of Nevanlinna theory is to compare the order function and the counting functions. If for instance \(D\in |A|\) then it is known that for all \(r\geqslant 0\), one has
 \[N_{f}(r,D)\leqslant T_f(r,A)+O(1),\]
 where \(O(1)\) is some bounded function. The so called ``Second Main Theorems''   are inequalities in the opposite direction of the form
 	\[T_f(r,K_X)+T_f(r,A)\leqslant N_f(r,D)+S_f(r) \quad \lVert,\]
	where \(S_f(r)\) is a small term compared to \(T_f(r,A)\), and where \( \lVert\) means that  the inequality holds outside a set of finite Lebesgue measure in \(\rb^+\).
Those inequalities are mainly conjectural and we refer to \cite{NW14} for a detailed account on the main known second main theorem type results. In the rest of this section we will consider the following weaker version of the Second Main Theorem, which consists in establishing inequalities of the form 
 \[T_f(f,A)\leqslant c N_f(r,D)+S_f(r)\quad \lVert\]
 for some constant  \(c\).  The  theory of jet differentials provides a direct way to produce such inequalities. 
 This relies mainly on  the lemma on \emph{logarithmic derivatives} and appears in several places in the literature more or less explicitly (see \emph{e.g.} \cite[Corollary 4.9]{Yam15}). Here we will apply the following precise statement recently established  in \cite[Theorem 3.1]{HVX17}. 
\begin{lem}\label{second} Let \((X,D)\) be a smooth logarithmic  pair, and let \(A\) be an ample line bundle on \(X\). For any positive integers \(k,N,N'\), for any global jet differential $P\in H^0\big(Y,E_{k,N}^{\rm GG}\Omega_X(\log D)\otimes A^{-N'}\big)$,  and for any entire curve \(f:\cb\rightarrow X\) which is not contained in \({\rm Supp} (D)\), if $f^*P\not\equiv 0$, then there exists a constant \(C\) such that
	\begin{eqnarray}
	T_f(r,A)\leqslant\frac{N}{N'}\cdot  N^{(1)}(f,D)+C\big(\log T_f(r,L)+\log r\big) \quad \lVert.
	\end{eqnarray}
Here the symbol \(\lVert\) means that	the  inequality holds outside a Borel subset of \((0,+\infty )\) of finite Lebesgue measure.
\end{lem}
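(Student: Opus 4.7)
The plan is to reduce the inequality to an application of the logarithmic derivative lemma of Noguchi-Winkelmann (see \cite[\S4.7]{NW14}), following the well-established strategy also employed in \cite{Yam15,HVX17}. First I would fix a smooth hermitian metric $h_A$ on $A$ with positive Chern form, a hermitian metric on $E_{k,N}^{\rm GG}\Omega_X(\log D)$, and an open covering $(U_\alpha)$ of $X$ on which $A$ is trivialized and on which $D$ has simple normal crossings local equations. On each $U_\alpha$, thanks to the local expression \eqref{local expression}, $P$ is a polynomial of weighted degree $N$ in the symbols $d^{\ell}w_i$, where $w_i=\log z_i$ along the components of $D$ and $w_i=z_i$ otherwise, with coefficients that are holomorphic sections of $A^{-N'}$ over $U_\alpha$.

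Next I would pull back along $f$. Since $f(\mathbb{C})\not\subset{\rm Supp}(D)$, the pullback $f^*P$ makes sense as a \emph{meromorphic} object on $\mathbb{C}$: under the chosen trivializations, $f^*P$ is locally a polynomial in the logarithmic derivatives $(f_i^{(\ell)}/f_i)$ and the ordinary derivatives $f_i^{(\ell)}$, weighted by the local representatives of the $A^{-N'}$-coefficients. The crucial observation is that the poles of $f^*P$ can occur only at points $z_0\in f^{-1}(D)$, and at such a point the pole order is \emph{bounded uniformly by $N$}, independently of ${\rm mult}_{z_0}(f^*D)$. This is because each $\nd^\ell(z_i)$ produces at most a simple logarithmic pole along $D$, so the total order of the pole at $z_0$ is controlled by the weighted degree, which is precisely what yields the truncation at level $1$ in the counting function $N^{(1)}_f(r,D)$.

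Third, I would estimate the proximity function: writing $f^*P=\psi\cdot s_{-N'}$ locally, where $s_{-N'}$ trivializes $A^{-N'}$, the logarithmic derivative lemma applied to each $f_i^{(\ell)}/f_i$ and $f_i^{(\ell)}$ gives
\begin{equation*}
m_f\!\left(r,\tfrac{f^*P}{s_{-N'}}\right)=O\!\left(\log T_f(r,A)+\log r\right)\quad\lVert.
\end{equation*}
Combined with the pole estimate from the previous step and the Poisson-Jensen formula applied to the (not identically zero, by hypothesis) meromorphic function $f^*P$, one obtains
\begin{equation*}
N'\cdot T_f(r,A)=T_f(r,A^{-N'})+O(1)\leqslant N\cdot N^{(1)}_f(r,D)+O\!\left(\log T_f(r,A)+\log r\right)\quad\lVert,
\end{equation*}
which is the desired inequality after dividing by $N'$.

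The only genuine subtlety is the bound on the pole order of $f^*P$ at each intersection point with $D$ by the weighted degree $N$ (rather than by multiplicity), which is what provides the truncation at level one; this is a purely local computation using the explicit expression \eqref{local expression} and the Leibniz rule. Since the statement is entirely standard and a precise reference (\cite[Theorem 3.1]{HVX17}) is already available, I would simply cite it in the final write-up rather than reproduce the full argument.
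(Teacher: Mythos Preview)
Your proposal is correct and aligns with the paper's treatment: the paper does not give a proof of this lemma either, simply citing \cite[Theorem 3.1]{HVX17} and remarking that the restriction to $X=\mathbb{P}^n$ in that reference is unnecessary. Your sketch of the underlying argument (logarithmic derivative lemma plus the observation that the pole order of $f^*P$ at each point of $f^{-1}(D)$ is bounded by the weighted degree $N$, yielding truncation at level one) is accurate and in fact more detailed than what the paper provides.
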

Let us mention that in \cite{HVX17} the authors only state their result in the case \(X=\pb^n\), but this restriction is unnecessary. 

As an immediate consequence of \cref{second} we obtain the following.
\begin{cor}\label{cor:SMT} Let \((X,D)\) be a smooth log pair, and let \(A\) be an ample line bundle on \(X\). Let \( {\pi}_{0,k}: {X}_k(D)\to X\) be the log Demailly  tower associated to the pair \((X,D)\). For any positive integers \(k,N,N'\), if the stable base locus \[\mathbf{B}\big(\oc_{{X}_k(D)}(N)\otimes {\pi}_{0,k}^*A^{-N'}\big)\subseteq {X}_k(D)^{\rm sing}\bigcup{\pi}_{0,k}^{-1}(D)\]
	then, for any entire curve \(f:\cb\to X\) not contained in \({\rm Supp} (D)\), one has
	\begin{eqnarray}
	T_f(r,A)\leqslant\frac{N}{N'}\cdot  N^{(1)}(f,D)+C\big(\log T_f(r,A)+\log r\big) \quad \lVert.
	\end{eqnarray}
\end{cor}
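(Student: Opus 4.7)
The plan is to combine the geometric hypothesis on the stable base locus with the logarithmic derivative lemma recalled in \cref{second}. More precisely, the strategy is to produce, for the given entire curve $f$, an invariant logarithmic jet differential $P$ vanishing along a sufficiently positive power of $A$ such that $f^*P\not\equiv 0$, and then invoke \cref{second} directly.

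First, I would reduce to the case of a non-constant $f$, the constant case being trivial since both sides vanish (or the inequality is automatic). Next, the goal is to check that $f_{[k]}(\cb)$ is not contained in $X_k(D)^{\rm sing}\cup {\pi}_{0,k}^{-1}(D)$. Since $f$ is not contained in the support of $D$ by assumption, one has $f(\cb)\not\subset D$ and thus $f_{[k]}(\cb)\not\subset {\pi}_{0,k}^{-1}(D)$. On the other hand, since $f$ is non-constant, there exists $t_0\in \cb$ with $f'(t_0)\neq 0$; by the standard description of the regular locus as the image of regular $k$-jets via \eqref{embedding} (cf.\ also the constructions recalled in \Cref{subsec:DS,sec:LogDSTower}), one has $f_{[k]}(t_0)\in X_k(D)^{\rm reg}$, so $f_{[k]}(\cb)\not\subset X_k(D)^{\rm sing}$.

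Combining these two facts with the hypothesis on the stable base locus, we conclude that $f_{[k]}(\cb)\not\subset \mathbf{B}\bigl(\oc_{X_k(D)}(N)\otimes {\pi}_{0,k}^*A^{-N'}\bigr)$. By definition of the stable base locus, this means that there exists an integer $m\geqslant 1$ and a global section
\[
\omega\in H^0\bigl(X_k(D),\oc_{X_k(D)}(mN)\otimes {\pi}_{0,k}^*A^{-mN'}\bigr)
\]
such that $\omega\circ f_{[k]}\not\equiv 0$ on $\cb$. Using the direct image formula \eqref{eq:DirectImageFormula}, $\omega$ corresponds to an invariant logarithmic jet differential
\[
P\in H^0\bigl(X,E_{k,mN}\Omega_X(\log D)\otimes A^{-mN'}\bigr)\subset H^0\bigl(X,E_{k,mN}^{\rm GG}\Omega_X(\log D)\otimes A^{-mN'}\bigr),
\]
and the non-vanishing of $\omega\circ f_{[k]}$ translates precisely to $f^*P\not\equiv 0$.

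Finally, I would apply \cref{second} to $P$ with weighted degree $mN$ and twist $mN'$, yielding
\[
T_f(r,A)\leqslant \frac{mN}{mN'}\,N^{(1)}_f(r,D)+C\bigl(\log T_f(r,A)+\log r\bigr)\quad \lVert,
\]
and since $\tfrac{mN}{mN'}=\tfrac{N}{N'}$ this is exactly the desired inequality. The step that requires the most care is verifying that a non-constant entire curve $f$ not lying in $D$ indeed has its $k$-jet lift not entirely contained in $X_k(D)^{\rm sing}$; everything else is a direct application of the direct image formula and the logarithmic derivative lemma.
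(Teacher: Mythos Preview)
Your proposal is correct and follows exactly the argument the paper leaves implicit (the paper states the corollary as ``an immediate consequence'' of \cref{second} without further detail). One small logical slip: from $f_{[k]}(\cb)\not\subset X_k(D)^{\rm sing}$ and $f_{[k]}(\cb)\not\subset {\pi}_{0,k}^{-1}(D)$ separately one cannot conclude $f_{[k]}(\cb)\not\subset X_k(D)^{\rm sing}\cup {\pi}_{0,k}^{-1}(D)$; you should instead pick a single $t_0$ with both $f(t_0)\notin D$ and $f'(t_0)\neq 0$, which exists since for a non-constant holomorphic $f$ not contained in $D$ both exceptional sets are discrete.
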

It now suffices to combine this result with \cref{cor:Intermediaire} to obtain a  Second Main Theorem type result for general log pairs. 
\begin{cor}\label{effective estimate2}
	Let $Y$ be a projective manifold  of dimension $n\geqslant 2$, and  let $A$ be a very ample line bundle over $Y$. If  $D\in |A^m|$ is a general smooth hypersurface with  
	\[m\geqslant (n +2)^{n+3}(n+1)^{n+3}\sim_{n\to \infty}e^3n^{2n+6},\] then for any entire curve \(f:\cb\to Y\) not contained in \({\rm Supp}(D)\), there exists \(C\in \rb^+\) such that 
\[
	T_f(r,A)\leqslant N^{(1)}(f,D)+C\big(\log T_f(r,A)+\log r\big) \quad \lVert.
\]
\end{cor}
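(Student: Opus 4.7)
The strategy is to combine the two already-established tools Corollary \ref{cor:Intermediaire} and Corollary \ref{cor:SMT}. Corollary \ref{cor:Intermediaire} guarantees, for a general $D \in |A^{\varepsilon+(r+k)\delta}|$ and any $\alpha \in \nb$, the base locus inclusion
\[
\mathbf{B}\bigl(\oc_{Y_k(D)}(\beta + \alpha \delta^{k-1}k') \otimes \pi_{0,k}^*A^{\tilde{\beta} + \alpha(\delta^{k-1}k(\varepsilon+k\delta) - r)}\bigr) \subseteq Y_k(D)^{\rm sing} \cup \pi_{0,k}^{-1}(D).
\]
Setting $N := \beta + \alpha \delta^{k-1}k'$ and $N' := \alpha\bigl(r - \delta^{k-1}k(\varepsilon+k\delta)\bigr) - \tilde{\beta}$, Corollary \ref{cor:SMT} then gives, for any entire curve $f:\cb \to Y$ not contained in $\mathrm{Supp}(D)$,
\[
T_f(r,A) \leqslant \tfrac{N}{N'}\, N^{(1)}(f,D) + C\bigl(\log T_f(r,A)+\log r\bigr) \quad \lVert,
\]
provided $N' > 0$. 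The plan is simply to arrange the parameters so that $N/N' \leqslant 1$.

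Keeping $k = n+1$, $\delta = n^2+3n+1$, and $k' = k(k+1)/2$ as in the proof of Corollary \ref{effective estimate}, the ratio $N/N'$ tends to $\delta^{k-1}k'/(r - \delta^{k-1}k(\varepsilon+k\delta))$ as $\alpha \to \infty$, so the inequality $N/N' \leqslant 1$ is achieved for $\alpha$ sufficiently large whenever
\[
r \;>\; \delta^{k-1}\bigl(k(\varepsilon+k\delta) + k'\bigr).
\]
This is a mild strengthening of the condition $r > \delta^{k-1}k(\varepsilon+k\delta)$ used in \emph{loc.~cit.} A direct computation with $\varepsilon \leqslant k+\delta-1$ gives $k(\varepsilon+k\delta) + k' \leqslant (\delta+1)^2 + \tfrac{(n+1)(n-2)}{2}$, which is still of order $\delta^2$, so the resulting lower bound on $m=\varepsilon+(r+k)\delta$ remains of order $\delta^{k+1}$, exactly as before.

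The main (and essentially only) obstacle is the numerical bookkeeping: one must verify that the announced bound $m \geqslant (n+2)^{n+3}(n+1)^{n+3}$ still accommodates this slightly stricter threshold for $r$. Since $\delta+1 = (n+1)(n+2)$, one has $\delta^{k-1}(\delta+1)^2 \cdot \delta \leqslant (n+1)^{n+3}(n+2)^{n+3}$, and the correction $\delta^{k-1}\cdot \tfrac{(n+1)(n-2)}{2}\cdot \delta$ is of strictly lower order in $n$; hence the same method as in the proof of Corollary \ref{effective estimate} shows that for any $m$ above the stated threshold we can decompose $m = \varepsilon + (r+k)\delta$ with $k \leqslant \varepsilon \leqslant k+\delta-1$ and $r > \delta^{k-1}(k(\varepsilon+k\delta)+k')$. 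Choosing $\alpha$ large enough so that simultaneously $N' > 0$ and $N/N' \leqslant 1$, a single invocation of Corollary \ref{cor:SMT} then produces the claimed Second Main Theorem inequality.
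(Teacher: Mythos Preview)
Your proposal is correct and follows essentially the same route as the paper: combine Corollary~\ref{cor:Intermediaire} with Corollary~\ref{cor:SMT}, strengthen the requirement on $r$ from $r>\delta^{k-1}k(\varepsilon+k\delta)$ to $r>\delta^{k-1}\bigl(k(\varepsilon+k\delta)+k'\bigr)$ so that the limiting ratio $N/N'$ is strictly below~$1$, and then verify that the stated degree bound still absorbs this extra $\delta^{k-1}k'$. The paper packages the threshold as $r_0=\delta^{k-1}(\delta+1)(\delta+\tfrac{3}{2})$ (using $k'=(\delta+1)/2$) and bounds $(r_0+k)\delta+2\delta$ directly; your sharper identity $k(\varepsilon+k\delta)+k'\leqslant(\delta+1)^2+\tfrac{(n+1)(n-2)}{2}$ is an equivalent way to reach the same numerical conclusion.
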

\begin{proof}
Let us take \(k=n+1\),  \(\delta=(k+1)n+k=n^2+3n+1\) and set 
\[r_0=\delta^{k-1}k'+\delta^{k-1}(\delta+1)^{2} =\delta^{k-1}(\delta+1)(\delta+\frac{3}{2}).\]
By \eqref{basic} one can prove that any \(m\geqslant(r_0+k)\delta+2\delta\) can be written in the form
\[m=\varepsilon+(r+k)\delta\]
with \(k\leqslant \varepsilon \leqslant k+\delta-1\), and \(r>\delta^{k-1}k'+\delta^{k-1}k(\varepsilon +k\delta)\).  In particular, applying \cref{cor:Intermediaire}  for \(\alpha\gg 0\) such that \(-\tilde{\beta}-\alpha(\delta^{k-1}k(\varepsilon +k\delta)-r)>0\) and applying \cref{cor:SMT} we see that for such \(m\),  general hypersurface \(D\in |A^m|\) and for any entire curve \(f:\cb\to Y\) not contained in \({\rm Supp}(D)\), there exists \(C\in \rb^+\) such that 
\[	T_f(r,A)\leqslant\frac{\beta+\alpha\delta^{k-1}k'}{{-\tilde{\beta}-\alpha(\delta^{k-1}k(\varepsilon +k\delta)-r)}}\cdot  N^{(1)}(f,D)+C\big(\log T_f(r,A)+\log r\big) \quad \lVert.\]
However,
\[\frac{\beta+\alpha\delta^{k-1}k'}{-\tilde{\beta}-\alpha(\delta^{k-1}k(\varepsilon +k\delta)-r)}\to_{\alpha\to \infty} \frac{\delta^{k-1}k'}{r-(\delta^{k-1}k(\varepsilon +k\delta)}<1. \]
Therefore, in order to complete the proof, it now suffices to give a bound on \((r_0+k)\delta+2\delta\): 
\begin{align*}
(r_0+k)\delta+2\delta&=\big(\delta^{k-1}(\delta+1)(\delta+\frac{3}{2})+k+2\big)\delta\\
&< (n +2)^{n+3}(n+1)^{n+3}\lesssim_{n\to \infty}  e^3n^{2n+6}.   \qedhere
\end{align*}
\end{proof}

 \subsection{Orbifold hyperbolicity and hyperbolicity for the cyclic cover}
The \emph{orbifold}   introduced by Campana arises naturally in his study of the birational classification of varieties in \cite{Cam04}. One can also generalize the definition of Kobayashi hyperbolicity and the tools of jet differentials to orbifolds, which were first studied by Rousseau in \cite{Rou10}. We refer the readers to the very recent paper \cite{CDR18} for the hyperbolicity and orbifold jet differentials in the orbifold category.  In this last section, we will apply   \cref{cor:Intermediaire} to prove the orbifold hyperbolicity for general orbifolds. From \cite{CDR18} one can easily derive  the following lemma.
\begin{lem}\label{orbifold}
	Let $Y$ be an $n$-dimensional   projective manifold, and let  $D$ be a smooth hypersurface of $Y$. Then for the Campana orbifold $\big(Y,\Delta):=(Y,(1-\frac{1}{m})D\big)$ where $m\in \nb^*$,  one has natural inclusions
	\begin{align*}
	E^{\rm GG}_{k,N}\Omega_Y(\log D)\otimes \oc_Y(-\lceil \frac{N}{m}\rceil D)\hookrightarrow E^{\rm GG}_{k,N}\Omega_{Y,\Delta}\hookrightarrow E^{\rm GG}_{k,N}\Omega_Y(\log D)
	\end{align*} 
	where $E^{\rm GG}_{k,N}\Omega_{Y,\Delta}$ is the \emph{orbifold jet differential} of degree $k$ and weight $N$ defined in \cite{CDR18}.
\end{lem}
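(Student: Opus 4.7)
The plan is to work locally in an open set $U\subset Y$ with coordinates $(z_1,\ldots,z_n)$ such that $D\cap U=(z_1=0)$, since at points away from $D$ all three sheaves coincide with the ordinary jet differential sheaf and both inclusions are trivial. First I would recall the local description of log jet differentials from \cref{lem:new basis}: any section of $E^{\rm GG}_{k,N}\Omega_Y(\log D)$ admits a unique expression
\begin{equation*}
P = \sum_{|\alpha|=N} \frac{c_\alpha(z)}{z_1^{\mu(\alpha)}}(d^{1}z)^{\alpha_1}\cdots (d^{k}z)^{\alpha_k}, \qquad \mu(\alpha):=\sum_{j=1}^k \alpha_j^1,
\end{equation*}
with $c_\alpha\in\oc(U)$. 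Next I would invoke the local description of orbifold jet differentials from \cite{CDR18}: a section of $E^{\rm GG}_{k,N}\Omega_{Y,\Delta}$ has the same local form, but each coefficient $c_\alpha$ is additionally required to vanish along $D$ to order at least $\lceil \mu(\alpha)/m\rceil$ (equivalently, $c_\alpha/z_1^{\mu(\alpha)}$ has pole of order at most $\lfloor\mu(\alpha)(1-1/m)\rfloor$ along $D$, which is the expected \emph{fractional} regularity of a Campana section with multiplicity $m$ along $D$).

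From this comparison the right-hand inclusion is immediate, since the orbifold condition is nothing more than an extra vanishing constraint on top of the log-pole condition. For the left-hand inclusion, a local section of $E^{\rm GG}_{k,N}\Omega_Y(\log D)\otimes \oc_Y(-\lceil N/m\rceil D)$ can be written as $z_1^{\lceil N/m\rceil}P$ for some log jet differential $P$ with holomorphic coefficients $c_\alpha$; the coefficient of $(d^1 z)^{\alpha_1}\cdots (d^k z)^{\alpha_k}/z_1^{\mu(\alpha)}$ then becomes $z_1^{\lceil N/m\rceil}c_\alpha(z)$, which vanishes along $D$ to order at least $\lceil N/m\rceil$. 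I then need to verify $\lceil N/m\rceil\geq \lceil\mu(\alpha)/m\rceil$ for every multi-index $\alpha$ with $|\alpha|=N$; this reduces to the elementary inequality
\begin{equation*}
\mu(\alpha) = \sum_{j=1}^k \alpha_j^1 \,\leq\, \sum_{j=1}^k j\,\alpha_j^1 \,\leq\, \sum_{j=1}^k j\,(\alpha_j^1+\cdots+\alpha_j^n) \,=\, |\alpha| = N,
\end{equation*}
which holds because each weight $j$ appearing in the sum satisfies $j\geq 1$.

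The only potential obstacle is matching the local description of $E^{\rm GG}_{k,N}\Omega_{Y,\Delta}$ used above to the intrinsic definition in \cite{CDR18}; this is essentially bookkeeping, since the Campana convention for $(1-1/m)D$ precisely allows each $d^j z_1/z_1$-contribution to be ``absorbed'' by a factor $z_1^{\lceil\bullet/m\rceil}$ in the coefficient. Once this correspondence is in place, the lemma reduces to the two short pole/order verifications carried out above.
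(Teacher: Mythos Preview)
Your approach is essentially the same as the paper's: work locally in coordinates adapted to $D$, use the basis from \cref{lem:new basis} for $E^{\rm GG}_{k,N}\Omega_Y(\log D)$, invoke the local description of the orbifold sheaf from \cite{CDR18}, and reduce both inclusions to an elementary inequality between the orbifold vanishing order and $\lceil N/m\rceil$.

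The one imprecision is the formula you quote for the orbifold condition. According to \cite[\S 2.3]{CDR18} (and as the paper records), $E^{\rm GG}_{k,N}\Omega_{Y,\Delta}$ is locally generated by
\[
z_1^{\big\lceil \frac{1}{m}\sum_{j=1}^k \alpha_j^1\min(j,m)\big\rceil}\cdot \frac{1}{z_1^{\mu(\alpha)}}(d^1z)^{\alpha_1}\cdots(d^kz)^{\alpha_k},
\]
so the required vanishing order of $c_\alpha$ is $\big\lceil\frac{1}{m}\sum_j\alpha_j^1\min(j,m)\big\rceil$, not $\lceil\mu(\alpha)/m\rceil$. This is exactly the ``bookkeeping'' you flag, and once corrected your argument goes through unchanged: the right inclusion is immediate, and for the left inclusion the needed inequality becomes
\[
\sum_{j=1}^k \alpha_j^1\min(j,m)\ \leq\ \sum_{j=1}^k j\,\alpha_j^1\ \leq\ \sum_{j=1}^k j\,|\alpha_j|\ =\ N,
\]
which is the same chain you wrote (using $\min(j,m)\leq j$ in place of $1\leq j$). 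So your proof is correct modulo this adjustment, and matches the paper's.
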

\begin{proof}
	Take any  open subset of $U\subset Y$ with local coordinates $(z_1,\ldots,z_n)$ such that $D\cap U=(z_1=0)$. By \cref{lem:new basis}, for any $j\in \mathbb{N}$,  $\frac{d^{j}z_1}{z_1}$ is  a logarithmic jet differential and moreover,   $E^{\rm GG}_{k,N}\Omega_Y(\log D)$ is the locally free  sheaf   generated in local 	coordinates by elements
	\begin{align*} 
\left\{  \frac{1}{z_1^{\alpha_{1}^1+\cdots+\alpha_{k}^1}} (d^{1}z)^{\alpha_1}(d^{2}z)^{\alpha_2}\cdots (d^{k}z)^{\alpha_k}\right\}_{|\alpha_1|+2|\alpha_2|+\cdots+k|\alpha_k|=N},
	\end{align*}
	where $\alpha_j=(\alpha_j^1,\ldots,\alpha_j^n)\in \mathbb{N}^n$. 
By \cite[\S 2.3]{CDR18}  $E^{\rm GG}_{k,N}\Omega_{Y,\Delta}$  is the locally free subsheaf of $E_{k,N}\Omega_Y(\log D)$ generated in local 	coordinates  by elements
$$
\left\{  z_1^{\big\lceil\frac{\alpha_{1}^1{\rm min}(1,m)+\cdots+k\alpha_{k}^1{\rm min}(k,m)}{m}\big\rceil}\cdot \frac{1}{z_1^{\alpha_{1}^1+\cdots+\alpha_{k}^1}} (d^{1}z)^{\alpha_1}(d^{2}z)^{\alpha_2}\cdots (d^{k}z)^{\alpha_k}\right\}_{|\alpha_1|+2|\alpha_2|+\cdots+k|\alpha_k|=N} 
$$
The lemma then follows immediately from the obvious inequality
\[
\Big\lceil\frac{\alpha_{1}^1{\rm min}(1,m)+\cdots+k\alpha_{k}^1{\rm min}(k,m)}{m}\Big\rceil\leqslant \Big\lceil \frac{N}{m}\Big\rceil. \qedhere
\]
\end{proof}

Now let us  combine \cref{orbifold}  with \cref{cor:Intermediaire} to prove the orbifold hyperbolicity.
\begin{cor} \label{orbifold Kobayashi}
	Let $Y$ be a projective manifold  of dimension $n\geqslant 2$, and  $A$ a very ample line bundle over $Y$. Then for any 
	\[m\geqslant (n +2)^{n+3}(n+1)^{n+3}\sim_{n\to \infty}e^3n^{2n+6},\]
	if $D\in |A^m|$ is a general smooth hypersurface, 
	\begin{thmlist}
		\item  the orbifold $\big(Y,\Delta):=(Y,(1-\frac{1}{m})D\big)$ is orbifold hyperbolic. 
		\item \label{cyclic}For the cyclic cover $\pi:X\to Y$ obtained by taking the $m$-th root along $D$,  
		$X$ is Kobayashi hyperbolic.
		\end{thmlist}
\end{cor}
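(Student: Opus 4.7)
The plan is to deduce (i) directly from \cref{cor:Intermediaire} combined with \cref{orbifold}, and then derive (ii) as a formal consequence of (i) through the orbifold étale cover structure of the cyclic cover.

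For statement (i), I would first apply \cref{cor:Intermediaire} with the parameter choices made in the proof of \cref{effective estimate2}, namely \(k=n+1\), \(\delta=n^2+3n+1\), and \(r>\delta^{k-1}k'+\delta^{k-1}k(\varepsilon+k\delta)\); the hypothesis \(m\geqslant(n+2)^{n+3}(n+1)^{n+3}\) allows us to write \(m=\varepsilon+(r+k)\delta\) with these constraints satisfied. Thus for a general smooth \(D\in|A^m|\) and every \(\alpha\in\nb\) the stable base locus of \(\oc_{Y_k(D)}(\beta+\alpha\delta^{k-1}k')\otimes\pi_{0,k}^{*}A^{\tilde\beta+\alpha(\delta^{k-1}k(\varepsilon+k\delta)-r)}\) is contained in \(Y_k(D)^{\rm sing}\cup\pi_{0,k}^{-1}(D)\). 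By the direct image formula \eqref{eq:DirectImageFormula}, these elements correspond to global logarithmic jet differentials \(P\in H^{0}\bigl(Y,E_{k,N}^{\rm GG}\Omega_Y(\log D)\otimes A^{-M}\bigr)\) with \(N=\beta+\alpha\delta^{k-1}k'\) and \(M=\alpha(r-\delta^{k-1}k(\varepsilon+k\delta))-\tilde\beta\).

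The crucial step is to convert these logarithmic jet differentials into \emph{orbifold} jet differentials using \cref{orbifold}: the inclusion \(E^{\rm GG}_{k,N}\Omega_Y(\log D)\otimes\oc_Y(-\lceil N/m\rceil D)\hookrightarrow E^{\rm GG}_{k,N}\Omega_{Y,\Delta}\), combined with \(D\in|A^m|\), shows that every such \(P\) yields a section \(\tilde P\in H^{0}\bigl(Y,E^{\rm GG}_{k,N}\Omega_{Y,\Delta}\otimes A^{m\lceil N/m\rceil-M}\bigr)\). Since \(M/N\to(r-\delta^{k-1}k(\varepsilon+k\delta))/(\delta^{k-1}k')>1\) as \(\alpha\to\infty\) by our choice of \(r\), for \(\alpha\) sufficiently large the exponent \(m\lceil N/m\rceil-M\) becomes strictly negative, so \(\tilde P\) is an orbifold jet differential vanishing along an ample divisor. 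The orbifold analogue of the fundamental vanishing theorem (\cref{thm:fundamental}, established in the orbifold category in \cite{CDR18}) then asserts that \(f^{*}\tilde P\equiv0\) for every orbifold entire curve \(f:\cb\to(Y,\Delta)\), which by construction of \(\tilde P\) from \(P\) is equivalent to \(f^{*}P\equiv 0\). Since this holds for enough sections \(P\), the \(k\)-th lift \(f_{[k]}(\cb)\) lies in the stable base locus, hence in \(Y_k(D)^{\rm sing}\cup\pi_{0,k}^{-1}(D)\). As \(f(\cb)\not\subset D\) by definition of an orbifold entire curve, we must have \(f_{[k]}(\cb)\subset Y_k(D)^{\rm sing}\), but for any non-constant entire curve \(f\) the lift \(f_{[k]}\) maps a dense subset of \(\cb\) into \(Y_k(D)^{\rm reg}\), a contradiction. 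Thus \(f\) is constant, proving (i).

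For statement (ii), the map \(\pi:(X,\varnothing)\to(Y,\Delta)\) is an orbifold étale cover of degree \(m\): indeed, the cyclic cover obtained by taking the \(m\)-th root of \(D\) is ramified exactly along \(\pi^{-1}(D)\) with ramification index \(m\), so for any holomorphic map \(g:\cb\to X\), the composition \(f:=\pi\circ g:\cb\to Y\) satisfies \({\rm mult}_t(f^{*}D)\geqslant m\) at every \(t\in f^{-1}(D)\). If \(g\) is non-constant, then either \(f(\cb)\subset D\), in which case \(g\) factors through the preimage \(\pi^{-1}(D)\), which is itself a smooth hypersurface hyperbolic by \cite{Bro17,Den17} (applied to \(X\) and its ramification locus), or \(f\) is an orbifold entire curve on \((Y,\Delta)\), contradicting (i). In either case \(g\) is constant, establishing the Kobayashi (equivalently Brody, by compactness) hyperbolicity of \(X\).

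The principal technical obstacle is the precise orbifold jet differential formalism and the verification of the orbifold version of \cref{thm:fundamental}; one must ensure that \cref{orbifold} indeed yields an inclusion into the sheaf \(E^{\rm GG}_{k,N}\Omega_{Y,\Delta}\) as defined in \cite{CDR18} and that the orbifold condition on \(f\) is strong enough to guarantee \(f^{*}\tilde P\) is a holomorphic function on \(\cb\) (not merely meromorphic), so that the vanishing-along-ample-divisor argument goes through as in the logarithmic case.
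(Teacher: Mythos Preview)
Your proposal is correct and follows essentially the same route as the paper: produce enough logarithmic jet differentials via \cref{cor:Intermediaire}, twist them into orbifold jet differentials through \cref{orbifold} with a still-anti-ample twist (using $N<N'$, equivalently $M/N\to (r-\delta^{k-1}k(\varepsilon+k\delta))/(\delta^{k-1}k')>1$), and then invoke the orbifold fundamental vanishing theorem from \cite{CDR18}; part (ii) then follows by pushing entire curves in $X$ down to orbifold entire curves in $(Y,\Delta)$. One small imprecision in your argument for (ii): the hyperbolicity of $\pi^{-1}(D)$ does not come from applying \cite{Bro17,Den17} to $X$ and its ramification locus (that divisor is not a general member of a linear system on $X$), but rather from the fact that $\pi$ restricts to an isomorphism $\pi^{-1}(D)_{\rm red}\xrightarrow{\sim} D$, and $D\subset Y$ is hyperbolic by \cite{Bro17,Den17} applied to $(Y,A)$.
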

\begin{proof}
As in the proof of \cref{effective estimate2}, we take \(k=n+1\),   \(\delta=(k+1)n+k=n^2+3n+1\) and    \[r_0  =\delta^{k-1}(\delta+1)(\delta+\frac{3}{2}).\]
 Then by the computations therein  and \cref{cor:Intermediaire}, for any	general smooth hypersurface $D\in |A^m|$ with \[m\geqslant (n +2)^{n+3}(n+1)^{n+3},\]
 we can take $\alpha\gg 0$ so that
	\[
	N:=\beta+\alpha\delta^{k-1}k'< N':={-\tilde{\beta}-\alpha(\delta^{k-1}k(\varepsilon +k\delta)-r)}
	\]
with
		\[\mathbf{B}\big(\oc_{Y_k(D)}(N)\otimes {\pi}_{0,k}^*A^{-N'}\big)\subseteq Y_k(D)^{\rm sing}\cup{\pi}_{0,k}^{-1}(D).\] 
By 		 \eqref{eq:DirectImageFormula}, for any $\ell \gg 0$ one has \emph{sufficiently many} log jet differentials in $E_{k,\ell N}\Omega_{Y}(\log D)\otimes \oc_Y(-\ell N' A)$ in the sense that,   for any germ of curve $\gamma:(\cb,0)\to (Y\setminus D, y)$ whose $k$-jet $j_k\gamma(0)\neq 0$, there always exists a logarithmic jet differential 	
	$$
	P\in H^0\big( Y,E_{k,\ell N}\Omega_{Y}(\log D)\otimes \oc_Y(-\ell N' A)\big)
	$$
	with $P(j_k\gamma)(0)\neq 0$.
By the inclusive relation in \cref{orbifold}, one also has \emph{sufficiently many  orbifold jet differentials} in 
 $$
 E_{k,\ell N}\Omega_{Y,\Delta}(\log D)\otimes \oc_Y\big(- \ell N'A+ \lceil \frac{\ell N}{m}\rceil  D  \big)=E_{k,\ell N}\Omega_{Y,\Delta}(\log D)\otimes \oc_Y\Big(\big(- \ell N' + m\lceil \frac{\ell N}{m}\rceil\big)  A  \Big). $$  Take $\ell$ divisible enough  (\emph{i.e.} $m \mid \ell$) and one thus has
	$$
 \ell N'-m\lceil \frac{\ell N}{m}\rceil  >0
	$$	
	which implies the orbifold hyperbolicity of $(Y,\Delta)$ by the \emph{fundamental vanishing theorem} in the orbifold setting (cf.  \cite[Corollary 3.11]{CDR18}).   Hence the first claim is proved. 

	To prove the second statement, since $X$ is compact, it is equivalent to show that $X$ is also Brody hyperbolic. To prove this, we assume that there exists an entire curve $f:\cb\to X$ on $X$,  and the contradiction is derived immediately by  observing that   $\pi\circ  f: \cb\to Y$ is an orbifold entire curve with respect to the orbifold $(Y,\Delta)$, whereas $(Y,\Delta)$ is orbifold hyperbolic by the first claim. This proves the second claim.
\end{proof}

Let us mention that  in \cref{effective estimate,effective estimate2,orbifold Kobayashi} we made an approximation in order to give readable bound. In all cases, as is clear from the proof, we could obtain a slightly better bound. The fact that the  same bound  appears in \cref{effective estimate} and \cref{effective estimate2} is due to this approximation. In fact, our method would provide a slightly better bound in \cref{effective estimate} than that in \cref{effective estimate2,orbifold Kobayashi}.

\nomenclature[1]{$f:(\cb,0)\to X$}{Germ of holomorphic curve}%
\nomenclature[2]{$J_kX\rightarrow X$}{Fiber bundle of $k$-jets of germs of  holomorphic curves in $X$} 
\nomenclature[3]{$j_kf\in J_kX$}{ $k$-jet of germ of curve $f$ in $J_kX$}%

\nomenclature[4]{$J_k(X,\log D)$}{Fiber bundle of logarithmic $k$-jets of germs of  holomorphic curves in $X$}
\nomenclature[5]{$E_{k,m}^{\rm GG}\Omega_X, E_{k,m}^{\rm GG}\Omega_X(\log D)$}{Green-Griffiths bundle of (logarithmic) jet differentials of order $k$ and weight  $m$}
\nomenclature[6]{$(X,V)$}{Directed manifolds}
\nomenclature[6a]{$X_k$}{Demailly-Semple $k$-jet tower of the directed manifold $(X,V)$}
\nomenclature[7]{$(X, D,V)$}{Logarithmic directed manifold}
\nomenclature[8]{$E_{k,m}\Omega_X, E_{k,m} \Omega_X(\log D)$}{Invariant (logarithmic) jet bundle of order $k$ and weight  $m$} 
\nomenclature[9]{$X_k(D)$}{logarithmic Demailly(-Semple) $k$-jet tower associated to  logarithmic directed manifold $\big(X,D,T_X(-\log D)\big)$}
\nomenclature[9a]{$\pi_{0,k}:X_k(D)\to X$}{The natural projection map}
\nomenclature[a]{$J^kL$}{Jet bundle of a line bundle $L$ \eqref{def:jet bundle}}
\nomenclature[b]{$j_L^ks \in H^0(X,J^kL)$}{ $k$-jet of the holomorphic section $s\in H^0(X,L)$}
\nomenclature[c]{$W_L(\bullet)$}{ Wronskian  \eqref{def:wronskian} associated to the line bundle $L$}
\nomenclature[d]{$\nabla_{\!\!D}(\bullet)$}{logarithmic connection \eqref{def:log connection}}
\nomenclature[e]{$\nabla^k_{\!\!D}(\bullet)$}{logarithmic connection \eqref{def:higherorder}}


\nomenclature[f]{$W_{D}(\bullet)$}{Logarithmic  Wronskian associated to log pair $(X,D)$ \eqref{eq:defLogWronskian}}
\nomenclature[g]{$\omega_{D}(\bullet)$}{$({\pi}_{0,k})_*\omega_{D}(\bullet)=W_{D}(\bullet)$ \eqref{def:omega}}
\nomenclature[h]{$\onu^j(\bullet)$}{Higher order logarithmic connection in the trivialization tower $\uk$ \eqref{eq:trivial high}}

\nomenclature[i]{$\omega_{D}'(\bullet)$}{Defined in \eqref{eq:another wronskian} or \eqref{eq:relation of two wronskian}}
\nomenclature[j]{$ {\wk}_{{X}_k(D)}$ }{  $k$-th logarithmic Wronskian ideal sheaf  of the log manifold $(X,D)$}

\nomenclature[k]{$\lbb$}{The total space of the line bundle $A^{\otimes m}$ on $Y$}
\nomenclature[l]{$W_{{\lbb},Y} (\bullet)$}{Logarithmic  Wronskian associated to the log pair $(\lbb,Y)$}
\nomenclature[l1]{$\lbb_k$}{Log  Demailly $k$-jet tower of   log  directed manifold $\big(\lbb,Y,T_{\lbb}(-\log Y)\big)$}
\nomenclature[m]{$\omega_{\log}(\bullet)$}{$({\pi}_{0,k})_*\omega_{\log} (\bullet)=W_{{\lbb},Y} (\bullet)$}
\nomenclature[m1]{${\wk}_{k,{\lbb},Y}$, ${\wk}'_{k,{\lbb},Y}$}{Ideal sheaves of $\lbb_k$ in \eqref{eq:ideal relation}}
\nomenclature[m2]{$\nu_k:\widetilde{{\lbb}}_k\to \lbb_k$}{The blow-up of ${\wk}_{k,{\lbb},Y}$ and  ${\wk}'_{k,{\lbb},Y}$}
\nomenclature[n]{$(H_\sigma,D_\sigma)$}{The sub-log manifold of the log pair $(\lbb,Y)$ induced by $\sigma\in H^0(Y,A^m)$}
\nomenclature[n1]{$ H_{\sigma,k}$}{Log  Demailly tower of   log directed manifold   $\big(H_\sigma,D_\sigma,T_{H_\sigma}(-\log D_\sigma)\big)$}
\nomenclature[n2]{$\mu_{\sigma,k}:\widetilde{H}_{\sigma,k}\to H_{\sigma,k}$}{The blow-up of ${\wk}_{H_{\sigma,k}}$}
\nomenclature[o]{$\af\in \ab$}{Family of hypersurfaces in $Y$ parametrized by certain Fermat-type hypersurfaces defined in \cref{construction} }
\nomenclature[o1]{$ (\hs,\ds)\to \ab_{\rm sm}$}{Smooth family of sub-log pairs of $(\lbb,Y)$ induced by Fermat-type hypersurfaces}
\nomenclature[o2]{$\hs_k$}{Log  Demailly $k$-jet tower of   log  directed manifold $\big(\hs,\ds,T_{\hs/\ab_{\rm sm}}(-\log \ds)\big)$}
\nomenclature[o2]{$\omega_{\log,I_1,\ldots,I_k}(\bullet)$}{Modified logarithmic Wronskians \eqref{eq:two wronskian}}
\nomenclature[p]{$E^{\rm GG}_{k,N}\Omega_{Y,\Delta}$}{Orbifold jet differentials of order $k$ and weight $N$ associated to Campana orbifold $(Y,\Delta)$.}

\printnomenclature[1.7in]

\bibliographystyle{amsalpha}
\bibliography{biblio}

\end{document}